\renewcommand{\epsilon}{\varepsilon}
\numberwithin{equation}{section}
\newtheorem{theorem}{Theorem}[section]
\newtheorem{proposition}[theorem]{Proposition}
\newtheorem{lemma}[theorem]{Lemma}
\newtheorem{corollary}[theorem]{Corollary}
\theoremstyle{definition}
\newcommand{\B}{\mathcal B}
\newcommand{\eps}{\varepsilon}
\newcommand{\R}{\mathbb{R}}
\newcommand{\cal}{\mathcal}
\newcommand{\weakto}{\rightharpoonup}
\def\Xint#1{\mathchoice
{\XXint\displaystyle\textstyle{#1}}%
{\XXint\textstyle\scriptstyle{#1}}%
{\XXint\scriptstyle\scriptscriptstyle{#1}}%
{\XXint\scriptscriptstyle\scriptscriptstyle{#1}}%
\!\int}
\def\XXint#1#2#3{{\setbox0=\hbox{$#1{#2#3}{\int}$ }
\vcenter{\hbox{$#2#3$ }}\kern-.6\wd0}}
\def\intmed{\Xint-}
\begin{document}

\title[Harnack Estimates]{Harnack and pointwise estimates for  degenerate or singular parabolic equations}

\author[F.\ G.\ D\"uzg\"un, S.\ Mosconi \& V. Vespri]{Fatma Gamze D\"uzg\"un, Sunra Mosconi$^{*}$ and Vincenzo Vespri}

\address[F.\ G.\ D\"uzg\"un]{Department of Mathematics, 
\newline\indent
Hacettepe University, 06800, Beytepe, Ankara, Turkey}
\email{gamzeduz@hacettepe.edu.tr }

\address[S.\ Mosconi]{Dipartimento di Matematica e Informatica,
\newline\indent
Universit\`a degli Studi di Catania,
Viale A.\ Doria 6, 95125 Catania, Italy}
\email[Corresponding author]{mosconi@dmi.unict.it}

\address[V. Vespri]{Dipartimento di Matematica e Informatica ``U. Dini'', 
\newline\indent
Università di Firenze, Viale Morgagni 67/A, 50134 Firenze, Italy}
\email{vespri@math.unifi.it}

\bigskip
\keywords{Degenerate and Singular Parabolic Equations, Pointwise Estimates, Harnack Estimates, Weak Solutions, Intrinsic Geometry}

\subjclass[2010]{35K67, 35K92, 35K20}

\begin{abstract}
In this paper we give both an historical and technical overview of the theory of Harnack inequalities for nonlinear parabolic equations in divergence form. We start reviewing the elliptic case with some of its variants and geometrical consequences. The linear parabolic Harnack inequality of Moser is discussed extensively, together with its link to two-sided kernel estimates and to the Li-Yau differential Harnack inequality. Then we overview the more recent developements of the theory for nonlinear degenerate/singular equations, highlighting the differences with the quadratic case and introducing the so-called {\em intrinsic} Harnack inequalities. Finally, we provide complete proofs of the Harnack inequalities in some paramount case to introduce the reader to the {\em expansion of positivity} method. 
\end{abstract}

\maketitle

\tableofcontents

\section{Introduction}
Generally speaking, given class of functions ${\cal C}$ defined on a set $\Omega$, a Harnack inequality is a pointwise control of the form $u(x)\le C\, u(y)$ for all  $u\in {\cal C}$ (with a constant independent of $u$)  where the inequality holds for $x\in X\subseteq \Omega$ and $y\in Y\subseteq\Omega$, $(X, Y)$ belonging to a certain family  ${\cal F}$  determined by ${\cal C}$. Thus it takes the form 
\begin{equation}
\label{ghar}
\exists\  C=C({\cal C}, {\cal F}) \quad \text{such that} \quad \sup_{X}u\le C\,  \inf_{Y} u\qquad \forall \ (X, Y)\in {\cal F},\  u\in {\cal C}.
\end{equation}
Given ${\cal C}$, one is ideally interested in maximal families ${\cal F}$. In this respect, certain properties of maximal familes are immediate, e.g., if $X'\subseteq X$ and $(X, Y)\in {\cal F}$, then $(X', Y)\in {\cal F}$. The so-called {\em Harnack chain} argument consists in the elementary observation that if both $(X, Y)$ and $(Y',  Z)$ belong to ${\cal F}$ and $y_{0}\in Y\cap Y'\ne \emptyset$, then
\[
\sup_{X} u\le C\, \inf_{Y} u\le C\, u(y_{0})\le C\, \sup_{Y'}u\le C^{2}\, \inf_{Z} u,
\]
hence we can add all such couples $(X, Z)$ to  ${\cal F}$ by considering the constant $C^{2}$. Other properties of ${\cal F}$ follows from the structure of ${\cal C}$: if, for instance, ${\cal C}$ is invariant by a suitable semi-group $\{\Phi_{\lambda}\}_{\lambda>0}$ of domain transformations (meaning that $u\in {\cal C}\,  \Rightarrow \, u \circ\Phi_{\lambda}\in {\cal C}$ for all $\lambda>0$), then ${\cal F}$ should also exhibits this invariance. 

Formally, to a larger class ${\cal C}$ corresponds a smaller family ${\cal F}$ and the more powerful Harnack inequalities aim at ``maximize'' the two sets at once.  Typically, ${\cal C}$ is the set of nonnegative solutions to certain classes of PDE in an ambient metric space $\Omega$ and ${\cal F}$ should at least cluster near each point of $\Omega$ (i.\,e.\,$\forall P\in \Omega, r>0$ there exists $(X, Y)\in {\cal F}$ such that both $X$ and $Y$ lie in the ball of center $P$ and radius $r$). Another example is the class of ratios of nonnegative harmonic functions vanishing on the same set, giving rise to the so-called {\em boundary Harnack inequalities}. Given ${\cal C}$, searching for a suitable maximal family ${\cal F}$ such that \eqref{ghar} holds, informally takes the name of {\em finding the right form} of the Harnack inequality in ${\cal C}$. Rich examples of such instance arise in the theory of hypoelliptic PDE's.

Historically, the first of such pointwise control was proved by Harnack in 1887 for the class ${\cal C}$ of nonnegative harmonic functions in a domain $\Omega\subseteq \R^{2}$, with ${\cal F}$ being made of couples of identical balls well contained in $\Omega$. Since then, extensions and variants of the Harnack inequality grew steadily in the mathematical literature, with plentiful applications in PDE and differential geometry. Correspondingly, its proof in the various settings has been obtained through many different points of view. To mention a few: the original potential theoretic approach, the measure-theoretical approach of Moser, the probabilistic one of Krylov-Safonov and the differential approach of Li and Yau. 

Many very good books and surveys on the Harnack inequality already exist (see e.g. \cite{Kass}) and we are thus forced to justify the novelty of this one.
Our main focus will be the quest for the right form ${\cal F}$ of various Harnack inequalities and, to this end, we will mainly deal with parabolic ones, which naturally exhibit a richer structure. Even restricting the theme to the parabolic setting requires a further choice, as the theory naturally splits into two large branches:  one can either consider {\em divergence form} (or variational) equations, whose basic linear example is 
$u_{t}={\rm div}(A(x)\, Du)$, or equations in  {\em non-divergence form} (or non-variational), such as $u_{t}=A(x)\cdot D^{2}u$. While some attempts to build a unified approach to the Harnack inequality has been made (see \cite{FeSa}), structural differences seem unavoidable. Moreover, both examples have nonlinear counterparts  and the corresponding theories rapidly diverge. We will deal with parabolic nonlinear equations in  divergence form, referring to the surveys \cite{IS, Kryl} for the non-divergence theory.

Rather than simply collecting known result to describe the state of the art, we aim at giving both an historical and technical overview on the subject, with emphasis on the different proofs and approaches to the subject. 

The first part, consisting in sections 2 to 4, will focus on the various form of \eqref{ghar}, mentioning some applications and giving from time to time proofs of well-known facts which we found somehow hard to track in the literature. In particular, we will deal with the elliptic case in section 2, with the linear parabolic Harnack inequality in section 3 and with the singular and degenerate parabolic setting in section 4. Here we will describe the so-called {\em intrinsic Harnack inequalities}, by which we mean a generalization of \eqref{ghar} where the sets $X$ and $Y$ also depend on $u$ (or, equivalently, \eqref{ghar} holds in a restricted class ${\cal C}$ determined by non-homogeneous scalings). 

The second part consists of the final and longest section, which is devoted to detailed proofs of the most relevant Harnack inequalities for equations in divergence form. Our aim is to obtain the elliptic and parabolic Harnack inequalities in a unified way, following the measure-theoretical approach of De Giorgi to regularity and departing from Moser's one. This roadmap has been explored before (see \cite{Mald} for an axiomatic treatment), but we push it further to gather what we believe are the most simple proofs of the Harnack inequalities up to date. Credits to the main ideas and techniques should be given to the original De Giorgi paper \cite{G}, the book of Landis \cite{Landis} and the work of Di Benedetto and collaborators gathered in the monograph \cite{HR}. We will focus on model problems rather than on generality in the hope to make the proofs more transparent and attract non-experts to this fascinating  research field.

\section{Elliptic Harnack inequality}
    \subsection{Original Harnack}

In 1887, The German mathematician C.G. Axel von
Harnack proved the following result in \cite{DT}.
\begin{theorem}
Let $u$ be a nonnegative harmonic function in $B_R(x_0)\subseteq \R^2$. Then for all  $x \in B_r(x_0) \subset  B_R(x_0)$ it holds
\[
\frac{R-r}{R+r}u(x_0)\leq u(x) \leq \frac{R+r}{R-r}u(x_0).
\]
\end{theorem}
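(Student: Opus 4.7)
The plan is to reduce everything to the Poisson integral representation of harmonic functions on a disk, which converts the desired two-sided bound into elementary estimates on the Poisson kernel. Since the hypothesis only guarantees $u$ harmonic in the open ball $B_R(x_0)$ (not necessarily up to the boundary), the first step is to argue on a concentric sub-disk $B_{R'}(x_0)$ with $r<R'<R$: there $u$ is smooth up to $\partial B_{R'}(x_0)$, and after translating so that $x_0=0$ the representation formula reads
\[
u(x)=\frac{1}{2\pi R'}\int_{\partial B_{R'}}\frac{(R')^{2}-|x|^{2}}{|x-y|^{2}}\,u(y)\,d\sigma(y),\qquad |x|<R'.
\]
Specialising at $x=0$ this is precisely the mean value property $u(0)=\frac{1}{2\pi R'}\int_{\partial B_{R'}}u\,d\sigma$.

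The second step is a pointwise estimate on the Poisson kernel. For $y\in\partial B_{R'}$ and $|x|<R'$ the triangle inequality yields $R'-|x|\le|x-y|\le R'+|x|$, so that factoring $(R')^{2}-|x|^{2}=(R'-|x|)(R'+|x|)$ gives
\[
\frac{R'-|x|}{R'+|x|}\cdot\frac{1}{2\pi R'}\ \le\ \frac{(R')^{2}-|x|^{2}}{2\pi R'|x-y|^{2}}\ \le\ \frac{R'+|x|}{R'-|x|}\cdot\frac{1}{2\pi R'}.
\]
Because $u\ge 0$ the inequalities are preserved upon integrating against $u(y)\,d\sigma(y)$, and comparing with the mean value formula above produces
\[
\frac{R'-|x|}{R'+|x|}\,u(0)\ \le\ u(x)\ \le\ \frac{R'+|x|}{R'-|x|}\,u(0).
\]

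Finally, I would let $R'\nearrow R$ to obtain the same bounds with $R$ in place of $R'$, and then use monotonicity of $t\mapsto (R+t)/(R-t)$ on $[0,R)$ together with $|x-x_0|\le r$ to conclude. The whole argument is quite short; the only genuinely technical point is the justification of the Poisson representation, which is why one first restricts to a sub-disk where boundary regularity is automatic and then passes to the limit. Once that is in place, the Harnack bound reduces to a one-line inequality for the kernel combined with nonnegativity of $u$.
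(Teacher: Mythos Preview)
Your proof is correct and follows precisely the route the paper indicates: it remarks that the precise form of the inequality ``can be derived through Poisson representation formula'' without giving details, and your argument supplies exactly those details via the kernel bounds $\frac{R'-|x|}{R'+|x|}\le \frac{(R')^2-|x|^2}{|x-y|^2}\le \frac{R'+|x|}{R'-|x|}$ together with the mean value identity. The sub-disk/limit device to handle the open-ball hypothesis is the standard and appropriate way to justify the representation here.
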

    
The estimate can be generalized to any dimension $N\geq 1$, resulting in
\begin{equation}
\label{OH}
\left(\frac{R}{R+r}\right)^{N-2} \frac{R-r}{R+r}u(x_0)\leq u(x) \leq\left(\frac{R}{R-r}\right)^{N-2} \frac{R+r}{R-r}u(x_0),
\end{equation}
however, the modern version of the Harnack inequality for harmonic functions is the following special case of the previous one.
\begin{theorem}
Let $ N \geq 1$. Then there exists
a constant $C=C(N)>1$, such that if $u$ is a nonnegative, harmonic function in $B_{2r}(x_0)$, then
\begin{equation}
\label{H}
\sup_{B_r(x_0)} u \leq C  \inf_{B_r(x_0)}u.
\end{equation}
\end{theorem}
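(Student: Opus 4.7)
The plan is to deduce \eqref{H} directly from the stronger quantitative estimate \eqref{OH}, which has just been stated in all dimensions and gives a pointwise two-sided control of $u$ by $u(x_0)$. Specializing \eqref{OH} to the ambient radius $R=2r$, every $x\in B_r(x_0)$ satisfies
\[
\frac{2^{N-2}}{3^{N-1}}\,u(x_0)\le u(x)\le 3\cdot 2^{N-2}\,u(x_0).
\]
Taking the supremum in the right inequality and the infimum in the left and then eliminating $u(x_0)$ yields $\sup_{B_r(x_0)}u\le 3^N\,\inf_{B_r(x_0)}u$, so $C(N)=3^N$ works. The whole task thus reduces to establishing \eqref{OH}.

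For \eqref{OH} the plan is to invoke the Poisson integral representation of nonnegative harmonic functions on a ball: for $u\ge 0$ harmonic on $B_R(x_0)$ (first on slightly smaller concentric balls and then passing to the limit),
\[
u(x)=\frac{R^2-|x-x_0|^2}{N\omega_N\,R}\int_{\partial B_R(x_0)}\frac{u(y)}{|x-y|^N}\,d\sigma(y),
\]
which at the center reduces to the mean value identity $u(x_0)=(N\omega_N R^{N-1})^{-1}\int_{\partial B_R(x_0)}u\,d\sigma$. The strategy is to compare the Poisson kernels $P(x,\cdot)$ and $P(x_0,\cdot)$ pointwise on $\partial B_R(x_0)$: their ratio equals $R^{N-2}(R^2-|x-x_0|^2)/|x-y|^N$, and since $u\ge 0$ any uniform two-sided bound on this ratio integrates to the analogous bound for $u(x)/u(x_0)$. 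For $|x-x_0|\le r$ the crude estimates $R^2-|x-x_0|^2\ge(R-r)(R+r)$ and $|x-y|\le R+r$ give the lower half of \eqref{OH} at once; for the upper half one factorizes $R^2-|x-x_0|^2=(R-|x-x_0|)(R+|x-x_0|)$ and invokes the sharp triangle inequality $|x-y|\ge R-|x-x_0|$ to cancel one factor of $R-|x-x_0|$ before enforcing $|x-x_0|\le r$.

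The only genuinely delicate point is this asymmetric use of the triangle inequality: replacing $R-|x-x_0|$ by the crude $R-r$ in the upper bound costs one power and leaves a suboptimal estimate of the form $(R/(R-r))^N u(x_0)$, which falls short of the factor $(R+r)/(R-r)$ appearing in \eqref{OH}. An alternative route avoiding the Poisson formula entirely and closer in spirit to the Harnack chain argument described in the introduction is to join any two points of $B_r(x_0)$ by a dimensionally bounded chain of overlapping balls of radius $r/4$ contained in $B_{2r}(x_0)$ and apply the mean value property on each: if $u\ge 0$ is harmonic on $B_\rho(z)$ and $z'\in B_{\rho/2}(z)$, then $u(z')\le 2^N u(z)$. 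Iteration yields \eqref{H} with a larger but still purely dimensional constant.
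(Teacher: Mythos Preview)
Your proof is correct. The paper itself does not give a detailed argument here: it simply remarks that \eqref{H} ``is an easy consequence of the mean value theorem, while the more precise form \eqref{OH} can be derived through Poisson representation formula.'' You essentially reverse this order: your primary route establishes \eqref{OH} via the Poisson kernel comparison and then specializes to $R=2r$ to obtain \eqref{H} with the explicit constant $C(N)=3^{N}$, whereas the paper envisions proving \eqref{H} directly from the mean value property (your ``alternative route'' with overlapping balls and a Harnack chain). The Poisson approach buys you the sharp pointwise bounds \eqref{OH} and an explicit constant, at the cost of invoking a representation formula; the mean-value approach is more elementary and is the one the paper has in mind for \eqref{H}. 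Both are standard and correct.
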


The proof of this latter form of the Harnack inequality is an easy consequence of the mean value theorem, while the more precise form \eqref{OH} can be derived through Poisson representation formula. For the early historical developments related to the first Harnack inequality we refer to the survey \cite{Kass}.

The Harnack inequality has several deep and powerful consequences.
On the local side, Harnack himself in \cite{DT} derived from it a precisely quantified oscillation estimate. Due to the ubiquity of this argument we recall its elementary proof.
Let $x_0=0$ and  
\[
M_r(u)=\sup_{B_r}u, \qquad m_r(u)=\inf_{B_r} u, \quad {\rm osc}(u, B_{r})=M_r(u)-m_r(u).
\]
Both $M_{2r}(u)-u$ and $u-m_{2r}(u)$ are nonnegative and harmonic in $B_{2r}$, so \eqref{H} holds for them, resulting in
\[
M_{2r}(u)-m_r(u)\leq C(M_{2r}(u)-M_r(u)),\quad M_r(u)-m_{2r}(u)\leq C(m_r(u)-m_{2r}(u)),
\]
which added together give
\[
M_{2r}(u)-m_{2r}(u)+M_r(u)-m_r(u)\leq C\big(M_{2r}(u)-m_{2r}(u)-(M_r(u)-m_r(u))\big).
\]
Rearranging, we obtain
\[
{\rm osc}(u, B_{r})\leq \frac{C-1}{C+1} {\rm osc}(u, B_{2r}),
\]
which is the claimed quantitive estimate of decrease in oscillation. 

Removable singularity results can also be obtained through the Harnack inequality, as well as two classical convergence criterions for sequences of harmonic functions. At the global level, it implies Liouville and Picard type theorems. For example, Liouville's theorem asserts that any globally defined harmonic function bounded from below must be constant, as can be clearly seen by applying \eqref{H} to $u-\inf_{\R^N}u$ and letting $r\to +\infty$.

\subsection{Modern developements}
In his celebrated paper \cite{G}, De Giorgi introduced the measure theoretical approach to regularity, proving the local H\"older continuity of weak solutions of linear elliptic equations in divergence form
\begin{equation}
\label{linell}
L(u):=\sum_{i,j=1}^N D_i(a_{ij}(x) D_ju)=0
\end{equation}
with merely measurable coefficients satisfying the ellipticity condition
\begin{equation}
\label{EC}
\lambda|\xi|^2\leq \sum_{i, j=1}^Na_{i j}(x)\xi_i\xi_j\leq \Lambda |\xi|^2,\qquad 0\leq \lambda\leq \Lambda<+\infty.
\end{equation}
The modern regularity theory descending from his ideas is a vast field and the relevant literature is huge. We refer to \cite{Min} for a general overview and bibliographic references; the monograph \cite{GIU} contains the regularity theory of quasi-minima, while for systems one should see \cite{KM} and the literature therein.
\vskip2pt
 
Regarding the Harnack inequality, Moser extended  in his fundamental work \cite{HAR} its validity to solutions of \eqref{linell}.
\begin{theorem}
Suppose $u\geq 0$ solves \eqref{linell} in a ball $B_{2r}(x_0)$  where $a_{i j}$ obeying \eqref{EC}.
Then there exists a constant $C>1$ depending only on $N$ and the {\em ellipticity ratio} $\Lambda/\lambda$ such that 
\[
\sup_{B_r(x_0)} u \leq C  \inf_{B_r(x_0)}u.
\]
\end{theorem}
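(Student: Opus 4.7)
The plan is to follow Moser's original approach, which blends three ingredients: Caccioppoli-type energy estimates with power test functions, a De Giorgi-Nash-Moser iteration producing one-sided $L^\infty$ bounds in terms of integral averages, and a logarithmic estimate together with the John-Nirenberg lemma to bridge these two one-sided bounds.

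First I would test the weak formulation of $L(u)=0$ with $\varphi=\eta^2 u^{\beta}$, where $\beta\in\R\setminus\{-1\}$ and $\eta$ is a smooth cutoff. Ellipticity \eqref{EC} together with Young's inequality produces a Caccioppoli-type bound of the form
\[
\int_{B_R}\eta^2\,|\nabla u^{(\beta+1)/2}|^2\,dx\le C_\beta\int_{B_R}|\nabla \eta|^2\, u^{\beta+1}\,dx,
\]
with $C_\beta$ depending on $\Lambda/\lambda$ and degenerating only as $\beta\to -1$. Combining this with the Sobolev inequality on $\R^N$ gives a reverse-Hölder-type self-improvement with exponent gain $\chi=N/(N-2)>1$. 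A standard Moser iteration over a shrinking sequence of balls, carried out separately for $\beta+1>0$ and for $\beta+1<0$, then yields, for every $p>0$,
\[
\sup_{B_r(x_0)} u\le C\Big(\intmed_{B_{2r}(x_0)} u^p\,dx\Big)^{1/p},\qquad \inf_{B_r(x_0)} u\ge c\Big(\intmed_{B_{2r}(x_0)} u^{-p}\,dx\Big)^{-1/p},
\]
with $C,c$ depending only on $p$, $N$ and $\Lambda/\lambda$.

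The heart of the proof is to link these two one-sided estimates, since neither is useful in isolation: one must find some $p_0>0$ for which $\intmed u^{p_0}$ and $\intmed u^{-p_0}$ are comparable. To this end I would plug the test function $\varphi=\eta^2/u$ (after replacing $u$ by $u+\eps$ and eventually letting $\eps\to 0^+$) into the weak formulation, obtaining the logarithmic energy estimate
\[
\int_{B_R}\eta^2|\nabla \log u|^2\,dx\le C(\Lambda/\lambda)\int_{B_R}|\nabla \eta|^2\,dx.
\]
Combined with the Poincaré inequality, this shows that $w:=\log u$ belongs to $\mathrm{BMO}(B_{2r})$ with a norm depending only on $N$ and $\Lambda/\lambda$. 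The John-Nirenberg inequality then furnishes some $p_0=p_0(N,\Lambda/\lambda)>0$ such that
\[
\intmed_{B_{2r}} e^{p_0(w-\bar w)}\,dx\cdot\intmed_{B_{2r}} e^{-p_0(w-\bar w)}\,dx\le C,
\]
i.e.\ $\intmed_{B_{2r}} u^{p_0}\,dx\cdot \intmed_{B_{2r}} u^{-p_0}\,dx\le C$. Concatenating this with the two iteration bounds at exponent $\pm p_0$ yields the desired Harnack inequality.

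The main obstacle is the bridging step: while the logarithmic Caccioppoli estimate is essentially a one-line computation, John-Nirenberg is the nontrivial ingredient that turns a BMO bound on $\log u$ into a two-sided exponential integrability, and it was Moser's key innovation. Minor technical care is also needed to justify the use of $u^{-1}$ as a test function, which is handled by the $u+\eps$ regularization just mentioned, and to track that all constants depend only on $N$ and $\Lambda/\lambda$. An alternative route, announced by the authors for the last section of the paper, bypasses John-Nirenberg entirely by deducing Harnack from a measure-theoretic expansion of positivity in the style of De Giorgi and Landis.
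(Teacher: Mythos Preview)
Your proposal is correct and follows essentially the same approach as the paper's own sketch of Moser's proof in Section~2.3: the $L^p$--$L^\infty$ bounds via iteration with power test functions, the logarithmic estimate $\int|D\log u|^2\eta^2\le C\rho^{-2}$ yielding $\log u\in\mathrm{BMO}$, and the John--Nirenberg lemma to obtain the crossover inequality \eqref{cross}. You also correctly anticipate that the detailed proof in Section~5.1 takes the alternative expansion-of-positivity route.
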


Moser's proof is also measure-theoretical, stemming from the De Giorgi approach but introducing pioneering new ideas. It relied on the John-Nirenberg Lemma \cite{JN} and certainly contributed to its diffusion in the mathematical community. Such a level of generality allowed  to apply essentially the same technique for the general quasilinear equation
\begin{equation}
\label{qle}
{\rm  div} A(x ,u, Du) = 0. 
\end{equation}
Indeed, in \cite{SER} \cite{TRU}, the same statement of the Harnack inequality has been proved for \eqref{qle} instead of the linear equation \eqref{linell}, provided $A$ satisfies for some $p>1$ and $\Lambda\geq \lambda>0$ 
\begin{equation}
\label{pgrowth}
\begin{cases}
A(x, s, z)\cdot z \geq  \lambda |z|^p\\
|A(x, s, z)| \leq  \Lambda |z|^{p-1} 
\end{cases}\qquad x\in B_{2r}(x_0), s\in \R, z\in \R^N.
\end{equation}
The power of the measure-theoretical approach was then fully exploited in \cite{DBTr}, where the Harnack inequality has been deduced without any reference to an elliptic equation, proving that it is a consequence of very general energy estimates of Caccioppoli type, encoded in what are the nowadays called {\em De Giorgi classes}. For a comprehensive treatment of the latters see \cite{DBG16}.

\subsection{Moser's proof and weak Harnack inequalities}
Moser's proof of the Harnack inequality is splitted in two steps:
\vskip5pt
\noindent
(I) -- {\em $L^{p}-L^{\infty}$ bound}:\\
Let $u$ be a nonnegative subsolution of \eqref{linell}  in $B_{2r}$, i.e., obeys  $-L(u)\leq 0$ (supersolutions being defined through the opposite inequality). For any $p>0$ it holds
\begin{equation}
\label{linftylr}
\sup_{B_{r}}u\leq C\left(\intmed_{B_{2r}}|u|^{p}\, dx\right)^{\frac{1}{p}}
\end{equation}
for some constant $C=C(N, \Lambda/\lambda, p)$.
If on the other hand $u$ is a positive supersolution, then $u^{-1}$ is a positive subsolution, and \eqref{linftylr} can be rewritten as
\[
\inf_{B_{r}}u\ge C^{-1}\left(\intmed_{B_{2r}}u^{-p}\, dx\right)^{-\frac 1 p}.
\]
\vskip5pt
\noindent
(II) - {\em Crossover Lemma}. The Harnack inequality then follows if one has
\begin{equation}
\label{cross}
\intmed_{B_{r}} u^{\bar p}\, dx\, \intmed_{B_{r}} u^{-\bar p}\, dx\le C(N)
\end{equation}
for some (small) $\bar p=\bar p(N, \Lambda, \lambda)>0$. This is the most delicate part of Moser's approach, and is dealt with the so-called {\em logarithmic estimates}. The idea is to prove a universal bound on $\log u$, as suggested by the Harnack inequality itself. To this end, consider a ball $B_{2\rho}(x_{0})\subseteq B_{2r}$ and test the equation with $u^{-1}\eta^{2}$, $\eta$ being a cutoff function in $C^{\infty}_{c}(B_{2\rho}(x_{0}))$. This yields 
\[
\lambda \int_{B_{2\rho}(x_{0})}|Du|^{2}u^{-2}\eta^{2}\, dx \le 2\Lambda\int_{B_{2\rho}(x_{0})} |Du|\,u^{-1}\, |\eta| \, |D\eta|\, dx
\]
with $\lambda, \Lambda$ given in \eqref{EC}. Apply Young inequality on the right and note that $|D\eta|\le c\, \rho^{-1}$ to get
\begin{equation}
\label{logest}
\intmed_{B_{\rho}(x_{0})}|D \log u|^{2}\, dx\le C(\Lambda/\lambda)\, \rho^{-2}
\end{equation}
as long as $\eta\equiv 1$ in $B_{\rho}(x_{0})$. Poincar\'e inequality then implies
\[
\intmed_{B_{\rho}(x_{0})}\Big(\log u-\intmed_{B_{\rho}(x_{0})}\log u\, dx\Big)^{2}\, dx\le C(N, \Lambda/\lambda),\qquad \text{for all $B_{2\rho}(x_{0})\subseteq B_{2r}$},
\]
wich means that $\log u \in BMO(B_{2r})$. Then John-Nirenberg's Lemma ensures that 
\[
\intmed_{B_{r}}e^{\bar p\, |w|}\, dx\le c,\qquad w=\log u-m,\qquad m=\intmed_{B_{r}}\log u\, dx
\]
for some small $\bar p=\bar p (N, \Lambda)>0$ and $c=c(N)$, and inequality \ref{cross} follows by multiplying
\[
 \intmed_{B_{r}} u^{\bar p}\, dx=e^{\bar p\, m} \intmed_{B_{r}}e^{\bar p\, w}\, dx\le c\, e^{\bar p\, m}\qquad\text{and} \qquad 
 \intmed_{B_{r}} u^{-\bar p}\, dx=e^{-\bar p\, m} \intmed_{B_{r}}e^{-\bar p\, w}\, dx\le c\, e^{-\bar p\, m}.
 \]

\vskip2pt
In particular, Moser's proof shows that  a weaker form of Harnack inequality holds  for the larger class of non-negative  supersolutions to \eqref{linell} in $B_{2r}$. Namely, for any $p\in \ ]0, \frac{N}{N-2}[$, the following {\em weak Harnack inequality} holds 
\[
\left(\intmed_{B_{2r}}u^{p}\, dx\right)^{\frac{1}{p}}\leq C\inf_{B_{r}}u
\]
for some constant $C=C(N, \Lambda/\lambda, p)$.
The range of exponents in the weak Harnack inequality is optimal, as the fundamental solution for the Laplacian shows.
Notice that the $L^{\infty}-L^{p}$ bound also implies a Liouville theorem for $L^{p}(\R^{N})$ nonnegative subsolutions, while the weak Harnack inequality gives a lower asymptotic estimate for positive $L^{p}(\R^{N})$ supersolutions. From the local point of view, the latter is also sufficient for H\"older regularity and for strong comparison principles. 

A different and detailed proof of the elliptic Harnack inequality via the expansion of positivity technique will be given in section 5.1.

\subsection{Harnack inequality on minimal surfaces}
After considering the Harnack inequality for nonlinear operator, a very fruitful framework  was to consider its validity for linear elliptic operators defined  on {\em nonlinear} ambient spaces, such as Riemannian manifolds. One of the first examples of this approach was the Bombieri - De Giorgi - Miranda gradient bound \cite{BDGM} for solutions of the {\em minimal surface equation}
\begin{equation}
\label{minsurf}
{\rm div}\left(\frac{D u}{\sqrt{1+|D u|^{2}}}\right)=0.
\end{equation}
The approach of \cite{BDGM}, later simplified in \cite{T72}, consisted in showing that $w=\log \sqrt{1+|D u|^{2}}$ is a subsolution of the Laplace-Beltrami operator naturally defined on the graph of $u$ considered as a Riemannian manifold. Since a Sobolev-Poincar\'e inequality can be proved for minimal graphs (see \cite{MS73} for a refinement to smooth minimal submanifolds), the Moser iteration yelds an $L^{\infty}-L^{1}$ bound on $w$ which is the core of the proof.

Another realm of application of the Harnack inequality are Bernstein theorem, i.e. Liouville type theorem for the minimal surface equation \eqref{minsurf}. More precisely Bernstein's theorem asserts that {\em any entire solution to \eqref{minsurf}} in $\R^{2}$ is affine. This statement is  known to be true in all dimension $N\le 7$ and false from $N=8$ onwards. One of the first applications in \cite{HAR} of Moser's (euclidean) Harnack inequality was to show that if in addition $u$ has bounded gradient the Bernstein statement holds true in any dimension. Indeed, one can differentiate \eqref{minsurf} with respect to $x_{i}$, giving a nonlinear equation which however can be seen as linear in $u_{x_{i}}$ with freezed coefficients. It turns out that if $|Du|$ is bounded  then the coefficients are elliptic and the Liouville property gives the conclusion. 

The approach of \cite{BDGM} was pushed forward in \cite{BG}, where a pure Harnack inequality was shown for general linear operators on minimal graphs. Taking advantage of their Harnack inequality, Bombieri and Giusti proved that if $N-1$ derivatives of a solution to \eqref{minsurf} are bounded, then also the $N^{\text{th}}$ is bounded, thus ensuring the Bernstein statement in any dimension thanks to the Moser result. See also \cite{Farina} for a direct proof of this fact using the Harnack inequality on minimal graph alone. 

For  other applications of the Harnack inequality on minimal graphs, see \cite{CY75}.

\subsection{Differential Harnack inequality}
A natural way to look at the Harnack estimate $u(x)\le C\, u(y)$ is to rewrite it as 
\[
\log(u(x))-\log(u(y))\le \log C=C',\qquad \text{for all $x, y\in B_{r}$}
\]
as long as $u> 0$ in $B_{2r}$. If one considers smooth functions (such as solutions to smooth elliptic equations) a way to prove the latter would be to look at it as a gradient bound on $\log u$. More concretely, it is a classical fact that Harmonic functions in $B_{2r}(x_{0})$ satisfy the gradient estimate
\[
|D u(x_{0})|\leq C(N)\frac{\sup_{B_{r}(x_{0})}|u|}{r},
\]
therefore Harnack's inequality implies that 
\[
\text{$u\geq 0$ in $B_{r}(x_{0})$}\quad \Rightarrow\quad |D u(x_{0})|\leq C(N)\frac{u(x_{0})}{r}.
\]
This can be rewritten in the following form:
\begin{theorem}[Differential Harnack inequality]
Let $u>0$ be harmonic in $B_{r}(x_{0})\subseteq \R^{N}$. Then 
\begin{equation}
\label{DH}
|D \log u (x_{0})|\leq \frac{C(N)}{r}.
\end{equation}
\end{theorem}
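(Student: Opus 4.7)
The plan is to combine two ingredients already referenced in the preceding discussion: the classical interior gradient estimate for harmonic functions and the Harnack inequality \eqref{H}. Since the conclusion is a pointwise statement at the single point $x_0$, it suffices to apply each tool on a nested pair of balls of radius $r/2$.

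First, I would invoke the standard pointwise gradient bound: any $u$ harmonic in $B_r(x_0)$ satisfies
\[
|Du(x_0)| \leq \frac{C_1(N)}{r} \sup_{B_{r/2}(x_0)} |u|,
\]
which can be derived either by applying the mean value property to each partial derivative $D_i u$ on $B_{r/2}(x_0)$ and using the divergence theorem (the factor $1/r$ appearing as a boundary-to-volume ratio), or by directly differentiating Poisson's representation formula on the ball $B_{r/2}(x_0)$. This is the basic $C^1$ a priori estimate for harmonic functions.

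Second, since $u > 0$ is harmonic in $B_r(x_0)$, I apply the Harnack inequality \eqref{H} with the inner radius $r/2$ (whose double $B_r(x_0)$ lies in the domain of harmonicity) to get
\[
\sup_{B_{r/2}(x_0)} u \leq C_2(N) \inf_{B_{r/2}(x_0)} u \leq C_2(N)\, u(x_0).
\]
Substituting into the gradient estimate gives $|Du(x_0)| \leq C_1(N)\, C_2(N)\, u(x_0)/r$. Dividing by $u(x_0) > 0$ and recognizing that $D \log u = Du/u$ at $x_0$ produces \eqref{DH} with constant $C(N) = C_1(N)\, C_2(N)$.

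There is no genuine obstacle here: both tools are classical and their composition is essentially a one-line manipulation, as already foreshadowed by the author between \eqref{H} and \eqref{DH}. The only bookkeeping concern is ensuring that the domains on which each ingredient is invoked fit inside the available ball $B_r(x_0)$; this is precisely why the intermediate ball $B_{r/2}(x_0)$ is the right one — it gives the gradient estimate its required sup-control while leaving enough room for the Harnack inequality to be applied with a factor-of-two dilation.
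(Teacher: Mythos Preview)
Your argument is correct and is exactly the approach taken in the paper: the text immediately preceding the theorem combines the classical gradient estimate $|Du(x_0)|\le C(N)\sup_{B_{r/2}}|u|/r$ with the Harnack inequality \eqref{H} to obtain $|Du(x_0)|\le C(N)\,u(x_0)/r$, and the theorem is just the restatement of this as a bound on $|D\log u(x_0)|$. Your care in using the intermediate radius $r/2$ so that both ingredients fit inside $B_r(x_0)$ is precisely what is needed.
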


Inequality \eqref{DH} can be seen as the pointwise version of the integral estimate \eqref{logest} and as such it can be integrated  back along segments, to give the original Harnack inequality.  The differential form \eqref{DH} of the Harnack inequality clearly requires much more regularity than the Moser's one, however, it was proved to hold in the Riemannian setting for the Laplace-Beltrami equation in the ground-breaking works \cite{Y75, CY75}, under the assumption of non-negative Ricci curvature for the manifold. To appreciate the result, notice that all proofs of the Harnack inequality known at the time required a global Sobolev inequality, which is known to be false in general under the ${\rm Ric}\geq 0$ assumption alone. 

The elliptic Harnack inequality in the Riemannian setting proved in \cite{Y75} (and, even more importantly, its parabolic version proved soon after in \cite{LY}) again implies the Liouville property for semi bounded harmonic functions and it was one of the pillars on which modern geometric analysis grew. See for example the survey article \cite{L} for recent results on the relationship between Liouville-type theorems and geometric aspects of the underlying manifold. The book \cite{M} gives an in-depth exposition of the technique of differential Harnack inequalities in the framework of Ricci flow, culminating in Perelman differential Harnack inequality.

\subsection{Beyond smooth manifolds}

Clearly, the differential approach to the Harnack inequality is restricted to the Laplace-Beltrami operator, due to its smoothness and its close relationship with Ricci curvature given by the Bochner identity
\[
\Delta u=0\quad \Rightarrow\quad \Delta\frac{|D u|^{2}}{2}=|D^{2}u|^{2}+{\rm Ric}(D u, D u).
\]
It was only after the works \cite{G92, SC92} that a different approach to Moser's Harnack inequality on manifolds was found.\footnote{Actually, to a {\em parabolic version} of the Harnack inequality, which readily implies the elliptic one. For further details see the discussion on the parabolic Harnack inequality below and for a nice historical overview on the subject see \cite{SAL}, section 5.5.} Essentially, it was realized that in order to obtain the Harnack inequality, on a Riemannian manifold $(M, g)$ with corresponding volume form $m$ and geodesic distance, two ingredients suffices:
\begin{equation}
\label{dp}
\begin{split}
\text{--{\em Doubling  condition}:}\qquad &
m\big(B_{2r}(x_{0})\big)\leq C m\big(B_{r}(x_{0})\big)\\[10pt]
\text{--{\em Poincar\'e inequality}:}\qquad &
\int_{B_{r}(x_{0})}\Big|u-\intmed_{B_{r}(x_{0})} u\, dm\Big|^{2}\, dm \leq C\int_{B_{r}(x_{0})}|D u|^{2}\,dm
\end{split}
\end{equation}
for any $x_0\in M$ and $r>0$.
These two properties hold in any Riemannian manifold with nonnegative Ricci curvature, thus giving a Moser-theoretic approach to the Harnack inequality in this framework. What is relevant here is that Doubling\, \&\, Poincar\'e are stable with respect to quasi-isometries (i.e. bilipschitz homeomorphisms) and thus can hold in non-smooth manifolds, manifolds where ${\rm Ric}\geq 0$ does not hold (since curvature is not preserved through quasi-isometries), and/or for merely measurable coefficients elliptic operators. 
It is worth mentioning that Doubling\, \&\, Poincar\'e were also shown in \cite{CM97} to be sufficient conditions for the solution of Yau's conjecture on the finite-dimensionality of the space of harmonic functions of polynomial growth.

It was a long standing problem to give geometric conditions which are actually {\em equivalent} to the validity of the Harnack inequality, and thus to establish the stability of the latter with respect to quasi (or even rough) isometries. This problem has recently been settled in \cite{BM}, to which we refer the interested reader for bibliographic reference and discussion.

\section{Parabolic Harnack inequality}

\subsection{Original Parabolic Harnack}
Looking at the fundamental solution for the heat equation
\[
u_t -\Delta u=0,
\]
one finds out that there is no hope to prove a straightforward generalization of the Harnack inequality \eqref{H}. In the stationary case, ellipticity is preserved by spatial homotheties and traslations, thus the corresponding Harnack inequality turns out to be scale and traslation invariant. For the heat equation, the natural scaling  $(x, t)\mapsto (\lambda x, \lambda^2 t)$ preserves the equation and one expects a parabolic Harnack inequality to obey this invariance. Actually, an explicit calculation shows that it cannot hold for {\em fixed} times $t_0>0$ and corresponding space balls $B_{R_{0}}(x_0)$, even assuming that $t_0\geq 1$. However, a similar argument rules out the possibility of a Harnack inequality in parabolic cylinders as well. 
The correct parabolic form of the Harnack inequality was found and proved independently by Pini and Hadamard in \cite{PIN, HAD} and reads as follows.

\begin{theorem}
Let $u\geq 0$ be a solution of the heat equation in $B_{2\rho}(x_0) \times  [t_0 - 4\rho^2, t_0 + 4\rho^2]$. Then there exists a constant $C(N)$, $N$ being the dimension, such that
\begin{equation}
\label{pH}
\sup_{B_\rho(x_0)}u(\cdot, t_0 -  \rho^2)\leq C(N)\inf_{B_\rho(x_0)} u(\cdot, t_0 +\rho^2).
\end{equation}
\end{theorem}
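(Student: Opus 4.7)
The plan is to use the Li--Yau differential Harnack inequality, whose integrated form along a straight spacetime segment yields \eqref{pH} mechanically. This is the sharpest available route for the heat operator specifically, and it exploits the smoothness of caloric functions, avoiding the more elaborate Moser-type iteration that will be needed in the measurable-coefficient setting treated later in the paper.

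After reducing to $x_0=0$, $t_0=0$, $\rho=1$ by parabolic scaling and translation, and replacing $u$ with $u+\varepsilon$ so that $u>0$ in $B_2\times[-4,4]$, I would first establish the local Li--Yau bound
\[
\frac{|\nabla u|^2}{u^2} - \alpha\,\frac{u_t}{u} \leq C(N,\alpha), \qquad (x,t) \in B_{3/2}\times[-3,3],
\]
for a fixed $\alpha>1$. The derivation is the standard Bochner--maximum-principle argument: compute $(\partial_t-\Delta)F$ for $F=|\nabla\log u|^2-\alpha\,\partial_t\log u$ using the flat-space Bochner identity $\Delta|\nabla w|^2=2|\nabla^2 w|^2+2\nabla w\cdot\nabla\Delta w$ with $w=\log u$, multiply by a cutoff $\eta^2$ supported in $B_{3/2}$, and apply the maximum principle at an interior extremum of $\eta^2 F$; the cutoff error terms are absorbed by the leading $|\nabla^2\log u|^2$ contribution precisely because $\alpha>1$, leaving a constant that depends only on $N$ and $\alpha$.

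Next, for arbitrary $x,y\in B_1$ I would parametrize the straight segment $\gamma(s)=x+\tfrac{s+1}{2}(y-x)$, $s\in[-1,1]$, and combine the Li--Yau bound with Young's inequality to obtain
\[
\frac{d}{ds}\log u(\gamma(s),s) = \frac{u_t}{u} + \nabla\log u\cdot\dot\gamma \;\geq\; -\frac{C(N,\alpha)}{\alpha} - \frac{\alpha}{4}|\dot\gamma|^2,
\]
since the Young term $|\nabla\log u|\,|\dot\gamma|\leq |\nabla\log u|^2/\alpha + (\alpha/4)|\dot\gamma|^2$ cancels exactly the positive $|\nabla\log u|^2/\alpha$ coming from the Li--Yau bound. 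Using $|\dot\gamma|=|y-x|/2\leq 1$, integration over $[-1,1]$ yields $\log u(y,1)-\log u(x,-1)\geq -C'(N)$, i.e.\ $u(x,-1)\leq e^{C'(N)}u(y,1)$. Taking $\sup_{x\in B_1}$ and $\inf_{y\in B_1}$ produces \eqref{pH} with $C(N)=e^{C'(N)}$.

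The main obstacle is the \emph{local} nature of the Li--Yau step: the original Li--Yau argument is tailored to global solutions on a complete Riemannian manifold with $\mathrm{Ric}\geq 0$, whereas here $u$ is given only on a finite cylinder. Running the Bochner computation with a spatial cutoff produces extra terms whose absorption requires the strict inequality $\alpha>1$ and thus precludes recovering the sharp global constant $N/(2t)$; this does not matter for \eqref{pH}, but the bookkeeping is the one genuine technical point. An alternative closer in spirit to Pini's and Hadamard's original approaches would exploit the Dirichlet heat kernel of $B_{2\rho}$ and its two-sided Gaussian bounds, but converting the resulting integral comparison into a pointwise sup--inf estimate needs an additional parabolic $L^\infty$--$L^1$ mean-value inequality and is technically comparable.
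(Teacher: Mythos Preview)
Your argument is correct. The localized Li--Yau bound with $\alpha>1$ and a spatial cutoff is a standard computation, and since after rescaling the relevant cylinder $B_{3/2}\times[-3,3]$ sits well inside $B_2\times[-4,4]$, both the $1/R^2$ and the $1/(t-t_{\rm bottom})$ error terms are bounded by dimensional constants, so writing the right-hand side as $C(N,\alpha)$ is legitimate. The integration along the straight segment stays inside $B_1\times[-1,1]\subset B_{3/2}\times[-3,3]$, and your Young-inequality cancellation is exactly right.

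However, the paper does not prove this theorem directly; it is stated as the historical Pini--Hadamard result. The proof the paper actually supplies is the one in Section~5.2 (Theorem~\ref{HIhom}), which treats the general quadratic-growth equation \eqref{parabo} via the De Giorgi expansion-of-positivity machinery: measure-theoretic propagation (Lemma~\ref{lemmapar}), shrinking and critical-mass lemmas, and then the Krylov--Safonov/Landis ``largeness point'' selection through the function $\psi(\rho)=(1-\rho)^{\bar\lambda}\sup_{Q_\rho^-}u$. Your Li--Yau route is the differential approach discussed in Section~3.4: it is shorter and more transparent for the Laplacian specifically, exploiting the Bochner identity and the smoothness of caloric functions, and yields rather explicit constants. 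The paper's route, by contrast, uses only the energy inequality and works verbatim for measurable-coefficient operators satisfying \eqref{2gr}; more importantly, it is the template that the paper then deforms to handle the degenerate and singular $p$-Laplacian, where no Bochner-type identity is available. So your proof is a legitimate shortcut for the heat equation, but it does not generalize in the direction the paper is headed.
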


As expected, this form of Harnack's inequality respects the scaling of the equations and introduces the notion of {\em waiting time} for a pointwise control to hold. It represents a quantitative bound from below on how much the positivity of $u(x_0, t_0)$  (physically, the temperature of a body at a point)  propagates forward in time: in order to have such  a bound in a whole ball of radius $r$ we have to wait a time proportional to $r^2$. 
\begin{figure}
\centering
\begin{tikzpicture}[scale=1.8]
\draw[->, thick] (0, -0.5) -- (0, 4.5) node[above left]{$t$};
\draw[->, thick] (-1, 0) -- (4, 0) node[below right]{$x$};
\fill[white!70!black] (0.5, 3) parabola bend (1.5, 2) (2.5, 3) -- cycle;
\draw (0.5, 3) parabola bend (1.5, 2) (2.5, 3);
\draw (1.5, 2.5) node{$P_T^+$};
\fill[white!90!black] (0.5, 1) parabola bend (1.5, 2) (2.5, 1) -- cycle;
\draw (0.5, 1) parabola bend (1.5, 2) (2.5, 1);
\draw (1.5, 1.5) node{$P_T^-$};
\filldraw (1.5,2) circle (0.03cm);
\draw (0, 0) -- (3, 0) -- (3, 4) -- (0, 4) -- cycle;
\draw (0, 4) node[left]{$4T$};
\draw (3, 0) node[below]{$2\sqrt{T}$};
\draw (2.5, 3.5) node{$u> 0$};
\end{tikzpicture}
\caption{Assuming $u>0$ in the boxed region, the dark grey area is  $P_T^+$ where $u$ is bounded below by $u(x_0, 2T)$, while the light grey is $P_+^T$ where $u$ is bounded above by $u(x_0, 2T)$.}
\label{pHfig1}
\end{figure}
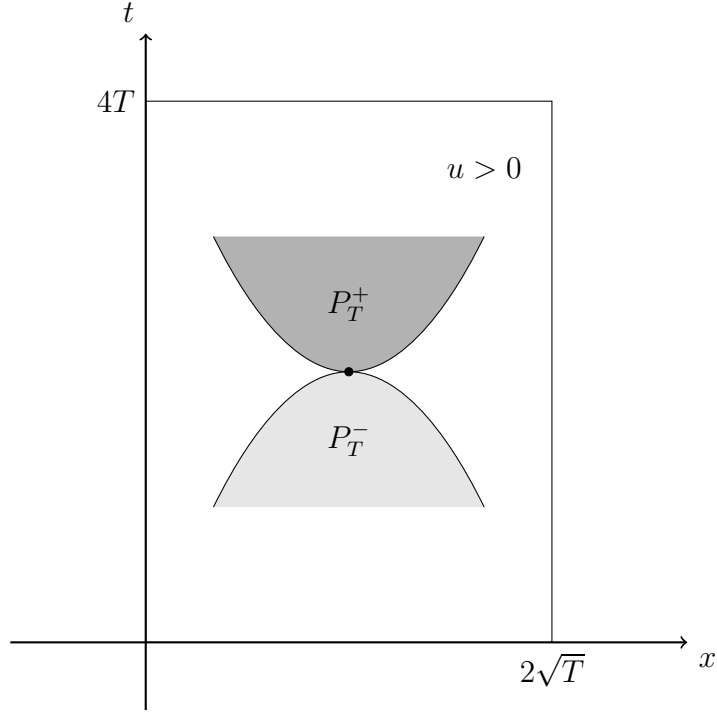

Another way of expressing this propagation for a nonnegative solution on $B_{2\sqrt{T}}(x_{0})\times [0, 4T]$ is the following, which, up to numerical factors is equivalent to \eqref{pH},
 \begin{equation}
\label{php}
C\inf_{P_T^+(x_0)}u\geq u(x_0, 2T)\geq C^{-1} \sup_{P_T^-(x_0)}u,
\end{equation}
where $P_T^\pm(x_0)$ are the part of the forward (resp. backward) space-time paraboloid with vertex $(x_0, 2T)$ in $B_{\sqrt{T}}(x_0)\times [T, 3T]$ (see Figure \ref{pHfig1}):
\[
P_T^+(x_0)=\{(x, t): T-t_{0}\geq t-t_0\geq |x-x_0|^2\}, \qquad P_T^-(x_0)=\{(x, t): t_{0}-T\geq t_0-t\geq |x-x_0|^2\}.
\]

A consequence of the parabolic Harnack inequality is the following form of the strong maximum principle. We sketch a proof here since this argument will play a r\^ole in the discussion of the Harnack inequality for nonlinear equations.

\begin{corollary}[Parabolic Strong Minimum Principle]
\label{minp}
Let $u\geq 0$ be a solution of the heat equation in $\Omega\times [0, T]$, where $\Omega$ is connected, and suppose $u(x_0, t_0)=0$. Then $u\equiv 0$ in $\Omega\times [0, t_0]$.
\end{corollary}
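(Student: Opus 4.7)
\emph{Plan.} I would deduce the strong minimum principle by iterating the parabolic Harnack inequality \eqref{pH} so as to propagate the zero of $u$ from the single point $(x_0, t_0)$ to every point of $\Omega\times[0, t_0]$. The propagation moves in two directions: backward in time from any known zero, via a direct application of \eqref{pH}, and laterally in space, via a Harnack chain exploiting the path-connectedness of $\Omega$. For simplicity I assume $t_0 < T$; the boundary case $t_0 = T$ then follows by continuity after applying the result to any $t_0' < t_0$ close to $t_0$, or by a one-sided variant of the Harnack inequality.

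\emph{Local backward step and chain.} The base step is: \emph{if $u(z, t_1) = 0$ for some $(z, t_1)\in \Omega\times(0, T)$, then $u\equiv 0$ on $B_\rho(z)\times\{t_1 - 2\rho^2\}$ for every $\rho>0$ small enough that $B_{2\rho}(z)\subset\Omega$ and $[t_1 - 5\rho^2, t_1 + 3\rho^2]\subset[0, T]$.} Indeed, applying \eqref{pH} with center $(z, t_1 - \rho^2)$ gives
\[
\sup_{B_\rho(z)} u(\cdot, t_1 - 2\rho^2) \le C\,\inf_{B_\rho(z)} u(\cdot, t_1) \le C\,u(z, t_1) = 0,
\]
and the nonnegativity of $u$ forces the supremum to vanish. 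Now fix an arbitrary $(y, s)\in \Omega\times(0, t_0)$ and join $x_0$ to $y$ by a curve $\gamma\subset\Omega$ of length $L$ lying at distance at least $d>0$ from $\partial\Omega$. Choose an integer $N$ large enough that
\[
\rho := \sqrt{\frac{t_0 - s}{2N}} \le \min\Big\{\tfrac{d}{2},\ \sqrt{\tfrac{s}{3}},\ \sqrt{\tfrac{T-t_0}{3}}\Big\} \qquad\text{and}\qquad \frac{L}{N}\le \rho,
\]
both conditions being met once $N$ exceeds an explicit constant depending on $L, d, s, t_0, T$. Partition $\gamma$ into $N$ arcs of length $\le L/N\le \rho$ with successive endpoints $x_0 = \xi_0, \xi_1, \dots, \xi_N = y$, set $t_i := t_0 - 2i\rho^2$, and iterate the base step: from $u(\xi_i, t_i) = 0$ one obtains $u\equiv 0$ on $B_\rho(\xi_i)\times\{t_{i+1}\}$, and $|\xi_{i+1} - \xi_i|\le\rho$ yields $u(\xi_{i+1}, t_{i+1}) = 0$. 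After $N$ iterations, $t_N = s$ and $\xi_N = y$, so $u(y, s) = 0$.

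\emph{Conclusion and main obstacle.} Since $(y, s)\in \Omega\times(0, t_0)$ was arbitrary, $u\equiv 0$ on $\Omega\times(0, t_0)$, and continuity extends the vanishing to $\Omega\times[0, t_0]$. The main technical point is the simultaneous balancing of the chain parameters: $N$ must be large enough both to cover $\gamma$ by balls of radius $\rho$ \emph{and} to keep $\rho$ small enough that each Harnack cylinder $[t_i - 5\rho^2, t_i + 3\rho^2]$ remains inside $[0, T]$. Since the admissibility bound $\rho^2\le s/3$ coming from the last step of the chain degrades as $s\to 0^+$, one cannot chain directly down to $s = 0$ and must invoke continuity at the end; this is a minor but unavoidable feature of the argument and is typical of propagation-of-positivity arguments that will reappear in the nonlinear setting.
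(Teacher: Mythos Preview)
Your argument is correct and follows essentially the same Harnack-chain strategy as the paper's proof. The only cosmetic difference is that the paper parameterizes the chain by a single smooth spacetime curve $\gamma:[0,1]\to\Omega\times\,]0,t_0]$ with strictly positive $t$-component and invokes compactness to obtain a uniform forward paraboloid $P_\epsilon^+$ along it, whereas you work with a purely spatial curve and explicit discrete time steps $t_i=t_0-2i\rho^2$; both yield the same propagation of the zero set.
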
 

\begin{proof}(sketch)
Pick $(x_1, t_1)\in \Omega\times \, ]0, t_0[$ and join it to $(x_0, t_0)$ with a smooth curve $\gamma:[0, 1]\to \Omega\times \,]0, t_0]$ such that $\gamma'$ has always a positive $t$-component. By compactness there is $\delta>0$ and a small forward parabolic sector $P^+_\eps=\{\eps\geq t\geq |x|^2\}$ such that: {\em 1)} $\gamma(\sigma)\in \gamma(\tau)+P_\eps^+$ for all $\sigma\in [\tau, \tau+\delta]$ and {\em 2)} the  Harnack inequality holds in the form \eqref{php} for all $s\in [0, 1]$, i.e. 
\[
u(\gamma(s))\leq \inf_{\gamma(s)+P_\eps^+} u.
\]
These two properties and $u(\gamma(1))=0$ readily imply $u(\gamma)\equiv 0$.
\end{proof}

\subsection{The linear case with coefficients}
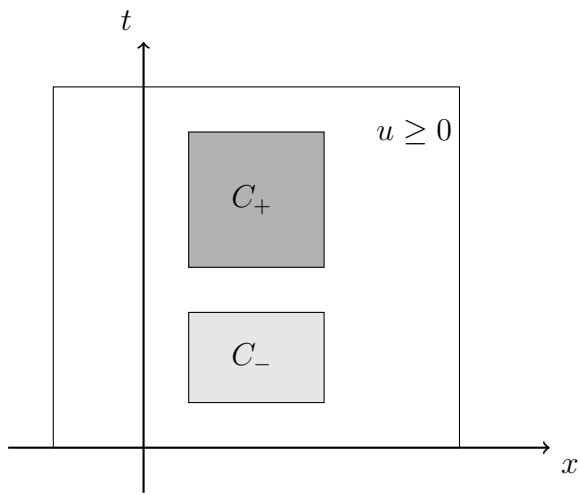
\begin{figure}
\centering
\begin{tikzpicture}[scale=1.2]
\draw[->, thick] (0, -0.5) -- (0, 4.5) node[above left]{$t$};
\draw[->, thick] (-1.5, 0) -- (4.5, 0) node[below right]{$x$};
\fill[white!90!black] (0.5, 0.5) -- (2, 0.5) -- (2, 1.5) -- (0.5, 1.5) --  cycle;
\draw (0.5, 0.5) -- (2, 0.5) -- (2, 1.5) -- (0.5, 1.5) --  cycle;
\draw (1.2, 1) node{$C_-$};
\fill[white!70!black] (0.5, 2) -- (2, 2) -- (2, 3.5) -- (0.5, 3.5) --  cycle;
\draw (0.5, 2) -- (2, 2) -- (2, 3.5) -- (0.5, 3.5) --  cycle;
\draw (1.2, 2.75) node{$C_+$};
\draw (-1, 0)-- (3.5, 0) -- (3.5, 4) -- (-1, 4) -- cycle;
\draw (3, 3.5) node{$u\geq 0$};
\end{tikzpicture}
\caption{The cylinders $C_+$ and $C_-$ where the Harnack inequality is stated.}
\label{pHfig2}
\end{figure}

In the seminal paper \cite{NA} on the H\"older regularity of solutions to elliptic parabolic equations with measurable coefficients, Nash already mentioned the possibility to obtain a parabolic Harnack inequality through his techniques. However, the first one to actually prove it was again  Moser, who in  \cite{MOS}  extended the Harnack inequality to linear parabolic  equations of the form
\begin{equation}
\label{PL}
u_t =\sum_{j,i=1}^N D_i(a_{ij}(x,t) D_ju).
\end{equation}

\begin{theorem}[Moser]
Let $u$ be a positive weak solution of \eqref{PL} in $B_{2r}\times [0, T]$, where $a_{ij}$ are measurable and satisfy the ellipticity condition \eqref{EC}. For any $0<t_{1}^{-}<t_{2}^{-}<t_{1}^{+}<t_{2}^{+}<T$ define 
\[
C_-:=B_{r}\times [t_{1}^{-}, t_{2}^{-}],\quad C_+:=B_{r}\times [t_{1}^{+}, t_{2}^{+}].
\]
Then it holds
\begin{equation}
\label{mph}
\sup_{C_-} u\leq C(N, \Lambda, \lambda, t_{1,2}^{\pm})\inf_{C_+} u,
\end{equation}
with a constant which is bounded as long as $t_{1}^{+}-t_{2}^{-}$ is bounded away from $0$.
\end{theorem}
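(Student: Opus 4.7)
The plan is to follow Moser's two-step strategy, now in the parabolic setting: first derive $L^{p}$--$L^{\infty}$ estimates for nonnegative sub- and supersolutions on suitable parabolic subcylinders, and then connect them via a logarithmic/crossover argument that replaces the elliptic application of John--Nirenberg. Throughout, the natural scaling $(x,t)\mapsto(\mu x,\mu^{2}t)$ dictates the geometry of the cylinders and enforces a waiting time between the two, so the dependence of the final constant on $t_{1,2}^{\pm}$ enters from the very start.

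For the first step, I would test the weak formulation against $u^{2q-1}\eta^{2}$ for $q\ge 1/2$, with $\eta=\eta(x,t)$ a smooth space--time cutoff; using \eqref{EC} and Young's inequality one obtains the parabolic Caccioppoli estimate
\[
\sup_{t}\int \eta^{2}u^{2q}\,dx+\int\!\!\int \eta^{2}|Du^{q}|^{2}\,dx\,dt\le C\, q^{2}\int\!\!\int \bigl(|D\eta|^{2}+|\eta\,\eta_{t}|\bigr)u^{2q}\,dx\,dt.
\]
Coupling this with the Gagliardo--Nirenberg parabolic Sobolev embedding produces a reverse Hölder inequality with gain $\kappa=1+2/N$; a Moser iteration on a shrinking sequence of cylinders then yields, for any $p>0$, an $L^{p}$--$L^{\infty}$ bound of the form $\sup_{Q}u\le C\bigl(\intmed_{Q'}u^{p}\bigr)^{1/p}$ on a backward cylinder $Q\subset Q'$ containing $C_{-}$. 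A direct computation shows that $u^{-1}$ is a subsolution whenever $u>0$ is a supersolution, so applying the same iteration to $u^{-1}$ on forward cylinders containing $C_{+}$ gives the companion estimate $\inf_{Q}u\ge C^{-1}\bigl(\intmed_{Q'}u^{-p}\bigr)^{-1/p}$.

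For the second step, I would test the supersolution inequality against $u^{-1}\eta^{2}$ with $\eta=\eta(x)$ a purely spatial cutoff. Integrating by parts in space leads to
\[
\frac{d}{dt}\int \eta^{2}\log u\,dx\ge \frac{\la}{2}\int \eta^{2}|D\log u|^{2}\,dx-C\int |D\eta|^{2}\,dx,
\]
which upon integration in time and combination with the spatial Poincaré inequality shows that, for each fixed $t$, $w(\cdot,t)=\log u(\cdot,t)$ belongs to $BMO(B_{r})$ with constants uniform in $t$, while its spatial mean $m(t)=\intmed_{B_{r}}\log u(\cdot,t)\,dx$ satisfies a quantitative one-sided control in time. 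A fiberwise John--Nirenberg argument then produces $\bar p=\bar p(N,\Lambda/\la)>0$ and a constant $c$ such that $\intmed_{B_{r}}e^{\pm\bar p(w-m(t))}\,dx\le c$; integrating in time over the slabs containing $C_{-}$ and $C_{+}$ and exploiting the monotonicity-type control on $m(t)$ produces the crossover
\[
\intmed_{Q'}u^{\bar p}\,dx\,dt\cdot \intmed_{Q''}u^{-\bar p}\,dx\,dt\le C,
\]
with $Q'$ a backward cylinder containing $C_{-}$ and $Q''$ a forward one containing $C_{+}$. Chained with the two bounds of the first step applied with $p=\bar p$, this yields the claimed Harnack inequality.

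The main obstacle is precisely the passage from the pointwise-in-time log estimate to the global crossover: unlike the elliptic case, $\log u$ is not a space--time $BMO$ function, and the drift $m(t)$ has to be tamed exploiting the sign information intrinsic to the supersolution test. This is also exactly the mechanism through which the dependence of the constant on $t_{1,2}^{\pm}$ is generated: as $t_{1}^{+}-t_{2}^{-}\to 0$ the parabolic waiting time is violated and the crossover degenerates, whereas when $t_{1}^{-}\to 0$ or $t_{2}^{+}\to T$ no room is left for the Caccioppoli cutoffs in the first step.
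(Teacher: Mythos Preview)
Your first step (parabolic Caccioppoli plus Moser iteration, applied both to $u$ and to $u^{-1}$) is exactly what the paper does. The divergence is in the crossover.

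The gap is your claim that ``for each fixed $t$, $w(\cdot,t)=\log u(\cdot,t)$ belongs to $BMO(B_{r})$ with constants uniform in $t$''. This does not follow from your differential inequality: at a given instant $t$ that inequality only says $\int\eta^{2}|D\log u|^{2}\,dx\le C-\tfrac{d}{dt}\int\eta^{2}\log u\,dx$, and the right-hand side can be arbitrarily large for some times. Integrating in time controls the \emph{time-averaged} Dirichlet energy of $\log u$, not the slice-by-slice one, so fiberwise John--Nirenberg with uniform constants is unavailable. This is precisely the obstruction you flag at the end, but ``monotonicity-type control on $m(t)$'' does not repair it: monotonicity of $m(t)-Ct$ governs the drift of the mean, not the size of the spatial oscillation at each instant. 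What you are sketching is essentially Moser's original 1964 route, which required a genuinely parabolic John--Nirenberg lemma on space--time cylinders and was delicate enough to need an erratum.

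The paper instead follows Moser's 1971 argument \`a la Bombieri--Giusti. From the same differential inequality one restricts to the superlevel sets of $v=\log u-Ct$, divides by $(M(0)-M(t)+\lambda)^{2}$, and integrates to obtain directly the \emph{weak}-$L^{1}$ bounds
\[
|Q_{+}\cap\{\log u\ge M(0)+\lambda\}|\le C/\lambda,\qquad |Q_{-}\cap\{\log u\le M(0)-\lambda\}|\le C/\lambda,
\]
with no John--Nirenberg at all. These are then combined with your Step~1 estimates, but in a refined form with constants \emph{independent of $p$} for small $p$: one splits the integral of $w^{p}$ according to $\log w\lessgtr\varphi(r)/2$, chooses $p=p(r)$ adaptively, and iterates the resulting inequality $\varphi(\rho)\le\tfrac{3}{4}\varphi(r)+C(r-\rho)^{-2N-4}$ over a sequence of radii. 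So the mechanism is weak-$L^{1}$ plus an ``abstract John--Nirenberg'' iteration on the $L^{p}$--$L^{\infty}$ side, rather than fiberwise BMO plus drift control.
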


Using the natural scaling of the equation, the previous form the parabolic Harnack inequality can be reduced to \eqref{pH}.

As in the elliptic case, the first step of Moser's proof consisted in the $L^{p}-L^{\infty}$ estimates for subsolutions, obtained by testing the equation with recursively higher powers of the solution.  This leads to 
\begin{equation}
\label{ml1}
\sup_{Q(\rho)}u^{p}\le \frac{C(N, p, \Lambda, \lambda)}{(r-\rho)^{N+2}}\iint_{Q(r)}u^{p}\, dx\, dt,\qquad r>\rho, \quad p>0
\end{equation}
where $Q(r)$ are parabolic cylinders having top boundary at the same fixed time $t_{0}$, say $Q(r)=B_{r}\times [t_{0}-r^{2}, t_{0}]$.
Since if $u$ is a positive solution, $u^{-1}$ is a positive subsolution, \eqref{ml1} holds true also for negative powers of $p$, yielding a bound from below for $u$ in terms of integrals of $u^{p}$. Similarly to the elliptic case, in order to obtain the parabolic Harnack inequality, Moser proceeded to prove
a crossover lemma which reads as
\begin{equation}
\label{col}
\int_{-1}^{0}\int_{B_{1}} u^{p_{0}}\, dx\, \int_{1}^{2}\int_{B_{1}} u^{-p_{0}}\, dx\le C,
\end{equation}
for some $C$ and small $p_{0}>0$ depending on $N$ and the ellipticity constants. This proved to be much harder than in the elliptic case, mainly because the integrals are taken on the two different and distant sets and no appropriate John-Nirenberg inequality dealing with this situation was known at the time. Moser himself proved such a parabolic version of the John-Nirenberg lemma yielding \eqref{col}, but the proof was so involved that he was forced to an erratum three years later. In \cite{MOS2} he gave a different proof avoiding it, following an approach of Bombieri and Giusti \cite{BG}. For this to work, he refined his  $L^{p}-L^{\infty}$ estimates \eqref{ml1}, showing that  they hold with constants independent from $p$, at least for sufficiently small values of $|p|$. As we will see, this was necessary for the Bombieri-Giusti argument to carry over. Despite the parabolic John-Nirenberg Lemma has later been given a simpler proof in \cite{FG},  the  {\em abstract John-Nirenberg Lemma} technique of \cite{BG} is nowadays the standard tool to prove parabolic Harnack inequalities, see e.g. \cite{SAL, KK}. On the other hand, Nash's program was later established in \cite{FAB}.

\vskip2pt

We next sketch the proof in \cite{MOS2}. The starting point is a logarithmic estimate, obtained by multiplying the equation by $u^{-1}\eta^{2}$, with $\eta\in C^{\infty}_{c}(B_{3})$, $\eta\ge 0$ and $\eta\equiv 1$ on $B_{2}$ and integrate in space only. Proceeding as in the elliptic case we obtain the differential inequality 
\[
\frac{d}{dt}\, \intmed_{B_{3}}\eta^{2}(x)\log u(x, t)\, dx+c\, \intmed_{B_{3}}|D\log u(x, t)|^{2}\, \eta^{2}(x)\, dx\le C.
\]
Under mild concavity assumptions on $\eta$, a weighted Poincar\'e inequality holds true with respect to the measure $d\mu=\eta^{2}(x)\, dx$, so that we infer
\[
\frac{d}{dt}\, \intmed_{B_{3}}\log u(x, t)\, d\mu+c\, \intmed_{B_{3}}\Big(\log u(x, t)-\intmed_{B_{3}}\log u(x, t)\, d\mu\Big)^{2}\, d\mu\le C.
\]
By letting  
\[
v(x, t)=\log u(x, t)-C\, t, \qquad M(t)=\intmed_{B_{3}} v(x, t)\, d\mu
\]
the previous inequality can be rewritten as
\[
\frac{d}{dt}\, M(t)+c\, \intmed_{B_{3}}\big(v(x, t)-M(t)\big)^{2}\, d\mu\le 0,
\]
so that $M(t)$ is decreasing.
Next, for $\lambda>0$ and $t\in [0, 4]$, restrict the integral over $\{x\in B_{2}: v(x, t)\ge M(0)+\lambda\}$ where, by monotonicity, $v(x, t)-M(t)\ge M(0)-M(t)+\lambda\ge \lambda$, to get
\[
\frac{d}{dt}\, M(t)+c\, (M(0)-M(t)+\lambda)^{2}\, |B_{2}\cap \{v(x, t)\ge M(0)+\lambda\}|\le 0,
\]
(notice that $d\mu=dx$ on $B_{2}$). Dividing by $(M(0)-M(t)+\lambda)^{2}$, integrating in $t\in [0, 4]$ and recalling that $M(t)\le M(0)$ we deduce
\[
|Q_{+}(2)\cap \{v\ge M(0)+\lambda\}|\le C/\lambda, \qquad Q_{+}(2):=B_{2}\times [0, 4].
\]
Similarly, for any $t\in [-4, 0]$, on $\{x\in B_{2}: v(x, t)\le M(0)-\lambda\}$ it holds $M(t)-v_{t}\ge M(t)-M(0)+\lambda\ge\lambda$ being $M$ decreasing and proceeding as before we get 
\[
|Q_{-}(2)\cap \{v\le M(0)-\lambda\}|\le C/\lambda, \qquad Q_{-}(2):=B_{2}\times [-4, 0].
\]
Recalling the definition of $v$, the last two displays imply the weak-$L^{1}$ estimate
\begin{equation}
\label{wl1}
|Q_{+}(2)\cap \{\log u\ge M(0)+\lambda\}| \le C/\lambda,\qquad |Q_{-}(2)\cap \{\log u\le M(0)-\lambda\}|\le C/\lambda,
\end{equation}
where $M(0)$ is a weighted mean of $\log u$. 
To proceed, we let
\[
w=u\, e^{-M(0)},\qquad Q_{+}(r)=B_{r}\times [4-r^{2},4],\qquad \varphi(r)=\sup_{Q_{+}(r)}\log w
\]
for $r\in [1, 2]$. Since $Q_{+}(r)\subseteq Q_{+}(2)$,  for all $\lambda>0$,
\[
|Q_{+}(r)\cap \{\log w\ge\lambda\}|\le C/\lambda. 
\]
We will prove  a universal bound on $\varphi(r)$ so we may suppose that $\varphi(r)$ is large. Estimate the integral of $w^{p}$ on $Q_{+}(r)$ splitting it according to $\log w\le \varphi(r)/2$ or $\log w>\varphi(r)/2$, to get
\[
\begin{split}
\iint_{Q_{+}(r)} w^{p}\, dx\, dt =\iint_{Q_{+}(r)}e^{p\log w}\, dx\, dt &\le e^{p\varphi(r)}|Q_{+}(r)\cap \{\log w\ge \varphi(r)/2\}|+|Q_{+}(r)| \, e^{\frac{p}{2}\varphi(r)}\\
&\le  \frac{2\, C}{\varphi(r)}\, e^{p\varphi(r)}+ c_{N}\, e^{\frac{p}{2}\varphi(r)}. 
\end{split}
\]
Choose now $p=p(r)$ such that 
\[
 \frac{2\, C}{\varphi(r)}\, e^{p\varphi(r)}=c_{N}\, e^{\frac{p}{2}\varphi(r)}\quad \Leftrightarrow\quad p=\frac{2}{\varphi(r)}\log(c\, \varphi(r)), \quad c:=c_{N}/(2C)
 \]
(where $\varphi(r)$ is so large that $p$ is positive and sufficiently small), so that
 \[
 \iint_{Q_{+}(r)} w^{p}\, dx\, dt\le 2c_{N}\, e^{\frac{p}{2}\varphi(r)}.
 \]
We use \eqref{ml1} (the constant being independent of $p$ for small $p$), obtaining for a larger $C$,
 \[
\varphi(\rho)\le \frac{1}{p}\log\left(\frac{C\, e^{\frac{p}{2}\varphi(r)}}{(r-\rho)^{N+2}}\right)=\frac{\varphi(r)}{2}+\frac{1}{p}\log\left(\frac{C}{(r-\rho)^{N+2}}\right)=\frac{\varphi(r)}{2}\left(1+\frac{\log(C/(r-\rho)^{N+2})}{\log(c\, \varphi(r))}\right)
\]
Therefore, either the second term in the parenthesis is greater than $1/2$, which is equivalent to
\[
\varphi(r)\le \frac{C^{2}}{c\, (r-\rho)^{2(N+2)}}
\]
or the opposite is true, giving  $\varphi(\rho)\le \frac 3 4\varphi(r)$. All in all we obtained
\[
\varphi(\rho)\le \frac{3}{4}\varphi(r)+\frac{C}{(r-\rho)^{2N+4}}.
\]
The latter can be iterated on an infinite sequence of radii $1=r_{0}\le r_{n}\le r_{n+1}\le \dots\le 2$ with, say, $r_{n+1}-r_{n}\simeq (n+1)^{-2}$, to get
\[
\varphi(1)\le C\sum_{n=0}^{\infty}(3/4)^{n}\, n^{4(N+2)},
\]
which implies $\sup_{Q_{+}(1)} u\, e^{-M(0)}\le C$ for some $C$ depending on $N$, $\Lambda$ and $\lambda$. Thanks to the second estimate in \eqref{wl1}, a completely similar argument holds true for $w=u^{-1}\, e^{M(0)}$ on the cylinders $Q_{-}(r)=B_{r}\times [-r^{2}, 0]$, yielding $\sup_{Q_{-}(1)}u^{-1}\, e^{M(0)}\le C$, i.e. $\inf_{Q_{-}(1)}u\, e^{M(0)}\ge C^{-1}$. Therefore we obtained 
\[
\frac{\sup_{Q_{+}(1)}u}{\inf_{Q_{-}(1)}u}= \frac{\sup_{Q_{+}(1)}u\, e^{M(0)}}{\inf_{Q_{-}(1)}u\, e^{M(0)}}\le C^{2}.
\]

\subsection{First consequences}
As in the elliptic case, the parabolic Harnack inequality provides an oscillation estimate giving the H\"older continuity of solutions to \eqref{PL} subjected to \eqref{EC}. Moreover, \eqref{mph} readily yields a strong minimum principle like the one in Corollary \ref{minp} for nonnegative solutions of \eqref{PL}.

\vskip5pt

On the other hand,  Liouville theorems in the parabolic setting are more subtle and don't immediately follow from the parabolic version of the Harnack inequality. In fact, the Liouville property is false in general since, for example, the function $u(x, t)=e^{x+t}$ is clearly  a nontrivial positive eternal (i.e., defined on $\R^{N}\times \R$) solution of the heat equation. 
A two sided bound is needed, and a fruitful setting where to state Liouville properties in the one of {\em ancient} solutions, i.e. those defined on an unbounded interval $]-\infty, T_{0}[$. An example is the following.
\begin{theorem}[Widder]\label{widder}
Let $u>0$ solve the heat equation in $\R^{N}\times\, ]-\infty, T_{0}[ $. Suppose for some $t_{0}<T_{0}$ it holds
\[
u(x, t_{0})\leq Ce^{o(|x|)},\qquad \text{for $|x|\to +\infty$}.
\]
Then, $u$ is constant.
\end{theorem}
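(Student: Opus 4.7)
My plan is to combine the parabolic Harnack inequality with the classical representation of positive solutions of the heat equation. The first step is to propagate the sub-exponential growth hypothesis to every earlier time slice. Fix $y\in\R^N$ and iterate \eqref{pH} at the spatial point $y$ with cylinders of a fixed small radius $\rho>0$ satisfying $5\rho^2<T_0-t_0$: each step gives $u(y,\tau-2\rho^2)\le C\, u(y,\tau)$, so after $n=\lceil (t_0-s)/(2\rho^2)\rceil$ applications one obtains $u(y,s)\le C^n\, u(y,t_0)$ with $n$ independent of $y$. Invoking the hypothesis $u(x,t_0)\le C_\epsilon e^{\epsilon|x|}$, for every $\epsilon>0$ and every $s<t_0$ there is $C(s,\epsilon)$ such that $u(y,s)\le C(s,\epsilon)\, e^{\epsilon|y|}$ for all $y\in\R^N$, so each time slice is itself sub-exponential.

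Next, this uniform sub-exponential control places $u(\cdot,s)$ in the Tikhonov uniqueness class for the Cauchy problem for the heat equation, so $u$ is recovered from its values at any $s<T_0$ via convolution with the heat kernel. Combined with the classical representation of positive solutions of the heat equation on $\R^N\times\,]-\infty,T_0[$ as nonnegative superpositions of the elementary exponential solutions $e^{a\cdot x+|a|^2 t}$, one obtains a nonnegative Borel measure $\mu$ on $\R^N$ with
\[
u(x,t)=\int_{\R^N}e^{a\cdot x+|a|^2 t}\, d\mu(a).
\]

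Substituting into the hypothesis at $t=t_0$ and evaluating at $x=r\hat v$ for a unit vector $\hat v$ and $r>0$ yields $\int_{\R^N} e^{r\, a\cdot \hat v+|a|^2 t_0}\, d\mu(a)\le C_\epsilon\, e^{\epsilon r}$. On the region $\{a\cdot\hat v\ge \eta\}$ the integrand is at least $e^{r\eta}e^{|a|^2 t_0}$, so
\[
e^{r\eta}\int_{\{a\cdot\hat v\ge\eta\}}e^{|a|^2 t_0}\, d\mu(a)\le C_\epsilon\, e^{\epsilon r},
\]
and letting $r\to+\infty$ with $\epsilon<\eta$ forces $\mu(\{a\cdot\hat v\ge\eta\})=0$. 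Since $\hat v$ and $\eta>0$ are arbitrary, $\mu$ is concentrated at the origin and $u$ is constant.

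The main obstacle is the appeal to the representation in the second step: the parabolic Harnack inequality alone does not yield such a formula---witness the non-constant eternal solution $u(x,t)=e^{x_1+t}$ of the heat equation, which satisfies every Harnack estimate---so the argument genuinely requires additional structural input about positive solutions of the heat equation, either in the form of the classical Widder representation theorem or equivalent Martin-boundary-type tools. Once this is granted, the first and last steps are routine.
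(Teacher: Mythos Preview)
Your proof is correct and shares the same essential input as the paper---the Widder representation
\[
u(x,t)=\int_{\R^N}e^{a\cdot x+|a|^2 t}\, d\mu(a)
\]
for positive ancient solutions---but the two arguments diverge from there. The paper observes, via H\"older's inequality in the measure $e^{|a|^2 t_0}\, d\mu$, that $x\mapsto \log u(x,t_0)$ is convex; a convex function that is $o(|x|)$ is constant, and then differentiating under the integral sign forces all polynomial moments $\int P(\xi)\, d\nu(\xi)$ with $P(0)=0$ to vanish, so a Fourier-type argument pins $\nu$ at the origin. Your route is more direct: you simply restrict the representation to the half-space $\{a\cdot\hat v\ge\eta\}$, compare exponential rates along the ray $x=r\hat v$, and let $r\to\infty$. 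This avoids the convexity step and the moment/Fourier machinery entirely.

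One remark: your first step, propagating the sub-exponential bound backward in time via the Harnack inequality, is unnecessary. The Widder representation requires only positivity of the ancient solution (this is precisely the content of the result cited from \cite{LZ}), so no growth control at earlier times is needed to obtain the measure $\mu$. Once you have the representation, your third step uses only the hypothesis at the single time $t_0$. The Harnack detour and the Tikhonov-class discussion can be dropped without loss.
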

 
The latter has been proved for $N=1$ in \cite{W}, and we sketch the proof in the general case. 
\begin{proof} By the Widder representation for ancient solutions (see \cite{LZ}) it holds
\begin{equation}
\label{W}
u(x, t)=\int_{\R^{N}}e^{x\cdot \xi+t|\xi|^{2}}\, d\mu(\xi)
\end{equation}
for some nonnegative Borel measure $\mu$. Let $\nu:=e^{t_{0}|\xi|^{2}}\mu$ and observe that by H\"older inequality with respect to the measure $\nu$ implies that for all $t\in ]0, 1[$
\[
u(tx+(1-t)y)=\int_{\R^{N}}e^{(tx+(1-t)y)\cdot \xi}\, d\nu(\xi)\le \left(\int e^{x\cdot \xi}\, d\nu(\xi)\right)^{t}\left(\int e^{y\cdot \xi}\, d\nu(\xi)\right)^{1-t}=u^{t}(x)u^{1-t}(y),
\]
i.e.,  $x\mapsto \log u(x, t_{0})$ is convex. As $\log u(x, t_{0})=o(|x|)$ by assumption, it follows that $x\mapsto u(x, t_0)$ is constant. Differentiating under the integral sign the Widder representation, we obtain
\[
0=\left.P(D_{x})u(x, t_{0})\right|_{x=0}=\int_{\R^N} P(\xi)\, d\nu(\xi)
\]
for any polinomial $P$ such that $P(0)=0$. By a classical Fourier transform argument, this implies that $\nu=c\, \delta_{0}$ and thus $u(x, t)\equiv c$ due to the representation \eqref{W}.
\end{proof}

Compare with \cite{SZ} where it is proved that under the growth condition $0\leq u\leq Ce^{o(|x|+\sqrt{|t|})}$ for $t\leq T_{0}$, there are no ancient  non-constant  solutions to the heat equation on a complete Riemannian manifold with ${\rm Ric}\geq 0$.

\vskip5pt

Using Moser's Harnack inequality, Aronsson  proved in \cite{A}  a {\em two sided} bound on the fundamental solution of \eqref{PL}, which reads
\begin{equation}
\label{kb}
\frac{1}{C (t-s)^{N/2}}e^{-C\frac{|x-y|^{2}}{t-s}}\leq \Gamma(t, x; s, y)\leq \frac{C}{(t-s)^{N/2}}e^{-\frac 1 C\frac{|x-y|^{2}}{t-s}}
\end{equation}
for some $C=C(N, \Lambda, \lambda)$ and $t>s>0$, where the fundamental solution (or {\em heat kernel}) is solves, for any fixed $(s, y)\in \R_{+}\times \R^{N}$
\[
\begin{cases}
\partial_{t}\Gamma =\sum_{j,i=1}^N D_{x_{i}}(a_{ij}(x,t) D_{x_{j}}\Gamma)&\text{in $\R^{N}\times\ ]s, +\infty[$},\\
\Gamma(x, t;  \cdot, s)\weakto^{*}\delta_{y},&\text{as $t\downarrow s$, in the measure sense}.
\end{cases}
\]
In \cite{FAB}, the previous kernel estimate was proved through Nash's approach, and was shown to be equivalent to the parabolic Harnack inequality.  

A {\em global} Harnack inequality also follows from \eqref{kb}, whose proof we will now sketch. If $u\geq 0$ is a solution to \eqref{PL} on $\R^{N}\times \R_+$ and $t>s>\tau\geq 0$, then using the representation
\[
u(x, t)=\int_{\R^{N}}\Gamma(x, t; \xi, \tau)u(\xi, \tau)\, d\xi,\qquad t>\tau,
\]
and the analogous one for $(y, s)$, we get
\[
\begin{split}
u(x, t)&=\int_{\R^{N}}\Gamma(x, t; \xi, \tau)\, \Gamma^{-1}(y, s; \xi, \tau)\, \Gamma(y, s; \xi, \tau)\, u(\xi, \tau)\, d\xi\\
&\geq \frac{u(y, s)}{C^{2}}\left(\frac{s-\tau}{t-\tau}\right)^{\frac N 2}\inf_{\xi}e^{\, \frac 1 C\frac{|y-\xi|^{2}}{s-\tau}-C\frac{|x-\xi|^{2}}{t-\tau}},
\end{split}
\] 
where $\tau\ge 0$ is a free parameter. Recalling that 
\[
\inf_{\xi} a\, |y-\xi|^{2}-b\, |x-\xi|^{2}=\frac{a\, b}{b-a}|x-y|^{2},\qquad a>b\ge 0,
\]
we consider two cases. If $s/t\leq 1/(2C^{2})$ we choose $\tau=0$ and compute
\[
\inf_{\xi}\frac{|y-\xi|^{2}}{C\, s} - C\frac{|x-\xi|^{2}}{t}\geq -2C\frac{|x-y|^{2}}{t-s}.
\]
If instead $s/t\in \ ]1/(2C^{2}), 1]$, we set $\tau=s-(t-s)/(2C^{2})>0$ obtaining
\[
\inf_{\xi}\frac{1}{C}\frac{|y-\xi|^{2}}{s-\tau}-C\frac{|x-\xi|^{2}}{t-\tau}\geq -2C\frac{|x-y|^{2}}{ t-s}.
\]
while $(s-\tau)/(t-\tau)=1/(1+2C^{2})$.
Therefore the kernel bounds \eqref{kb} imply the following Harnack inequality {\em at large}, often called {\em sub-potential lower bound}, for positive solutions $u$ of \eqref{PL} on $\R^{N}\times ]0, T[\, $: there exists a constant $C=C(N, \Lambda, \lambda)>1$ such that
\begin{equation}
\label{sub}
u(x, t)\geq \frac{1}{C}\, u(y, s)\left(\frac{s}{t}\right)^{\frac{N}{2}}e^{-C\frac{|x-y|^{2}}{t-s}}\qquad \text{for all $T>t>s>0$. }
\end{equation}
A similar global estimate, with a non-optimal exponent $\alpha=\alpha(N, \Lambda, \lambda)>N/2$ for the ratio $s/t$, was already derived through the so-called {\em Harnack chain} technique by Moser in \cite{MOS}.

\subsection{Riemannian manifolds and beyond}
Following the differential approach of \cite{Y75}, Li and Yau proved in \cite{LY} their celebrated parabolic differential Harnack inequality.

\begin{theorem}
Let $M$ be a complete Riemannian manifold of dimension $N\geq 2$ and ${\rm Ric}\geq 0$, and let $u>0$ solve the heat equation on $M\times\R_+ $. Then it holds
\begin{equation}
\label{LY}
|D (\log u)|^{2}-\partial_{t}(\log u)\leq \frac{N}{2t}.
\end{equation}
\end{theorem}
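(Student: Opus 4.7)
The plan is to follow the Bochner identity and parabolic maximum principle approach pioneered by Li and Yau. First I reduce to an inequality on $f := \log u$. Since $u = e^f$, the heat equation $u_t = \Delta u$ becomes
$$f_t = \Delta f + |\nabla f|^2,$$
so the quantity appearing in \eqref{LY} equals $|\nabla f|^2 - f_t = -\Delta f$. Introduce the auxiliary function
$$F(x,t) := t\bigl(|\nabla f|^2 - f_t\bigr)(x,t) = -t\,\Delta f(x,t),$$
so that the target inequality is simply $F \le N/2$ on $M \times \mathbb{R}_+$.

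Second, I compute the parabolic operator $\Box F := (\partial_t - \Delta) F$. Combining Bochner's identity
$$\Delta |\nabla f|^2 = 2|D^2 f|^2 + 2\langle \nabla f,\nabla \Delta f\rangle + 2\,\text{Ric}(\nabla f,\nabla f),$$
the evolution equation for $f$, and the commutation $\partial_t \Delta = \Delta \partial_t$, one obtains after cancellation
$$\Box(|\nabla f|^2 - f_t) = 2\langle \nabla f,\nabla(|\nabla f|^2 - f_t)\rangle - 2|D^2 f|^2 - 2\,\text{Ric}(\nabla f,\nabla f).$$
Dropping the Ricci term via the hypothesis $\text{Ric}\ge 0$, and estimating the Hessian by Cauchy--Schwarz on its trace, $|D^2 f|^2 \ge (\Delta f)^2/N = F^2/(Nt^2)$, one arrives at the pointwise differential inequality
$$\Box F \le \frac{F}{t} + 2\langle \nabla f,\nabla F\rangle - \frac{2F^2}{Nt}.$$

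Third, I conclude via the parabolic maximum principle. In the compact case $F$ attains its maximum on $M\times[0,T]$ at some interior point $(x_0,t_0)$ (otherwise the maximum is on $\{t=0\}$, where $F\equiv 0$); at such a point $\nabla F = 0$ and $\Box F \ge 0$, so the displayed inequality collapses to $0 \le F/t_0 - 2F^2/(Nt_0)$, forcing $F(x_0,t_0)\le N/2$. To handle a general complete non-compact $M$, I localize: choose a cutoff $\psi(x) = \phi(d(x,\bar x)/R)$ with $\phi$ smooth, decreasing from $1$ to $0$ on $[1,2]$ and satisfying $|\phi'|^2/\phi,\ |\phi''|\le C$. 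Under $\text{Ric}\ge 0$ the Laplacian comparison theorem gives $\Delta d \le (N-1)/d$ in the distributional sense, so $\Delta \psi$ is controlled by $C/R^2$ up to error terms that vanish as $R\to\infty$. Applying the maximum principle to $\psi F$ on $B_{2R}(\bar x)\times[0,T]$ and sending $R\to\infty$ yields $F(\bar x, T)\le N/2$, and since $\bar x\in M$ and $T>0$ were arbitrary, \eqref{LY} follows.

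The main obstacle is the Bochner bookkeeping in the second step: one must track several cancellations carefully to isolate the gradient term $2\langle\nabla f,\nabla F\rangle$, which vanishes at the maximum point and thereby allows the argument to close. The Cauchy--Schwarz inequality $|D^2 f|^2 \ge (\Delta f)^2/N$ is the sole source of the dimensional constant and is what makes the estimate sharp, as realized by the Euclidean heat kernel $u(x,t) = (4\pi t)^{-N/2} e^{-|x|^2/(4t)}$. The cutoff localization, while technical, is by now standard once the interior pointwise inequality for $\Box F$ is established.
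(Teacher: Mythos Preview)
The paper does not actually prove this theorem: it is stated in Section~3.4 as the celebrated result of Li and Yau \cite{LY}, with no proof given, and the surrounding discussion only describes its consequences (the integrated form \eqref{LYp} and the heat kernel bounds \eqref{hk}). So there is no ``paper's own proof'' to compare against.

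That said, your argument is correct and is precisely the original Li--Yau method the paper is citing. The computation of $\Box F$ via Bochner's identity is right (the key cancellation being $\nabla(F/t)=-\nabla\Delta f$, which converts the awkward $-2\langle\nabla f,\nabla\Delta f\rangle$ into the harmless first-order term $2\langle\nabla f,\nabla(F/t)\rangle$), the use of $|D^{2}f|^{2}\ge(\Delta f)^{2}/N$ is what produces the sharp constant $N/2$, and the cutoff localization via Laplacian comparison is the standard device to pass from compact to complete non-compact manifolds. One minor point worth being explicit about in the non-compact step: when you apply the maximum principle to $\psi F$, the cross terms $\nabla\psi\cdot\nabla F$ generate an additional contribution that has to be absorbed (typically by Young's inequality against the good $-2F^{2}/(Nt)$ term), yielding $\psi F\le N/2+C(N)/R$ rather than $N/2$ directly; only after $R\to\infty$ do you recover the clean bound. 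Your sketch gestures at this but the absorption deserves one more line.
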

In the same paper, many variants of the previous inequality are considered, including one for local solutions in $B_{R}(x_{0})\times \, ]t_{0}-T, t_{0}[\, $ much in the spirit of \cite{CY75}, and several consequences are also derived. Integrating inequality \eqref{LY} along geodesics provides, for any positive solution of the heat equation of $M\times \R_+$
\begin{equation}
\label{LYp}
u(x, t)\geq u(y, s)\left(\frac{s}{t}\right)^{\frac{N}{2}}e^{-\frac{d^2(x, y)}{4(t-s)}},\qquad t>s>0,
\end{equation}
where $d(x, y)$ is the geodesic distance between two points $x, y \in M$. This, in turn, gives the heat kernel estimate (see \cite[Ch. 5]{SAL})
\begin{equation}
\label{hk}
\frac{1}{CV(x, \sqrt{t-s})}\ e^{-C\frac{d^2(x, y)}{t-s}}	\leq \Gamma(x, t; y, s)\leq \frac{C}{V(x, \sqrt{t-s})}\ e^{-\frac 1 C\frac{d^2(x, y)}{t-s}},
\end{equation}
where $V (x, r)$ is the
Riemannian volume of a geodesic ball $B(x, r)$. Notice that, in a general Riemannian manifold of dimension $N\geq 2$, 
\[
V(x, r)\simeq r^{N} \qquad \text{for small $r>0$}, 
\]
but, under the s\^ole assumption ${\rm Ric}\geq 0$, the best one can say is
\[
\frac{r}{C}\leq V(x, r)\leq Cr^{N},\qquad \text{for large $r>0$}.
\]
Therefore, while Li-Yau  estimate on the heat kernel coincides with Aronsson's one locally, it is genuinely different at the global level.

Other parabolic differential Harnack inequalities were then found by Hamilton in \cite{H93} for  compact Riemannian manifolds with ${\rm Ric}\geq 0$, and were later extended in  \cite{SZ, K} to complete, non-compact manifolds. Actually, far more general differential Harnack inequalities are available under suitable conditions on the Riemannian manifold, see the book \cite{M} for the history and applications of the latters.

Again, the differential Harnack inequality \eqref{LY} requires a good deal of smoothness both on the operator and on the ambient manifold. Yet, the corresponding pointwise inequality \eqref{LYp} doesn't depend on the smoothness of the metric $g_{ij}$ but only on its induced distance and the dimension, hence one is lead to believe that a smoothness-free proof exists. Indeed, the papers \cite{G92, SC92} showed that the parabolic Harnack inequality (and the corresponding heat kernel estimates)  can still be obtained through a Moser-type approach based solely on the Doubling \& Poincar\'e condition \eqref{dp}. Indeed, \cite{G92, SC92} independently showed the following {\em equivalence}.

\begin{theorem}[Parabolic Harnack principle]
For any Riemannian manifold the following are equivalent:
\begin{enumerate}
\item
The parabolic Harnack inequality \eqref{pH}.
\item
The heat kernel estimate \eqref{hk}.
\item
The Doubling \& Poincar\'e condition \eqref{dp}.
\end{enumerate}
\end{theorem}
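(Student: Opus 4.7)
My plan is to establish the three-cycle (3) $\Rightarrow$ (1) $\Rightarrow$ (2) $\Rightarrow$ (3). The heart of the argument is the first implication, which transfers Moser's measure-theoretic scheme to the metric-measure setting of $(M,g,m)$ under the sole hypothesis of Doubling and the scale-invariant Poincar\'e inequality \eqref{dp}. The remaining two implications are more structural: Harnack yields two-sided Gaussian kernel bounds via chaining and a Davies-type weighted argument, while the kernel bounds in turn encode volume doubling and the Poincar\'e inequality through the analysis of the heat semigroup.

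For (3) $\Rightarrow$ (1), the first move is to upgrade Doubling\,\&\,Poincar\'e to a scale-invariant $L^2$-Sobolev/Gagliardo–Nirenberg inequality on geodesic balls, with dimensional exponent $\nu$ depending only on the doubling constant. Combined with a Caccioppoli-type energy inequality (which on a manifold requires only integration by parts and cut-offs built from the distance), this yields an analogue of the $L^p$-$L^\infty$ estimate \eqref{ml1} by Moser iteration, with constants stable for small $|p|$. The logarithmic estimate in Section 3.2 carries over verbatim, since it only uses a test function of the form $u^{-1}\eta^2$ and the Poincar\'e inequality, producing the weak-$L^1$ bounds \eqref{wl1} for $\log u$. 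For the crossover step one cannot invoke John–Nirenberg directly on $M$, so one follows the approach of \cite{MOS2} via the abstract Bombieri–Giusti lemma, which in its axiomatic form requires only Doubling together with the $p$-uniform $L^p$-$L^\infty$ bound established above.

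For (1) $\Rightarrow$ (2), the lower Gaussian bound on $\Gamma$ is obtained by a Harnack chain along an approximate geodesic from $(y,s)$ to $(x,t)$, combined with the on-diagonal estimate $\Gamma(y,2\tau;y,\tau)\asymp V(y,\sqrt{\tau})^{-1}$, which itself follows by applying Harnack to the kernel integrated against the constant $1$. The sharp Gaussian upper bound requires more: one exploits the semigroup identity $\Gamma(x,t;y,s)=\int_M \Gamma(x,t;z,r)\Gamma(z,r;y,s)\,dm(z)$ together with a weighted $L^2$-energy estimate \`a la Davies to extract the factor $e^{-d^2(x,y)/(C(t-s))}$. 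For (2) $\Rightarrow$ (3), doubling is extracted from $1=\int_M \Gamma(x,2t;y,t)\,dm(y)$ and the two-sided comparability of $\Gamma$ with $V(x,\sqrt{t})^{-1}$, which forces the desired control of $V(x,\sqrt{2t})/V(x,\sqrt{t})$. The Poincar\'e inequality follows from a local cut-off Sobolev inequality on balls, which is in turn derived from the Gaussian kernel bounds via the spectral decomposition of the associated Dirichlet form.

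The principal technical obstacle is the sharp Gaussian upper bound in (1) $\Rightarrow$ (2). The parabolic Harnack inequality alone, via the chaining argument explained in Section 3.3, readily delivers a matching lower Gaussian bound; the matching upper bound with the correct constant $1/C$ in the exponent, rather than a merely polynomial or sub-Gaussian decay, cannot be read off from Harnack alone. It demands Davies' exponential perturbation trick: one studies the conjugated kernel $e^{\phi}\Gamma e^{-\phi}$ for Lipschitz functions $\phi$ with $|\nabla\phi|\le\alpha$, derives an $L^2$ energy estimate that is quadratic in $\alpha$, and then optimizes over $\phi$ and $\alpha$ to recover the Gaussian factor. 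Handling this under only Doubling\,\&\,Poincar\'e, with no smoothness of $g_{ij}$ whatsoever, is precisely the technical achievement of \cite{G92, SC92} and is by far the hardest single estimate in the circle.
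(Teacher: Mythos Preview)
The paper does not provide a proof of this theorem. It is stated as a result from the literature, attributed to \cite{G92, SC92}, in the survey portion of Section~3.4; no argument is given beyond the citation. So there is nothing in the paper to compare your attempt against.

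That said, your outline is a faithful sketch of the standard Grigor'yan/Saloff-Coste proof cycle and is broadly correct at the level of a roadmap. A few remarks. Your identification of the Gaussian \emph{upper} bound in $(1)\Rightarrow(2)$ as ``by far the hardest single estimate'' is debatable: once the parabolic Harnack inequality is in hand, one can extract the on-diagonal upper bound $\Gamma(x,t;x,0)\le C/V(x,\sqrt{t})$ rather directly (Harnack forces volume doubling, and then a Nash-type argument gives the decay), after which Davies' exponential perturbation produces the off-diagonal Gaussian factor in a fairly robust way. The step usually regarded as the substantive contribution of \cite{G92, SC92} is $(3)\Rightarrow(1)$, specifically the passage from a \emph{weak} $(1,2)$-Poincar\'e inequality on balls to the family of localized Sobolev inequalities needed to run Moser's iteration uniformly in scale. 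Your description of $(2)\Rightarrow(3)$ is also a bit thin: the derivation of the Poincar\'e inequality from two-sided Gaussian bounds is not really ``spectral decomposition'' but rather a Kusuoka--Stroock type argument using the lower heat kernel bound to control $\int_B|u-u_B|^2\,dm$ in terms of $\int_B|\nabla u|^2\,dm$ via the semigroup. None of this invalidates your plan; it just shifts where the real work lies.
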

Since Doubling \& Poincar\'e are stable with respect to quasi-isometries, the previous theorem ensures the stability of the parabolic Harnack inequality with respect to the latters, and thus its validity in a much wider class of Riemannian manifolds than those with ${\rm Ric}\geq 0$. Condition {\em (3)} also ensures that the parabolic Harnack inequality holds for general parabolic equations with elliptic and merely measurable coefficients, see \cite{SC2}. Actually, under local regularity conditions, it can be proved for {\em metric spaces} which are roughly isometric to a Riemannian manifold with ${\rm Ric}\geq 0$, such as suitable graphs or singular limits or Riemannian manifolds.

\subsection{The nonlinear setting}
An analysis of Moser's proofs reveals that the linearity of the second order operator is immaterial, and that
essentially the same arguments can be applied as well to nonnegative weak solutions
to a wide family of quasilinear equations. 
In \cite{AS, T68}, the  Harnack inequality in the form \eqref{mph} was proved to hold for nonnegative solutions of the quasilinear equation
\begin{equation}
\label{ql}
u_t =  {\rm div} A(x ,u ,Du)  
\end{equation}
where the function $A:\Omega\times \R\times \R^N\to \R^N$  is only assumed to be measurable and satisfying
\begin{equation}
\label{2gr}
\begin{cases}
A(x, s, z)\cdot z \geq  C_{0} |z|^2,\\
|A(x, s, z)| \leq  C_{1} |z|,
\end{cases}
\end{equation}
for some given positive constants  $0<C_{0}\le C_{1}$. These structural conditions are very general, as, for example, the validity of the comparison principle holds in general under the so-called {\em monotonicity condition}
\begin{equation}
\label{monot}
\big(A(x, s, z)-A(x, s, w)\big)\cdot (z-w)\ge 0 
\end{equation}
which does not follow from \eqref{2gr}. To appreciate the generality of \eqref{2gr}, consider the toy model case $N=1$, $A(x, s, z)=\varphi(z)$,  so that a smooth solution to \eqref{ql} fulfills $u_{t}=\varphi'(u_{x})u_{xx}$. Assuming \eqref{2gr} gives no information on the sign of $\varphi'$ except at $0$ (where $\varphi'(0)\ge C_{0}$), so that \eqref{2gr} is a backward parabolic equation in the region $\{u_{x}\in \{\varphi'<0\}\}$.

Trudinger noted in \cite{T68} that the Harnack inequality for the case of general $p$-growth conditions \eqref{pgrowth} with  $p\neq 2$ seemed instead a difficult task. He stated the validity of the Harnack inequality \eqref{mph} for positive solutions of the doubly nonlinear equation
\[
(u^{p-1})_t={\rm div} A(x, t ,u ,Du)
\]
where $A$ obeys \eqref{pgrowth} with the same $p$ as the one appearing on the right hand side, thus recovering a form of homogeneity in the equation which is lacking in \eqref{ql}. The (homogeneous) doubly nonlinear result has later been proved in \cite{GV, KK, UG}, (see also the survey \cite{KINN}), but it took around forty years to obtain the right form of the Harnack inequality for solutions of \eqref{ql} under the general $p$-growth condition \eqref{pgrowth} on the principal part.  The next chapter will be dedicated to this developement.

It is worth noting that another widely studied parabolic equation which presented the same kind of difficulties is the {\em porous medium equation}, namely
\[
u_t=\Delta u^m,\qquad m>0.
\]
 In fact, most of the results in the following sections have analogue statements and proofs for positive solutions of the porous medium equation. The interested reader may consult the monographs \cite{V, V2, HR} for the corresponding results for porous media and related literature.
More generally, the doubly nonlinear inhomogeneous equation 
\[  
u_t={\rm div} (u^{m-1}|Du|^{p-2}Du)
\]
has found applications in describing polytropic flows of a non-newtonian fluid in porous media \cite{BER} and soil science \cite{SW, BMo, KL}, see also the survey article \cite{Kal}. Regularity results can be found in \cite{PV, Iv1} and Harnack inequalities in \cite{FS} for the degenerate  and in \cite{FSV} in the singular case, respectively.
To keep things as simple as possible, we chose not to treat these equations, limiting our exposition to \eqref{ql}.

\section{Singular and degenerate parabolic equations}

\subsection{The prototype equation}
Let us consider the parabolic $p$-Laplace equation 

\begin{equation} \label{p_lapl}
u_t = {\rm div} (| D u|^{p-2} D u) , \ \ \  p>1,
\end{equation}
which can be seen as a parabolic elliptic equation with $|D u|^{p-2}$ as (intrinsic) isotropic coefficient. The coefficient  vanishes near a point where $D u=0$ when $p>2$, while it blows up near such a point when $p<2$. For this reasons we call \eqref{p_lapl} {\em degenerate} when $p>2$ and {\em singular} if $p<2$.

In the fifties, the seminal paper \cite{BAR} by Barenblatt
was the starting point of the study of the $p$-Laplacian
equation \eqref{p_lapl}. The following family of explicit solutions to \eqref{p_lapl} where found, and are since then called {\em Barenblatt solution} to \eqref{p_lapl}.

\begin{theorem}
For any $p>\frac{2N}{N+1}$ and $M>0$, there exist constants $a, b>0$ depending only on $N$ and $p$ such that the function
\begin{equation}           \label{barenblatt}
\B_{p, M}(x,t) := 
\begin{cases}
t^{-\frac{N}{\lambda}}
\left[a M^{\frac{p}{\lambda}\frac{p-2}{p-1}} - b\big(|x|\, t^{-\frac{1}{\lambda}}\big)^\frac{p}{p-1}\right]_+^\frac{p-1}{p-2}, &\text{if $p>2$},\\[15pt]
 t^{-\frac{N}{\lambda}}
\left[aM^{\frac{p}{\lambda}\frac{p-2}{p-1}} + b\big(|x|\,t^{-\frac{1}{\lambda}}\big)^\frac{p}{p-1}\right]^\frac{p-1}{p-2}&\text{if $2>p$},
\end{cases}
\end{equation}
where $\lambda = N(p-2) +p>0$, solves the problem
\[
\begin{cases}
u_t = {\rm div} (| D u|^{p-2} D u)&\text{in $ \R^N\times \, ]0, +\infty[$},\\
u(\cdot, t)\weakto^* M\delta_0&\text{ as $t\downarrow 0$}.
\end{cases}
\]
\end{theorem}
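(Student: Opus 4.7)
The plan is to exploit the scaling structure of \eqref{p_lapl} to reduce it to an ODE for a self-similar profile, and then fix the integration constants by mass conservation. First, I observe that \eqref{p_lapl} is invariant under $u(x,t)\mapsto \mu^{a}u(\mu x,\mu^{b}t)$ iff $b=a(p-2)+p$, while preservation of $\int u(\cdot,t)\,dx$ forces $a=N$; combining these, $b=\la:=N(p-2)+p$, which is positive precisely when $p>2N/(N+1)$. This motivates the self-similar ansatz
\[
u(x,t)=t^{-N/\la}F(\xi),\qquad \xi=|x|\,t^{-1/\la},
\]
with $F\geq 0$ radial. Substituting into \eqref{p_lapl} and using the radial identity $(\xi^{N}F)'=\xi^{N-1}(NF+\xi F')$ together with the radial form of the $p$-Laplacian, the PDE collapses into
\[
\bigl(\xi^{N-1}\,\la\,|F'|^{p-2}F'\bigr)'=-(\xi^{N}F)'\qquad \text{on }\xi>0.
\]
Imposing $F'(0)=0$ (smoothness at the origin), one integration reduces this to the separable first-order ODE $\la|F'|^{p-2}F'=-\xi F$, which on $\{F>0\}$ (where $F$ is decreasing) rewrites as $\la(-F')^{p-1}=\xi F$.

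Second, separating variables and integrating from $0$ to $\xi$ yields
\[
F(\xi)^{(p-2)/(p-1)}=F(0)^{(p-2)/(p-1)}-\tfrac{p-2}{p}\,\la^{-1/(p-1)}\,\xi^{p/(p-1)}.
\]
When $p>2$ the exponent $(p-2)/(p-1)$ is positive and the right-hand side eventually becomes negative, forcing $F$ to have compact support and to equal the positive-part expression in the first line of \eqref{barenblatt}. When $1<p<2$ the exponent $(p-2)/(p-1)$ is negative, so the relation rearranges to $F^{(p-2)/(p-1)}=A+B\,\xi^{p/(p-1)}$ with $A,B>0$, and $F$ is positive everywhere with asymptotic decay $F(\xi)\sim \xi^{-p/(2-p)}$ at infinity, matching the second line of \eqref{barenblatt}. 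The free parameter $F(0)$ is then pinned down by the mass normalization $\int_{\R^{N}}u(x,t)\,dx=M$, which by self-similarity is independent of $t$ and reduces to a single algebraic identity; integrability of the profile in the singular case amounts to $p/(2-p)>N$, i.e.\ precisely the standing assumption $p>2N/(N+1)$.

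Third, for the initial trace $u(\cdot,t)\weakto^{*}M\delta_{0}$, the change of variables $y=x\,t^{-1/\la}$ gives, for any $\varphi\in C_{c}(\R^{N})$,
\[
\int_{\R^{N}}\B_{p,M}(x,t)\,\varphi(x)\,dx=\int_{\R^{N}}F(|y|)\,\varphi(t^{1/\la}y)\,dy,
\]
which converges to $\varphi(0)\int F\,dy=M\varphi(0)$ as $t\downarrow 0$ by dominated convergence, with $\|\varphi\|_{\infty}F\in L^{1}$ as dominant. The main obstacle in my view is the rigorous verification that $\B_{p,M}$ is a weak solution of \eqref{p_lapl} on $\R^{N}\times\,]0,+\infty[$ \emph{across the free boundary} in the degenerate regime $p>2$: there $F$ vanishes to order $(p-1)/(p-2)$ along $\partial\{F>0\}$, and one has to check by direct computation that the flux $|D\B_{p,M}|^{p-2}D\B_{p,M}$ extends continuously by zero across this free boundary, so that the distributional divergence captures no extra surface term. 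The precise exponent $(p-1)/(p-2)$ appearing in \eqref{barenblatt} is exactly what produces this cancellation.
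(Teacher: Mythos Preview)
The paper does not actually supply a proof of this theorem: it is stated as a classical result (citing Barenblatt \cite{BAR}) and then used as a reference profile throughout the survey. So there is no ``paper's own proof'' to compare against.

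Your derivation is the standard and correct one: the scaling analysis pins down $\lambda=N(p-2)+p$, the self-similar ansatz reduces the PDE to $\lambda|F'|^{p-2}F'=-\xi F$, and separation of variables gives exactly the profile in \eqref{barenblatt}, with the dichotomy compact support / algebraic tail according to the sign of $(p-2)/(p-1)$. Your integrability check $p/(2-p)>N\Leftrightarrow p>2N/(N+1)$ for the singular case is correct, as is the dominated-convergence argument for the initial trace. The point you flag about the free boundary when $p>2$ is the genuine technical issue, and your explanation is the right one: since $F$ vanishes like $(\xi_{0}-\xi)_{+}^{(p-1)/(p-2)}$, one computes that $|F'|^{p-2}F'$ vanishes like $(\xi_{0}-\xi)_{+}^{1/(p-2)\cdot(p-1)}$, hence the flux $|Du|^{p-2}Du$ is continuous (in fact $C^{1}$ in $\xi$) across $\{\xi=\xi_{0}\}$ and no boundary term arises when integrating by parts against a test function.
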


The functions $\B_{p, M}$ are also called {\em fundamental solution of mass $M$}, or simply fundamental solution when $M=1$, in which case one briefly writes $\B_{p, 1}=\B_{p}$. Uniqueness of the fundamental solution for the prototype equation  was proved by Kamin and V\'azquez in \cite{KAM} (the uniqueness for general monotone operators is still not known).

The Barenblatt solutions show that when \eqref{p_lapl} is degenerate the  diffusion is very slow and the speed of the propagation of the support is finite, while  in the singular case  the  diffusion is very fast and the solution may become extinct in finite time.  These two phenomena are incompatible with a parabolic Harnack inequality of the form \eqref{pH} or \eqref{mph}, (suitably modified taking account of the natural scaling) such as
\begin{equation}
\label{tph}
C^{-1}\sup_{B_\rho(x_0)}u(\cdot, t_0 -  \rho^p)\leq u(x_0, t_0)\leq C\inf_{B_\rho(x_0)} u(\cdot, t_0 +\rho^p)
\end{equation}
with a constant $C$ depending only on $N$.
 Indeed, in the degenerate case the Barenblatt solution has compact support for any positive time, violating the strong minimum principle dictated by \eqref{tph} (the proof of Corollary \ref{minp} still works). Regarding the singular case, this incompatibility is not immediately apparent from the Barenblatt profile itself and in fact the strong minimum principle still holds for solutions defined in $\R^N\times ]0, T[$ when $p>\frac{2N}{N+1}$.  However, consider the solution of the Cauchy problem associated to \eqref{p_lapl} in a cylindrical domain $\Omega\times \R_+$ with $u(x, 0)=u_0(x)\in C^\infty_c(\Omega)$ and Dirichlet boundary condition on $\partial\Omega\times \R_+$, with $\Omega$ {\em bounded}. An elementary energetic argument (see \cite[Ch VII]{DI}) gives a suitable {\em extinction time} $T^*(\Omega, u_0)$ such that $u(\cdot, t)\equiv 0$ for $t>T^*$, again violating the strong minimum principle which would follow from \eqref{tph}. 

Let us remark here that for $1<p\leq\frac{2N}{N+1}=:p_*$ the Barenblatt profiles cease to exists. The exponent $p_{*}$ is called the {\em critical exponent} for singular parabolic equations and, as it will be widely discussed in the following, the theory is mostly complete in the {\em supercritical} case $p>p_{*}$. Solutions of critical and subcritical equations (i.e. with $p\in \ ]1, p_{*}]$) on the other hand, even in the model case \eqref{p_lapl}, exhibit odd and, in some aspects still unclear, features.

\subsection{Regularity}

Let us consider  equations of the type 
\begin{equation}
\label{qlp}
u_t = {\rm div} A(x,u, Du)
\end{equation}
with general measurable coefficients obeying
\begin{equation}
\label{pgr}
\begin{cases}
A(x,s, z)\cdot z \geq C_{0} |z|^p,\\
|A(x, s, z)|\leq C_{1} |z|^{p-1}.
\end{cases}
\end{equation}

We are concerned with weak solutions in $\Omega\times [0, T]$, namely those satisfying
\[
\left.\int u\varphi\, dx\right|^{t_2}_{t_1}+\int_{t_1}^{t_2}\int_\Omega\left[-u\varphi_t+A(x, u, Du)\cdot \varphi\right]\, dx\, dt=0
\]
where $\varphi$ is an arbitrary function such that $\varphi\in W^{1, 2}_{\rm loc}(0, T; L^2(\Omega))\cap L^p(0, T; W^{1, p}_0(\Omega)$.
This readily implies that 
\[
u\in C_{\rm loc}(0, T; L^2_{\rm loc}(\Omega))\cap L^p_{\rm loc}(0, T; W^{1,p}_{\rm loc}(\Omega)).
\]

In the case $p=2$, the local H\"older continuity of solutions to \eqref{qlp} has been proved in \cite{LA} through a parabolic De Giorgi approach. The case $p\neq 2$ was considered as a major open problem in the theory of quasilinear parabolic equation for over two decades. The main obstacle to its solution was that the energy and logathmic estimates for \eqref{qlp} are non-homogeneous when $p\neq 2$. It was solved by DiBenedetto \cite{DB88} in the degenerate case and  Chen and DiBenedetto in  \cite{CDB} for the singular case through an approach nowadays called {\em method of intrinsic scaling} (see the monograph \cite{UR} for a detailed description). Roughly speaking, in order to recover from the lack of homogeneity in the integral estimates one works in cylinders whose natural scaling is modified by the oscillation of the solution itself. In the original proof, these rescaled cylinders are then sectioned in smaller sub-cylinders and the so-called {\em alternative} occurs: either there exists a sub-cylinder where $u$ is sizeably (in a measure-theoretic sense) away from its infimum or in each sub-cylinder it is sizeably away from its supremum. In both cases a reduction in oscillation can be proved, giving the claimed H\"older continuity.

Stemming from recent techniques built to deal with the Harnack inequality for \eqref{qlp}, simpler proofs are nowadays available,  avoiding the analysis of said alternative.  In the last section we will provide such a simplified proof, chiefly based on \cite{NEW} and \cite{ON}.

As it turned out, H\"older continuity of {\em bounded} solutions to \eqref{pgr} (in fact, to much more general equations) always holds. 
In the degenerate case $p\geq 2$, a-priori boundedness follows from the natural notion of weak solution given above, but in the singular case there is a precise threshold: local boundedness is guaranteed only for $p>p_{**}:=\frac{2N}{N+2}$, which is therefore another critical exponent for the singular equation, smaller than $p_*$. However, when $1<p<p_{**}$, weak solutions may be unbounded: for example, a suitable multiple of
\[
v(x, t)=(T-t)_+^{\frac{1}{2-p}} |x|^{\frac{p}{p-2}}
\]
solves the model equation \eqref{p_lapl} in the whole $\R^N\times \R$. 

The critical exponents $p_*>p_{**}$ arise from the so-called $L^r-L^\infty$-estimates for sub-solutions, which are parabolic analogues of \eqref{linftylr}. Namely, when $p>p^*$, a $L^1-L^\infty$ estimate holds true, eventually giving the intrinsic parabolic Harnack inequality. If only  $p>p_{**}$ is assumed, one can still obtain a weaker $L^r-L^\infty$ estimate with $r>1$ being the optimal exponent in the parabolic embedding
\[
L^\infty(0, T; L^2(B_R))\cap L^p(0, T; W^{1,p}(B_R))\hookrightarrow L^r(0, T; L^r(B_R)),\qquad r=p\frac{N+2}{N}
\]
which is ensured by the notion of weak solution. 

\vskip5pt

Finally, we briefly comment on the regularity theory for parabolic {\em systems}. The general measurable coefficient condition dictated by \eqref{pgr} is not enough to ensure continuity, and either some additional structure is required (the so-called {\em Uhlenbeck structure}, due to the seminal paper \cite{UH} in the elliptic setting) or regularity holds everywhere except in a small {\em singular set}. The parabolic counterpart of \cite{UH} has first been proved in \cite{DBF85} and systematized in the monograph \cite{DI} for a large class of  nonlinear parabolic system with Uhlenbeck structure. For the more recent developments on the partial regularity theory for parabolic system with general structure we refer to the memoirs \cite{DMK}, \cite{BDM}.

\subsection{Intrinsic Harnack inequalities}

 DiBenedetto and DiBenedetto \& Kwong in 
\cite{INT} and \cite{KW}  found and proved the suitable form of the parabolic Harnack inequality for the prototype equation \eqref{p_lapl}, respectively in the degenerate and singular case. The critical value $p_{*}=2N/(N+1)$ was shown to be the threshold below which no Harnack inequality, even in intrinsic form, may hold. However, comparison theorems where essential tools of the proof. A similar statement was later proved to hold for general parabolic quasilinear equations of $p$-growth in \cite{GIA} (degenerate case) and in \cite{FOR} (singular supercritical case), with no monotonicty assumption. We will now describe the results, starting from the degenerate case.

\begin{theorem}[Intrinsic Harnack inequality, degenerate case]\label{Hdeg}
Let $p\geq 2$ and $u$ be a non negative weak solution in $B_{2r}\times [-T, T]$ of \eqref{qlp} under the growth conditions \eqref{pgr}.
There exists $C>0$ and $\theta>0$, depending only on $N, p, C_{0}, C_{1}$ such that if $0<\theta \, u(0, 0)^{2-p}\, r^{p}\le T$, then
 \begin{equation}
 \label{pharnack}
C^{-1 }\sup_{B_{r}}u(\cdot, -  \theta \, u(0, 0)^{2-p}\, r^{p}) \leq u(0, 0) \leq  C \inf_{B_{r}}u(\cdot, \theta \, u(0, 0)^{2-p}\, r^{p}).
\end{equation}
\end{theorem}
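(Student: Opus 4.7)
The plan is to follow the De Giorgi--type \emph{expansion of positivity} strategy outlined in the introduction and developed in the monograph \cite{HR}, avoiding any use of comparison principles or the original alternative-based arguments. Since the growth conditions \eqref{pgr} are $p$-homogeneous in the gradient, the class of weak solutions is stable under the rescaling $(x,t)\mapsto (rx,\, u(0,0)^{2-p}r^{p}t)$ together with $u\mapsto u/u(0,0)$, so I would first use this transformation to reduce to the normalized case $r=1$, $u(0,0)=1$, with the parameter $\theta$ to be fixed at the end of the argument. This is precisely the intrinsic rescaling that makes the waiting time $\theta\, u(0,0)^{2-p}r^{p}$ look natural.

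The core technical ingredient will be an \emph{expansion of positivity lemma}: if at some time $\tau_{0}$ one has a measure-density estimate $|\{x\in B_{\rho}(x_{0}):u(x,\tau_{0})\ge k\}|\ge \alpha\,|B_{\rho}|$ for some $\alpha\in(0,1)$, then there exist universal constants $\eta,c>0$ depending only on $N,p,C_{0},C_{1},\alpha$ such that $u\ge \eta k$ on $B_{2\rho}(x_{0})$ throughout a time interval of length comparable to $k^{2-p}\rho^{p}$ beginning at $\tau_{0}+c\,k^{2-p}\rho^{p}$. Its proof combines two steps: a logarithmic estimate in the spirit of \eqref{logest}, adapted to the intrinsic parabolic geometry, which shows that the measure density persists (with a smaller $\alpha$) over an intrinsic time slab; and a De Giorgi iteration on the Caccioppoli inequality for the truncations $(u-k/2^{j})_{-}$ on cylinders of time-length $k^{2-p}\rho^{p}$, which converts the measure density into a pointwise bound on the doubled ball.

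Granted the expansion-of-positivity lemma, I would deduce the right-hand inequality in \eqref{pharnack} as follows. An intrinsic local $L^{\infty}$ bound of Moser/DiBenedetto type, together with the normalization $u(0,0)=1$, yields a measure estimate $|\{u(\cdot,\tau_{0})\ge\sigma\}\cap B_{\rho_{0}}|\ge\alpha\,|B_{\rho_{0}}|$ for some universal $\sigma,\alpha,\rho_{0}$ and some $\tau_{0}$ close to $0$. A finite iteration of the expansion-of-positivity lemma, at each step doubling the spatial radius and adding a controlled increment to the waiting time, then produces $u\ge C^{-1}$ on all of $B_{1}$ at the accumulated time $\theta$, giving the desired lower bound. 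For the left-hand inequality I would use a Harnack-chain / contradiction scheme: if $u(y_{0},-\theta)=M\gg 1$ for some $y_{0}\in B_{1}$, applying the forward inequality at $(y_{0},-\theta)$ and using that for $p\ge 2$ the intrinsic waiting time $M^{2-p}$ is \emph{shorter} than $u(0,0)^{2-p}$ places the origin inside the forward positivity set of $(y_{0},-\theta)$, forcing $u(0,0)\gtrsim M$, a contradiction.

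The hard part will be the non-homogeneity of the energy and logarithmic estimates when $p\ne 2$: the time-derivative term $\partial_{t}(u^{2}/2)$ and the spatial diffusion term $|Du|^{p}$ scale differently, so the cylinders on which the De Giorgi iteration is run must be re-calibrated at each level by the solution itself. Beyond technicalities, this is what truly dictates the appearance of $u(0,0)^{2-p}$ in the waiting time: a Harnack inequality with a universal waiting time $Cr^{p}$ would have to hold simultaneously at every level, which is ruled out by the Barenblatt profiles \eqref{barenblatt}. Keeping this intrinsic scaling coherent across the De Giorgi iteration and along the finite Harnack chain is where most of the work lies.
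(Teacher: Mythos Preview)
Your overall architecture --- intrinsic rescaling, expansion of positivity, then a Harnack-chain/contradiction argument for the backward bound --- is exactly the paper's route, and your treatment of the backward inequality is essentially identical to the paper's. Two points deserve comment.

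\textbf{A genuine gap: getting the initial measure density.} The sentence ``an intrinsic local $L^{\infty}$ bound \dots\ together with $u(0,0)=1$, yields a measure estimate $|\{u(\cdot,\tau_{0})\ge\sigma\}\cap B_{\rho_{0}}|\ge\alpha|B_{\rho_{0}}|$'' is where your plan is incomplete. An $L^{\infty}$ bound is an \emph{upper} estimate and does not by itself produce a lower measure-density bound; and if you mean to combine it with H\"older continuity to get $u\ge 1/2$ near the origin, notice that in the degenerate case the oscillation estimate \eqref{oscp>2} scales with $\max\{1,\|u\|_{L^{\infty}}\}$, which is \emph{not} controlled by $u(0,0)$ alone. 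The paper resolves this with the Landis-type maximization trick: set $\psi(\rho)=(1-\rho)^{\bar\lambda}\sup_{Q^{-}_{\rho}}u$, pick its maximizer $\rho_{0}$ and a point $(x_{0},t_{0})\in Q^{-}_{\rho_{0}}$ with $u(x_{0},t_{0})=u_{0}$, and let $r=\bar\xi(1-\rho_{0})$. Maximality forces $\sup_{\widetilde Q_{r}(x_{0},t_{0})}u\le 2u_{0}$ \emph{and} $u_{0}\,r^{\bar\lambda}\ge \bar\xi^{\bar\lambda}$. The first inequality makes the H\"older estimate usable (modulus controlled by $u_{0}$), giving $u(\cdot,t_{0})\ge u_{0}/2$ on $K_{\bar\eta r}(x_{0})$; the second guarantees that the output of the expansion of positivity stays bounded below by a universal constant when spread to scale $\rho\simeq 1$. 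Without this device your iteration has no quantitative starting point. Incidentally, the paper explicitly avoids any quantitative sup-bound in the degenerate case.

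\textbf{A technical difference in the expansion of positivity.} You propose a logarithmic estimate ``in the spirit of \eqref{logest}'' to propagate the measure density in time. The paper instead proves (Lemma~\ref{lemmapt}) that the density persists at decaying level $k(\mu)/(t+1)^{1/(p-2)}$, then removes the $t$-dependence via the change of variable $v(x,\tau)=e^{\tau/(p-2)}u(x,e^{\tau}-1)$, which turns $v$ into a nonnegative supersolution of an equation of the same type. The De Giorgi shrinking lemma plus the critical mass lemma then apply to $v$ with $t$-independent structural constants. Both routes are viable, but the paper's change of variable neatly sidesteps the inhomogeneity that makes a direct logarithmic/BMO approach awkward when $p\neq 2$.
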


Clearly, for $p=2$ we recover \eqref{pH}. For $p>2$, the waiting time is larger the smaller $u(0, 0)$ is; in other  terms  $u(0, 0)$ bounds from below $u$ on $p$-paraboloids of opening proportional to $u(0, 0)^{p-2}$. It is worth noting here two additional difficulties in the Harnack inequality theory with respect to the linear (or more generally, homogeneous) setting. While it is still true that the forward form in the quasilinear setting  implies the backward one, this is no more trivial due to the intrinsic waiting time depending on $u_{0}$. 
\vskip5pt
The Harnack inequality in the singular setting turns out to be much more rich and subtle than in the degenerate case. A natural guess would be that \eqref{pharnack} holds also in the singular case. However, consider the function
\begin{equation}
\label{ex0}
u(x, t)=(T-t)_+^{\frac{N+2}{2}}\left(a+b|x|^{\frac{2N}{N-2}}\right)^{-\frac{N}{2}},
\end{equation}
which is a bounded solution in $\R^N\times\R$ of the prototype equation \eqref{p_lapl} for any $p\in \, ]1, p_*[$, $N>2$ and suitably chosen $a, b>0$. The latter violates both the forward and backward Harnack inequality in \eqref{pharnack}, as the right hand side vanishes for sufficiently large $r$, while the left hand side goes to $+\infty$ for $r\to +\infty$. A similar phenomenon persist at the critical value $p=p_{*}$, as is shown by the entire solution 
\begin{equation}
\label{ex}
u(x, t)=\left(e^{ct}+|x|^{\frac{2N}{N-1}}\right)^{-\frac{N-1}{2}}
\end{equation}
for suitable $c>0$:  the left hand side of \eqref{pharnack} goes to $+\infty$ while the right hand one vanishes as $r\to +\infty$. It turns out that for $p\in \ ]p_{*}, 2[$, Theorem \ref{Hdeg} has a corresponding statement.

\begin{theorem}[Intrinsic Harnack inequality, singular supercritical case]\label{hsing}
Let $2>p> \frac{2N}{N+1}$ and $u$ be a non negative weak solution in $B_{4r}\times [-T, T]$ of \eqref{qlp} under the growth conditions \eqref{pgr}.
There exists $C>0$ and $\theta>0$, depending only on $N, p, C_{0}, C_{1}$ such that if $u(0, 0)>0$ and
\begin{equation}
\label{hypsing}
r^{p}\, \sup_{B_{2r}}u^{2-p}\le T,
\end{equation}
then
\begin{equation}
\label{thEsing}
C^{-1 }\sup_{B_{r}}u(\cdot, -  \theta \, u_{0}^{2-p}\, r^{p}) \leq u_{0} \leq  C \inf_{B_{r}}u(\cdot, \theta \, u_{0}^{2-p}\, r^{p}).
\end{equation}
\end{theorem}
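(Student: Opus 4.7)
The plan is to prove the two inequalities by the expansion of positivity technique, adapted to the intrinsic geometry of \eqref{qlp}. A preliminary intrinsic rescaling $v(y, s) := u(r y, u_0^{2-p} r^p s)/u_0$ with $u_0 := u(0, 0) > 0$ reduces matters to the following: $v$ solves a structurally identical equation on $B_4 \times [-T', T']$, hypothesis \eqref{hypsing} transforms into $T' \geq \sup_{B_2} v^{2-p}$, $v(0, 0) = 1$, and the target \eqref{thEsing} becomes
\[
C^{-1} \sup_{B_1} v(\cdot, -\theta) \;\leq\; 1 \;\leq\; C \inf_{B_1} v(\cdot, \theta)
\]
for universal $\theta, C > 0$ depending only on $N, p, C_0, C_1$.

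\emph{Forward bound.} This is obtained via a three-step expansion of positivity. First, by continuity of $v$ at $(0,0)$ together with a local $L^r$-$L^\infty$ estimate for subsolutions (available here because the supercritical assumption $p > p_*$ is stronger than $p > p_{**}$), one produces $\rho_0 \in (0, 1)$ and $\alpha \in (0, 1)$ such that
\[
\big| B_{\rho_0} \cap \{ v(\cdot, 0) \geq 1/2 \} \big| \;\geq\; \alpha \, |B_{\rho_0}|.
\]
Second, this measure information is propagated in time using Caccioppoli and logarithmic energy estimates applied to the truncations $(k - v)_+$, yielding a similar lower bound for the measure of $\{v(\cdot, s) \geq \eta\}$ in $B_2$ for every $s \in [0, \theta]$, with universal $\eta, \theta$. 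Third, a De Giorgi measure-to-pointwise lemma upgrades this to $\inf_{B_1} v(\cdot, \theta) \geq C^{-1}$. It is precisely in the time-propagation step that the singular-case hypothesis $T' \geq \sup_{B_2} v^{2-p}$ enters: the rescaled equation produces diffusion and transport terms whose size is dictated by the pointwise maximum of $v$, not merely by $v(0,0)$, and without such an a priori bound the examples \eqref{ex0} and \eqref{ex} would rule out the conclusion.

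\emph{Backward bound.} Once the forward bound is in hand, the backward one follows by a contradiction-and-iteration argument. Assume $\sup_{B_1} v(\cdot, -\theta) \geq K$ for a large constant $K$ to be chosen, and pick $y_0 \in \bar B_1$ with $M := v(y_0, -\theta) \geq K/2$. Applying the forward bound centered at $(y_0, -\theta)$ on the intrinsic cylinder of radius $\bar r := (\theta/M^{2-p})^{1/p}$, which is small when $M$ is large, one obtains $v \geq M/C$ on $B_{\bar r}(y_0)$ at time $-\theta + \theta M^{2-p} \bar r^p = 0$. A short chain of such applications, along a curve of comparable values joining $y_0$ to $0$ (or, equivalently, a finite De Giorgi covering of $B_1$ at intrinsic scales), yields $v(0, 0) \geq M / C^n$ for some universal $n$, contradicting $v(0,0) = 1$ for $K$ sufficiently large.

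\emph{Main obstacle.} The decisive and most delicate step is the time-propagation of positivity in the singular regime: the natural intrinsic scale is dictated by the solution value at a single point, but in the fast-diffusion range $p<2$ the diffusion coefficient blows up where $|Dv|$ is small, and the solution can redistribute rapidly. The asymmetric hypothesis \eqref{hypsing}, involving $\sup_{B_{2r}} u^{2-p}$ rather than $u_0^{2-p}$, is exactly what is needed to dominate these effects, and it threads through every step of the expansion of positivity. The threshold $p > p_* = 2N/(N+1)$ enters via the $L^r$-$L^\infty$ estimates, without which the measure-to-pointwise conversion cannot be carried out with constants independent of $u$.
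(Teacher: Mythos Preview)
Your outline has the right overall architecture (intrinsic rescaling, expansion of positivity, forward then backward), but the forward bound as sketched has a genuine gap, and it misses precisely the ingredient the paper identifies as decisive in the singular range.

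\textbf{The largeness point.} Your ``first step'' claims that continuity of $v$ at $(0,0)$ together with an $L^{r}$--$L^{\infty}$ bound produces a \emph{universal} $\rho_{0}$ and $\alpha$ with $|B_{\rho_{0}}\cap\{v(\cdot,0)\ge 1/2\}|\ge\alpha|B_{\rho_{0}}|$. It does not: continuity gives such a ball, but its radius depends on the modulus of continuity, which in turn depends on $\|v\|_{\infty}$ on a surrounding \emph{space--time} cylinder---information you do not have. The paper handles this via the Landis--type selection: maximize $\psi(\rho)=(1-\rho)^{\bar\lambda}\sup_{K_{\rho}}u(\cdot,0)$ to locate a point $x_{0}$ and scale $r$ where simultaneously $u_{0}\,r^{\bar\lambda}$ is bounded below and $\sup_{K_{r}(x_{0})}u(\cdot,0)\le 2u_{0}$. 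This is what makes the subsequent constants universal.

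\textbf{The $L^{1}$-Harnack inequality.} Even after locating $x_{0}$, one only controls $u$ on a \emph{spatial} slice; to apply the H\"older estimate (and then expand positivity) one needs a sup bound on a space--time box. In the degenerate case this comes for free from the geometry, but for $p<2$ it does not, and the paper singles out the $L^{1}$-Harnack estimate (Theorem~\ref{sl1}, quoted from \cite{HR}) as the tool that converts the spatial control $\sup_{K_{r}(x_{0})}u(\cdot,0)\le 2u_{0}$ into $\sup_{K_{r/2}(x_{0})\times[-a,a]}u\le \bar c\,u_{0}$. Your time-propagation step (``Caccioppoli and logarithmic energy estimates'') is exactly what works for $p\ge 2$ but is not known to suffice here; the paper is explicit that the singular supercritical case is ``considerably more difficult'' and ``crucially relies'' on this $L^{1}$ estimate. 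Relatedly, the threshold $p>p_{*}$ does not enter through the $L^{r}$--$L^{\infty}$ bound (which only needs $p>p_{**}=2N/(N+2)$) but through the exponent $N(p-2)+p$ in Theorem~\ref{sl1}, which must be positive.

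Your backward argument is closer in spirit to the paper's (which also leverages the forward inequality at a shifted center), though the paper runs it as a direct application at $x_{*}=\arg\max_{K_{R}}u(\cdot,0)$ rather than an iteration.
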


Assumption \eqref{hypsing} seems technical, however no proof is known at the moment without it. Following the procedure in \cite{KW}, it can be removed for  solutions of monotone equations fullfilling \eqref{monot}  (and thus obeying the comparison principle). The proof of the intrinsic Harnack inequality for supercritical singular equations is considerably more difficult than in the degenerate case and crucially relies on the following $L^{1}$-form of the Harnack inequality, first observed in \cite{HP} for the porous medium equation, which actually holds in the full singular range.

\begin{theorem}[$L^{1}$-Harnack inequality for singular equations]
Let $p\in \ ]1, 2[$ and $u$ be a non negative weak solution in $B_{4r}\times [0, T]$ of \eqref{qlp} under the growth conditions \eqref{pgr}.
There exists $C>0$ depending only on $N, p, C_{0}, C_{1}$ such that
\[
\sup_{t\in [0, T]}\int_{B_{r}} u(x, t)\, dx\le C\, \inf_{t\in [0, T]}\int_{B_{2r}} u(x, t)\, dx +C\, \left(T/r^{p+N(p-2)}\right)^{\frac{1}{2-p}}.
\]
\end{theorem}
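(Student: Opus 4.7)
I would follow the Herrero–Pierre strategy, which consists in producing a Bernoulli-type differential inequality for the quantity
\[
Y(t) \;:=\; \int_{\R^{N}} u(x,t)\,\eta(x)^{q}\,dx,
\]
where $\eta \in C^{\infty}_{c}(B_{2r})$ is a spatial cutoff with $\eta\equiv 1$ on $B_{r}$, $0\le\eta\le 1$, $|D\eta|\le C/r$, and $q$ is a sufficiently large exponent to be fixed later. Plugging $\varphi(x)=\eta(x)^{q}$ as test function in the weak formulation of \eqref{qlp} and using the upper bound in \eqref{pgr} together with $|D\eta^{q}|\le Cq\,\eta^{q-1}/r$, one gets
\[
|Y'(t)| \;=\; \Bigl|\int A(x,u,Du)\cdot D\eta^{q}\,dx\Bigr| \;\le\; \frac{C}{r}\int |Du|^{p-1}\,\eta^{q-1}\,dx.
\]

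The core step is to convert the gradient integral into a power of $Y(t)$. My goal is the estimate
\[
\int |Du|^{p-1}\,\eta^{q-1}\,dx \;\le\; C\,r^{\,1-p+N(2-p)}\,Y(t)^{\,p-1},
\]
which would give the Bernoulli ODE $|Y'(t)|\le C\,r^{-(p+N(p-2))}\,Y(t)^{\,p-1}$. The natural way to proceed is via Hölder's inequality with the conjugate pair $1/(p-1)$ and $1/(2-p)$, both well defined since $p\in(1,2)$: this yields a geometric factor $|B_{2r}|^{\,2-p}\simeq r^{N(2-p)}$ and reduces matters to an $L^{1}$-type gradient integral. The remaining gradient integral is then controlled by a Caccioppoli estimate (obtained by testing \eqref{qlp} with $u\,\eta^{q}$ and using the lower bound in \eqref{pgr}), after which the sub-linearity of $s\mapsto s^{p-1}$ for $p<2$ together with Jensen/Hölder reduces the $u$-integrals arising to the single quantity $Y(t)$. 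The large choice of $q$ serves to absorb the various $\eta$-weights produced along the way.

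Once the Bernoulli ODE is at hand, I rewrite it as
\[
\Bigl|\frac{d}{dt}\,Y(t)^{\,2-p}\Bigr| \;\le\; C\,(2-p)\,r^{-(p+N(p-2))},
\]
and integrate over an arbitrary interval $[t_{1},t_{2}]\subseteq [0,T]$ to obtain
\[
Y(t_{2})^{\,2-p}\;\le\; Y(t_{1})^{\,2-p} + C\,T\,r^{-(p+N(p-2))}.
\]
Raising to the $1/(2-p)$-th power and using subadditivity $(a+b)^{1/(2-p)}\le C(a^{1/(2-p)}+b^{1/(2-p)})$ (licit since $1/(2-p)\ge 1$), one obtains
\[
Y(t_{2}) \;\le\; C\,Y(t_{1}) + C\,\bigl(T/r^{\,p+N(p-2)}\bigr)^{1/(2-p)}.
\]
Since $\eta\equiv 1$ on $B_{r}$ and $\eta\le 1$ everywhere, we have $\int_{B_{r}}u(\cdot,t)\,dx \le Y(t)\le \int_{B_{2r}}u(\cdot,t)\,dx$ for every $t$, and taking $\sup_{t_{2}\in[0,T]}$ on the left and $\inf_{t_{1}\in[0,T]}$ on the right closes the proof.

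The main obstacle is the Bernoulli step: $|Du|^{p-1}$ is not itself a divergence, so one cannot simply integrate by parts to bring $u$ in. The argument instead relies on the interplay between three facts — the sub-linearity of $s\mapsto s^{p-1}$ for $p<2$, the sign $u\ge 0$, and the $p$-Laplacian Caccioppoli inequality — and the bookkeeping of $r$-exponents is delicate: it is precisely the matching of the geometric factor $r^{N(2-p)}$ from Hölder with the $r^{-p}$ factor from Caccioppoli that produces the characteristic singular-parabolic scaling $T/r^{p+N(p-2)}$ on the right-hand side.
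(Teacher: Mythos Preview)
The paper does not actually prove this theorem. It is stated in Section~4.3 as a tool (with attribution to \cite{HP} for the porous-medium analogue), but neither Section~4 nor Section~5 contains an argument; the only result in Section~5.5 that is quoted without proof is the $L^\infty$--$L^1$ estimate \eqref{l1har}, which is a different statement.

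Your overall plan --- the Herrero--Pierre Bernoulli strategy, ending with the integration of $|dY^{2-p}/dt|\le C r^{-(p+N(p-2))}$ and the sandwich $\int_{B_r}u\le Y\le\int_{B_{2r}}u$ --- is indeed the standard route, and your final two paragraphs are correct. The gap is in the ``core step''. Two problems arise. First, the Caccioppoli inequality you invoke (test function $u\,\eta^{q}$) is a \emph{space--time} estimate; it cannot produce a bound on $\int_{B_{2r}}|Du(\cdot,t)|^{p-1}\eta^{q-1}\,dx$ at a fixed time $t$, so there is no pointwise-in-$t$ ODE $|Y'(t)|\le C\,r^{-p+N(2-p)}Y(t)^{p-1}$ to be had this way. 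Second, even in the time-integrated form, testing with $u\,\eta^{q}$ generates the boundary term $\int u^{2}(\cdot,t_1)\eta^{q}\,dx$, and no H\"older/Jensen manipulation reduces $\int u^{2}$ to a power of $Y=\int u\,\eta^{q}$: that inequality goes the wrong way. The sub-linearity of $s\mapsto s^{p-1}$ is exactly what makes the PME case work (there one integrates $\Delta u^{m}$ by parts \emph{twice} and lands on $\int u^{m}$ with $m<1$), but for the $p$-Laplacian the nonlinearity sits on $Du$, the second integration by parts is unavailable, and the $L^{2}$-in-space term from $u_t$ is the genuine obstruction.

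The fix in the literature (e.g.\ \cite[Appendix B]{HR}) does not use the naive Caccioppoli from $u\,\eta^{q}$. One works instead with test functions of power type $(u+\varepsilon)^{\alpha}\eta^{q}$ with $\alpha$ chosen so that the time-boundary term carries the correct homogeneity in $u$, combined with an iteration over the nested balls $B_{r}\subset B_{2r}\subset B_{4r}$; this is why the hypothesis places the solution in $B_{4r}\times[0,T]$ rather than $B_{2r}\times[0,T]$, a piece of the statement your argument never uses.
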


Notice that $p+N(p-2)>0$ if and only if $p>p_{*}$. Thanks to this deep result, an elliptic form of the intrinsic Harnack inequality can be proved.

\begin{theorem}[Elliptic Harnack inequality for singular supercritical equations]\label{IEH}
Let $p\in \ ]\frac{2N}{N+1}, 2[$ and $u$ be a non negative weak solution in $B_{4r}\times [-T, T]$ of \eqref{qlp} - \eqref{pgr}.
There exists $C>0$ and $\theta>0$, depending on $N, p, C_{0}, C_{1}$ such that if $u(0, 0)>0$ and \eqref{hypsing} holds, then
\begin{equation}
\label{fbh}
C^{-1}\sup_{Q_{r}}u\leq u(0, 0)\le C\,\inf_{Q_{r}}u,\qquad Q_{r}=B_{r}\times [-\theta\, u(0, 0)^{2-p}\, r^{p}, \theta\, u(0, 0)^{2-p}\, r^{p}].
\end{equation}
\end{theorem}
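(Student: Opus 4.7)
The elliptic form \eqref{fbh} strengthens the parabolic intrinsic Harnack of Theorem \ref{hsing}: the latter controls $u$ only on the two slices $B_r\times\{\pm\tau_0\}$ with $\tau_0:=\theta\, u_0^{2-p}\, r^p$, whereas \eqref{fbh} asks for both bounds throughout the whole cylinder $Q_r$. My strategy is to fix a small parameter $\theta'\in (0,\theta)$, depending only on $N,p,C_0,C_1$, and prove \eqref{fbh} on $Q_r\subset B_r\times[-\tau_0,\tau_0]$ by propagating the slice estimates of Theorem \ref{hsing} to the full cylinder.

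For the lower bound $u_0\le C\inf_{Q_r}u$, I would start from $u(\cdot,\tau_0)\ge u_0/C$ on $B_r$ and propagate it backward in time via a finite chain of forward intrinsic Harnack inequalities. Given $(y,t)\in Q_r$, I would consider a sequence $(y_j,s_j)_{j=0}^{k}$ with $(y_0,s_0)=(y,t)$ and $s_k=\tau_0$, and apply Theorem \ref{hsing} at each $(y_j,s_j)$ with radius $\rho_j\simeq r/k$ and intrinsic time step $s_{j+1}-s_j\simeq \theta\, u(y_j,s_j)^{2-p}\rho_j^p$; each step costs a multiplicative constant $C$, giving $u(y,t)\ge u_0\, C^{-k-1}$ after $k=k(N,p,C_0,C_1)$ iterations. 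Smallness of $\theta'$ ensures admissibility at every step: $B_{4\rho_j}(y_j)\subset B_{4r}$ and the local version of \eqref{hypsing} survives throughout.

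For the upper bound $\sup_{Q_r}u\le C u_0$, the same chaining idea fails, because the forward intrinsic Harnack only propagates \emph{lower} bounds forward in time, never upper bounds. Instead I would invoke the $L^1$-Harnack inequality for singular equations: integrating the slice estimate $u(\cdot,-\tau_0)\le C u_0$ on $B_r$ gives $\int_{B_r}u(x,-\tau_0)\, dx\le C u_0 |B_r|$, and the $L^1$-Harnack, applied on $B_{r/2}\times[-\tau_0,\tau_0]$ with additive tail controlled by \eqref{hypsing} (where the supercriticality $p>p_*$ is essential for the tail to scale correctly), yields
\[
\sup_{t\in[-\tau_0,\tau_0]}\int_{B_{r/2}}u(x,t)\, dx\le C' u_0 |B_{r/2}|.
\]
After an intrinsic rescaling $\tilde u(x,s):=u(x,u_0^{2-p}s)/u_0$, this becomes a unit-scale $L^1$-bound on a nonnegative subsolution $\tilde u$ of an equation with the same structure as \eqref{qlp}--\eqref{pgr}; the parabolic $L^1\to L^\infty$ estimate (valid precisely for $p>p_*$) converts it to the sought pointwise bound on $Q_r$ after a covering argument.

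The hard part will be this upper bound, because it genuinely requires the interplay of three distinct ingredients --- the $L^1$-Harnack, an intrinsic rescaling, and the $L^1\to L^\infty$ estimate for subsolutions --- with the supercriticality $p>p_*$ entering in two separate places. By contrast the lower bound is a more mechanical chaining of parabolic Harnacks, but still demands careful tuning of $\theta'$ to keep the geometry of the iterated sub-cylinders inside $B_{4r}\times[-T,T]$ and the accumulated constant $C^{k+1}$ under control.
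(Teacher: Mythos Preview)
Your lower-bound argument does not work as stated. Applying the forward half of Theorem~\ref{hsing} at $(y_j,s_j)$ yields $u(y_j,s_j)\le C\,u(y_{j+1},s_{j+1})$, so chaining from $(y_0,s_0)=(y,t)$ up to $s_k=\tau_0$ gives only $u(y_k,\tau_0)\ge C^{-k}u(y,t)$. Combining this with the known \emph{lower} bound $u(y_k,\tau_0)\ge u_0/C$ produces nothing; you would need an \emph{upper} bound on $u(y_k,\tau_0)$ to conclude. More fundamentally, forward Harnack chains can only propagate lower bounds \emph{forward} in time, so there is no way to reach any $(y,t)$ with $t<0$ starting from the only points where a lower bound is available (namely $(0,0)$ and the slice $t=\tau_0$). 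A second difficulty is that each step length $s_{j+1}-s_j=\theta\,u(y_j,s_j)^{2-p}\rho_j^p$ depends on the unknown value $u(y_j,s_j)$: if this value happens to be small the step becomes long, if large it becomes short, so one cannot fix $k$ depending only on the data and guarantee the chain lands at $\tau_0$.

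The paper does not derive \eqref{fbh} from Theorem~\ref{hsing}; in fact the logical order is the reverse, and \eqref{thEsing} is a consequence of \eqref{fbh}. The proof of \eqref{fbh} is direct. For the $\inf$ bound one works on the single slice $t=0$, picks the largeness point $x_0$ via $\psi(\rho)=(1-\rho)^{\bar\lambda}\sup_{K_\rho}u(\cdot,0)$, and then uses the $L^1$--$L^\infty$ estimate (Theorem~\ref{sl1}, which packages the $L^1$-Harnack together with the sup bound) to control $u$ on a small intrinsic cylinder around $(x_0,0)$. H\"older regularity~\eqref{oscp<2} converts this into a genuine space-\emph{time} positivity box $K_{\bar\eta r}(x_0)\times[-\bar\eta^p a,\bar\eta^p a]$ on which $u\ge u_0/2$. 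The crucial point is that one can now launch the expansion of positivity (Lemma~\ref{epossing}) from \emph{every} time level $|t|\le \bar\eta\,u_0^{2-p}r^p$ in this box, and since in the singular range the expansion reaches full spatial spread within a \emph{bounded} increment of time (the $\gamma_i$ are small), the union of the resulting positivity sets covers all of $K_1\times[-\bar\theta,\bar\theta]$, negative times included. This is how the elliptic character enters, and it is exactly the mechanism your chaining argument fails to capture. Your instinct for the $\sup$ bound is closer to the mark: the paper also funnels everything through Theorem~\ref{sl1}, after first reducing to a slice estimate $\sup_{K_R}u(\cdot,0)\le C\,u(0,0)$ obtained by rerunning the $\inf$ argument centred at the maximum point $x_*$.
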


Recall that an elliptic form of the Harnack inequality such as \eqref{fbh} cannot not hold for the classical heat equation. This forces the constants appearing in the previous theorem to blow-up as $p\uparrow 2$, hence,  while this last form of the intrinsic Harnack inequality clearly implies \eqref{thEsing}, the constants in \eqref{thEsing} are instead stable as $p\uparrow 2$.  The previous examples also show that both constants must blow-up for $p\downarrow p_{*}$. The same comments following Theorem \ref{hsing} on the r\^ole of hypothesis \eqref{hypsing} can be made.

In the subcritical case, different forms of the Harnack inequality have been considered. Here we mention the one obtained in \cite{FV} generalizing to monotone operators a result of Bonforte and V\'azquez \cite{BON}, \cite{VAZ} on the porous medium equation.

\begin{theorem}[Subcritical case]
Let $p\in \ ]1, 2[$, $u$ be a positive, locally bounded weak solution in $B_{2r}\times [-T, T]$ of \eqref{qlp} under the growth conditions \eqref{pgr} and the monotonicty assumption \eqref{monot}. For any $s\geq 1$ such that $\lambda_{s}:=N\, (p-2)+p\,s>0$  there exists $C, \delta, \theta>0$, depending on $N, p, s, C_{0}, C_{1}$ such that letting
\[
\widetilde{Q}_{r}(u)=B_{r}\times \left[\theta\, \big(\intmed_{B_{r}} u(x, 0)\, dx\big)^{2-p}r^{p}, \theta\, \big(\intmed_{B_{r}} u(x, 0)\, dx\big)^{2-p}(2r)^{p}\right],
\]
if $u(0, 0)>0$ and $\widetilde{Q}_{2r}(u)\subseteq B_{2r}\times [0, T]$, then 
\begin{equation}
\label{gjh}
\sup_{\widetilde{Q}_{r}(u)} u\leq C \, A_{u}^{\delta}\,\inf_{\widetilde{Q}_{r}(u) } u,\qquad A_{u}=\left[\dfrac{\intmed_{B_{r}} u(x, 0)\, dx}{\left(\intmed_{B_{r}}u^{s}(x, 0)\, dx\right)^{\frac 1 s}}\right]^{\frac{p\, s}{\lambda_{s}}}
\end{equation}
\end{theorem}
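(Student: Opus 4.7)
The proof will combine three ingredients developed in this section: the $L^1$-Harnack inequality just stated, a Moser-type $L^s$--$L^\infty$ smoothing estimate (whose validity hinges on the sign condition $\lambda_s > 0$), and the comparison principle afforded by the monotonicity hypothesis \eqref{monot}. Set $M := \intmed_{B_r} u(\cdot, 0)\, dx$, $M_s := \bigl(\intmed_{B_r} u^s(\cdot, 0)\, dx\bigr)^{1/s}$, and $\tau := M^{2-p} r^p$, so that $\widetilde Q_r(u) = B_r \times [\theta\tau, 2^p\theta\tau]$ sits entirely in $\{t > 0\}$ and $A_u = (M/M_s)^{ps/\lambda_s}$.

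The first step is to propagate the mass forward in time using the $L^1$-Harnack inequality on $\widetilde Q_{2r}(u)$. For $\theta$ chosen small enough (depending on $N, p, s$), the error term $(T/r^{p+N(p-2)})^{1/(2-p)}$ in that inequality can be absorbed into $M$, which yields the uniform comparability
\[
\tfrac{1}{C}\, M \ \le\ \intmed_{B_r} u(\cdot, t)\, dx \ \le\ C\, M, \qquad t\in [0, 2^p\theta\tau].
\]
This is where the assumption $\widetilde Q_{2r}(u) \subseteq B_{2r}\times [0,T]$ enters crucially, and it guarantees that $M$ is a robust ``height'' for $u$ on the entire intrinsic parabolic cylinder.

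For the upper bound I would iterate Caccioppoli-type energy estimates on $\widetilde Q_r(u)$ \`a la Moser, starting from $u\in L^s$. The iteration is intrinsically scaled by $\tau$, and the exponent $\lambda_s = N(p-2)+ps$ arises as the balance between the spatial and temporal scales; the hypothesis $\lambda_s > 0$ is exactly what makes the geometric series of iteration exponents converge. Plugging $\tau = M^{2-p}r^p$ into the resulting smoothing estimate gives, after direct algebra,
\[
\sup_{\widetilde Q_r(u)} u \ \le\ C\,\tau^{-N/\lambda_s}\, r^{Np/\lambda_s}\, M_s^{ps/\lambda_s} \ =\ C\, M\, (M_s/M)^{ps/\lambda_s}.
\]
For the lower bound, the mass comparability above forces at every time $t\in [\theta\tau, 2^p\theta\tau]$ the superlevel set $\{u(\cdot,t)\ge \eta M\}\cap B_r$ to have measure at least $\eta' |B_r|$ for universal $\eta, \eta'>0$. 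A De~Giorgi-type expansion-of-positivity argument, using the monotonicity \eqref{monot} to compare $u$ with a suitably shrunk Barenblatt subsolution of mass comparable to $M$, then upgrades this measure-theoretic positivity to the pointwise bound $\inf_{\widetilde Q_r(u)} u \ge C^{-1} M$. Dividing the two displays produces \eqref{gjh} with $\delta$ read off from the exponent $ps/\lambda_s$ in the definition of $A_u$.

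The main obstacle is the lower bound in the third step. In the subcritical range one has $\lambda_1\le 0$, so no pointwise $L^1$--$L^\infty$ estimate is available and the smoothing cannot simply be inverted; instead, the $L^1$ lower bound must be transferred into a pointwise one by comparison with a Barenblatt-type subsolution, which is precisely why the monotonicity hypothesis \eqref{monot} cannot be removed here, in contrast with the supercritical Theorem~\ref{hsing}. The examples \eqref{ex0} and \eqref{ex}, for which $M_s/M\to\infty$ as the profiles concentrate, show moreover that the corrective factor $A_u^\delta$ in \eqref{gjh} is genuinely necessary and cannot be replaced by a universal constant as in \eqref{thEsing}.
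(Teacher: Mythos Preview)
The paper does not actually prove this theorem: it is stated in Section~4.3 as a result from the literature, attributed to \cite{FV} (extending \cite{BON,VAZ}), and no proof is given anywhere in the paper. Section~5.5 treats only the singular \emph{supercritical} range $p\in\ ]p_*,2[$, and explicitly relies on Theorem~\ref{sl1}, which requires $p>p_*$; the subcritical case is left entirely to the cited references. So there is no ``paper's own proof'' to compare your proposal against.

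That said, your outline is broadly aligned with the strategy of \cite{FV,BON,VAZ}: forward $L^1$-control of the mass, an $L^s$--$L^\infty$ smoothing estimate producing the $A_u$ factor, and a comparison argument for the pointwise lower bound. Two points deserve tightening. First, the $L^1$-Harnack inequality as stated compares $\int_{B_r}u$ with $\int_{B_{2r}}u$ at different times, not $\int_{B_r}u$ with itself, so the ``uniform comparability'' you claim needs the doubling between radii handled explicitly. Second, and more seriously, your lower-bound step is only a sketch: in the subcritical range there is no Barenblatt profile (they cease to exist for $p\le p_*$, as the paper notes), so ``comparison with a suitably shrunk Barenblatt subsolution'' is not available as written. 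The actual argument in \cite{FV} builds an explicit subsolution by other means and this is the technical heart of the proof; you have identified correctly that monotonicity is essential precisely here, but the construction itself is missing from your proposal.
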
 

Notice that \eqref{gjh} is an elliptic Harnack inequality for later times, intrinsic in terms of the size of $u$ at the initial time $t=0$. In the singular supercritical case one can take $r=1$ and thus $A_{u}\equiv 1$ in the previous statement to recover partially Theorem \ref{IEH}. The main weakness of  \eqref{gjh} lies in the dependence of the Harnack constant from the solution itself. In general, a constant depending on $u$ won't allow to deduce H\"older continuity but, as noted in \cite{FV}, the peculiar structure of $A_{u}$ permits such a deduction.
 
Other weaker forms not requiring the monotonicity assumption \eqref{monot} are available (see \cite{DGV}), however the complete picture in the subcritical case is not completely clear up to now.

\subsection{Liouville theorems} 

As for the classical heat equation, a one sided bound is not sufficient to ensure triviality of the solutions of the prototype equation \eqref{p_lapl}. Indeed, a suitable positive multiple of the function 
\begin{equation}
\label{exl}
u(x, t)=(1-x+ct)_{+}^{\frac{p-1}{p-2}}
\end{equation}
solves \eqref{p_lapl} on $\R\times \R$ whenever $c>0$ and $p>2$. As is natural with parabolic Liouville theorems, the natural setting is the one of ancient solutions and it turns out a two-sided bound at a fixed time is sufficient. The basic tools to prove the following results are the previously discussed Harnack inequalities and the following results are contained in \cite{LIOU}.

\begin{theorem}
Let $p>2$ and $u$ be a non-negative solution of 
\begin{equation}
\label{po}
u_{t}={\rm div}(A(x, u, Du))\qquad \text{on $\R^{N}\times \, ]-\infty, T[$}
\end{equation}
under the growth condition \eqref{pgr}.
If for some $t_{0}<T$, $u(\cdot, t_{0})$ is bounded above, then $u$ is constant.
\end{theorem}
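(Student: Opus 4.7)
The plan is to combine the forward and backward directions of the intrinsic Harnack inequality of Theorem \ref{Hdeg} with the ancient character of the solution, i.e. the availability of arbitrarily large time windows in the past. The favourable fact throughout is that $p>2$, so that the intrinsic waiting time $\theta u^{2-p} r^{p}$ scales well under an a priori upper bound on $u$.

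\textbf{Step 1 (A priori boundedness on the past strip).} I first upgrade the assumption $u(\cdot, t_{0})\le M$ to a global bound on $\R^{N}\times\,]-\infty,t_{0}]$. Pick any $(y,s)$ in this strip: if $u(y,s)=0$ there is nothing to prove, otherwise I apply the forward intrinsic Harnack inequality \eqref{pharnack} at $(y,s)$, choosing the radius $r$ so that the forward waiting time exactly reaches $t_{0}$, namely
\[
\theta\, u(y,s)^{2-p}\, r^{p}=t_{0}-s.
\]
This is permitted since $u$ is defined on the whole past (so the backward side of the parabolic cylinder in Theorem \ref{Hdeg} is harmless) and $t_{0}<T$. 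Theorem \ref{Hdeg} then yields $u(y,s)\le C\inf_{B_{r}(y)}u(\cdot,t_{0})\le CM$, and consequently $\bar M:=\sup_{\R^{N}\times\,]-\infty,t_{0}]}u\le CM<\infty$.

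\textbf{Step 2 (Constancy on each time slice).} Because $u\le\bar M$ and $p>2$, one has $u^{2-p}\ge\bar M^{2-p}$, and the intrinsic waiting time is bounded below by $\theta\bar M^{2-p}r^{p}$. This allows me to connect any two points of the past strip by a single Harnack application. Precisely, for any $(x,s)$ with $u(x,s)>0$ and any $y\in\R^{N}$, choosing $r$ so that $r^{p}=(t_{0}-s)u(x,s)^{p-2}/\theta$ and observing that $|x-y|\le r$ as soon as $t_{0}-s\ge\theta |x-y|^{p}\bar M^{2-p}$ (by the uniform bound on $u$), I obtain
\[
u(x,s)\le C\inf_{B_{r}(x)}u(\cdot,t_{0}).
\]
Letting $s\to-\infty$ the ball $B_{r}(x)$ invades $\R^{N}$, so $\limsup_{s\to-\infty}u(x,s)\le C\inf_{\R^{N}}u(\cdot,t_{0})$. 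Dually, applying the backward inequality in \eqref{pharnack} at each point $(x_{0},t_{0})$ and letting $r\to\infty$ yields $\sup_{\R^{N}}u(\cdot,s)\le C u(x_{0},t_{0})$ for $s$ sufficiently negative, hence for each fixed $x_{0}$,
\[
\limsup_{s\to-\infty}\sup_{\R^{N}}u(\cdot,s)\le C u(x_{0},t_{0}).
\]
Taking infimum over $x_{0}\in\R^{N}$ in the last inequality and comparing with the previous one pins the oscillation of $u$ on each sufficiently negative slice to zero; propagating this information forward via \eqref{pharnack} forces $u(\cdot,t)$ to be constant for every $t\le t_{0}$.

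\textbf{Step 3 (From spatial constancy to full constancy).} Once $u(\cdot,t)\equiv c(t)$ for each $t\le t_{0}$, plugging a constant-in-space test function into the weak form of \eqref{po} gives $c'(t)=0$, hence $u\equiv c$ on $\R^{N}\times\,]-\infty,t_{0}]$. The extension to $[t_{0},T[\,$ follows by uniqueness for the forward Cauchy problem with constant Cauchy datum, or by repeating the whole argument at a time $t_{1}\in\,]t_{0},T[\,$ now using that $u(\cdot,t_{0})\equiv c\le M$.

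\textbf{Main obstacle.} The difficulty is entirely in Step 2: since $A(x,u,Du)$ depends on $u$, one cannot subtract a constant from $u$ to reduce to the case $\inf u=0$, so the Harnack-based propagation must be carried out on $u$ itself. The crucial interplay is between the intrinsic waiting time $\theta u^{2-p}$, which shrinks where $u$ is large in the degenerate regime, and the a priori bound $\bar M$ from Step 1, which allows to invert this and trade long time spans in the ancient strip for arbitrarily large spatial radii. This is why the ancient character of $u$ is essential and why \eqref{exl} cannot be ruled out without a two-sided bound on some slice.
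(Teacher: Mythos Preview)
Your Step 1 is correct and is indeed the right opening move: it upgrades the one-slice bound to a global bound $u\le\bar M$ on $\R^{N}\times\,]-\infty,t_{0}]$, exploiting exactly the favourable scaling of the intrinsic waiting time when $p>2$.

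The gap is in Step 2. Both Harnack applications you perform---forward from $(x,s)$ and backward from $(x_{0},t_{0})$---go in the \emph{same} direction: they yield upper bounds on past values in terms of future ones. After taking limits and infima you obtain, at best,
\[
\limsup_{s\to-\infty}\sup_{\R^{N}}u(\cdot,s)\le C\,\inf_{\R^{N}}u(\cdot,t_{0}),
\]
which is an upper bound on $\sup u(\cdot,s)$, not a control on ${\rm osc}\,u(\cdot,s)$. You never produce a matching \emph{lower} bound on $\inf u(\cdot,s)$, so nothing forces the oscillation to vanish. This is precisely the obstruction you flag in your last paragraph---you cannot subtract $\inf u$ and re-apply Harnack---but your argument does not actually circumvent it: a Harnack inequality $\sup\le C\inf$ with $C>1$ never by itself implies $\sup=\inf$. (Compare the elliptic case: Harnack on $\R^{N}$ gives $\sup_{\R^{N}}u\le C\inf_{\R^{N}}u$, which is \emph{not} a Liouville theorem until you apply it to $u-\inf u$.)

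The fix is to replace Step 2 by the oscillation estimate \eqref{oscp>2} from Theorem~\ref{holdp>2}, which, unlike Harnack, applies to \emph{any} bounded solution and is invariant under addition of constants. From Step 1 you have $\|u\|_{L^{\infty}(Q_{R}^{-}(\bar T))}\le\bar M$ for every $R$, whence
\[
{\rm osc}\big(u(\cdot,t),K_{r}\big)\le\bar C\max\{1,\bar M\}\,(r/R)^{\bar\alpha}\longrightarrow 0\qquad\text{as }R\to\infty,
\]
for every $t\le t_{0}$; this gives $u(\cdot,t)\equiv c(t)$ directly. Your Step~3 then shows $c$ is constant in $t$. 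A smaller issue: in Step~3, the appeal to ``uniqueness for the forward Cauchy problem'' is unavailable under \eqref{pgr} alone (no monotonicity is assumed), so the extension past $t_{0}$ needs a separate argument.
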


Notice that no monotonicity assumption on the principal part of the operator is needed. An optimal Liouville condition such as the one of Theorem \ref{widder} is unknown and clearly the example in \eqref{exl} shows that it must involve polynomial growth condition instead of a sub-exponential one. For the prototype parabolic $p$-Laplacian equation, a polynomial growth condition on both $x$ and $t$ more in the spirit of \cite{SZ} is considered in \cite{TU}.

On the complementary side, boundedness for {\em fixed} $x_{0}$ can also be considered, yielding:
\begin{theorem}
Let $p>2$ and $u$ be a nonnegative solution  in $\R^{N}\times \R$ of \eqref{po}, \eqref{pgr}. If 
\[
\limsup_{t\to +\infty} u(x_{0}, t)<+\infty\qquad \text{for some $x_{0}\in \R^{N}$},
\]
$u$ is constant.
\end{theorem}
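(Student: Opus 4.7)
The plan is to reduce the statement to the preceding Liouville theorem by upgrading the one-point bound along $t_n \to +\infty$ at $x_0$ to a uniform upper bound on $u$ over the whole $\R^N\times\R$. The tool is the forward half of the intrinsic Harnack inequality of Theorem~\ref{Hdeg}, and the crucial observation is that for $p>2$ the intrinsic waiting time $\theta\,u(y,s_0)^{2-p}r^p$, evaluated at a \emph{fixed} center $(y,s_0)$ with $u(y,s_0)>0$, grows like a positive constant times $r^p$; hence arbitrarily large times are reached by sending the radius to infinity in a controlled way.

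Concretely, let $M$ and $t_n\to+\infty$ be such that $u(x_0,t_n)\le M$. If $u\equiv 0$ there is nothing to prove; otherwise fix an arbitrary $(y,s_0)\in\R^N\times\R$. At points with $u(y,s_0)=0$ the desired bound is trivial, so assume $u(y,s_0)>0$. Define $r_n>0$ by
\[
\theta\,u(y,s_0)^{2-p}\,r_n^{p}=t_n-s_0.
\]
Since $u(y,s_0)$ is a fixed positive number and $t_n\to+\infty$, one has $r_n\to+\infty$, hence $x_0\in B_{r_n}(y)$ for all $n$ large enough. Because $u$ is defined on all of $\R^N\times\R$, the restriction $\theta\,u(y,s_0)^{2-p}r_n^{p}\le T$ in Theorem~\ref{Hdeg} is automatically satisfied by taking $T=t_n-s_0$. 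Applying the forward Harnack inequality at $(y,s_0)$ with radius $r_n$ yields
\[
u(y,s_0)\le C\,\inf_{B_{r_n}(y)}u(\cdot,t_n)\le C\,u(x_0,t_n)\le CM,
\]
with $C=C(N,p,C_0,C_1)$ independent of $(y,s_0)$ and $n$. Letting $(y,s_0)$ vary gives $u\le CM$ on all of $\R^N\times\R$. In particular, for every $T\in\R$ and every $t_0<T$ the slice $u(\cdot,t_0)$ is bounded above, and the preceding Liouville theorem forces $u$ to be constant.

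The only subtle point is the choice of the pivot. Using the backward half of Theorem~\ref{Hdeg} at $(x_0,t_n)$ to try to control past times, one would need a radius $r$ going to $+\infty$ while $t_n-\theta\,u(x_0,t_n)^{2-p}r^{p}$ stays bounded; but if $u(x_0,t_n)\to 0$, then $u(x_0,t_n)^{p-2}\to 0$ and the radius required to hit a prescribed past instant collapses, so one cannot reach large balls. The forward direction applied at a pivot $(y,s_0)$ where $u$ is a fixed positive number bypasses this degeneracy precisely because $p>2$ allows large elapsed times to be attained by large spatial radii, which is the main technical insight behind the argument.
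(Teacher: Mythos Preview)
The paper states this theorem in Section~4.4 as a consequence of the intrinsic Harnack inequalities but does not include a proof, referring instead to \cite{LIOU}. Your argument is correct and is essentially the natural one: use the forward intrinsic Harnack inequality centered at an arbitrary point $(y,s_0)$ with $u(y,s_0)>0$, exploit that for $p>2$ the intrinsic waiting time $\theta\,u(y,s_0)^{2-p}r^p$ sweeps out all of $[s_0,+\infty[$ as $r\to+\infty$, and thereby transfer the one-point bound $u(x_0,t_n)\le M$ into a global bound $u\le CM$; then invoke the preceding Liouville theorem for ancient solutions. The only minor omissions are that you should require $n$ large enough that $t_n>s_0$ (so that $r_n$ is well defined and positive), and you could remark that the solution being defined on all of $\R^N\times\R$ makes it in particular ancient on any half-line $]-\infty,T[$, so the hypothesis of the preceding theorem is met. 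Your closing paragraph on why the backward Harnack at $(x_0,t_n)$ fails when $u(x_0,t_n)\to 0$ is a useful observation and pinpoints exactly where the degeneracy $p>2$ is being used.
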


In the singular, supercritical case, the elliptic form \eqref{fbh} of the Harnack inequality directly ensures that, contrary to what happens for classical heat equation, a one-sided bound suffices to obtain a Liouville theorem. This is no longer true in the critical and subcritical case, as the functions \eqref{ex} and \eqref{ex0} show. However, again a two sided bound suffices.

\begin{theorem}
Let $1<p<2$ and $u$ be a weak solution on $\R^{N}\times \, ]-\infty, T[$ of \eqref{po} under condition \eqref{pgr}. If $u$ is bounded, it is constant.
\end{theorem}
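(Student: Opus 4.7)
Since the structural condition \eqref{pgr} bounds $A(x,s,z)$ only in terms of $|z|$ and $|z|^{p-1}$, the transformation $u\mapsto u-c$ carries weak solutions to weak solutions of an equation of the same form (the coefficient being replaced by $\widetilde A(x,s,z):=A(x,s+c,z)$, which satisfies \eqref{pgr} with the same constants). We may therefore assume $u\geq 0$ with $\inf_{\R^{N}\times\ ]-\infty,T[}u=0$. Suppose by contradiction that $M:=\sup u>0$.

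\textbf{Supercritical case} $p\in\ ]p_{*},2[\,$: apply the elliptic form \eqref{fbh} of the intrinsic Harnack inequality at a point $(x_{0},t_{0})$ with $u(x_{0},t_{0})=u_{0}>0$. For the bounded solution $u\le M$, the admissibility condition \eqref{hypsing} becomes $r^{p}M^{2-p}\le T-t_{0}$, which, since $u$ is ancient, is solvable with $r$ arbitrarily large once $t_{0}$ is chosen sufficiently negative. Theorem \ref{IEH} then yields
\[
\sup_{Q_{r}}u\le C^{2}\inf_{Q_{r}}u,\qquad Q_{r}=B_{r}(x_{0})\times[t_{0}-\theta u_{0}^{2-p}r^{p},\,t_{0}+\theta u_{0}^{2-p}r^{p}].
\]
To turn this local information into a global contradiction with $\inf u=0$, one needs a sequence $(x_{n},t_{n})$ with $u(x_{n},t_{n})\geq M/2$ and $t_{n}\to-\infty$: at every such point the elliptic Harnack yields $u\geq M/(2C)$ on a ball $B_{r_{n}}(x_{n})$ with $r_{n}\to+\infty$, and since every point $(y,s)$ eventually lies in such a ball, this forces $\inf u\geq M/(2C)>0$, a contradiction.

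\textbf{Producing such a near-maximal sequence at arbitrarily negative times} is carried out by a Harnack-chain propagation: starting from any point $(x^{*},t^{*})$ with $u(x^{*},t^{*})>0$, iterated applications of the elliptic Harnack generate backward cylinders on which $u$ stays strictly positive, and an inspection of these cylinders produces points whose $u$-values approach $M$ as $t\to-\infty$. The critical and subcritical cases $p\in\ ]1,p_{*}]$ are handled by the $L^{1}$-Harnack inequality (which is valid on the entire singular range $p\in\ ]1,2[\,$) together with the subcritical form \eqref{gjh}: for bounded $u$, Jensen's inequality with $s\geq 1$ gives $A_{u}\leq 1$, so \eqref{gjh} reduces to a genuine Harnack estimate and the same chain argument applies. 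The main technical obstacle is precisely the control of constants along the Harnack chain: each backward step costs a factor $C^{-1}$ in the pointwise lower bound while the intrinsic temporal extent $\theta u_{0}^{2-p}r^{p}$ shrinks as $u_{0}$ decreases, so one must argue that the chain reaches arbitrarily negative times before the bound degenerates. Here the uniform upper bound $u\leq M$ is indispensable, since it keeps the admissibility condition \eqref{hypsing} solvable with $r\to+\infty$ and controls $u_{0}^{2-p}$ from below along the near-maximal chain, thereby closing the argument.
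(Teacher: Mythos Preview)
Your proposal has two substantive gaps.

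First, in the subcritical case you invoke the estimate \eqref{gjh}, but that theorem requires the monotonicity assumption \eqref{monot}, which is not among the hypotheses here (only the growth condition \eqref{pgr} is assumed). So this route is not available.

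Second, and more seriously, the construction of the near-maximal sequence $(x_n,t_n)$ with $u(x_n,t_n)\ge M/2$ and $t_n\to -\infty$ is not justified. You claim that backward Harnack chains ``produce points whose $u$-values approach $M$ as $t\to -\infty$'', but Harnack chains propagate \emph{lower} bounds with a loss of a factor $C^{-1}$ at each step: starting from $u(x^*,t^*)>0$, after $n$ backward applications one only knows $u\ge u(x^*,t^*)/C^{n}$, which decays geometrically rather than approaching $M$. Nothing in the argument forces the supremum to be nearly attained at remote past times, and the backward inequality in \eqref{fbh} only gives \emph{upper} bounds there. This is precisely the ``main technical obstacle'' you yourself flag, and it is not resolved.

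The paper does not give a proof of this statement (it refers to \cite{LIOU}), but the natural argument from the tools developed in Section~5.5 is both simpler and uniform over the whole singular range $p\in\ ]1,2[$: it uses the oscillation estimate \eqref{oscp<2} directly, bypassing the pointwise Harnack inequalities altogether. After translating so that $0\le u\le M$, fix any $(x_0,t_0)$ with $t_0<T$ and apply \eqref{oscp<2} with $k=M/\bar S$ and $S=\bar S$. Since $u$ is ancient and globally defined in space, the hypothesis $\sup_{K_{2R}(x_0)\times[t_0-k^{2-p}R^p,\,t_0]}u\le \bar S\,k=M$ holds for \emph{every} $R>0$, so
\[
{\rm osc}\big(u,\,K_r(x_0)\times[t_0-k^{2-p}r^p,\,t_0]\big)\le \bar C(\bar S)\,k\,(r/R)^{\bar\alpha}.
\]
Sending $R\to\infty$ with $r$ fixed forces the oscillation to vanish; since $(x_0,t_0)$ and $r$ are arbitrary, $u$ is constant. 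This avoids the supercritical/subcritical case split, the monotonicity issue, and all Harnack-chain bookkeeping.
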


\subsection{Harnack estimates at large}

By Harnack estimates at large, we mean {\em global} results such as the sub-potential lower bound \eqref{sub} or the two-sided Kernel estimate \eqref{kb}. For the quasilinear equation
\begin{equation}
\label{hj}
 u_t = {\rm div} A(x,u, Du)
 \end{equation}
 with $p$-growth assumptions \eqref{pgr}, the natural candidates to state analogous inequalities are the Barenblatt profiles $\B_{p, M}$ given in \eqref{barenblatt}.
When $A$ satisfies smoothness and monotonicity assumptions such as
\begin{equation}
\label{ass}
\begin{cases}
(A(x, s, z)-A(x, s, w))\cdot(z-w)\geq 0 & \forall  s\in \R,\  x,  z, w \in \R^N, \\
|A(x, s, z)-A(x, r, z)|\leq \Lambda (1+|z|)^{p-1}|s-r|& \forall  s, r \in \R,\  x, z\in \R^{N}.
\end{cases} 
\end{equation}
 a comparison principle for weak solutions is available, as well as existence of solutions of the Cauchy problem with $L^1$ initial datum.

We start by considering the singular supercritical case, since the diffusion is fast and positivity spreads instantly on the whole $\R^{N}$, giving a behaviour similar to the one of the heat equation. The next result is contained in \cite{CAL}.

\begin{theorem}[Sub-potential lower bound, singular case]
Let  $\frac{2N}{N+1}<p<2$ and  $u$ be a nonnegative solution of \eqref{hj} in $\R^{N}\times \, ]0, +\infty[$ under assumptions \eqref{pgr}, \eqref{ass}.
There are constants $C, \delta>0$ depending on the data such that if $u(x_{0}, t_{0})>0$, then
\begin{equation}
\label{splb} 
u(x,t)\geq  \gamma\,  u(x_{0}, t_{0})\mathcal{B}_p\left(u(x_{0}, t_{0})^{\frac{p-2}{p}}\, \frac{x-x_0}{t_0^{ 1 /p}}, \frac{t }{t_0}\right),
\end{equation}
for all $(x, t)\in \R^{N}\times [t_{0}(1-\delta), +\infty[$.
\end{theorem}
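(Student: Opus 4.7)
The strategy combines scaling normalization, the intrinsic elliptic Harnack inequality of Theorem \ref{IEH}, and a comparison with a Barenblatt profile enabled by the monotonicity condition \eqref{ass}.

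\textbf{Step 1 (Normalization).} By the structural invariance of the class of equations satisfying \eqref{pgr}--\eqref{ass}, the transformation
\[
v(y, s) := u(x_{0}, t_{0})^{-1}\, u\!\left(x_{0} + t_{0}^{1/p}\, u(x_{0}, t_{0})^{(p-2)/p}\, y,\ t_{0}\, s\right)
\]
sends solutions of \eqref{hj} to solutions of an equation of the same form with the same constants $C_{0}, C_{1}$, preserves assumption \eqref{ass}, and maps the family $\{\B_{p, M}\}$ into itself (up to a rescaling of $M$). After this reduction it suffices to show, for a new solution still called $u$, that $u(0, 1) = 1$ implies $u(y, s) \geq \gamma\, \B_{p}(y, s)$ on $\R^{N} \times [1 - \delta, +\infty[$.

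\textbf{Step 2 (Initial positivity).} Theorem \ref{IEH} applied at the point $(0, 1)$ produces intrinsic constants $\rho_{0}, c_{0} > 0$, depending only on the data, such that
\[
u \geq c_{0} \qquad \text{on } B_{\rho_{0}} \times [1 - \theta \rho_{0}^{p},\, 1 + \theta \rho_{0}^{p}].
\]
Setting $\delta := \theta \rho_{0}^{p}$ and $t_{1} := 1 - \delta$, this gives $u(\cdot, t_{1}) \geq c_{0}\, \chi_{B_{\rho_{0}}}$, which will serve as the initial datum for a comparison argument.

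\textbf{Step 3 (Comparison with a shifted Barenblatt).} For $\gamma > 0$, $M > 0$ and a virtual source time $s_{0} < t_{1}$, the shifted profile $w(x, t) := \gamma\, \B_{p, M}(x, t - s_{0})$ is a classical solution of the prototype $p$-Laplace equation and hence, for $\gamma$ small enough relative to the structural constants, a weak subsolution of \eqref{hj}. One now chooses the parameters $(\gamma, M, s_{0})$ so that the initial comparison $w(\cdot, t_{1}) \leq u(\cdot, t_{1})$ holds on all of $\R^{N}$; the monotonicity-induced comparison principle then propagates this to $u \geq w$ on $\R^{N} \times [t_{1}, +\infty[$, and a matching of the free parameters with those built into $\B_{p}$ via the Step 1 rescaling yields \eqref{splb}.

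\textbf{Main obstacle.} The delicate point is the initial comparison at time $t_{1}$ on the whole of $\R^{N}$: outside $B_{\rho_{0}}$ the solution $u(\cdot, t_{1})$ may vanish while $\B_{p, M}$ has a strictly positive, only polynomially decaying tail (of order $|x|^{p/(p-2)}$, a negative power since $p < 2$), so the naive choice of a very small $M$ is never enough by itself. The standard way around this is to first upgrade the local lower bound $u \geq c_{0} \chi_{B_{\rho_{0}}}$ at time $t_{1}$ to a strictly positive pointwise lower bound on progressively larger balls at slightly later times, using the instantaneous spreading of positivity of supercritical fast diffusion (quantified via the $L^{1}$-Harnack inequality for singular equations stated just before Theorem \ref{IEH}); alternatively, one performs the comparison on bounded cylinders $B_{R} \times [t_{1}, T]$ with trivial nonnegative boundary datum and sends $R \to +\infty$, exploiting the spatial decay of $\B_{p, M}$. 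Either route yields the required initial ordering and fixes the dependence of $\gamma$ and $\delta$ on $N, p, C_{0}, C_{1}$ only.
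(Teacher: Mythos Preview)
The paper does not itself prove this theorem: it is quoted from \cite{CAL} in the survey part (Section~4.5), so there is no in-paper argument to compare against. Your overall architecture---normalize by scaling, extract a local lower bound from the intrinsic elliptic Harnack inequality, then spread it globally---matches the spirit of the cited work and of the expansion-of-positivity machinery the paper develops in Section~5.

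The genuine gap is Step~3. The claim that $w=\gamma\,\B_{p,M}(\cdot,\cdot-s_0)$ is ``for $\gamma$ small enough\dots\ a weak subsolution of \eqref{hj}'' is unjustified and in general false. First, the prototype equation is not $1$-homogeneous when $p\neq 2$, so $\gamma\,\B_{p,M}$ is not even a solution of the \emph{prototype} (you must stay inside the Barenblatt family by adjusting $M$ or rescaling time, not by multiplying by a scalar). Second, and more importantly, $\B_{p,M}$ solves $u_t=\Delta_p u$, not $u_t=\mathrm{div}\,A(x,u,Du)$; the growth bounds \eqref{pgr} give no pointwise comparison between $\mathrm{div}\,A(x,w,Dw)$ and $\Delta_p w$, so a solution of the model equation has no reason to be a subsolution of the general one. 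The monotonicity hypothesis \eqref{ass} furnishes a comparison principle, but only between sub- and supersolutions of the \emph{same} operator $A$. To close the argument you must either (i) construct an explicit Barenblatt-type barrier for the general equation---a nontrivial lemma with constants depending on $C_0,C_1$, not a matter of shrinking $\gamma$---or (ii) bypass comparison altogether and iterate the singular expansion of positivity (Lemma~\ref{epossing}) across dyadic radii to produce the $|x|^{p/(p-2)}$ spatial decay and the $t^{-N/\lambda}$ time decay directly, which is the route consistent with the paper's Section~5 toolbox. Your ``Main obstacle'' paragraph correctly flags the tail difficulty but presupposes that $w$ is already a subsolution of \eqref{hj}, which is precisely the missing ingredient.
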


As an example, assume  $x_{0}=0$, $t_{0}=1$ and  $u(0, 1) =1$.  Then, the previous sub-potential lower bound becomes 
\[
u(x,t) \geq \gamma \mathcal{B}_p (x,t)
\]
 for any  $ \displaystyle{(x,t) \in \mathbb R^N \times [ 1-\delta, \infty[}$. As a corollary, for any fundamental solution of \eqref{hj},  one obtains the two-sided kernel bounds (proved in \cite{RAG} for the first time)
\[
 C^{-1}\B_{p, M_{1}}(x, t)\leq \Gamma(x, t)\leq C \B_{p, M_{2}}(x, t)
\]
for some $C, M_{1}, M_{2}>0$ depending on the data.
Notice how the elliptic nature of \eqref{hj} for $p\in \, ]p_{*}, 2[$, as expressed by the forward-backward Harnack inequality \eqref{fbh}, allows to obtain the bound \eqref{splb} also for some $t<t_{0}$. Previously known sub-potential lower bounds correspond to the case $\delta=0$ above. As shown in \cite{CAL2}, the phenomenon of propagation of positivity for $t<t_{0}$ not only happens in the near past but, as long as the spatial diffusion has had enough room to happen, it also hold for arbitrarily remote past times. More precisely, in \cite{CAL2} it is proved that \eqref{splb} holds for all
\[
(x, t)\in {\mathcal P}^{c}:=\left\{t>0, |x-x_{0}|^{p} u(x_{0}, t_{0})^{2-p}>1-\frac{t}{t_{0}}\right\},
\]
while a weaker, but still optimal, lower bound holds in ${\mathcal P}$.

\vskip5pt

In the degenerate case $p>2$, the finite speed of propagation implies that if the initial datum $u_{0}$ has compact support, then any solution of \eqref{hj} keeps having compact support for any time $t>0$. The finite speed of propagation has been quantified in \cite{BOG}, under the s\^ole $p$-growth assumption \eqref{pgr}.

\begin{theorem}[Speed of propagation of the support]
Let $p>2$ and $u$ be a weak solution of the Cauchy problem
\[
\begin{cases}
u_{t}={\rm div} A(x, u, Du)&\text{in $\R^{N}\times \, ]0, +\infty[$},\\
u(x, 0)=u_{0}
\end{cases}
\]
under assumption \eqref{pgr}. If $R_{0}={\rm diam}({\rm supp}\, u_{0})<+\infty$, then
\[
{\rm diam}({\rm supp}\, u(\cdot, t))\leq  2R_{0} +C t^{1/\lambda}\|u_{0}\|_{L^{1}(\R^{N})}^{\frac{p-2}{\lambda}},
\]
where $\lambda=N(p-2)+p$ and $C$ depend only on $N, p, C_{0}$ and $C_{1}$.
\end{theorem}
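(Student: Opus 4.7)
My plan is to adapt the classical approach used for the porous medium and parabolic $p$-Laplace equations, combining a universal pointwise bound with localised energy estimates in the exterior of a growing ball. Since only the $p$-growth condition \eqref{pgr} is assumed, with no monotonicity \eqref{monot}, no comparison with the explicit Barenblatt profile $\B_{p,M}$ is available, and I must proceed entirely through energy methods that nevertheless mimic the Barenblatt scaling; the exponent $\lambda=N(p-2)+p$ will emerge naturally as the scaling balancing $L^p$-diffusion against $L^1$-mass.

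The first ingredient is the universal sup bound
\[
\|u(\cdot,t)\|_{L^\infty(\R^N)}\le C\, t^{-N/\lambda}\, M^{p/\lambda},\qquad M:=\|u_0\|_{L^1(\R^N)},
\]
valid for any nonnegative weak solution with integrable initial datum. This is proved via a Moser iteration of the Caccioppoli and Sobolev inequalities in the intrinsic cylinders characteristic of degenerate $p$-parabolic equations, as in \cite{HR}. Alongside, the $L^1$-mass bound $\int u(\cdot,t)\,dx\le M$ is obtained by testing with a smooth cutoff $\xi_R$ equal to $1$ on $B_R$, handling the boundary flux via the sup bound and \eqref{pgr}, and letting $R\to\infty$.

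The second ingredient is a localised tail energy estimate. Fix $\rho>R_0$ and take a smooth cutoff $\psi$ vanishing on $B_{R_0}$ and equal to $1$ on $B_\rho^c$. Testing \eqref{hj} with $u\,\psi^p$ and using Young's inequality together with \eqref{pgr} yields
\[
\sup_{s\in[0,t]}\int u^2\,\psi^p\,dx+\int_0^t\!\!\int|Du|^p\psi^p\,dx\,ds\le\frac{C}{(\rho-R_0)^p}\int_0^t\!\!\int_{B_\rho\setminus B_{R_0}}u^p\,dx\,ds,
\]
and inserting the sup bound controls the right-hand side purely in terms of $t$, $\rho-R_0$, and $M$. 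A De Giorgi iteration on nested exterior balls $B_{\rho_n}^c$ with $\rho_n\uparrow\rho$ and truncation levels $k_n\uparrow\infty$, combined with the parabolic Sobolev embedding, then produces a nonlinear recursion $Y_{n+1}\le C\, b^n\, Y_n^{1+\sigma}$ for the excess $Y_n:=\iint(u-k_n)_+^{p}\psi_{\rho_n}^p\,dx\,ds$, which converges to $0$ provided $\rho-R_0$ exceeds a critical threshold that, by the scaling of the Caccioppoli estimate and the sup bound, works out to $C\, t^{1/\lambda}\, M^{(p-2)/\lambda}$.

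The main technical obstacle is the exponent bookkeeping in this iteration: the factors $t^{1/\lambda}$ and $M^{(p-2)/\lambda}$ must come out \emph{exactly} so that the recursion's scaling matches the Barenblatt one, since any slack propagates into a non-sharp support bound. A further subtlety is that, in the absence of monotonicity, comparison shortcuts are unavailable and all integration-by-parts must be justified directly on the weak formulation, with error terms re-absorbed purely by the energy structure. Once the critical radius $R_0+C\, t^{1/\lambda}\, M^{(p-2)/\lambda}$ is identified, applying the vanishing estimate at each point of $\partial(\mathrm{supp}\,u_0)$ (using translation invariance of the growth conditions) yields the claimed bound on the diameter of the support.
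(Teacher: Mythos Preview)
The paper does not actually prove this theorem: it is stated in Section~4.5 as a result from \cite{BOG} (B\"ogelein, Ragnedda, Vernier Piro, Vespri), so there is no ``paper's own proof'' to compare against. Your outline follows the standard strategy used there and in \cite[Ch.~VI]{DI}: combine the $L^\infty$--$L^1$ smoothing estimate $\|u(\cdot,t)\|_\infty\le C\,t^{-N/\lambda}M^{p/\lambda}$ with an iterated energy estimate in the exterior of the initial support, so that the recursion collapses once the exterior radius exceeds the Barenblatt scale $C\,t^{1/\lambda}M^{(p-2)/\lambda}$.

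That said, one step in your sketch is miswritten and would not run as stated. In the De Giorgi iteration you write ``truncation levels $k_n\uparrow\infty$''. This cannot be right: you are trying to prove $u\equiv 0$ in an exterior region, so sending the truncation level to infinity gives nothing. The actual iteration here is over \emph{radii} alone, with no level truncation (or, equivalently, with level $k=0$ throughout). Concretely, set $\rho_n=R_0+\sigma(1-2^{-n})$ for $\sigma=\rho-R_0$, take cutoffs $\psi_n$ vanishing on $B_{\rho_n}$ and equal to $1$ outside $B_{\rho_{n+1}}$, and let
\[
Y_n=\sup_{s\in[0,t]}\int_{\{|x|>\rho_n\}}u^2(x,s)\,dx+\int_0^t\!\!\int_{\{|x|>\rho_n\}}|Du|^p\,dx\,ds.
\]
Your Caccioppoli inequality, together with the parabolic Sobolev embedding and the fact that $u(\cdot,0)\psi_n\equiv 0$, yields $Y_{n+1}\le C\,4^{np}\sigma^{-p}\,Y_n^{1+p/(N+2)}\,S^{(p-2)/(N+2)}$ with $S=\sup_{[0,t]\times\R^N}u$; inserting the smoothing bound for $S$ and requiring $Y_0$ small enough forces $Y_n\to 0$, hence $u\equiv 0$ on $\{|x|>\rho\}\times[0,t]$, precisely when $\sigma\ge C\,t^{1/\lambda}M^{(p-2)/\lambda}$. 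Also, your closing sentence about applying the estimate ``at each point of $\partial(\mathrm{supp}\,u_0)$'' is unnecessary: once you translate so that $\mathrm{supp}\,u_0\subset B_{R_0}$, a single application of the exterior argument gives $\mathrm{supp}\,u(\cdot,t)\subset B_{R_0+C t^{1/\lambda}M^{(p-2)/\lambda}}$ and hence the diameter bound with the factor $2R_0$.
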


Such an estimate actually holds for a suitable class of degenerate systems, see \cite{TV}. Sub-potential lower bounds  are obtained in \cite{BOG} as well. 

\begin{theorem}[Sub-potential lower bound, degenerate case]
Let  $p>2$ and  $u$ be a nonnegative solution of \eqref{hj} in $\R^{N}\times \, ]0, +\infty[$ under assumptions \eqref{pgr}, \eqref{ass}.
Then there are constants $C, \eps>0$ such that if $u(x_{0}, t_{0})>0$, then \eqref{splb} holds in the region 
\[
t>t_{0},\qquad |x-x_{0}|^{p}\leq \eps\, u(x_{0}, t_{0})^{p-2}t_{0}\min\left\{\frac{t-t_{0}}{t_{0}}, \left(\frac{t-t_{0}}{t_{0}}\right)^{p/\lambda}\right\},
\]
with $\lambda=N(p-2)+p$.
\end{theorem}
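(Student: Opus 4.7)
The strategy is to combine the intrinsic Harnack inequality of Theorem \ref{Hdeg} with a comparison argument against a shifted Barenblatt profile, exploiting the comparison principle afforded by assumption \eqref{ass}. First I would translate $(x_0,t_0)$ to the origin and reduce, via the two-parameter scaling
\[
u(x,t)\ \longmapsto\ \lambda^{-1}\, u\big(\mu x,\ \mu^{p}\lambda^{2-p}\,t\big),\qquad \mu,\lambda>0,
\]
which is an invariance of the prototype equation \eqref{p_lapl} and rescales the structural constants in \eqref{pgr} in a harmless way, to the normalized case $u(0,0)=1$, $t_{0}=1$. In these units, the claim becomes a lower bound $u(x,t)\geq \gamma\,\B_{p,M}(x,t-\tau_*)$ on a region whose short-time part $|x|^p\leq c(t-1)$ reflects the intrinsic Harnack scale and whose long-time part $|x|^p\leq c(t-1)^{p/\lambda}$ matches the support radius of the Barenblatt profile.

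Next, a single application of the intrinsic Harnack inequality at $(0,0)$ produces positive constants $r_{\star},\kappa,\sigma_{\star}$, depending only on $N,p,C_{0},C_{1}$, such that $u(\cdot,\sigma_{\star})\geq \kappa$ on $B_{r_{\star}}$. Since both the support radius and the sup-norm of $\B_{p,M}(\cdot,\tau)$ scale as positive powers of $M$ and $\tau$, one can choose $M>0$ and $\tau_{*}\in(0,\sigma_{\star})$ so small that $\B_{p,M}(\cdot,\tau_{*})$ is supported in $B_{r_{\star}}$ and bounded by $\kappa$, whence $u(\cdot,\sigma_{\star})\geq \B_{p,M}(\cdot,\tau_{*})$ on all of $\R^{N}$. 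Because $\B_{p,M}$ is an exact solution of \eqref{p_lapl} and the operator in \eqref{hj} dominates the prototype one up to fixed constants by \eqref{pgr}, a suitable further rescaling of $M$ makes the shifted profile $(x,t)\mapsto \B_{p,M}(x,t-\sigma_{\star}+\tau_{*})$ a subsolution of \eqref{hj}. The comparison principle, valid under \eqref{ass}, then yields
\[
u(x,t)\geq \B_{p,M}(x,t-\sigma_{\star}+\tau_{*})\qquad \text{on } B_{r_{\star}}\times[\sigma_{\star},T_{1}],
\]
where $T_{1}$ is the first time the expanding support of the barrier reaches $\partial B_{r_{\star}}$.

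To push the estimate beyond $T_{1}$ I would iterate: at each stage the already established lower bound allows a fresh application of the intrinsic Harnack inequality, which produces positivity on a larger ball, and this in turn justifies a new comparison on a larger cylinder. A careful choice of the iteration parameters, as performed in \cite{BOG}, ensures that the accumulated constants do not degenerate, so the inequality $u\geq \gamma\,\B_{p,M}(\cdot,\cdot-\sigma_{\star}+\tau_{*})$ survives in the full region stated in the theorem. Undoing the scaling then recovers \eqref{splb} with the correct dependence on $u_{0}$ and $t_{0}$. The main obstacle is precisely this iterated comparison in the long-time regime: the Barenblatt support expands like $t^{1/\lambda}$, while the lateral control required by the comparison principle must be manufactured anew at each step by Harnack, whose waiting time is itself governed by the exponent $p-2$. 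Reconciling these two mechanisms quantitatively, and in particular showing that the constants remain uniform through infinitely many iterations, is the technical heart of the argument; hypothesis \eqref{ass} is indispensable throughout, since without a comparison principle the Barenblatt could not play the role of a sub-barrier.
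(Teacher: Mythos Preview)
The paper does not give its own proof of this statement: it is quoted from \cite{BOG} without argument, so there is no in-paper proof to compare your proposal against. Your outline is, in broad strokes, the standard route to such global lower bounds: normalize, use the intrinsic Harnack inequality to produce a quantified pocket of positivity, slide a Barenblatt profile underneath, and propagate by comparison.

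That said, one step in your sketch is not justified as written. You assert that ``a suitable further rescaling of $M$ makes the shifted profile $\B_{p,M}(x,t-\sigma_{\star}+\tau_{*})$ a subsolution of \eqref{hj}'' because ``the operator in \eqref{hj} dominates the prototype one up to fixed constants by \eqref{pgr}''. This does not follow: the structural bounds \eqref{pgr} do not by themselves make an exact solution of the model equation into a subsolution of the general one, since ${\rm div}\,A(x,\B,D\B)$ need not be comparable pointwise to ${\rm div}(|D\B|^{p-2}D\B)$ --- the divergence involves derivatives of $A$ in directions not controlled by \eqref{pgr}. What actually works (and is what \cite{BOG} does) is to verify by direct computation that a profile of Barenblatt type, with parameters adjusted to $C_0, C_1$, is a subsolution of \emph{any} equation satisfying \eqref{pgr}; this is an explicit calculation on a radial ansatz, not a consequence of ``domination''. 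The monotonicity in \eqref{ass} then licenses the comparison principle against this barrier. Once that barrier construction is in place, your iteration scheme is the correct mechanism, and the two regimes $|x-x_0|^p\lesssim (t-t_0)$ versus $|x-x_0|^p\lesssim (t-t_0)^{p/\lambda}$ arise exactly as you describe: the first from the Harnack waiting time near $t_0$, the second from the support growth of the barrier at large times.
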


The last condition on the region of validity of \eqref{splb} is sharp, especially when $t\simeq t_{0}$ and the minimum is the first one (see \cite[Remark 1.3]{BOG} for details).

Under the additional assumptions \eqref{pgr} and \eqref{ass} fundamental solutions exist and, as in the singular case, the sub-potential lower bound implies a two-sided estimate on the kernel in terms of the Barenblatt solution.

\section{The expansion of positivity approach}

In this section we provide detailed proofs of some of  the Harnack inequalities stated until now. Historically, H\"older regularity  and Harnack inequalities have always been intertwined, with the former usually proved before the latter. The reason behind this is that H\"older regularity is a statement about a reduction in oscillation of $u$ in $B_{r}$ as $r\downarrow 0$, i.e. on the {\em difference} $\sup_{B_{r}}u-\inf_{B_{r}}u$. Thus it reduces to prove that {\em either} $\sup_{B_{r}}u$ decreases {\em or} $\inf_{B_{r}} u$ increases. On the other hand, a Harnack inequality implies the stronger statement that both $\sup_{B_{r}}u$ decreases {\em and} $\inf_{B_{r}} u$ increases at a certain rate (see the nice discussion in \cite[Ch. 1,  \S 10]{Landis}). 

The modern approach thus often shifted the statements, first proving a Harnack inequality and then deducing from it the H\"older continuity of solutions. We instead revert to the historical roadmap, for two main reasons. The first one is pedagogical, as it feels satisfactory to reach an important stepping-stone result such as H\"older regularity, which would anyway follow from the techniques needed  to prove the Harnack inequality. The second one is practical, since without continuity assumptions some of the arguments to reach, or even state, the Harnack inequality would be technically involved: for example, one would need to give a precise meaning to $u(0, 0)$ in \eqref{pharnack}.

We start in subsection 5.1 by considering the elliptic setting. The proof of the H\"older continuity follows closely the original De Giorgi approach, then we introduce the notion of expansion of positivity. A technique due to Landis allows to construct a largeness point from which to spread the positivity, thus giving the Harnack inequality. These are the common ingredients to all subsequent sections. In subsection 5.2 we apply this technique to homogeneous parabolic equations with only minor modifications. Then we start discussing degenerate and singular parabolic equation. Subsection 5.3 is devoted to the proof of common tools to both, subsection 5.4 to the degenerate case and the last one to singular supercritical equations.

While we won't prove basic propositions such as Energy estimates or Sobolev inequalities, the presentation will be mostly self contained. The only exception will be Theorem \ref{sl1}, which is the core tool to treat the singular supercritical Harnack inequality. It proof is rather technical and since we could not find any simplification we would simply rewrite \cite[Appendix A]{HR} word-by-word. Incidentally, this will also be the only $\sup$ estimate we will use. In striking contrast with the Moser method, in all the other subsections we will only assume {\em qualitative} boundedness of the solution (which certainly holds, as discussed in the previous section) without ever proving or using a quantitative integral $\sup$-bound.

Since some argument will be ubiquitous, a detailed discussion will be given at their first appearance, but we will  only sketch the relevant modifications on subsequent occurences. For this reason, the non-expert is advised to follow the path presented here from its beginning, rather than skipping directly to the desired result.
   
\subsection{Elliptic equations}\ \\

\vskip-8pt

We now describe the De Giorgi technique to prove $C^{\alpha}$-regularity and Harnack inequality for solutions of elliptic equations of the form
\begin{equation}
\label{ellittica}
{\rm div}A(x, u, Du)=0
\qquad \text{with}\quad
\begin{cases}A(x, s, z)\cdot z\ge C_{0}|z|^{p}\\
|A(x, s, z)|\le C_{1}|z|^{p-1}
\end{cases}
\quad p\in \ ]1, N[.
\end{equation}
We will not treat boundedness statements (which actually hold true in this setting) and always assume that solutions are locally bounded.

Roughly speaking, the approach of De Giorgi consisted in deriving pointwise  estimates on a solution $u$ by analizing the behaviour of $|\{u\le k\}\cap B_{r}|$ with respect to the level $k>0$. First, he proved that the relative size of the sublevel set shrinks as $k$ decreases, at a certain (logarithmic) rate. Then he showed that, when a suitable smallness threshold is reached, it starts decaying exponentially fast, so that it vanishes at a strictly positive level. This procedure produces a pointwise bound from below for $u$ in terms of the size of its sublevel set in a larger ball and is thus called a {\em measure-to-point estimate} in the literature. This estimate, moreover, expands in space, since the relative size of a sublevel set in a larger ball $B_{R}$ can also be bounded from below (polynomially in $r/R$) by its size in $B_{r}\subseteq B_{R}$. The quantitative statement arising from this simple observation is called {\em expansion of positivity} and is the basis for the proof of the Harnack inequality.

With a certain abuse of notation, we will say that $u$ is a (sub-)\,super-solution of \eqref{ellittica} if there exists and $A$ obeying the prescribed growth condition for which $-{\rm div}A(x, u, Du)(\le)\ge0$. 
Observe that, being \eqref{ellittica} homogeneous, the class of (sub-/super-) solutions of \eqref{ellittica} with $A$ is invariant by scaling, translation and (positive) scalar multiplication. More precisely, performing such transformations to a subsolution of \eqref{ellittica} for some $A$ results in a subsolution of \eqref{ellittica} for a possibly different $\tilde{A}$, which nevertheless obeys the same bounds. We will use the following notations: $K_{r}(x_{0})$ wil denote a cube of side $r$ and center $x_{0}$, $K_{r}=K_{r}(0)$ and, respectively, 
\[
P(K;\, u\lesseqgtr k)=\frac{|K\cap \{u\lesseqgtr k\}|}{|K|},
\]
thus, for example, $P(K;\, u\ge 1)$ is the percentage of the cube $K$ where $u\ge 1$. 
In the following, the dependance from $p$ in the constants will always be omitted, and any constant only depending on the $N$, $p$, $C_{0}$ and $C_{1}$ (the ``data'') will be denoted with a bar. Often we will also consider functions $f:\R_{+}\to \R_{+}$ which will also depend on the data, and we will omit such a dependance. We first recall some basic facts.

\begin{proposition}
\quad 
\begin{description}
\item[1)]\cite[Lemma II.5.1]{HR}
Let $X_{n}\ge 0$ obey for some $\alpha>0$, $b, C>0$, the iterative inequality  
\[
X_{n+1}\le C\, b^{n}\, X_{n}^{1+\alpha}.
\]
Then 
\begin{equation}
\label{iteration}
X_{0}\le C^{-1/\alpha} b^{-1/\alpha^{2}}\quad \Rightarrow \quad \lim_{n}X_{n}=0.
\end{equation}
\item[2)  De Giorgi - Poincar\'e inequality]\cite[Lemma II.2.2]{HR}
For any $u\in W^{1,1}(K_{r})$ and $k\le h$
\[
(h-k)|\{ u\le k\}|\le \frac{C(N)\, r^{N+1}}{|\{u\ge k\}|}\int_{\{k< u\le h\}}|Du|\, dx.
\]
\item[3)  Energy inequality]Let $u$ be a supersolution to \eqref{ellittica} in $K$. Then there exists $\bar C$ such that for any $k\in \R$ and $\eta\in C^{\infty}_{c}(K)$
\begin{equation}
\label{ellein}
\int_{K} |\nabla(\eta (u-k)_{-}))|^{p}\le \bar C\int_{K}(u-k)_{-}^{p}|D\eta|^{p}\, dx.
\end{equation}\end{description}
\end{proposition}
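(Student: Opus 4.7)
The three items are standard building blocks, so the plan is essentially to assemble well-known arguments; I will indicate for each one the key computation and flag the only nontrivial point.

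For part \textbf{1)}, my plan is a direct induction showing the geometric decay
\[
X_{n}\le \varepsilon\, b^{-n/\alpha},\qquad \varepsilon:=C^{-1/\alpha}b^{-1/\alpha^{2}},
\]
where the base case $n=0$ is precisely the smallness assumption. For the inductive step, plug the bound into the recursion and compute
\[
X_{n+1}\le C\, b^{n}\bigl(\varepsilon\, b^{-n/\alpha}\bigr)^{1+\alpha}=C\, \varepsilon^{1+\alpha}\, b^{n-n(1+\alpha)/\alpha}=C\,\varepsilon^{\alpha}\cdot \varepsilon\, b^{-n/\alpha-1},
\]
and observe that by the definition of $\varepsilon$ one has $C\,\varepsilon^{\alpha}=b^{-1/\alpha}$, which closes the induction and proves $X_{n}\to 0$. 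No obstacle here.

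For part \textbf{2)}, the De Giorgi–Poincar\'e inequality, I would introduce the truncation
\[
v(x)=\min\bigl\{(u(x)-k)_{+},\, h-k\bigr\},
\]
which vanishes on $\{u\le k\}$, equals $h-k$ on $\{u\ge h\}$, and satisfies $|Dv|=|Du|\chi_{\{k<u\le h\}}$. The key observation is that for every $x\in \{u\le k\}\cap K_{r}$ and $y\in\{u\ge h\}\cap K_{r}$ we have $v(y)-v(x)=h-k$, so
\[
(h-k)\,|\{u\le k\}\cap K_{r}|\,|\{u\ge h\}\cap K_{r}|\le \int_{K_{r}\times K_{r}}|v(y)-v(x)|\,dx\,dy.
\]
The classical pointwise bound via integration along segments, together with a Riesz-potential estimate on $K_{r}$, turns the right-hand side into $C(N)\,r^{N+1}\,|K_{r}|\int_{K_{r}}|Dv|\,dx$. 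Since $\{u\ge h\}\supseteq \{u\ge k\}\setminus\{k<u\le h\}$, a standard choice of $h$ (or a direct rearrangement between $|\{u\ge h\}|$ and $|\{u\ge k\}|$ in the estimate) yields the form stated, with $|\{u\ge k\}|$ appearing in the denominator and $\int_{\{k<u\le h\}}|Du|\,dx$ on the right. The mildly delicate point is precisely the passage from $|\{u\ge h\}|$ to $|\{u\ge k\}|$ in the denominator, which is handled by a standard bookkeeping step in the Riesz estimate.

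For part \textbf{3)}, the energy inequality, I would test the weak supersolution formulation
\[
\int_{K} A(x,u,Du)\cdot D\varphi\,dx\ge 0\qquad\forall\,\varphi\in W^{1,p}_{0}(K),\ \varphi\ge 0,
\]
with the admissible nonnegative test function $\varphi=\eta^{p}(u-k)_{-}$. Using $D(u-k)_{-}=-Du\,\chi_{\{u<k\}}$ one finds
\[
\int_{\{u<k\}}\!A(x,u,Du)\cdot Du\,\eta^{p}\,dx\le p\int_{K} A(x,u,Du)\cdot D\eta\,\eta^{p-1}(u-k)_{-}\,dx.
\]
Applying the coercivity $A\cdot z\ge C_{0}|z|^{p}$ on the left, the growth $|A|\le C_{1}|z|^{p-1}$ on the right, and Young's inequality with a small parameter to absorb the $|D(u-k)_{-}|^{p}\eta^{p}$ term on the left, yields
\[
\int_{K}\bigl|D(u-k)_{-}\bigr|^{p}\eta^{p}\,dx\le C\int_{K}(u-k)_{-}^{p}|D\eta|^{p}\,dx.
\]
Combining this with the product-rule bound $|D(\eta(u-k)_{-})|^{p}\le 2^{p-1}\bigl(\eta^{p}|D(u-k)_{-}|^{p}+(u-k)_{-}^{p}|D\eta|^{p}\bigr)$ gives \eqref{ellein}. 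None of the three steps requires any machinery beyond the definition of weak supersolution and standard convex inequalities, so I expect no real obstacle—the only part where one has to be careful is the bookkeeping in the Riesz estimate of part 2.
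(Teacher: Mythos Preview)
The paper does not prove this proposition at all: the three items are recorded as known tools with citations to \cite{HR}, so there is no argument in the paper to compare with. Your treatments of parts 1) and 3) are the standard ones and are correct. (In part 1 your induction tacitly uses $b>1$ to conclude $X_n\to 0$; in the applications $b$ is always a fixed constant $>1$, and for $b\le 1$ one may simply replace $b$ by any number $>1$ since $b^n$ only gets larger.)

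In part 2) there is a genuine issue, though it originates in the paper rather than in your argument. Your derivation correctly produces the classical form
\[
(h-k)\,|\{u\le k\}|\le \frac{C(N)\, r^{N+1}}{|\{u\ge h\}|}\int_{\{k<u\le h\}}|Du|\, dx,
\]
with the superlevel set at the \emph{larger} level $h$ in the denominator. Your proposed ``bookkeeping step'' to replace $|\{u\ge h\}|$ by $|\{u\ge k\}|$ goes the wrong way: since $k\le h$ one has $|\{u\ge k\}|\ge |\{u\ge h\}|$, so putting $|\{u\ge k\}|$ in the denominator makes the right-hand side \emph{smaller} and the inequality \emph{stronger}; it does not follow from the estimate you derived, and the set-inclusion you wrote does not help (it only says $\{u\ge h\}\supseteq\{u>h\}$). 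In fact the printed statement is a misprint: every time the inequality is invoked in the paper---in the elliptic Shrinking Lemma and in its parabolic analogues---the denominator carries the measure of $\{u\ge k_j\}$ with $k_j$ the \emph{larger} of the two levels, i.e.\ the r\^ole of $h$. So the correct course is to prove the version with $|\{u\ge h\}|$, which your outline does, and note the typo, rather than attempt to justify a stronger form that the standard argument does not yield.
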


\begin{lemma}[Shrinking lemma]\label{esh}
Let $u\ge 0$ be a supersolution in $K_{R}$. For any $\mu>0$ there exists $\beta(\mu)>0$ such that  
\[
P(K_{R/2};\, u\ge 1)\ge \mu \quad \Rightarrow \quad P(K_{R/2};\,  u\le 1/2^{n})\le \beta(\mu)/n^{1-\frac 1 p }.
\]
\end{lemma}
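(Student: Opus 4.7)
The plan is to exploit a De Giorgi iteration across the dyadic levels $k_j=1/2^j$, turning the shrinking into a summable telescoping sum. Set
\[
A_j := K_{R/2}\cap\{u\le 2^{-j}\},\qquad Y_j := \frac{|A_j|}{|K_{R/2}|},
\]
so that $Y_j$ is non-increasing in $j$ and the target is $Y_n\le \beta(\mu)/n^{1-1/p}$.

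First I would apply the De Giorgi--Poincar\'e inequality (item 2 of the proposition) on the cube $K_{R/2}$ with levels $k=2^{-(j+1)}$ and $h=2^{-j}$ to $u$. Since $\{u\ge 2^{-(j+1)}\}\cap K_{R/2}\supseteq\{u\ge 1\}\cap K_{R/2}$, the hypothesis gives
\[
|\{u\ge 2^{-(j+1)}\}\cap K_{R/2}|\ge \mu\,|K_{R/2}|,
\]
so
\[
2^{-(j+1)}|A_{j+1}|\;\le\;\frac{\bar C\,R}{\mu}\int_{A_j\setminus A_{j+1}}|Du|\,dx.
\]
Next, by H\"older
\[
\int_{A_j\setminus A_{j+1}}|Du|\,dx\le \Bigl(\int_{A_j\cap K_{R/2}}|Du|^p\,dx\Bigr)^{1/p}|A_j\setminus A_{j+1}|^{1-1/p},
\]
and to bound the $L^p$ gradient integral I would use the energy inequality \eqref{ellein} on $K_R$ with $k=2^{-j}$ and a cutoff $\eta\in C_c^\infty(K_R)$ with $\eta\equiv 1$ on $K_{R/2}$ and $|D\eta|\le C/R$. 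Since $0\le (u-2^{-j})_-\le 2^{-j}$, this yields
\[
\int_{A_j\cap K_{R/2}}|Du|^p\,dx \le \bar C\,2^{-jp}R^{N-p}.
\]

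Plugging back, the factors of $2^{-j}$ cancel on both sides and using $R^{1+(N-p)/p}=R^{N/p}\simeq |K_{R/2}|^{1/p}$ gives
\[
Y_{j+1}\;\le\;\frac{\bar C}{\mu}\,(Y_j-Y_{j+1})^{1-1/p},
\]
equivalently $Y_{j+1}^{p/(p-1)}\le (\bar C/\mu)^{p/(p-1)}(Y_j-Y_{j+1})$. Summing from $j=0$ to $n-1$ the right-hand side telescopes and is bounded by $Y_0\le 1$, so
\[
\sum_{j=1}^{n}Y_j^{p/(p-1)}\le \Bigl(\frac{\bar C}{\mu}\Bigr)^{p/(p-1)}.
\]
Finally, since $Y_n\le Y_j$ for all $j\le n$, monotonicity gives $n\,Y_n^{p/(p-1)}\le \sum_{j=1}^n Y_j^{p/(p-1)}$, and extracting the $(p-1)/p=1-1/p$ root yields $Y_n\le (\bar C/\mu)\,n^{-(1-1/p)}$, which is the claim with $\beta(\mu)=\bar C/\mu$.

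The only step requiring care is the application of the energy inequality: one must use a cutoff that allows \eqref{ellein} to be applied on a cube strictly larger than $K_{R/2}$ (where the shrinking is measured) so that $\eta\equiv 1$ on $K_{R/2}$ and $|D\eta|\lesssim 1/R$; a minor technicality is that the statement assumes $u$ is a supersolution in $K_R$ but the Poincar\'e step is done in $K_{R/2}$, so one may want either to enlarge slightly the ambient cube or to note that by translation invariance one may assume the hypothesis holds on any sub-cube of $K_R$ where the energy cutoff can be placed.
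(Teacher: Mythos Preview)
Your proof is correct and follows essentially the same route as the paper: De Giorgi--Poincar\'e at dyadic levels $2^{-j}$, H\"older, the energy inequality \eqref{ellein} with a cutoff in $K_R$ that is $\equiv 1$ on $K_{R/2}$, then telescoping and monotonicity. Your closing worry is unnecessary: the hypothesis gives a supersolution in $K_R$, the energy inequality is applied in $K_R$ with $\eta\in C_c^\infty(K_R)$, and the De Giorgi--Poincar\'e step only needs $u\in W^{1,1}(K_{R/2})$, so nothing has to be enlarged or translated.
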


\begin{proof}
Recale to $R=1$ and let  $k_{j}=2^{-j}$. By the De Giorgi-Poincar\'e inequality
\begin{equation}
\label{el4}
\begin{split}
(k_{j}-k_{j+1})|K_{1/2}\cap \{u\le k_{n+1}\}|&\le \frac{\bar C}{|K_{1/2}\cap \{u\ge k_{j}\}|}\int_{K_{1/2}\cap \{k_{j+1}<u\}}|D(u-k_{j})_{-}|\, dx\\
&\le\frac{\bar C}{\mu}\int_{K_{1/2}\cap \{k_{j+1}<u\}}|D(u-k_{j})_{-}|\, dx.
\end{split}
\end{equation}
If $\eta\in C^{\infty}_{c}(K_{1})$ is such that $0\le \eta\le 1$,  $\eta\equiv 1$ on $K_{1/2}$ and $|\nabla\eta|\le C(N)$, \eqref{ellein} gives
\[
\int_{K_{1/2}}|D(u-k_{j})_{-}|^{p}\, dx\le \bar C\int_{K_{1}}(u-k_{j})_{-}^{p}\, dx,
\]
so that the last integral in \eqref{el4} can be bounded through H\"older's inequality as 
\[
\begin{split}
\int_{K_{1/2}\cap \{ k_{j+1}<u\}}&|D(u-k_{j})_{-}|\, dx\le \left(\int_{K_{1/2}\cap \{ k_{j+1}\}}|D(u-k_{j})_{-}|^{p}\, dx\right)^{\frac{1}{p}}\left(|K_{1/2}\cap \{k_{j+1}< u\le k_{j}\}|\right)^{1-\frac{1}{p}}\\
&\le \bar C\left(\int_{K_{1}}(u-k_{j})_{-}^{p}\, dx\right)^{\frac{1}{p}}\left(|K_{1/2}\cap \{u\le k_{j}\}|-|K_{1/2}\cap \{u\le k_{j+1}\}|\right)^{1-\frac{1}{p}}
\end{split}
\]
Insert the latter into \eqref{el4}, use $(u-k_{j})_{-}\le k_{j}$ and $k_{j}-k_{j+1}=k_{j}/2$ to get
\[
\frac{k_{j}}{2}\, |K_{1/2}\cap \{u\le k_{j+1}\}|\le \frac{\bar C}{\mu}\, k_{j}\left(|K_{1/2}\cap \{u\le k_{j}\}|-|K_{1/2}\cap \{u\le k_{j+1}\}|\right)^{1-\frac{1}{p}}.
\]
Simplify the $k_{j}$'s, raise both sides to the power $p/(p-1)$ and sum over $j=0,\dots, n-1$. Since $|K_{1/2}\cap \{u\le k_{j}\}|$ is decreasing and $|K_{1/2}\cap \{u\le k_{j}\}|-|K_{1/2}\cap \{u\le k_{j+1}\}|$ telescopic, we obtain
\[
\begin{split}
n\, |K_{1/2}\cap \{u\le k_{n}\}|^{\frac{p}{p-1}}&\le \sum_{j=0}^{n-1}|K_{1/2}\cap \{u\le k_{j+1}\}|^{\frac{p}{p-1}}\\
&\le \frac{\bar C}{\mu^{\frac{p}{p-1}}}\left(|K_{1/2}\cap \{u\le k_{0}\}|-|K_{1/2}\cap \{u\le k_{n+1}\}|\right)\le \bar C \frac{1-\mu}{\mu^{\frac{p}{p-1}}}.
\end{split}
\]
\end{proof}

\begin{lemma}[Critical mass]\label{ecm}
Let $u\ge 0$ be a supersolution in $K_{R}$. There exists $\bar \nu$ s.\,t. 
\begin{equation}
\label{ell1}
P(K_{R};\, u\le 1)\le \bar \nu\quad \Rightarrow\quad u\ge 1/2 \quad \text{in $K_{R/2}$}.
\end{equation}
\end{lemma}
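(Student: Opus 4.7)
The plan is to run a standard De Giorgi iteration on shrinking cubes and decreasing levels converging to $1/2$, and then invoke the iteration lemma in \eqref{iteration}. By the scaling/translation invariance of the class of supersolutions noted above, we may reduce to $R=1$. Set
\[
r_{n}=\frac{1}{2}+\frac{1}{2^{n+1}},\qquad K_{n}=K_{r_{n}},\qquad k_{n}=\frac{1}{2}+\frac{1}{2^{n+1}},
\]
so that $K_{0}=K_{1}$, $K_{n}\searrow K_{1/2}$, $k_{0}=1$, $k_{n}\searrow 1/2$, and pick cutoffs $\eta_{n}\in C^{\infty}_{c}(K_{n})$ with $\eta_{n}\equiv 1$ on $K_{n+1}$ and $|D\eta_{n}|\le \bar C\, 2^{n}$. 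Define
\[
Y_{n}=P(K_{n};\, u\le k_{n})=\frac{|K_{n}\cap \{u\le k_{n}\}|}{|K_{n}|},
\]
so $Y_{0}\le \bar C\, P(K_{1};\, u\le 1)\le \bar C\, \bar\nu$.

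The first step is the energy inequality \eqref{ellein} applied with the level $k_{n}$ and cutoff $\eta_{n}$:
\[
\int_{K_{n}}|D(\eta_{n}(u-k_{n})_{-})|^{p}\, dx\le \bar C\int_{K_{n}}(u-k_{n})_{-}^{p}\, |D\eta_{n}|^{p}\, dx\le \bar C\, 2^{np}\, |K_{n}\cap\{u\le k_{n}\}|,
\]
where we used $(u-k_{n})_{-}\le k_{n}\le 1$. The second step is Sobolev's inequality applied to $\eta_{n}(u-k_{n})_{-}\in W^{1,p}_{0}(K_{n})$ (this is where $p<N$ enters, providing the Sobolev exponent $p^{*}=Np/(N-p)>p$):
\[
\left(\int_{K_{n}}(\eta_{n}(u-k_{n})_{-})^{p^{*}}\, dx\right)^{p/p^{*}}\le \bar C\int_{K_{n}}|D(\eta_{n}(u-k_{n})_{-})|^{p}\, dx.
\]

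The third step links this integral to $Y_{n+1}$. On $K_{n+1}\cap\{u\le k_{n+1}\}$ one has $\eta_{n}\equiv 1$ and $(u-k_{n})_{-}\ge k_{n}-k_{n+1}=2^{-(n+2)}$, hence
\[
2^{-p(n+2)}|K_{n+1}\cap\{u\le k_{n+1}\}|^{p/p^{*}}\le \left(\int_{K_{n}}(\eta_{n}(u-k_{n})_{-})^{p^{*}}\, dx\right)^{p/p^{*}}.
\]
Combining the three displays and normalizing by the comparable volumes $|K_{n}|\simeq |K_{n+1}|\simeq 1$ yields an iterative inequality of the form
\[
Y_{n+1}\le \bar C\, b^{n}\, Y_{n}^{1+\alpha},\qquad b=4^{p},\quad \alpha=\frac{p^{*}}{p}-1=\frac{p}{N-p}>0.
\]

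The final step is to apply \eqref{iteration}: there exists $\bar\nu>0$, depending only on the data, such that $Y_{0}\le \bar C\, \bar\nu$ implies $Y_{n}\to 0$. Passing to the limit $n\to\infty$, and since $K_{n}\searrow K_{1/2}$ and $k_{n}\searrow 1/2$, we deduce $|K_{1/2}\cap\{u<1/2\}|=0$, i.e.\ $u\ge 1/2$ a.e.\ in $K_{1/2}$, which is the claimed measure-to-point estimate. The only step that requires care is the third one, where the geometric shrinking of $k_{n}-k_{n+1}$ must be controlled by the Sobolev gain $\alpha>0$; this is automatic once $p<N$ is used, and is in fact the reason why the critical threshold $\bar\nu$ depends only on the data.
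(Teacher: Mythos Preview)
Your proof is correct and follows essentially the same approach as the paper's: reduce to $R=1$ by scaling, set up the dyadic sequences of radii $r_n\searrow 1/2$ and levels $k_n\searrow 1/2$, combine the energy inequality \eqref{ellein} with the Sobolev embedding and a Chebyshev-type lower bound to obtain the recursive estimate $Y_{n+1}\le \bar C\, b^n\, Y_n^{p^*/p}$, and conclude via \eqref{iteration}. The only differences are cosmetic (you start the index at $n=0$ rather than $n=1$, and the superfluous factor $\bar C$ in $Y_0\le \bar C\,\bar\nu$ is not needed since $K_0=K_1$ and $k_0=1$ exactly).
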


\begin{proof}
Scale back to $R=1$ and define for $n\ge 1$ $k_{n}=r_{n}=1/2+1/2^{n}$, $K_{n}=K_{r_{n}}$.
Let moreover  
\[
\eta_{n}\in C^{\infty}_{c}(K_{n}),\qquad 0\le \eta_{n}\le 1, \qquad \left.\eta_{n}\right|_{K_{n+1}}\equiv 1, \qquad |D\eta_{n}|\le \bar C\, 2^{n}
\]
and chain the Sobolev inequality with \eqref{ellein} with $k=k_{n}$, $\eta=\eta_{n}$, to obtain
\begin{equation}
\label{el3}
\int |(u-k_{n})_{-}\eta_{n}|^{p^{*}}\, dx\le \bar C\left(\int |\nabla((u-k_{n})_{-}\eta_{n})|^{p}\, dx\right)^{\frac{p^{*}}{p}}\le \bar C\left(\int_{K_{n}} 2^{np}(u-k_{n})_{-}^{p}\, dx\right)^{\frac{p^{*}}{p}}.
\end{equation}
On the right we use $(u-k_{n})_{-}\le k_{n}$ and $|K_{n}|\le 1$ to bound 
\[
\int_{K_{n}} (u-k_{n})_{-}^{p}\, dx\le  k_{n}^{p}\, |K_{n}\cap \{u\le k_{n}\}|\le   2^{-np}\, P(K_{n};\, u\le k_{n})
\]
while by $\eta_{n}\equiv 1$ on $K_{n+1}$ and Tchebicev's inequality, the left hand  side of \eqref{el3} bounds
\[
\begin{split}
\int &|(u-k_{n})_{-}\eta_{n}|^{p^{*}}\, dx\ge \int_{K_{n+1}}(u-k_{n})_{-}^{p^{*}}\, dx\ge \int_{K_{n+1}\cap \{u\le k_{n+1}\}}(u-k_{n})_{-}^{p^{*}}\, dx\\
&\ge  (k_{n}-k_{n+1})^{p^{*}} |K_{n+1}\cap \{u\le k_{n+1}\}|\ge 2^{-(n+1)p^{*}} 2^{-N} P(K_{n+1};\, u\le k_{n+1}).
\end{split}
\]
Use the previous two inequalities into \eqref{el3} to get
\[ 
P(K_{n+1};\, u\le k_{n+1})\le \bar C\, 2^{np^{*}}  P(K_{n};\, u\le k_{n})^{\frac{p^{*}}{p}}.
\]
The claim now follows from \eqref{iteration} applied to the sequence $X_{n}=P(K_{n};\, u\le k_{n})$.
\end{proof}

\begin{lemma}[Measure-to-point estimate]\label{MTP1}
Let $u\ge 0$ be a supersolution in $K_{R}$. For any $\mu>0$ there exists $m(\mu)>0$ such that
\begin{equation}
\label{ell5}
P(K_{R/2};\, u\ge k)\ge \mu\quad \Rightarrow \quad \inf_{K_{R/4}}u\ge m(\mu)\, k.
\end{equation}
\end{lemma}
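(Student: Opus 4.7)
The plan is to combine the two previous lemmas via a single rescaling argument, exploiting the invariance of the class of supersolutions of \eqref{ellittica} under positive scalar multiplication and dilation/translation of the domain. By these invariances I may reduce to the normalised situation $R=1$ and $k=1$: indeed, if $u$ is a supersolution on $K_{R}$ with $P(K_{R/2};\, u\ge k)\ge \mu$, then $\tilde u(y):=u(Ry)/k$ is a supersolution on $K_{1}$ with $P(K_{1/2};\, \tilde u\ge 1)\ge \mu$, and a lower bound $\inf_{K_{1/4}}\tilde u\ge m(\mu)$ yields exactly \eqref{ell5}.

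Starting from $P(K_{1/2};\, u\ge 1)\ge\mu$, the Shrinking Lemma \ref{esh} supplies, for every $n\ge 1$, the estimate
\[
P(K_{1/2};\, u\le 1/2^{n})\le \frac{\beta(\mu)}{n^{1-\frac{1}{p}}}.
\]
Since the right-hand side tends to $0$ as $n\to\infty$, I can pick $n=n(\mu)$ as the smallest integer for which $\beta(\mu)/n^{1-1/p}\le \bar\nu$, with $\bar\nu$ the universal constant of the Critical Mass Lemma \ref{ecm}; explicitly $n(\mu)\simeq (\beta(\mu)/\bar\nu)^{p/(p-1)}$. Now the rescaled function $v:=2^{n(\mu)}u$ is again a non-negative supersolution of an equation of the form \eqref{ellittica} (with the same structural constants $C_{0},C_{1}$), and on the cube $K_{1/2}$ it satisfies
\[
P(K_{1/2};\, v\le 1)=P(K_{1/2};\, u\le 1/2^{n(\mu)})\le \bar\nu.
\]

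Applying the Critical Mass Lemma \ref{ecm} to $v$ \emph{on the cube $K_{1/2}$} (which is legitimate by translation-dilation invariance) then yields $v\ge 1/2$ on $K_{1/4}$, that is
\[
\inf_{K_{1/4}} u\ge \frac{1}{2^{n(\mu)+1}}=:m(\mu),
\]
which is the desired measure-to-point estimate. The main conceptual point, and the only place where one must be a little careful, is the bookkeeping of cubes in the final step: the Critical Mass Lemma is stated with ratio $1/2$ between the outer and inner cubes, so after the Shrinking Lemma has transferred the measure information from $\{u\ge 1\}$ to $\{u\le 1/2^{n}\}$ inside $K_{1/2}$, one must apply the critical-mass step on $K_{1/2}\supset K_{1/4}$ and not on $K_{1}\supset K_{1/2}$. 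No further technical difficulty is expected, since the whole argument is essentially a quantitative chain of the two lemmas already proved.
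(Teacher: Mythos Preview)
Your proof is correct and follows essentially the same approach as the paper's: apply the Shrinking Lemma to push the measure of the sublevel set below the threshold $\bar\nu$, then invoke the Critical Mass Lemma on $K_{R/2}$ after rescaling, obtaining $m(\mu)=2^{-n(\mu)-1}$. The paper's version is simply more terse, rescaling by $u/k$ rather than first normalising $R$ and $k$ separately.
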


\begin{proof}
Given $\mu>0$, choose $n_{\mu}\ge 1$ in Lemma \ref{esh}  such that $\beta(\mu)/n_{\mu}^{1-1/p}\le \bar\nu$, so that $P(K_{R/2};\, u\ge k/2^{n_{\mu}})\le \bar \nu$. Then apply \eqref{ell1} to $u/k$, obtaining \eqref{ell5} with  $m(\mu)=2^{-n_{\mu}-1}$. 
\end{proof}

\begin{theorem}[H\"older regularity]\label{ellcalpha}
Let $u$ solve \eqref{ellittica} in $K_{2R}$. There exists $\bar C, \bar \alpha>0$  s.\,t.  
\begin{equation}
\label{oscell}
{\rm osc}(u; K_{\rho})\le \bar C\, {\rm osc}(u;  K_{R})\, (\rho/R)^{\bar \alpha}\quad \text{for $0\le \rho\le R/2$}.
\end{equation}
\end{theorem}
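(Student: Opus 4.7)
The plan is to implement the classical De Giorgi decrease-of-oscillation argument on a dyadic chain of concentric cubes, using Lemma \ref{MTP1} as the core tool. For each $n\ge 0$ set $R_n = R/4^n$, $M_n = \sup_{K_{R_n}}u$, $m_n = \inf_{K_{R_n}}u$, and $\omega_n = {\rm osc}(u; K_{R_n}) = M_n - m_n$. The goal is a geometric decay $\omega_{n+1}\le \eta\, \omega_n$ for some $\eta\in (0,1)$ depending only on the data, from which \eqref{oscell} follows by standard interpolation across dyadic scales.

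The key observation is that both $v_1 := M_n - u$ and $v_2 := u - m_n$ are nonnegative on $K_{R_n}$ and are supersolutions of an equation of the form \eqref{ellittica} with the same structural constants $C_0, C_1$. Indeed, if $u$ solves ${\rm div}\,A(x,u,Du)=0$, then $v_1$ solves ${\rm div}\,\tilde A(x,v_1,Dv_1)=0$ with $\tilde A(x,s,z):= -A(x, M_n - s, -z)$, and a direct check gives $\tilde A(x,s,z)\cdot z = A(x, M_n-s,-z)\cdot(-z)\ge C_0|z|^p$ together with $|\tilde A(x,s,z)|\le C_1|z|^{p-1}$; similarly for $v_2$ after the analogous substitution. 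On the middle cube $K_{R_n/2}$ we apply the elementary dichotomy at the midlevel $k := \omega_n/2$: either
\[
P\bigl(K_{R_n/2};\, v_1 \ge k\bigr) \ge \tfrac{1}{2}, \qquad \text{or}\qquad P\bigl(K_{R_n/2};\, v_2 \ge k\bigr) \ge \tfrac{1}{2}.
\]
In the first case Lemma \ref{MTP1} applied to $v_1$ in $K_{R_n}$ with $\mu = 1/2$ gives $\inf_{K_{R_n/4}} v_1 \ge m(1/2)\,\omega_n/2$, i.e.\ $\sup_{K_{R_{n+1}}} u \le M_n - m(1/2)\,\omega_n/2$, and hence $\omega_{n+1}\le \bigl(1 - m(1/2)/2\bigr)\omega_n$; the second case symmetrically improves the infimum and yields the same bound.

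Setting $\eta := 1 - m(1/2)/2 \in (0,1)$ and iterating one obtains $\omega_n \le \eta^n \omega_0$ for every $n$. Choose $\bar\alpha := \log_4(1/\eta) > 0$, so that $\eta^n = 4^{-n\bar\alpha}$. Given $\rho \le R/2$, pick the unique $n\ge 0$ with $R_{n+1} < \rho \le R_n$; then $(\rho/R)^{\bar\alpha} > 4^{-(n+1)\bar\alpha}$, and since $K_\rho\subseteq K_{R_n}$ we get
\[
{\rm osc}(u;K_\rho) \le \omega_n = 4^{-n\bar\alpha}\,\omega_0 \le 4^{\bar\alpha}\,(\rho/R)^{\bar\alpha}\,{\rm osc}(u;K_R),
\]
which is \eqref{oscell} with $\bar C = 4^{\bar\alpha}$.

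The substantive work of the theorem is already absorbed by Lemma \ref{MTP1}. The only genuinely delicate point here is bookkeeping the structural invariance of the class, so that both $v_1$ and $v_2$ fall under the hypotheses of the measure-to-point estimate with the same data-dependent constants; once this is recorded, the proof reduces to the standard dyadic iteration sketched above.
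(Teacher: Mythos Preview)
Your proof is correct and follows essentially the same approach as the paper's: apply the measure-to-point estimate (Lemma~\ref{MTP1}) to whichever of $M_n-u$ or $u-m_n$ occupies at least half the cube above the midlevel, deduce a fixed fractional decrease of oscillation, and iterate on dyadic scales. The only cosmetic difference is that you shrink by a factor of $4$ per step (applying Lemma~\ref{MTP1} inside $K_{R_n}$) while the paper shrinks by a factor of $2$ (applying it inside $K_{2R_n}\subseteq K_{2R}$), which affects only the numerical value of $\bar\alpha$.
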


\begin{proof}
Rescaling to $R=1$ and considering $u/{\rm osc}(u; K_{1})$ we can suppose ${\rm osc}(u, K_{1})=1$. Both $u_{+}=u-\inf_{K_{1}}u$ and $u_{-}=\sup_{K_{1}}u -u$ are non-negative solutions with ${\rm osc}(u_{\pm}; K_{1})=1$. Since
\[
P(K_{1};\, u_{+}\ge 1/2)=P(K_{1};\, u_{-}\le 1/2)=1-P(K_{1};\, u_{-}>1/2),
\]
at least one of $P(K_{1};\, u_{\pm}\ge 1/2)$ is at least $1/2$ and we can suppose without loss of generality that it is $u_{+}$. Then \eqref{ell5} with $R=2$, $k=1/2$ provides
\[
\inf_{K_{1/2}}u_{+}\geq m(1/2)/2=:\bar m\quad \Rightarrow\quad \inf_{K_{1/2}}u\ge\inf_{K_{1}} u+\bar m\quad \Rightarrow \quad {\rm osc}(u; K_{1/2})\leq 1-\bar m.
\]
Scaling back we obtained ${\rm osc}(u; K_{R/2})\le {\rm osc}(u; K_{R})(1-\bar m)$
which, iterated for $R_{n}=R/2^{n}$ gives 
\[
{\rm osc}(u; K_{R_{n}})\le {\rm osc}(u; K_{R})(1-\bar m)^{n}.
\]
For $\rho\le R/2$, let $n\ge 1$ obey $R_{n+1}\le \rho \le R_{n}$ and $\bar \alpha:=-\log_{2}(1-\bar m)$. Then, by monotonicity,
\[
{\rm osc}(u; K_{\rho})\le {\rm osc}(u; K_{R_{n}})\le {\rm osc}(u; K_{R})(1-\bar m)^{n}={\rm osc}(u; K_{R})2^{-n\, \bar\alpha}= 2^{\bar\alpha}\, {\rm osc}(u; K_{R})(2^{-(n+1)})^{\bar\alpha}
\]
giving the claim due to $2^{-(n+1)}\le \rho/R$.
\end{proof}

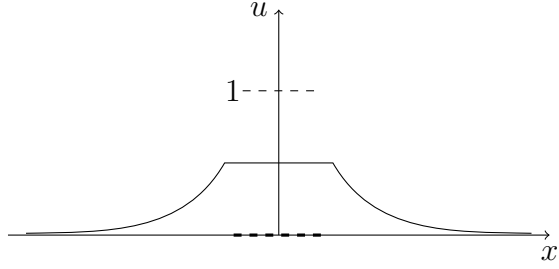
\begin{figure}
\centering
\begin{tikzpicture}[scale=1.2]

\draw[->] (-3, 0) -- (3, 0) node[below]{$x$};
\draw[->] (0, 0) -- (0, 2.5) node[left]{$u$};
\draw (-0.3,1.6) node[left]{$1$};
\draw[very thick, dashed] (-0.5, 0) -- (0.5, 0);
\draw[dashed] (-0.4, 1.6) -- (0.4, 1.6);
\draw (-2.8, 0.02) to[out=1, in=-120] (-0.6, 0.8) -- (0.6, 0.8) to[out=-60, in=179] (2.8, 0.02);
%\draw[very thick, white, dashed, opacity=1] (-0.5, 0.8) -- (0.5, 0.8);

\end{tikzpicture}
\caption{The expansion of positivity. If  $u\ge 1$ on the dashed part of the cube, it is bounded below  by a negative power of the distance from the cube. }
\label{EPf}
\end{figure}

\begin{theorem}[Expansion of positivity, see figure \ref{EPf}]\label{ellexpos}
Let $u\ge 0$ be a supersolution in $K_{R}$. There exists $\bar \lambda>1$ and,  for any $\mu>0$, $c(\mu)>0$   s.\,t. 
\begin{equation}
\label{ell6}
P(K_{r};\, u\ge 1)\ge \mu\quad \Rightarrow \quad \inf_{K_{\rho}}u\ge c(\mu) \, (r/\rho)^{\bar\lambda}\quad \text{if $r\le \rho\le R/2$}.
\end{equation}
\end{theorem}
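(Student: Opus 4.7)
The plan is to iterate Lemma \ref{MTP1} outward, starting from the given measure-density assumption and doubling the radius at each step at the price of a multiplicative loss in the pointwise lower bound. Once one quantifies this loss, the geometric series $\sum \log_2$ produces the power-type decay $(r/\rho)^{\bar\lambda}$.

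First I would apply Lemma \ref{MTP1} directly: since $u$ is a supersolution in $K_{2r}\subseteq K_R$ (recall $r\le \rho\le R/2$, so the situation can be reduced to $2r\le R$), the hypothesis $P(K_r;\, u\ge 1)\ge\mu$ yields
\[
\inf_{K_{r/2}} u\ge m(\mu)=: M_0.
\]
Now comes the iterative step. Suppose inductively that on some cube $K_{s_k}$ (with $s_0=r/2$) we have $\inf_{K_{s_k}}u\ge M_k$. Then, purely from volume comparison,
\[
P(K_{4 s_k};\, u\ge M_k)\ge \frac{|K_{s_k}|}{|K_{4s_k}|}=4^{-N},
\]
so, provided the cube $K_{8s_k}$ is still contained in $K_R$, Lemma \ref{MTP1} applied to $u/M_k$ on $K_{8s_k}$ gives
\[
\inf_{K_{2s_k}}u\ge m(4^{-N})\, M_k=:\bar m\, M_k,\qquad \bar m\in (0,1).
\]
Thus, setting $s_{k+1}=2s_k$, $M_{k+1}=\bar m\, M_k$, after $n$ iterations one has $s_n=2^{n-1}r$ and $M_n=\bar m^{n}\, m(\mu)$.

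Choosing $n$ to be the least integer with $s_n\ge \rho$ gives $n\simeq 1+\log_2(\rho/r)$, hence
\[
\inf_{K_\rho}u\ge \inf_{K_{s_n}}u\ge \bar m^{\, 1+\log_2(\rho/r)}\, m(\mu)=c(\mu)\,(r/\rho)^{-\log_2\bar m},
\]
which is the desired estimate with $\bar\lambda=-\log_2\bar m>0$; since $r/\rho\le 1$, we may always enlarge $\bar\lambda$ to ensure $\bar\lambda>1$ at the only cost of shrinking $c(\mu)$. The main bookkeeping obstacle is the geometric constraint $8s_k\le R$ required at each step to legitimately invoke Lemma \ref{MTP1}: stopping the iteration at the largest $n$ compatible with $8s_n\le R$ translates into the condition $\rho\lesssim R/16$. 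For $\rho$ in the remaining bounded range $[R/16,R/2]$ the ratio $r/\rho$ is itself bounded away from $0$, so the conclusion follows directly from the first step $\inf_{K_{r/2}}u\ge m(\mu)$ (together with one or two further applications of Lemma \ref{MTP1}) by absorbing the universal factor into $c(\mu)$. No deeper ingredient beyond the measure-to-point estimate and volume comparison is needed.
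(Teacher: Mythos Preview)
Your proposal is correct and follows essentially the same route as the paper: iterate Lemma~\ref{MTP1} by doubling radii, using the volume comparison $P(K_{4s};\,u\ge M)\ge 4^{-N}$ to feed back into the measure-to-point estimate with the fixed factor $\bar m=m(4^{-N})$, and read off $\bar\lambda=-\log_2\bar m$. One small slip in your boundary discussion: for $\rho\in[R/16,R/2]$ the ratio $r/\rho$ is \emph{not} bounded away from $0$ (since $r$ may be tiny); what actually saves you is that the iteration already reaches some $s_N\in(R/16,R/8]$ with a good bound there, and then \emph{that} ratio $s_N/\rho\ge 1/8$ is bounded, so your parenthetical ``one or two further applications of Lemma~\ref{MTP1}'' is the correct fix---the paper is equally casual about this endpoint bookkeeping.
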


\begin{proof}
Using the notations of Lemma \ref{MTP1}, we let $c=c(\mu):=m(\mu/2^{N})$ and iterate \eqref{ell5} as follows. From $P(K_{r};\, u\ge 1)\ge \mu$ we infer $P(K_{2r};\, u\ge 1)\ge \mu/2^{N}$ thus  \eqref{ell5} gives $\inf_{K_{r}} u\ge c$. If  $\bar \delta:=m(4^{-N})$ and $\rho_{n}=2^{n}r$, we thus have $P(K_{\rho_{0}};\, u\ge c\, \bar\delta^{0})=1$. Moreover
\[
P(K_{\rho_{n}};\, u\ge c\, \bar\delta^{n})= 1\quad \Rightarrow\quad  P(K_{\rho_{n+2}};\, u\ge c\, \bar\delta^{n })\ge 4^{-N}\ \  \underset{\eqref{ell5}}{\Rightarrow}\ \    P(K_{\rho_{n+1}};\, u\ge c\, \bar\delta^{n+1})=1.
\]
Thus, by induction, $u\ge c\, \bar\delta^{n}$ in $K_{\rho_{n}}$ for all $n\ge 0$ such that $\rho_{n+2}=2^{n+2}r\le R$. Given $\rho\in [r, R/2]$ let $n$ be such that $2^{n-1}\le \rho/r \le 2^{n}$. Then we obtained the claim with $\bar\lambda=-\log_{2}\bar\delta$, since
\[
\inf_{K_{\rho}}u\ge \inf_{K_{\rho_{n}}} u\ge c\, \bar \delta^{n}\ge c\, \bar \delta \, (\rho/r)^{\log_{2}\bar \delta}.
\]
\end{proof}

We call the exponent $\bar\lambda$ the {\em expansion of positivity rate}. 

\begin{theorem}[Harnack inequality]\label{harnackellittica}
There exists $\bar C>0$ such that for any locally bounded solution $u\ge 0$ to \eqref{ellittica} in $K_{8R}$  it holds 
\[
\sup_{K_{R}} u\le \bar C\, \inf_{K_{R}} u.
\]
\end{theorem}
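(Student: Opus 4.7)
The plan is to derive the Harnack inequality from the H\"older estimate (Theorem \ref{ellcalpha}) and the expansion of positivity (Theorem \ref{ellexpos}) via a Landis-type iteration. Since the class of solutions is invariant under translation, scaling, and positive scalar multiplication, and since by Theorem \ref{ellcalpha} we may work with a continuous representative, I rescale to $R=1$ and fix $x_{0}\in\overline{K_{1}}$ realizing $\inf_{K_{1}}u$. If $u(x_{0})=0$, the expansion of positivity forces $u\equiv 0$ on $K_{1}$ (a strong minimum principle); otherwise I normalize $u(x_{0})=1$, and the task reduces to proving $\sup_{K_{1}}u\le \bar C$.

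The key technical ingredient is a \emph{growth step}: given $y\in K_{1}$ with $u(y)=A$, observe $K_{7}(y)\subseteq K_{8}$ and set $A':=\sup_{K_{7/2}(y)}u$. Theorem \ref{ellcalpha} applied in $K_{7}(y)$ yields $u\ge A/2$ on $K_{\rho}(y)$ with $\rho=\bar c_{1}\,(A/A')^{1/\bar\alpha}$, and Theorem \ref{ellexpos} (applied to $2u/A$ with $\mu=1$ and $\sigma=2$, which is admissible since $2\le 7/2$) propagates this bound to
\[
\inf_{K_{2}(y)}u\ge c(1)\,(\rho/2)^{\bar\lambda}\,A/2 \;=\; \bar c_{2}\,(A/A')^{\bar\lambda/\bar\alpha}\,A.
\]
Since $|x_{0}-y|_{\infty}\le 1$, we have $x_{0}\in K_{2}(y)$, so evaluating at $x_{0}$ and using $u(x_{0})=1$ yields the quantitative growth estimate
\[
A'\ge \bar c_{3}\,A^{1+\varepsilon},\qquad \varepsilon:=\bar\alpha/\bar\lambda>0,
\]
and consequently the existence of a point $y^{*}\in K_{7/2}(y)$ with $u(y^{*})\ge \bar c_{3}A^{1+\varepsilon}/2$.

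Suppose by contradiction that $u(y_{0})\ge M$ for some $y_{0}\in K_{1}$, with $M$ large. Iterating the growth step produces a sequence $\{y_{k}\}$ with $u(y_{k+1})\ge \bar c_{3}\,u(y_{k})^{1+\varepsilon}/2$, so $u(y_{k})\to\infty$ super-linearly as soon as $M$ exceeds a threshold depending only on the data. The main obstacle, and the delicate technical step, is geometric: already after the first iteration $y_{1}$ may lie outside $K_{1}$, so that $x_{0}$ no longer belongs to the cube used by expansion of positivity at $y_{1}$. This is overcome by a Landis-style bookkeeping---either by renewing the reference point (picking some earlier $y_{j}$ with $u(y_{j})\ll u(y_{k})$, which still belongs to the relevant cube), or by choosing a summable geometric sequence of radii $r_{k}\searrow 0$ to ensure that $\{y_{k}\}$ remains in a fixed compact subset of $K_{8}$, while the effective growth exponents at each step remain uniformly bounded below by some $\varepsilon_{*}>0$. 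The resulting $u(y_{k})\to\infty$ inside $\overline{K_{7}}$ contradicts local boundedness of $u$ in $K_{8}$, and the Harnack inequality follows.
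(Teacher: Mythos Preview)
Your approach is valid but genuinely different from the paper's, and the delicate step you flag is precisely the one the paper sidesteps.

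The paper normalizes $\sup_{K_{1}}u=1$ and seeks $\inf_{K_{1}}u\ge\bar m$. The key device is the auxiliary function
\[
\psi(\rho)=(1-\rho)^{\bar\lambda}\sup_{K_{\rho}}u,\qquad \rho\in[0,1],
\]
whose maximum at some $\rho_{0}$ picks out a point $x_{0}\in K_{\rho_{0}}$ and a radius $r=\bar\xi(1-\rho_{0})$ with the two properties $u(x_{0})\,r^{\bar\lambda}\ge\bar\xi^{\bar\lambda}$ and $\sup_{K_{r}(x_{0})}u\le 2\,u(x_{0})$. The second inequality feeds directly into the H\"older estimate to produce a density bound at $x_{0}$, and a \emph{single} application of expansion of positivity from $K_{r}(x_{0})$ to $K_{2}(x_{0})\supseteq K_{1}$ finishes the proof. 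No iteration, no geometric bookkeeping.

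Your Landis-type blow-up does work, but the iteration you only sketch is where all the labor lives. Of your two proposed fixes, the ``summable radii'' option is the sound one: taking $A_{k+1}:=\sup_{K_{r_{k}}(y_{k})}u$ with $r_{k}=2^{-k-2}$ keeps $y_{k}\in K_{2}$ and $x_{0}\in K_{3}(y_{k})$, and the growth step becomes $A_{k+1}\ge \bar c_{3}\,r_{k}^{\bar\alpha}A_{k}^{1+\varepsilon}$; the recurrence $\log A_{k+1}\ge(1+\varepsilon)\log A_{k}-\bar c\,k$ still blows up for $\log M$ above a threshold depending only on the data. The ``renewing the reference point'' option is shakier as you stated it: replacing $x_{0}$ by $y_{k-1}$ yields only $A_{k+1}\ge\bar c_{3}(A_{k}/A_{k-1})^{\varepsilon}A_{k}$, which is a sub-linear recursion in the ratio and does not by itself force blow-up.

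In short: both proofs rest on the same two pillars (Theorem~\ref{ellcalpha} and Theorem~\ref{ellexpos}), but the paper's $\psi$-maximization collapses your entire iteration into one step and avoids the geometric control you would otherwise have to supply.
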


\begin{proof}
Rescaling to $R=1$ and considering $u/\sup_{K_{1}}u$ we are reduced to prove that 
\begin{equation}
\label{claimharnack}
\sup_{K_{1}}u=1 \quad \Rightarrow\quad \inf_{K_{1}}u\ge \bar m>0
\end{equation}
for any solution $u\ge 0$ in $K_{8}$. We will find $\bar m>0$, $x_{0}\in K_{1}$ and $r>0$ such that 
\begin{equation}
\label{claimharnack2}
u(x_{0})\, r^{\bar \lambda}\ge \bar m, \qquad P(K_{r}(x_{0});\, u\ge u(x_{0})/2)\ge \bar \nu
\end{equation}
for $\bar \lambda$ given \eqref{ell6} and some universal $\bar \nu$. Theorem \ref{ellexpos} applied to $u/u(x_{0})$ will then prove \eqref{claimharnack} for such $r$, with the choices $R=8$, $k=u(x_{0})/2$, $\mu=\bar \nu$ and $\rho=2$, as $K_{1}\subseteq K_{2}(x_{0})\subseteq K_{4}$.\\
 To choose $x_{0}$ and $r$, observe that Theorem \ref{ellcalpha} implies that the function
\[
[0, 1]\ni \rho \mapsto \psi(\rho)=(1-\rho)^{\bar \lambda}\sup_{K_{\rho}}u
\]
is continuous and vanishes at $\rho=1$, thus it attains its maximum at some $\rho_{0}<1$ and we set
\[
\max_{[0, 1]}\psi=(1-\rho_{0})^{\lambda}\sup_{K_{\rho_{0}}}u=(1-\rho_{0})^{\lambda}\, u(x_{0})
\]
for some $x_{0}\in K_{\rho_{0}}$. Let $\xi\in \ ]0, 1[$ to be chosen and define  $r=\xi\, (1-\rho_{0})$. Then
\begin{equation}
\label{par10}
u(x_{0})\, r^{\bar \lambda}=\xi^{\bar \lambda}\, u(x_{0})\, (1-\rho_{0})^{\bar \lambda}=\xi^{\bar \lambda}\, \psi(\rho_{0})\ge \xi^{\bar \lambda}\, \psi(0)=\xi^{\bar\lambda}.
\end{equation}
Since $K_{r}(x_{0})\subseteq K_{\rho_{0}+r}$, we infer from $\psi(\rho_{0}+r)\le \psi(\rho_{0})$ that
\[
 \sup_{K_{r}(x_{0})}u\le \sup_{K_{\rho_{0}+r}}u= \frac{\psi(\rho_{0}+r)}{(1-\rho_{0}-r)^{\bar\lambda}}\le  \frac{\psi(\rho_{0})}{(1-\rho_{0}-r)^{\bar\lambda}}=\frac{(1-\rho_{0})^{\bar\lambda}}{(1-\rho_{0}-r)^{\bar \lambda}}\sup_{K_{\rho_{0}}}u=\frac{u(x_{0})}{(1-\xi)^{\bar\lambda}} .
\]
Choose $\bar\xi$ as per $(1-\bar \xi)^{-\bar\lambda}=2$, so that $u\le 2\, u(x_{0})$ in $K_{r}(x_{0})$, while \eqref{par10} gives the first condition in \eqref{claimharnack2} with $\bar m=\bar\xi^{\bar\lambda}$.  Apply \eqref{oscell} for $R=r$, $\rho=\bar \eta r$ with $\bar\eta$ s.\,t. $4\bar C \bar\eta ^{\bar \alpha}\le 1$, so that
\[
 {\rm osc}(u; K_{\bar\eta r}(x_{0}))\le \bar C\, {\rm osc}(u; K_{r}(x_{0}))\, \bar\eta^{\bar\alpha}\le  2\,\bar C\,  u(x_{0})\, \bar\eta^{\bar\alpha}\le u(x_{0})/2,
 \]
 implying $u\ge u(x_{0})/2$ in $K_{\bar\eta r}(x_{0})$. Thus, the second condition in \eqref{claimharnack2} holds for $\bar\nu=\bar\eta^{N}$.
\end{proof}

\subsection{Homogeneous parabolic equations}\ \\

\vskip-8pt
In the forthcoming subsections we will provide the extension of the previous techniques to the parabolic setting.
In order to highlight the similarities with the elliptic case, we will proceed step-by-step in increasing generality, gradually introducing the modifications needed to cater with the evolutionary framework.

First we will deal with homogeneous equations, i.e. those for which scalar multiplication still gives a solution of the same (from the structural point of view) type of equation. We chose for simplicity to deal with the quadratic case, i.\,e., with equations of the form
\begin{equation}
\label{parabo}
u_{t}={\rm div}A(x, u, Du),\qquad 
\begin{cases}
A(x, s, z)\cdot z\ge C_{0}|z|^{2}\\
|A(x, s, z)|\le C_{1}|z|.
\end{cases}
\end{equation}
As in the previous subsection, we say that $u$ is a (sub-)\,super-solution if there is some $A$ obeying the growth conditions and such that $u_{t} (\le)\ge {\rm div}A(x, u, Du)$.
An important feature of  \eqref{parabo} is that the class of its solutions is invariant by space/time translations, by the scaling $u_{\lambda}(x, t)=u(\lambda x, \lambda^{2}t)$, $\lambda>0$ and, more substantially, by scalar multiplication.
More generally, homogeneous problems of the form
\[
|u_{t}|^{p-2}u_{t}={\rm div}A(x, u, Du),\qquad 
\begin{cases}
A(x, s, z)\cdot z\ge C_{0}|z|^{p}\\
|A(x, s, z)|\le C_{1}|z|^{p-1}
\end{cases}
\]
can be dealt in the same way. In fact, as will be apparent from the proofs, in this homogeneous setting the Harnack inequality follows solely from the energy inequality. Indeed, in \cite{UG}, it has been proved for non-negative functions belonging to the so-called {\em parabolic De Giorgi classes} i.e., roughly speaking, functions obeying the energy inequality for truncations.

In the following, we set $Q_{R, T}=K_{R}\times [0, T]$. Given a rectangle  $Q=K\times [a, b]\subseteq \R^{N}\times \R$,  $u:Q\to \R$ and $k\in \R$ we define, respectively
\[
P\left(Q;\, u\lesseqgtr k\right)=\frac{\left|Q\cap \left\{u\lesseqgtr k\right\}\right|}{|Q|},\qquad P_{t}\left(K;\, u\lesseqgtr k\right)=\frac{\left|K\cap \left\{u(\cdot, t)\lesseqgtr k\right\}\right|}{|K|}.
\]
The dependance on $N$, $C_{0}$ and $C_{1}$ will always be omitted, and a constant $c$ depending only on the latters will be denoted by $\bar c$.
We also recall the relevant functional analytic tools.

\begin{proposition}
\quad 
\begin{description}
\item[1)  Parabolic Sobolev Embedding]\cite[Lemma II.4.1]{HR}
If $u\in L^{2}(0, T; W^{1, 2}_{0}(\Omega))$, then
\[
\int_{0}^{T}\int_{\Omega} |u|^{2\frac{N+2}{N}}\, dx\, dt\leq C_{N}\left(\sup_{t\in [0, T]}\int_{\Omega}u^{2}(x, t)\, dx\right)^{\frac{2}{N}}\int_{0}^{T}\int_{\Omega}|Du|^{2}\, dx\, dt.
\]
\item[2)  Energy inequality]\cite[Prop. III.2.1]{HR}
Let $u$ be a supersolution to \eqref{parabo} in $Q=K\times [0, T]$. There exists $\bar C>0$ s.\,t. for any $k\ge 0$ and $\eta\in C^{\infty}(a, b; C^{\infty}_{c}(K))$, $0\le \eta\le 1$ it holds
\begin{equation}
\label{einh}
\begin{split}
\sup_{t\in [0, T]}&\int_{K} (u(x, t)-k)_{-}^{2}\eta^{2} \, dx+\frac{1}{\bar C}\iint_{Q} |D(\eta(u-k)_{-})|^{2}\, dx\, dt\\
&\le \int_{K} (u(x, 0)-k)_{-}^{2}\eta^{2} \, dx+\bar C\iint_{Q} (u-k)_{-}^{2}|\nabla\eta|^{2}\, dx\, dt+\bar C\iint_{Q} (u-k)_{-}^{2}|\eta_{t}|\, dx\, dt.
\end{split}
\end{equation}
\end{description}
\end{proposition}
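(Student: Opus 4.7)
The plan is to treat the two parts of the proposition separately, both by classical arguments: the parabolic Sobolev embedding reduces to the stationary Sobolev inequality coupled with $L^p$-interpolation, while the energy inequality follows from testing the equation against a truncation of $u$ times a cut-off, exactly as in the elliptic case (compare with the derivation of \eqref{ellein}) but retaining the evolutionary boundary term.

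For part (1), I would slice in time. For a.e.\ $t\in[0,T]$, $u(\cdot,t)\in W^{1,2}_0(\Omega)$, and the classical Sobolev embedding gives $\|u(\cdot,t)\|_{L^{2^*}(\Omega)}\le C_N\|Du(\cdot,t)\|_{L^2(\Omega)}$ with $2^*=2N/(N-2)$ (the cases $N\le 2$ need only cosmetic changes with $2^*$ replaced by a large but finite exponent). Writing $q=2(N+2)/N$, the relation $1/q=\theta/2+(1-\theta)/2^*$ is solved by $\theta=2/(N+2)$, so H\"older-interpolation yields pointwise in $t$
\[
\|u(\cdot,t)\|_{L^q(\Omega)}^q\le \|u(\cdot,t)\|_{L^2(\Omega)}^{q\theta}\|u(\cdot,t)\|_{L^{2^*}(\Omega)}^{q(1-\theta)}=\|u(\cdot,t)\|_{L^2(\Omega)}^{4/N}\|u(\cdot,t)\|_{L^{2^*}(\Omega)}^{2}.
\]
Bounding the $L^2$-factor by its supremum over $[0,T]$, applying Sobolev to the $L^{2^*}$-factor and integrating in $t$ delivers the claimed embedding, since $4/N=2\cdot(2/N)$.

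For part (2), the formal argument tests the inequality $u_t-\mathrm{div}\,A(x,u,Du)\ge 0$ against the non-negative function $\varphi=(u-k)_-\eta^2$ on the cylinder $K\times[0,\tau]$ for arbitrary $\tau\in[0,T]$. Since $D\varphi=-Du\,\eta^2\chi_{\{u<k\}}+2(u-k)_-\eta\,D\eta$, and on $\{u<k\}$ the structure conditions give $A\cdot Du\ge C_0|Du|^2=C_0|D(u-k)_-|^2$ and $|A|\le C_1|D(u-k)_-|$, the divergence term produces
\[
\iint A\cdot D\varphi\le -C_0\iint|D(u-k)_-|^2\eta^2\,dx\,dt+2C_1\iint (u-k)_-\eta|D(u-k)_-||D\eta|\,dx\,dt,
\]
and the cross-term is absorbed by Young's inequality into $\tfrac{C_0}{2}\iint|D(u-k)_-|^2\eta^2+C\iint(u-k)_-^2|D\eta|^2$. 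The parabolic term, using $u_t(u-k)_-=-\tfrac12\partial_t(u-k)_-^2$, integrates by parts in $t$ to
\[
\tfrac12\int_K(u-k)_-^2\eta^2\,dx\Big|_0^\tau-\tfrac12\iint(u-k)_-^2\partial_t(\eta^2)\,dx\,dt,
\]
so that rearranging and taking the supremum in $\tau$ on the time-boundary contribution yields exactly \eqref{einh}.

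The main technical obstacle is making the above test-function computation rigorous, since $u_t$ a priori only lives in $L^2(0,T;W^{-1,2}(K))$ and $\varphi$ is not admissible in time. The standard cure is to work with Steklov averages $u_h(x,t)=h^{-1}\int_t^{t+h}u(x,s)\,ds$, for which the equation holds pointwise in $t$ against spatial test functions; one performs the manipulations above with $u_h$ in place of $u$ and lets $h\downarrow 0$, at which point all terms converge by the regularity built into the notion of weak solution.
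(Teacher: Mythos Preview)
Your proof is correct and is precisely the standard argument. Note, however, that the paper does not actually prove this proposition: it is stated as a basic tool with explicit references to \cite[Lemma II.4.1]{HR} and \cite[Prop.~III.2.1]{HR}, and no proof is given in the text. Your sketch---time-slicing plus the interpolation $L^2\cap L^{2^*}\hookrightarrow L^{2(N+2)/N}$ for part (1), and testing with $(u-k)_-\eta^2$ followed by Young's inequality and Steklov regularisation for part (2)---is exactly the argument one finds in the cited monograph, so there is nothing to compare.
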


The first lemma shows how initial measure-theoretic positivity propagates at future times.

\begin{lemma}\label{lemmapar}
Let $u\ge 0$ be a supersolution  in $Q_{R, R^{2}}$. For any $\mu>0$ there are $k, \theta\in \, ]0, 1[$  s.\,t.
\begin{equation}
\label{assmeas2par}
P_{0}(K_{R};\, u\ge 1)\ge \mu\quad \Rightarrow\quad 
P_{t}\left(K_{R};\, u\ge k(\mu) \right)>\mu/2 \qquad  \forall t\in [0, \theta(\mu)\, R^{2}].
\end{equation}
\end{lemma}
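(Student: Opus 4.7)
The plan is to apply the energy inequality \eqref{einh} on the cylinder $K_R\times[0,\theta R^{2}]$ with truncation level $k_{0}=1$ and a time-independent cutoff $\eta\in C^{\infty}_{c}(K_R)$ satisfying $0\le\eta\le 1$, $\eta\equiv 1$ on $K_{(1-\sigma)R}$ and $|\nabla\eta|\le \bar c/(\sigma R)$. The three small parameters $k,\sigma,\theta\in(0,1)$ will all be selected as functions of $\mu$ at the very end, after the estimates have determined the necessary size relations among them.

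Because $u\ge 0$, the truncation $(u-1)_{-}$ takes values in $[0,1]$, so the hypothesis $P_{0}(K_R;\,u\ge 1)\ge\mu$ controls the initial term directly:
\[
\int_{K_R}(u(x,0)-1)_{-}^{2}\,\eta^{2}\,dx\le |K_R\cap\{u(\cdot,0)<1\}|\le(1-\mu)|K_R|.
\]
Since $\eta_{t}\equiv 0$, the only remaining term on the right of \eqref{einh} is the gradient contribution, bounded by $\bar C\,\theta/\sigma^{2}\cdot|K_R|$ after using $T=\theta R^{2}$ and $|\nabla\eta|\le \bar c/(\sigma R)$. For any $k\in(0,1)$, the pointwise inequality $(u-1)_{-}^{2}\ge(1-k)^{2}$ on $\{u\le k\}$ and the trivial strip estimate $|K_R\setminus K_{(1-\sigma)R}|\le N\sigma\,|K_R|$ then yield, for every $t\in[0,\theta R^{2}]$,
\[
P_{t}(K_R;\,u\le k)\le N\sigma+\frac{1-\mu+\bar C\,\theta/\sigma^{2}}{(1-k)^{2}}.
\]

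The remaining work is purely arithmetic: one chooses the parameters to force the right-hand side strictly below $1-\mu/2$. First pick $k=k(\mu)$ small enough that $1/(1-k)^{2}\le 1+\mu/8$, giving $(1-\mu)/(1-k)^{2}\le 1-7\mu/8$; next pick $\sigma=\sigma(\mu)$ with $N\sigma\le\mu/16$; finally pick $\theta=\theta(\mu)$ with $\bar C\,\theta/(\sigma^{2}(1-k)^{2})\le\mu/16$. Summing gives $P_{t}(K_R;\,u\le k)\le 1-3\mu/4<1-\mu/2$, hence $P_{t}(K_R;\,u\ge k)>\mu/2$, which is the stated conclusion.

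The main subtlety---rather mild in this homogeneous setting---lies in recognizing that the correct truncation level is the \emph{fixed} height $k_{0}=1$ dictated by the hypothesis, and \emph{not} the smaller threshold $k(\mu)$ at which measure-theoretic positivity is to be established; this separation is what allows the argument to avoid any quantitative $L^{\infty}$ bound on $u$, because $(u-1)_{-}\le 1$ is automatic. Under this choice the initial energy is immediately controlled by the hypothesis, and the energy inequality transports the control forward with an error proportional to $\theta/\sigma^{2}$, leaving only the balancing of $k,\sigma,\theta$ as a function of $\mu$.
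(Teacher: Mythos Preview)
Your proof is correct and follows essentially the same approach as the paper's: apply the energy inequality \eqref{einh} at truncation level $1$ with a time-independent cutoff supported in $K_R$ and identically $1$ on a slightly smaller cube, use $(u-1)_-\le 1$ to bound both the initial term (via the hypothesis) and the gradient term, then Chebyshev plus a strip estimate to pass from the inner cube back to $K_R$, and finally balance the three parameters. The only differences are cosmetic---you work directly with general $R$ and write $\sigma=1-\delta$, and your numerical splitting of $\mu$ differs slightly from the paper's---but the structure and every ingredient are identical.
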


\begin{proof}
Rescale to $R=1$ and, for any $\delta, \theta\in \ ]0, 1[$,  employ the energy inequality \eqref{einh} on $K_{1}\times [0, \theta ]$ with a cutoff function $\eta \in C^{\infty}_{c}(K_{1})$ independent of $t$ and such that 
\begin{equation}
\label{varphi2}
0\le \eta\le 1,\qquad \left.\eta\right|_{K_{\delta}}\equiv 1,\qquad |D\eta|\le C_{N}/(1-\delta).
\end{equation}
obtaining for any $t\in [0, \theta]$
\[
\int_{K_{\delta }} (u(x, t)-1)^{2}_{-}\, dx\le \int_{K_{1}} (u(x, 0)-1)^{2}_{-}\, dx+\frac{\bar C}{(1-\delta)^{2}}\int_{0}^{t}\int_{K_{1}}(u-1)^{2}_{-}\, dx\, dt\le 1-\mu+\frac{\bar C\, \theta}{(1-\delta)^{2}},
\]
where we used the assumption in \eqref{assmeas2par} in the last inequality. For $k\in \ ]0, 1[$ we have
\[
\int_{K_{\delta}} (u(x, t)-1)^{2}_{-}\, dx\ge \int_{K_{\delta}\cap \{u(\cdot, t)<k\}}(1-k)^{2}\, dx\ge  (1-k)^{2}| K_{\delta}\cap \{u(\cdot, t)<k\}|.
\]
Insert the latter into the previous one to obtain, for all $t\in [0, \theta]$
\begin{equation}
\label{par2}
\begin{split}
1-P_{t}(K_{1};\, u\ge k)&\le 
1-|K_{\delta}\cap \{u(\cdot, t)\ge k\}|=1-\delta^{N} + |K_{\delta}\cap \{u(\cdot, t)<k\}|\\
&\le 1-\delta^{N}+\frac{1}{(1-k)^{2} }\left(1-\mu+\frac{\bar C\, \theta }{(1-\delta)^{2}}\right).
\end{split}
\end{equation}
Successively choose $\delta, k\in\ ]0, 1[$ and, consequently, $\theta\in \ ]0, 1[$ so that:
\[
1-\delta^{N}=\frac{\mu}{8},\qquad \frac{1-\mu}{(1-k)^{2}}=1-\frac{3}{4}\mu,\qquad  \frac{1}{(1-k)^{2} }\frac{\bar C\, \theta}{(1-\delta)^{2}}\le\frac{\mu}{8} 
\]
to obtain that the right hand side in \eqref{par2} is less than $1-\mu/2$, proving the claim.
\end{proof}

The next two steps are fully in the spirit of the De Giorgi approach.

\begin{lemma}[Shrinking lemma]\label{slemmapar}
Suppose $u\ge 0$ is a supersolution in $Q_{2R,T}$  obeying
\begin{equation}
\label{ptpar}
P_{t}(K_{R};\, u\ge k)\ge \mu, \qquad \forall t\in [0, T]
\end{equation}
 for some $\mu\in \ ]0, 1[$, $k>0$. There exists $\beta=\beta(\mu)>0$ such that
\[
P\left(Q_{R, T};\,  u\le k/2^{n}\right)\le \beta(\mu)\Big(1+\frac{R^{2}}{T}\Big)^{1/2}\frac{1}{n^{1/2}}, \qquad 
\]
\end{lemma}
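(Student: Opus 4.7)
The plan is to mimic the elliptic shrinking lemma (Lemma 5.3 in the excerpt) by slicing in time: at each fixed $t$ one runs De Giorgi--Poincar\'e on the truncation levels $k_j = k/2^j$ and then integrates in $t$, using the parabolic energy inequality \eqref{einh} to trade the gradient integrals against a multiple of $k_j^2$. After rescaling we may assume $k=1$. Set
\[
A_j(t) = |K_R\cap\{u(\cdot,t)\le k_j\}|, \qquad B_j = |\{u\le k_j\}\cap Q_{R,T}| = \int_0^T A_j(t)\,dt.
\]
Hypothesis \eqref{ptpar} gives $|K_R\cap\{u(\cdot,t)\ge k_{j+1}\}|\ge \mu|K_R|$ for every $j\ge 0$ and $t\in[0,T]$ since $k_{j+1}\le 1 = k$, which is exactly what feeds the denominator in the De Giorgi--Poincar\'e inequality.

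At each fixed $t$, applying the De Giorgi--Poincar\'e inequality on $K_R$ with $h=k_j$, $k=k_{j+1}$ and then Cauchy--Schwarz (using that $|D(u-k_j)_-|=|Du|\mathbf{1}_{\{u<k_j\}}$) yields
\[
k_{j+1}A_{j+1}(t) \le \frac{\bar C\, R}{\mu}\left(\int_{K_R}|D(u-k_j)_-|^2(\cdot,t)\,dx\right)^{1/2}\bigl(A_j(t)-A_{j+1}(t)\bigr)^{1/2}.
\]
Integrating in $t\in[0,T]$ and applying Cauchy--Schwarz in $t$ gives
\[
k_{j+1}B_{j+1} \le \frac{\bar C\, R}{\mu}\left(\iint_{Q_{R,T}}|D(u-k_j)_-|^2\,dx\,dt\right)^{1/2}(B_j-B_{j+1})^{1/2}.
\]
The $L^2$ bound on $D(u-k_j)_-$ is supplied by the energy inequality \eqref{einh} on $Q_{2R,T}$ with a time-independent cutoff $\eta\in C^\infty_c(K_{2R})$, $\eta\equiv 1$ on $K_R$, $|D\eta|\le \bar C/R$; since $(u(\cdot,0)-k_j)_-\le k_j$ and $(u-k_j)_-\le k_j$, it produces
\[
\iint_{Q_{R,T}}|D(u-k_j)_-|^2\,dx\,dt \le \bar C\, k_j^2\, R^N\bigl(1+T/R^2\bigr).
\]

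Plugging this in and using $k_j = 2k_{j+1}$ gives $B_{j+1}^2\le C'(B_j-B_{j+1})$ with $C' = \bar C R^{N+2}(1+T/R^2)/\mu^2$. Since $B_j$ is decreasing, summing over $j=0,\dots,n-1$ makes the right-hand side telescope:
\[
nB_n^2 \le \sum_{j=0}^{n-1}B_{j+1}^2 \le C'(B_0 - B_n) \le C'|Q_{R,T}| = C'R^N T.
\]
Dividing by $|Q_{R,T}|^2 = R^{2N}T^2$ and simplifying the power of $R$ via
\[
\frac{R^{N+2}(1+T/R^2)\cdot R^N T}{R^{2N}T^2} = \frac{R^2+T}{T} = 1 + \frac{R^2}{T}
\]
yields the claimed estimate with $\beta(\mu) = \bar C/\mu$. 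The only real piece of book-keeping, and the place where one must be careful, is combining the two terms on the right of \eqref{einh} (the initial-datum term and the space-gradient term $(u-k_j)_-^2/R^2$) with the factor $R^2/\mu^2$ produced by De Giorgi--Poincar\'e so that, once one normalizes by $|Q_{R,T}|$, exactly the factor $(1+R^2/T)^{1/2}$ survives. Beyond this arithmetic, no ingredient is needed that was not already present in the elliptic shrinking lemma.
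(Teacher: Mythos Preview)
Your proof is correct and follows essentially the same approach as the paper: apply the De Giorgi--Poincar\'e inequality at each time slice using \eqref{ptpar} to control the denominator, bound the gradient term via the energy inequality \eqref{einh} with a time-independent cutoff, and then sum the resulting telescopic recursion. The only differences are cosmetic reorderings (you apply De Giorgi--Poincar\'e first and plug in the energy estimate afterward, whereas the paper states the energy bound first) and your explicit tracking of the arithmetic that produces the factor $(1+R^2/T)^{1/2}$.
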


\begin{proof}
Let $k_{j}=k/2^{j}$, $j\ge 0$. The energy inequality \eqref{einh} with $\eta\in C^{\infty}_{c}(K_{2R})$ such that 
\begin{equation}
\label{varphi}
0\le \eta\le 1,\qquad \left.\eta\right|_{K_{R}}\equiv 1,\qquad  |D\eta|\le C_{N}/R
\end{equation}  
gives
\begin{equation}
\label{par5}
\begin{split}
\iint_{Q_{R, T}}|D(u-k_{j})_{-}|^{2}\, dx\, dt&\le \bar C\int_{K_{2R}} (u(x, 0)-k_{j})_{-}^{2}\, dx+\frac{\bar C}{R^{2}}\iint_{Q_{2R, T}}(u-k_{j})_{-}^{2}\, dx\, dt\\
&\le \bar C \, k_{j}^{2}\, R^{N} (1+T/R^{2}).
\end{split}
\end{equation}
For any $t\in [0, T]$, apply the De Giorgi - Poincar\'e inequality and \eqref{ptpar} to obtain
\[
\begin{split}
(k_{j}-k_{j+1})|K_{R}\cap \{u(\cdot, t)\le k_{j+1}\}|&\le \frac{C_{N}\, R^{N+1}}{|K_{R}\cap \{u(\cdot, t)<k_{j}\}|}\int_{K_{R}\cap \{k_{j+1}\le u(\cdot, t)\}}|D(u(x, t)-k_{j})_{-}|\, dx\\
&\underset{\eqref{ptpar}}{\le} \frac{C_{N}\, R}{\mu}\int_{K_{R}\cap \{k_{j+1}\le u(\cdot, t)\}}|D(u(x, t)-k_{j})_{-}|\, dx
\end{split}
\]
Integrate the latter over $[0, T]$, divide by $|Q_{R, T}|$ and use H\"older's inequality to obtain
\[
\begin{split}
\frac{k_{j}}{2}&P(Q_{R, T}; \, u\le k_{j+1})\le \frac{C_{N}\, R}{\mu\, |Q_{R, T}|}\iint_{Q_{R, T}\cap \{k_{j+1}\le u\}}|D(u-k_{j})_{-}|\, dx\, dt\\
&\le \frac{C_{N}\, R}{\mu}\left(\frac{1}{|Q_{R, T}|}\iint_{Q_{R, T}\cap \{k_{j+1}\le u\}}|D(u-k_{j})_{-}|^{2}\, dx\, dt\right)^{\frac{1}{2}}\left(\frac{|Q_{R, T}\cap \{k_{j+1}\le u\le k_{j}\}|}{|Q_{R, T}|}\right)^{\frac{1}{2}}\\
&\underset{\eqref{par5}}{\le} \frac{\bar C\, R}{\mu}\, k_{j}\,\frac{1}{T^{\frac{1}{2}}} \Big(1+\frac{T}{R^{2}}\Big)^{\frac{1}{2}}\left(P(Q_{R, T}; \, u\le k_{j})-P(Q_{R, T};\, u\le k_{j+1})\right)^{\frac{1}{2}},
\end{split}
\]
The latter reads
\[
\left(P(Q_{R, T};\, u\le k_{j+1})\right)^{2}\le C(\mu) \, \left(1+R^{2}/T\right)\left(P(Q_{R, T};\, u\le k_{j})-P(Q_{R, T};\,  u\le k_{j+1})\right),
\]
which, being the right hand side telescopic, can be summed over $j\le n-1$ to get the claim:
\[
n\left(P(Q_{R, T};\, u\le k_{n})\right)^{2}\le \sum_{j=0}^{n-1}\left(P(Q_{R, T};\, u\le k_{n})\right)^{2}\le  C(\mu)\, (1+ R^{2}/T). 
\]
\end{proof}

\begin{lemma}[Critical mass]\label{critlemmapar}
For any $\theta>0$ there exists $\nu(\theta)>0$ such that any supersolution $u\ge 0$ on $Q_{R, \theta R^{2}}$ fulfills 
\begin{equation}
\label{par3}
P\left(Q_{R, \theta R^{2}};\,  u\le k \right)\le\nu(\theta) \quad \Rightarrow\quad  
u\ge  k/2\quad \text{on $K_{R/2}\times [\theta\, R^{2}/8, \theta\, R^{2}]$}.
\end{equation}
\end{lemma}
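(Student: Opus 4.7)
The plan is to run the De~Giorgi iteration from the elliptic critical-mass Lemma~\ref{ecm} in space-time, using the energy inequality~\eqref{einh} and the parabolic Sobolev embedding of Proposition~1, with cutoffs that shrink both in space and in time. By the scalar and parabolic scaling invariance of \eqref{parabo} (the transformation $v(x,t):=u(Rx,R^{2}t)/k$ is a supersolution with the same structural constants), one reduces to the normalized statement: if $P(Q_{1,\theta};\,u\le 1)\le\nu(\theta)$ for small enough $\nu(\theta)$, then $u\ge 1/2$ on $K_{1/2}\times[\theta/8,\theta]$. I would set up the nested cylinders contracting to $K_{1/2}\times[\theta/8,\theta]$,
\[
r_{n}:=\tfrac{1}{2}+\tfrac{1}{2^{n+1}},\qquad \tau_{n}:=\tfrac{\theta}{8}(1-2^{-n}),\qquad k_{n}:=\tfrac{1}{2}+\tfrac{1}{2^{n+1}},\qquad Q_{n}:=K_{r_{n}}\times[\tau_{n},\theta],
\]
so that $Q_{0}=Q_{1,\theta}$ and $k_{n}\downarrow 1/2$; set $A_{n}:=|Q_{n}\cap\{u<k_{n}\}|$, so the goal is $A_{n}\to 0$. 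Pick a product cutoff $\eta_{n}(x,t)=\zeta_{n}(x)\phi_{n}(t)$ with $\zeta_{n}\in C_{c}^{\infty}(K_{r_{n}})$, $\zeta_{n}\equiv 1$ on $K_{r_{n+1}}$, $|D\zeta_{n}|\le\bar C\,2^{n}$, and $\phi_{n}\in C^{\infty}$, $\phi_{n}(\tau_{n})=0$, $\phi_{n}\equiv 1$ on $[\tau_{n+1},\theta]$, $|\phi_{n}'|\le\bar C\,2^{n}/\theta$. Applying~\eqref{einh} on $Q_{n}$ (time-translated) with this cutoff and level $k_{n}$ kills the initial slice since $\phi_{n}(\tau_{n})=0$, and the bound $(u-k_{n})_{-}\le k_{n}\le 1$ yields
\[
\sup_{t\in[\tau_{n},\theta]}\int_{K_{r_{n}}}\eta_{n}^{2}(u-k_{n})_{-}^{2}\,dx+\iint_{Q_{n}}\bigl|D(\eta_{n}(u-k_{n})_{-})\bigr|^{2}\,dx\,dt\le \bar C\,4^{n}(1+1/\theta)\,A_{n}.
\]

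Plugging this bound into the parabolic Sobolev embedding applied to $v_{n}:=\eta_{n}(u-k_{n})_{-}$ (which vanishes on $\partial K_{r_{n}}$) produces
\[
\iint_{Q_{n}}v_{n}^{2(N+2)/N}\,dx\,dt\le \bar C\bigl[4^{n}(1+1/\theta)A_{n}\bigr]^{(N+2)/N}.
\]
Since $\eta_{n}\equiv 1$ on $Q_{n+1}$ and $(u-k_{n})_{-}\ge k_{n}-k_{n+1}=2^{-(n+2)}$ on $\{u<k_{n+1}\}$, H\"older's inequality applied on $Q_{n+1}\cap\{u<k_{n}\}$ (whose measure is $\le A_{n}$), combined with the exponent identity $\tfrac{N+2}{N}\cdot\tfrac{N}{N+2}=1$, then gives
\[
4^{-(n+2)}A_{n+1}\le\iint_{Q_{n+1}}(u-k_{n})_{-}^{2}\,dx\,dt\le \bar C\,4^{n}(1+1/\theta)\,A_{n}^{\,1+2/(N+2)},
\]
i.e.\ $A_{n+1}\le \bar C(\theta)\,16^{n}\,A_{n}^{1+\alpha}$ with $\alpha=2/(N+2)$. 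Invoking~\eqref{iteration}, $A_{n}\to 0$ provided $A_{0}$ is below the explicit threshold there, and since $A_{0}\le\theta\,P(Q_{1,\theta};\,u\le 1)$ this fixes $\nu(\theta)$; the desired conclusion follows upon rescaling back.

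The one genuinely parabolic difficulty, and the step that requires care, is the presence of the initial slice in~\eqref{einh}: unlike the elliptic case, the cutoff cannot depend only on $x$, or we would have no control at $t=\tau_{n}$. The remedy is to nest the cylinders downward in time as well and force $\phi_{n}(\tau_{n})=0$, so that the initial term simply drops out. The price is the factor $1/\theta$ coming from $|\phi_{n}'|$, which is precisely what produces the $\theta$-dependence of the threshold $\nu(\theta)$.
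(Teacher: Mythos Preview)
Your argument is correct and follows essentially the same De~Giorgi iteration as the paper's proof: nested space-time cylinders shrinking to $K_{1/2}\times[\theta/8,\theta]$, cutoffs vanishing at the initial slice to kill the boundary term in~\eqref{einh}, then the parabolic Sobolev embedding to produce a superlinear recursion amenable to~\eqref{iteration}. The only cosmetic difference is that the paper extracts the gain by a direct Tchebichev on the $L^{2(N+2)/N}$ norm (yielding exponent $1+2/N$), whereas you pass through H\"older on the $L^{2}$ integral (yielding $1+2/(N+2)$); both are perfectly fine.
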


\begin{proof}
Use homogeneity and scaling to reduce to $R=1$, $k=1$. Define for $n\ge 1$
\[
r_{n}=\frac{1}{2}+\frac{1}{2^{n}}, \qquad k_{n}=\frac{1}{2}+\frac{1}{2^{n}},\qquad \theta_{n}=\frac{\theta}{8}-\frac{\theta}{2^{n+3}},
\]
 let $K_{n}=K_{r_{n}}$, $Q_{n}=K_{n}\times [\theta_{n}, \theta]$ and choose $\eta_{n}\in C^{\infty}([\theta_{n}, \theta]; C^{\infty}_{c}(K_{n}))$ s.\,t. 
\begin{equation}
\label{etan}
\eta_{n}(\cdot, \theta_{n})\equiv 0,\qquad 0\le \eta_{n}\le 1,\qquad \left.\eta_{n}\right|_{Q_{n+1}}\equiv 1,\quad |D\eta_{n}|\le C_{N} \, 2^{n},\quad |(\eta_{n})_{t}|\le C_{N}\, 2^{n}/\theta.
\end{equation}
Inserting into the energy inequality \eqref{ein} and noting that $k_{n}\le k$, we get
\[
\begin{split}
&\sup_{t\in [\theta_{n+1}, \theta]}\int_{K_{n+1}}(u(x, t)-k_{n})_{-}^{2}\, dx+\iint_{Q_{n}}|D(\eta_{n}(u-k_{n})_{-})|^{2}\, dx\, dt\\
&\quad\le \bar C\, 2^{2n}(1+\theta^{-2})\iint_{Q_{n}}(u-k_{n})_{-}^{2}\, dx\, dt\le \bar C \, 2^{2n}(1+\theta^{-2}) k_{n}^{2} |Q_{n}\cap \{ u\le k_{n}\}|.
\end{split}
\]
By the parabolic Sobolev embedding
\[
\begin{split}
&\iint_{Q_{n+2}}(u-k_{n+1})_{-}^{2\frac{N+2}{N}}\, dx\, dt\le \iint_{Q_{n+1}}\left((u-k_{n+1})_{-}\eta_{n+1}\right)^{2\frac{N+2}{N}}\, dx\, dt\\
&\quad  \le C_{N}\iint_{Q_{n+1}}|D(\eta_{n+1}(u-k_{n+1})_{-}^{2})|^{2}\, dx\, dt\left(\sup_{t\in [\theta_{n+1}, \theta]}\int_{K_{n+1}}\eta_{n+1}^{2}(x, t)(u(x, t)-k_{n+1})_{-}^{2}\, dx\right)^{\frac{2}{N}}\\
&\quad \le \bar C\, 2^{2n\frac{N+2}{N}} (1+\theta^{-2})^{\frac{N+2}{N}} h_{n}^{2\frac{N+2}{N}}\, |Q_{n}\cap \{u\le k_{n}\}|^{\frac{N+2}{N}},
\end{split}
\]
while, being $(u-k_{n+1})_{-}\ge k_{n+1}-k_{n+2}=k_{n}/4$ when $u\le k_{n+2}$, 
\[
\iint_{Q_{n+1}}(u-k_{n+1})_{-}^{2\frac{N+2}{N}}\, dx\, dt\ge  (k_{n}/4)^{2\frac{N+2}{N}}| Q_{n+2}\cap \{ u\le k_{n+2}\}|.
\]
Chaining these latter two estimates and simplifying $k_{n}$ gives  the iterative inequality
\[
|Q_{n+2}\cap \{u\le k_{n+2}\}|\le \bar C\, b_{N}^{n} (1+\theta^{-2})^{1+\frac{2}{N}}|Q_{n}\cap \{u\le k_{n}\}|^{1+\frac{2}{N}}
\]
and \eqref{iteration} for $X_{n}:=|Q_{2n}\cap \{u\le k_{2n}\}|$ gives the claim.
\end{proof}

\begin{lemma}[Measure-to-point estimate]
Let $u\ge 0$ be a supersolution in $Q_{R, R^{2}}$. For all  $\mu\in \ ]0, 1[$ there are $c(\mu)>0$ and $\theta(\mu)\in \ ]0, 1[$  such that
\begin{equation}
\label{par8}
P_{0}(K_{\rho};\, u\ge h)\ge \mu\quad \Rightarrow\quad u\ge c(\mu)\, h\quad \text{in $K_{\rho/2}\times [\theta(\mu)\, \rho^{2}/8, \theta(\mu) \,\rho^{2}]$}.
\end{equation}
\end{lemma}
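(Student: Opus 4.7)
The plan is to mirror the elliptic Measure-to-point estimate (Lemma \ref{MTP1}) by chaining the shrinking lemma with the critical mass lemma, adding one preliminary step that is specific to the parabolic setting: turning the \emph{initial-time} positivity information into a \emph{uniform-in-time} measure estimate via Lemma \ref{lemmapar}. First I would use the scaling $u\mapsto u/h$, $(x,t)\mapsto(\rho x,\rho^{2}t)$ (both preserving the class of supersolutions of \eqref{parabo}) to reduce to $\rho=h=1$, after which the hypothesis reads $P_{0}(K_{1};\, u\ge 1)\ge\mu$ and the target becomes $u\ge c(\mu)$ on $K_{1/2}\times[\theta(\mu)/8,\theta(\mu)]$.

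The substance is then a three-step cascade of the previous lemmas. Step~1: Lemma \ref{lemmapar} produces $k_{1}=k_{1}(\mu)$ and $\theta_{1}=\theta_{1}(\mu)$ in $\,]0,1[\,$ such that
\begin{equation*}
P_{t}(K_{1};\, u\ge k_{1})>\mu/2\qquad\forall\, t\in[0,\theta_{1}].
\end{equation*}
Step~2: the uniform-in-time estimate is fed into the shrinking lemma (Lemma \ref{slemmapar}) with $R=1$, $T=\theta_{1}$, $k=k_{1}$, yielding for every integer $n\ge 1$
\begin{equation*}
P(Q_{1,\theta_{1}};\, u\le k_{1}/2^{n})\le \beta(\mu/2)\,\bigl(1+\theta_{1}^{-1}\bigr)^{1/2}\, n^{-1/2}.
\end{equation*}
Step~3: I pick $n=n(\mu)$ large enough that this right-hand side falls below the critical mass threshold $\nu(\theta_{1})$ from Lemma \ref{critlemmapar}. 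That lemma, applied with $k=k_{1}/2^{n(\mu)}$ and parameter $\theta=\theta_{1}$, concludes $u\ge k_{1}/2^{n(\mu)+1}$ on $K_{1/2}\times[\theta_{1}/8,\theta_{1}]$. Setting $c(\mu):=k_{1}/2^{n(\mu)+1}$ and $\theta(\mu):=\theta_{1}$ and undoing the rescaling gives the claim.

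The main obstacle is essentially bookkeeping. The shrinking lemma demands $u$ to be a supersolution on the larger cylinder $Q_{2,\theta_{1}}$, so to legitimately apply it one needs the original radii to satisfy $R\ge 2\rho$ (an implicit constraint in the statement of the lemma) or else an additional preliminary rescaling. Moreover, the shrinking-lemma bound degrades as $\theta_{1}^{-1/2}$, hence $n(\mu)$ must absorb both $\beta(\mu/2)$ and $\theta_{1}(\mu)^{-1}$, making $c(\mu)\to 0$ very rapidly as $\mu\downarrow 0$. The only genuinely new ingredient beyond the elliptic argument is Step~1: the propagation-in-time lemma is precisely what converts the single-time measure information into a measure estimate valid on a whole time interval, after which Steps~2--3 repeat almost verbatim what was done in the elliptic Lemma \ref{MTP1}.
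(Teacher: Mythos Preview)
Your proposal is correct and follows essentially the same route as the paper: reduce by homogeneity, apply Lemma~\ref{lemmapar} to propagate the initial measure estimate in time, then Lemma~\ref{slemmapar} to shrink the sub-level set below the threshold $\nu(\theta)$, and conclude with Lemma~\ref{critlemmapar}. Your remark on the implicit constraint $2\rho\le R$ needed for the shrinking lemma is accurate and is indeed tacitly assumed in the paper as well.
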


\begin{proof}
By homogeneity we can let $h=1$. Let  $\theta(\cdot), k(\cdot)$ be given in Lemma \ref{lemmapar}, so that $P_{t}(K_{\rho};\, u\ge k)\ge \mu/2$ for $t\in [0, \theta\,\rho^{2}]$, $\theta=\theta(\mu)$ and $k=k(\mu)$. Apply Lemma \ref{slemmapar}, chosing $n=n(\mu)$ such that 
\[
\beta(\mu/2)\, (1+\theta(\mu)^{-1})^{1/2}\, n^{-1/2}\le \nu(\theta),
\]
($\nu(\cdot )$ given in \eqref{par3}), to get $P(Q_{\rho, \theta\rho^{2}};\, u\le k\, 2^{-n})\le \nu(\theta)$. Then \eqref{par3} proves \eqref{par8}. 
\end{proof}

\begin{theorem}[H\"older regularity]
Any locally bounded solution of \eqref{parabo} is locally H\"older continuous, with H\"older exponent depending only on $N$, $C_{0}$ and $C_{1}$. 
\end{theorem}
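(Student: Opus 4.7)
The plan is to follow the elliptic template of Theorem~\ref{ellcalpha}: I will combine the parabolic measure-to-point estimate \eqref{par8} with a pigeonhole at a single time slice, derive a quantitative oscillation reduction on nested parabolic cylinders, and iterate. Since \eqref{par8} is the direct parabolic analogue of Lemma~\ref{MTP1}, only the geometric set-up will need adjustment to account for the time direction.

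Fix an interior point $(x_0, t_0)$ and let $\theta_* := \theta(1/2)$, $c_* := c(1/2)$ be the constants from \eqref{par8}. I will work with backward cylinders whose aspect ratio is calibrated to $\theta_*$, namely
$$Q_R^-(x_0, t_0) := K_R(x_0) \times [t_0 - \theta_* R^2,\ t_0],$$
for $R$ small enough that a slightly enlarged cylinder lies in the domain of $u$. By parabolic scaling, translation, and the fact that \eqref{parabo} is invariant under $u \mapsto u + c$ and under $u \mapsto M - u$ (via the new vector field $\tilde A(x,s,z) := -A(x, M-s, -z)$, which obeys the same bounds), I reduce to $(x_0, t_0) = (0,0)$, $R = 1$, and aim to prove the single-step reduction ${\rm osc}(u; Q_{1/2}^-) \leq (1 - c_*/2)\, {\rm osc}(u; Q_1^-)$.

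Set $\omega := {\rm osc}(u; Q_1^-)$, $M := \sup_{Q_1^-} u$ and $m := \inf_{Q_1^-} u$. Pigeonholing at the bottom slice $t = -\theta_*$, one of $P_{-\theta_*}(K_1;\ u \geq (M+m)/2)$ or $P_{-\theta_*}(K_1;\ u \leq (M+m)/2)$ is $\geq 1/2$; after possibly replacing $u$ by $M - u$, I may assume the first holds. Then $v := u - m$ is a non-negative supersolution with $P_0(K_1;\ v \geq \omega/2) \geq 1/2$ once the bottom slice is shifted to $t = 0$. Applying \eqref{par8} with $\mu = 1/2$ and $h = \omega/2$ yields $v \geq c_* \omega/2$ on $K_{1/2} \times [\theta_*/8, \theta_*]$. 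Shifting time back, $u \geq m + c_* \omega/2$ on $K_{1/2} \times [-7\theta_*/8, 0]$, and since $\theta_*/4 \leq 7\theta_*/8$ this sub-cylinder contains $Q_{1/2}^-(0,0)$, giving the required reduction. Iterating at radii $R_n = R/2^n$ produces ${\rm osc}(u; Q_{R_n}^-) \leq (1 - c_*/2)^n\, {\rm osc}(u; Q_R^-)$, and interpolating between dyadic scales exactly as in the proof of Theorem~\ref{ellcalpha} delivers the H\"older estimate in the parabolic metric $|x - x_0| + |t - t_0|^{1/2}$, with exponent depending only on $N$, $C_0$, $C_1$.

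The main delicacy, and the step that dictates the whole geometric set-up, is the calibration of the aspect ratio $\theta_*$: it is chosen precisely so that the ``positivity window'' $[\theta_* R^2/8, \theta_* R^2]$ returned by \eqref{par8} both reaches the top slice $t = t_0$ and is tall enough to contain the next-scale cylinder $Q_{R/2}^-$, which has time-height $\theta_* R^2/4$. A secondary technicality is that \eqref{par8} requires the supersolution on a cylinder of time-height roughly $R^2$ rather than $\theta_* R^2$, hence reaching slightly past $t_0$; this forces me to start with $R$ at most a fixed fraction of the parabolic distance of $(x_0, t_0)$ to the top boundary of the domain, but has no effect on the final modulus of continuity.
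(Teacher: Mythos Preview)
Your proof is correct and follows essentially the same approach as the paper: fix $\bar\theta=\theta(1/2)$ from \eqref{par8}, work with backward cylinders $K_{2^{-n}}\times[-\bar\theta\,2^{-2n},0]$, pigeonhole at the bottom time slice, apply the measure-to-point estimate to whichever of $u-\inf u$ or $\sup u-u$ wins, and iterate the resulting oscillation decay. Your write-up is in fact slightly more careful than the paper's in two respects---you make explicit the structural invariance under $u\mapsto M-u$, and you flag the technicality that \eqref{par8} needs a cylinder of time-height $\rho^2$ rather than $\theta_*\rho^2$, which the paper glosses over.
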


\begin{proof}
By translation and scaling it suffices to prove an oscillation decay on the cubes $Q_{n}=K_{2^{-n}}\times [-\bar\theta\, 2^{-2n}, 0]$ with $\bar\theta=\theta(1/2)$ given in \eqref{par8}. Suppose ${\rm osc}(u, Q_{0})=1$. Then, one of 
\[
 P_{-\bar\theta}\big(K_{1};\, \sup_{Q_{0}}u-u\ge 1/2\big)\ge 1/2,\qquad   \text{or}\qquad P_{-\bar\theta}\big(K_{1};\, u-\inf_{Q_{0}}u\ge  1/2\big)\ge  1/2
  \]
holds. If it is the first one, apply \eqref{par8} to $\sup_{Q_{0}}u-u\ge 0$ translated in time to get $\sup_{Q_{0}}u-u\ge m(1/2)/2=:\bar m$ in $Q_{1}$, i.\,e. $\sup_{Q_{1}}u\le \sup_{Q_{0}}u-\bar m$. Therefore 
\[
{\rm osc}(u, Q_{1})\le \sup_{Q_{1}}u-\inf_{Q_{0}}u\le \sup_{Q_{0}}u-\inf_{Q_{0}}u=1-\bar m.
\]
The same holds in the other case and by homogeneity we have ${\rm osc}(u; Q_{1})\le (1-\bar m){\rm osc}(u, Q_{0})$. By scaling and induction, ${\rm osc}(u, Q_{n})\le {\rm osc}(u, Q_{0})(1-\bar m)^{n}$. Finally, for $(x, t)\in Q_{1}$ let $n\ge 1$ s.\,t.
 \[
 2^{-n-1}\le \max\{|x|, (|t|/\bar\theta)^{1/2}\}\le 2^{n}, 
 \]
 so that we have $(x, t)\in Q_{n}$ and, for $\bar \alpha=-\log_{2}(1-\bar m)$,
\[
|u(x, t)-u(0, 0)|\le {\rm osc}(u, Q_{n})\le \frac{{\rm osc}(u, Q_{0})}{1-\bar m}(1-\bar m)^{n+1}\le \frac{{\rm osc}(u, Q_{0})}{1-\bar m}\max\left\{|x|, (|t|/\bar\theta)^{1/2}\right\}^{\bar \alpha}.
\]

\end{proof}

\begin{figure}
\centering
\begin{tikzpicture}[scale=1.2]

\shade[shading=axis, shading angle=180] (-0.6, 0.5) to [out=130, in=-80] (-2, 3.8) -- (2, 3.8) to [out=-100, in=50] (0.6,0.5) -- (-0.6, 0.5);
\draw (0.6, 0.5) to [out=50, in=-100] (2, 3.8);
\draw[->] (0, -0.5) -- (0, 4) node[above left]{$t$};
\draw[->] (-3, 0) -- (3, 0) node[below right]{$x$};
\draw (-0.6, 0.5) to [out=130, in=-80] (-2, 3.8);
\draw (0.6, 0.5) to [out=50, in=-100] (2, 3.8);
\draw (-0.6, 0.5) -- (0.6,0.5);
\draw[very thick, dashed] (-0.5, 0) -- (0.5, 0);
\draw (0, 0.5) node[below right]{$\bar\gamma$};
\end{tikzpicture}
\caption{The parabolic expansion of positivity. If at time $t=0$ $u\ge 1$ on the dotted part of given measure, after a waiting time $\bar\gamma$, $u$ is pointwise bounded from below in the paraboloid by a large negative power of $t$. }
\label{EPfig}
\end{figure}
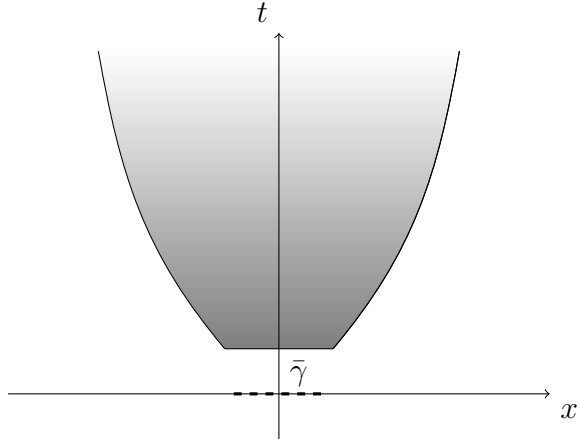

\begin{lemma}[Expansion of positivity, see figure \ref{EPfig}]\label{epospar}
Let $u\ge 0$ be a supersolution in $Q_{R, R^{2}}$. There exists $\bar \lambda>1, \bar \gamma\in \ ]0, 1/4[$ and, for any $\mu>0$ a constant $c(\mu)>0$, such that 
\[
P_{0}(K_{r};\, u\ge 1)\ge \mu\quad \Rightarrow\quad \inf_{K_{\rho}}u(\cdot, \bar\gamma\, \rho^{2})\ge c(\mu)\, (r/\rho)^{\bar \lambda}\qquad\forall  \rho\in [r, R/8].
\]
\end{lemma}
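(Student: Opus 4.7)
The plan is to follow the elliptic blueprint of Theorem \ref{ellexpos}, iterating the parabolic measure-to-point estimate \eqref{par8} across dyadic spatial scales $\rho_n:=2^n r$ while dragging along the time interval on which pointwise positivity has been established. Let $\delta:=c(4^{-N})\in\ ]0,1[$ and $\theta_*:=\theta(4^{-N})\in\ ]0,1[$ be the constants furnished by \eqref{par8} for $\mu=4^{-N}$. Replacing them by smaller admissible values if needed, I arrange $\theta_*<1/64$ and $\delta<1/2$, so that the targets $\bar\gamma:=4\theta_*\in\ ]0,1/4[$ and $\bar\lambda:=-\log_2\delta>1$ have the correct form.

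The inductive engine is the following propagation step: if $u\ge k$ on $K_{\rho_n}\times[a_n,b_n]$, then for every $\tau\in[a_n,b_n]$ the volume ratio $|K_{\rho_n}|/|K_{\rho_{n+2}}|=4^{-N}$ gives $P_\tau(K_{\rho_{n+2}};\,u\ge k)\ge 4^{-N}$, and a time-shifted application of \eqref{par8} with radius $\rho_{n+2}=4\rho_n$ and threshold $4^{-N}$ delivers $u\ge\delta k$ on $K_{\rho_{n+1}}\times[\tau+2\theta_*\rho_n^{2},\,\tau+16\theta_*\rho_n^{2}]$. Sweeping $\tau$ across $[a_n,b_n]$ then yields $u\ge\delta k$ on $K_{\rho_{n+1}}\times[a_{n+1},b_{n+1}]$ with
\[
a_{n+1}=a_n+2\theta_*\rho_n^{2},\qquad b_{n+1}=b_n+16\theta_*\rho_n^{2}.
\]
For the base case, the inclusion $K_r\subset K_{2r}$ turns the hypothesis into $P_0(K_{2r};\,u\ge 1)\ge\mu/2^N$, and \eqref{par8} with radius $2r$ produces $u\ge c_0(\mu):=c(\mu/2^N)$ on $K_r\times[a_0,b_0]=K_r\times[\theta_1 r^{2}/2,\,4\theta_1 r^{2}]$ with $\theta_1:=\theta(\mu/2^N)$. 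Using again the freedom to shrink $\theta_1$ granted by Lemma \ref{lemmapar}, I fix $\theta_1\le 4\theta_*$ so that the base interval locks into the geometric recursion. Iterating the propagation step then gives $u\ge c_0(\mu)\,\delta^{n}$ on $K_{\rho_n}\times[a_n,b_n]$ for all $n\ge 0$ with $\rho_n\le R/4$.

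For the conclusion, given $\rho\in[r,R/8]$ I take $n\ge 1$ to be the unique integer with $\rho_{n-1}<\rho\le\rho_n$; then $K_\rho\subset K_{\rho_n}$ and $\rho_n\le 2\rho\le R/4$, so the iteration stays comfortably inside $Q_{R,R^{2}}$. A direct check shows that with $\bar\gamma=4\theta_*$ and $\theta_1\le 4\theta_*$, $\bar\gamma\rho^{2}\in[a_n,b_n]$ for every such $\rho$ and every $n\ge 1$: asymptotically $a_n/\rho_n^{2}\to 2\theta_*/3$ and $b_n/\rho_n^{2}\to 16\theta_*/3$, sandwiching $\bar\gamma/4=\theta_*$ and $\bar\gamma=4\theta_*$, while the case $n=1$ is tight and is precisely what dictates the bound on $\theta_1$. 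Consequently $u(\cdot,\bar\gamma\rho^{2})\ge c_0(\mu)\,\delta^{n}$ on $K_\rho$, and from $n\le 1+\log_2(\rho/r)$ one reads off the claimed bound with $c(\mu):=\delta\,c_0(\mu)$.

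The main obstacle is precisely this matching step: the target time $\bar\gamma\rho^{2}$ must land inside the propagation interval $[a_n,b_n]$ for \emph{every} $\rho$ in the continuum $[r,R/8]$, not only at the dyadic scales $\rho_n$. This is why the parabolic iteration tracks a whole time band (rather than a single time-slice as in the elliptic argument of Theorem \ref{ellexpos}), and why this band must widen like $\rho_n^{2}$: only then can it eventually sweep the window $(\bar\gamma\rho_n^{2}/4,\bar\gamma\rho_n^{2}]$ traced out by $\bar\gamma\rho^{2}$ as $\rho$ varies. The residual tension between the $\mu$-dependent initial waiting time $\theta_1$ and the universal iteration time $\theta_*$ is a minor technicality, dispatched by the option of shrinking $\theta_1$.
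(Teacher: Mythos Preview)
Your proof is correct and follows essentially the same route as the paper's: both arguments iterate the measure-to-point estimate \eqref{par8} (applied on the enlarged cube $K_{4\rho_n}$ so as to land on $K_{2\rho_n}=K_{\rho_{n+1}}$) across the dyadic scales $\rho_n=2^n r$, while tracking a time interval of width comparable to $\rho_n^2$ on which the pointwise lower bound holds. The paper packages this as the auxiliary estimate \eqref{par9} and records the time band via the two sequences $t_n,s_n$; you instead run the recursion $a_{n+1}=a_n+2\theta_*\rho_n^2$, $b_{n+1}=b_n+16\theta_*\rho_n^2$ and verify directly that $\bar\gamma\rho^2\in[a_n,b_n]$---this is purely bookkeeping, and your observation that the constraint $\theta_1\le4\theta_*$ is dictated by the $n=1$ step is the analogue of the paper's assumption $\theta(\mu)\le\bar\theta$. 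One tiny endpoint nuisance: your choice of $n$ via $\rho_{n-1}<\rho\le\rho_n$ does not cover $\rho=r$, but this is harmless since the $n=1$ box $K_{\rho_1}\times[a_1,b_1]$ already contains $(K_r,\bar\gamma r^2)$ under your constraint on $\theta_1$.
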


\begin{proof}
First expand \eqref{par8} in space observing that  $P_{0}(K_{\rho};\, u\ge h)\ge \mu$ implies $P_{0}(K_{4\rho};\, u\ge h)\ge \mu\, 4^{-N}$, so that by changing the constants $\theta(\mu)$ and $c(\mu)$, we get
\begin{equation}
\label{par9}
P_{0}(K_{\rho};\, u\ge h)\ge \mu\quad \Rightarrow\quad u\ge c(\mu) \, h\quad \text{in $K_{2\rho}\times [\theta(\mu)\, \rho^{2}/8, \theta(\mu)\, \rho^{2}]$}.
\end{equation}
To prove the lemma, let $\bar c=c(1)$, $\bar\theta=\theta(1)$, $\rho_{n}=2^{n}\, r$ and define recursively the sequences
\[
t_{0}=\theta(\mu)r^{2}/8, \quad t_{n+1}=t_{n}+\bar\theta\, \rho_{n+1}^{2}/8,\qquad s_{0}=\theta(\mu)\, r^{2},\quad s_{n}=t_{n-1}+\bar\theta\, \rho_{n}^{2},\quad n\ge 1.
\]
Letting furthermore $Q_{n}=K_{\rho_{n+1}}\times [t_{n}, s_{n}]$, apply recursively \eqref{par9} as
\[
 P_{0}(K_{r};\, u\ge 1)\ge \mu\quad \Rightarrow\quad P(Q_{0};\, u\ge c(\mu))=1\quad \Rightarrow \quad P(Q_{1};\, u\ge c(\mu)\, \bar c)=1\quad \dots
 \]
 to get by induction $P(Q_{n};\, u\ge c(\mu)\, \bar c^{n})=1$. It is easily checked that $s_{n}> t_{n+1}$ for $n\ge 1$, hence
  \[
 \inf_{K_{\rho_{n+1}}}u(\cdot, t)\ge c(\mu)\, \bar c^{n}\qquad t_{n}\le t\le t_{n+1},\qquad n\ge 1.
 \]
 Notice that we can suppose that $\theta(\mu)\le \bar\theta\le 1/16$, so that it holds $\bar\theta\, \rho^{2}_{n-1}\le t_{n}\le  \bar\theta\, \rho_{n}^{2}$ for $n\ge 1$ and a monotonicity argument gives
 \[
  \inf_{K_{\rho_{n}}}u(\cdot, t)\ge c(\mu)\, \bar c^{n+2}\qquad  \bar\theta\, \rho_{n}^{2}\le t\le \bar\theta\, \rho_{n+1}^{2},\quad n\ge 0.
  \]
 For $\rho\ge r$, let $n$ be such that $\rho_{n}\le \rho\le \rho_{n+1}$, hence $\bar\theta\, \rho_{n+1}^{2}\le 4\, \bar\theta\, \rho^{2}\le \bar\theta\, \rho_{n+2}^{2}$. Then the lemma is proved for $\bar\lambda=-\log_{2}\bar c$, and $\bar\gamma=4\,\bar\theta$, since
\[
\inf_{K_{\rho}}u(\cdot, 4\, \bar\theta\, \rho^{2})\ge \inf_{K_{\rho_{n+1}}}u(\cdot, 4\, \bar\theta\, \rho^{2})\ge c(\mu)\, \bar c^{n+3}\ge c(\mu)\, \bar c^{3}\,  (r/\rho_{n})^{\bar\lambda}\ge c(\mu)\, \bar c^{3}\,  (r/\rho)^{\bar\lambda}.
\]
\end{proof}

\begin{theorem}[Harnack inequality]\label{HIhom}
Let $u\ge 0$ be a locally bounded solution of \eqref{parabo} in $K_{2R}\times [-(2R)^{2}, (2R)^{2}]$. There exists $\bar C$  such that $u(0, 0)\le  \bar C\, \inf_{K_{R}}u(\cdot,  R^{2})$.
\end{theorem}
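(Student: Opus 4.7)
The plan is to mirror the proof of the elliptic Harnack inequality (Theorem~\ref{harnackellittica}), substituting the elliptic expansion of positivity (Theorem~\ref{ellexpos}) with its parabolic counterpart Lemma~\ref{epospar}, and the elliptic oscillation decay (Theorem~\ref{ellcalpha}) with the parabolic H\"older continuity just established. I will exploit the space-time scaling $u_\lambda(x,t)=u(\lambda x,\lambda^2 t)$ together with the homogeneity of \eqref{parabo} to normalize $R=1$ and $u(0,0)=1$, reducing the task to producing a universal $\bar m>0$ with $\inf_{K_1}u(\cdot,1)\ge \bar m$.

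Since $u$ is continuous, I will run a parabolic Landis construction on the nested cylinders $E_\rho:=K_\rho\times[-\rho^2,0]$, considering
\[
\psi(\rho)=(1-\rho)^{\bar\lambda}\sup_{E_\rho}u,\qquad \rho\in[0,1],
\]
with $\bar\lambda$ the expansion-of-positivity exponent from Lemma~\ref{epospar}. Continuity of $\psi$, together with $\psi(0)=1$ and $\psi(1)=0$, guarantees that $\psi$ attains its maximum at some $\rho_0\in[0,1)$, with the supremum realized at a point $(x_0,t_0)\in\overline{E_{\rho_0}}$; I set $M:=u(x_0,t_0)$. Fixing $\bar\xi\in(0,1)$ by $(1-\bar\xi)^{-\bar\lambda}=2$ and $r:=\bar\xi(1-\rho_0)$, the geometric inclusion $K_r(x_0)\times[t_0-r^2,t_0]\subseteq E_{\rho_0+r}$ (which follows from $(\rho_0+r)^2\ge \rho_0^2+r^2$) combined with $\psi(\rho_0+r)\le\psi(\rho_0)$ will yield $u\le 2M$ on that small cylinder, while $\psi(\rho_0)\ge\psi(0)=1$ produces the key largeness relation $Mr^{\bar\lambda}\ge \bar m_0:=\bar\xi^{\bar\lambda}$.

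Next I will propagate this information forward in time. The parabolic H\"older estimate applied on the cylinder where ${\rm osc}\,u\le 2M$, with a universal $\bar\eta$ chosen so that $4\bar C\bar\eta^{\bar\alpha}\le 1$, gives $u\ge M/2$ on $K_{\bar\eta r}(x_0)\times[t_0-(\bar\eta r)^2,t_0]$. Setting $t_*:=t_0-(\bar\eta r)^2$, this translates into $P_{t_*}(K_{\bar\eta r}(x_0);u\ge M/2)=1$. Applying Lemma~\ref{epospar} to the non-negative supersolution $2u/M$ centered at $(x_0,t_*)$, with $\mu=1$, I obtain
\[
\inf_{K_\rho(x_0)}u(\cdot,t_*+\bar\gamma\rho^2)\ge \tfrac{1}{2}\,c(1)\,M\,\bigl(\bar\eta r/\rho\bigr)^{\bar\lambda}
\]
for admissible radii. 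Selecting a universal $\bar\rho$ with $K_1\subseteq K_{\bar\rho}(x_0)$ and $t_*+\bar\gamma\bar\rho^2=1$, and combining with $Mr^{\bar\lambda}\ge\bar m_0$, will deliver $\inf_{K_1}u(\cdot,1)\ge \bar m$ with $\bar m$ depending only on the data.

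The hard point is geometric: Lemma~\ref{epospar} requires the supersolution to live on a cylinder of spatial radius roughly eight times the target $\bar\rho$, whereas $\bar\gamma<1/4$ forces $\bar\rho\gtrsim 2$ in order to reach time $1$ from $t_*\in[-1,0]$, and thus asks for room substantially larger than the normalized outer domain $K_2\times[-4,4]$. I plan to circumvent this either by running the Landis construction on a suitably shrunk family $K_{\delta\rho}\times[-(\delta\rho)^2,0]$ with a universal small $\delta$, forcing $|x_0|$ and $|t_0|$ to be small enough that all cylinders entering the expansion lemma fit inside $K_2\times[-4,4]$, or by chaining several applications of Lemma~\ref{epospar} in a parabolic Harnack chain, in which each step enlarges the positivity region by a universal factor while contributing a universal multiplicative loss. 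This chaining is the genuinely new ingredient with respect to the elliptic proof and reflects the asymmetry between backward and forward control inherent to the parabolic setting.
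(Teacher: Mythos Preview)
Your proposal is essentially the paper's proof. The paper handles the geometric mismatch you identify by front-loading a Harnack chain: it declares at the outset that ``by homogeneity, scaling and a Harnack chain argument'' it suffices to prove $u(0,0)=1\Rightarrow\inf_{K_1}u(\cdot,1)\ge\bar c$ for solutions nonnegative on $K_{\bar L}\times[-\bar L^2,\bar L^2]$ with $\bar L$ a large universal constant to be chosen, and then runs exactly your Landis argument inside this roomy domain, picking $\bar L=16/\bar\gamma$ at the end so that Lemma~\ref{epospar} applies with $\rho$ up to $2/\bar\gamma$. Your option~(b) is thus precisely the paper's route, just invoked at the end rather than the beginning.

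Two minor remarks. First, your option~(a)---shrinking the Landis cylinders by a factor $\delta$---does not obviously help: it makes $|x_0|,|t_0|$ small, but the target is still $K_1$ at time $1$, so the expansion radius $\bar\rho$ satisfying $t_*+\bar\gamma\bar\rho^2=1$ is unchanged and Lemma~\ref{epospar} still demands spatial room $\sim 8\bar\rho\gtrsim 16$. You should commit to~(b). Second, the paper applies the H\"older estimate only at the fixed time $t_0$ (getting $P_{t_0}(K_r(x_0);\,u\ge u_0/2)\ge\bar\eta^N$) and then feeds $\mu=\bar\eta^N$ into Lemma~\ref{epospar}, whereas you use the full space-time oscillation to get $\mu=1$ at a slightly earlier time $t_*$; both work and the difference is cosmetic.
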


\begin{proof}
By homogeneity, scaling and a Harnack chain argument it suffices to prove 
\begin{equation}
\label{claimpar}
u(0, 0)=1\quad \Rightarrow \quad \inf_{K_{1}}u(\cdot, 1)\ge \bar c>0
\end{equation}
for any solution $u$ of \eqref{parabo}, nonnegative in $K_{\bar L}\times [-\bar L^{2}, \bar L^{2}]$ for some $\bar L$ to be chosen.
Let 
\[
\psi(\rho)=(1-\rho)^{\bar\lambda}\sup_{Q_{\rho}^{-}}u,\qquad Q^{-}_{\rho}:=K_{\rho}\times [-\rho^{2}, 0],\quad  \rho\in [0, 1]
\]
where  $\bar\lambda$ is given in Lemma \ref{epospar}. By continuity, we can choose $\rho_{0}\in [0,1]$, $(x_{0}, t_{0})\in Q^{-}_{\rho_{0}}$ s.\,t.
\[
\max_{[0,1]}\psi(\rho)=(1-\rho_{0})^{\bar\lambda}u_{0}\qquad u_{0}:=u(x_{0}, t_{0}).
\]
For $\xi\in \ ]0, 1[$ to be determined let $r=\xi\, (1-\rho_{0})$, so that, being $\psi(0)=u(0,0)=1$,
\begin{equation}
\label{etazero}
u_{0}\, r^{\bar \lambda}=\xi^{\bar\lambda}\, u_{0}(1-\rho_{0})^{\bar\lambda}=\xi^{\bar\lambda}\, \psi(\rho_{0})\ge \xi^{\bar\lambda}\, \psi(0)= \xi^{\bar\lambda}.
\end{equation}
If $\widetilde Q_{r}=K_{r}(x_{0})\times [t_{0}-r^{2}, t_{0}]$, it holds $\widetilde Q_{r}\subseteq Q^{-}_{\rho_{0}+r}$ hence, being $\rho_{0}$ maximum for $\psi$,
\begin{equation}
\label{jk45}
\sup_{\widetilde Q_{r}} u\le \sup_{Q^{-}_{\rho_{0}+r}} u=\frac{\psi(\rho_{0}+r)}{(1-\rho_{0}-r)^{\bar\lambda}}\le \frac{(1-\rho_{0})^{\bar\lambda}}{(1-\rho_{0}-r)^{\bar\lambda}}\, u_{0}=(1-\xi)^{-\bar\lambda}\, u_{0}.
\end{equation}
Choose $\bar \xi$ as per $(1-\bar \xi)^{-\bar\lambda}\le 2$, so that $u\le 2\, u_{0}$ in $\widetilde{Q}_{r}$, and let $\bar\theta=\theta(1/2)\in \ ]0, 1[$ be given in \eqref{par8}. Since $K_{r}(x_{0})\times [t_{0}- \bar\theta\, r^{2}, t_{0}]\subseteq \widetilde{Q}_{r}$, the previous proof shows that for all $\rho\le r/2$
\[
{\rm osc}(u(\cdot, t_{0}), K_{\rho}(x_{0}))\le \bar C\, \sup_{\widetilde Q_{r}} u\, (r/\rho)^{\bar\alpha}\le 2\, \bar C\, u_{0}\, (r/\rho)^{\bar\alpha},
\]
and choosing $\rho=\bar\eta\, r$ with $\bar C \, \bar \eta^{\bar\alpha}\le 1/4$ gives ${\rm osc}(u(\cdot, t_{0}), K_{\bar\eta r}(x_{0}))\le u_{0}/2$. The latter ensures $P_{t_{0}}(K_{r}(x_{0});\, u\ge u_{0}/2)\ge \bar\eta^{N}$ and the expansion of positivity Lemma \ref{epospar} for $2\, u/u_{0}$ implies
\begin{equation}
\label{lapo}
\inf_{K_{\rho}(x_{0})}u(\cdot, t_{0}+\bar\gamma\, \rho^{2})\ge c(\bar\eta^{N})\, \frac{u_{0}}{2}  \frac{r^{\bar\lambda}}{\rho^{\bar\lambda}}\underset{\eqref{etazero}}{\ge}\frac{c(\bar\eta^{N})\, \bar\xi^{\bar\lambda}}{2\, \rho^{\bar\lambda}},\qquad r\le \rho\le \bar L/8.
\end{equation}
Solve $t_{0}+\bar\gamma\, \rho=1$ in $\rho$: from $\bar\gamma\le 1/4$ and $t_{0}\in [-1, 0]$ we infer $2\le \rho\le 2/\bar\gamma$. Therefore $K_{\rho}(x_{0})\supseteq K_{1}$ and we can let $\bar L/8:=2/\bar\gamma$ in \eqref{lapo}, giving \eqref{claimpar} and completing the proof.
\end{proof}

\subsection{Inhomogeneous parabolic equations}\ \\

\vskip-8pt

In the last subsection, we heavily took advantage of the homogeneous structure of the equation. 
The situation is quite different for inhomogeneous equations whose model is 
\begin{equation}
\label{pt10}
u_{t}={\rm div}A(x, u, Du),\qquad 
\begin{cases}
A(x, s, z)\cdot z\ge C_{0}|z|^{p}\\
|A(x, s, z)|\le C_{1}|z|^{p-1}
\end{cases}
\end{equation}
for $p\ne 2$, as it is no longer true that $\lambda u$ is a solution of a similar equation for $\lambda\ne 1$. The traslation invariance still holds, and the scale invariance says that if $u$ solves \eqref{pt10} then $u_{\lambda}(x, t)=u(\lambda x, \lambda^{p} t)$ is a solution (in the usual sense that there exists an $A$ obeying the growth condition such that $u$ solves the corresponding equation). More generally, given $R, T>0$ and a (sub-)\,super-solution of \eqref{pt10},
\begin{equation}
\label{scalingin}
u_{R, T}(x, t)=R^{\frac{p}{2-p}}T^{\frac{1}{p-2}}u(R\, x, T\, t)
\end{equation}
is still  a (sub-)\,super-solution (in the structural sense) an equation of the kind \eqref{pt10}. This shows that statements  for $\lambda u$ can be derived from those for $u$ by scaling the space-time variables conveniently (actually, with one degree of freedom).

It is worth noting that, in the inhomogeneous setting, it is not known wether the energy inequality alone suffices to prove the Harnack inequality. In our proof, we will indeed use a clever change of variable introduced in \cite{NEW}, which crucially relies on the equation. Moreover, as extensively discussed in the previous chapter, the degenerate ($p>2$) and singular ($p<2$) cases require different treatments. We thus first derive some common tools in this subsection, and discuss in details the two families of equations in the following ones. The notation will be the same as in the previous one, with the additional dependance on $p$ omitted in constants.

\begin{proposition}
\quad 
\begin{description}
\item[1)  Parabolic Sobolev Embedding]
If $u\in L^{p}(0, T; W^{1, p}_{0}(\Omega))$, and $p^{*}=p(1+2/N)$, then
\[
\int_{0}^{T}\int_{\Omega} |u|^{p^{*}}\, dx\, dt\leq C(N)\left(\sup_{t\in [0, T]}\int_{\Omega}u^{2}(x, t)\, dx\right)^{\frac{p}{N}}\int_{0}^{T}\int_{\Omega}|Du|^{p}\, dx\, dt.
\]
\item[2)  Energy inequality]\cite[Prop. III.2.1]{HR}
Let $v$ be a supersolution to \eqref{pt10} in $Q_{T}$ under condition \eqref{pgr}. There exists $C=C(C_{0}, C_{1})>0$ s.\,t. for any $k\ge 0$ and $\eta\in C^{\infty}(0, T; C^{\infty}_{c}(K))$, $0\le \eta\le 1$ it holds
\begin{equation}
\label{ein}
\begin{split}
\sup_{t\in [0, T]}&\int_{K} (v(x, t)-k)_{-}^{2}\eta^{p} \, dx+\frac{1}{C}\iint_{Q_{T}} |D(\eta(v-k)_{-})|^{p}\, dx\, dt\\
&\le \int_{K} (v(x, 0)-k)_{-}^{2}\eta^{p} \, dx+C\iint_{Q_{T}} (v-k)_{-}^{p}|\nabla\eta|^{p}\, dx\, dt+C\iint_{Q_{T}} (v-k)_{-}^{2}|\eta_{t}|\, dx\, dt.
\end{split}
\end{equation}
\end{description}
\end{proposition}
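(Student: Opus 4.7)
The plan is to test the weak formulation of the supersolution condition against $\varphi=(v-k)_{-}\eta^{p}$, which is nonnegative and, for each $t$, compactly supported in $K$. Since $v$ is only assumed to be a weak supersolution, the time derivative must be handled via the Steklov average $v_{h}(x,t)=h^{-1}\int_{t}^{t+h}v(x,s)\,ds$: one writes the weak inequality pointwise in $t$ for $v_{h}$, carries out the computation below, and sends $h\to 0^{+}$ at the end, using $v_{h}\to v$ in $C([0,T];L^{2}_{\rm loc}(K))$ and $Dv_{h}\to Dv$ in $L^{p}_{\rm loc}$. In what follows I argue formally, treating $v_{t}$ as a genuine function.

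For the spatial part, expanding $D\varphi$ by the product rule and using the chain rule $D(v-k)_{-}=-Dv\,\chi_{\{v<k\}}$ gives
\[
\iint A(x,v,Dv)\cdot D\varphi\,dx\,dt=-\iint A\cdot Dv\,\chi_{\{v<k\}}\eta^{p}\,dx\,dt+p\iint A\cdot D\eta\,(v-k)_{-}\eta^{p-1}\,dx\,dt.
\]
The coercivity $A\cdot Dv\geq C_{0}|Dv|^{p}$ together with $|Dv|\,\chi_{\{v<k\}}=|D(v-k)_{-}|$ bounds the first term above by $-C_{0}\iint|D(v-k)_{-}|^{p}\eta^{p}$; the growth bound $|A|\leq C_{1}|Dv|^{p-1}$ and Young's inequality with exponents $p/(p-1)$ and $p$ bound the second, for any $\varepsilon>0$, by $\varepsilon\iint|D(v-k)_{-}|^{p}\eta^{p}+C(\varepsilon)\iint(v-k)_{-}^{p}|D\eta|^{p}$.

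For the parabolic part, the identity $v_{t}(v-k)_{-}=-\tfrac{1}{2}\partial_{t}(v-k)_{-}^{2}$ (valid a.e.\ for $v_{h}$, which is classically $t$-differentiable) together with an integration by parts in $t$ against $\eta^{p}$ on $[0,t_{0}]$ yields
\[
\iint_{K\times[0,t_{0}]}v_{t}\varphi\,dx\,dt=-\tfrac{1}{2}\!\int_{K}(v-k)_{-}^{2}\eta^{p}\Big|_{t=t_{0}}+\tfrac{1}{2}\!\int_{K}(v-k)_{-}^{2}\eta^{p}\Big|_{t=0}+\tfrac{p}{2}\iint(v-k)_{-}^{2}\eta^{p-1}\eta_{t}\,dx\,dt.
\]
Inserting these into $\iint v_{t}\varphi+\iint A\cdot D\varphi\geq 0$, picking $\varepsilon$ so that $C_{0}-\varepsilon\geq C_{0}/2$, and bounding $\eta^{p-1}|\eta_{t}|\leq |\eta_{t}|$ via $\eta\leq 1$, one arrives at
\[
\tfrac{1}{2}\!\int_{K}(v(\cdot,t_{0})-k)_{-}^{2}\eta^{p}+\tfrac{C_{0}}{2}\iint_{K\times[0,t_{0}]}|D(v-k)_{-}|^{p}\eta^{p}\leq \tfrac{1}{2}\!\int_{K}(v(\cdot,0)-k)_{-}^{2}\eta^{p}+C'\iint(v-k)_{-}^{p}|D\eta|^{p}+C'\iint(v-k)_{-}^{2}|\eta_{t}|
\]
for every $t_{0}\in[0,T]$. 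Taking the supremum over $t_{0}$ on the left and applying the elementary inequality $|D(\eta w)|^{p}\leq 2^{p-1}(\eta^{p}|Dw|^{p}+w^{p}|D\eta|^{p})$ to $w=(v-k)_{-}$, so as to replace $\eta^{p}|D(v-k)_{-}|^{p}$ by $|D(\eta(v-k)_{-})|^{p}$ (absorbing the extra $(v-k)_{-}^{p}|D\eta|^{p}$ on the right), produces exactly \eqref{ein}.

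The main obstacle is the rigorous handling of the time derivative: since $v$ is only a weak solution, $(v-k)_{-}\eta^{p}$ is not admissible in the distributional formulation without regularization. Steklov averaging is the standard fix, but in the limit $h\to 0^{+}$ one must identify the limit of the nonlinear term $A(x,v_{h},Dv_{h})\cdot Dv_{h}\,\chi_{\{v_{h}<k\}}$, which requires strong $L^{p}$-convergence of $Dv_{h}$ on the support of $\eta$; this follows from the uniform energy estimate for the averaged sequence combined with a standard Minty-type monotonicity bootstrap, and is where the structural assumptions \eqref{pgr} are essentially used.
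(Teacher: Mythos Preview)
The paper does not prove this proposition: Part 1 is stated as a known fact and Part 2 is cited directly from \cite[Prop.~III.2.1]{HR}. Your argument for Part 2 is the standard one and is correct in its essentials --- it is exactly the proof one finds in the cited reference.

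One correction to your final paragraph: the Steklov regularization does not produce the term $A(x,v_{h},Dv_{h})$. Averaging the weak formulation gives the pointwise-in-$t$ identity $\partial_{t}v_{h}=\mathrm{div}\,[A(x,v,Dv)]_{h}$, where $[\,\cdot\,]_{h}$ denotes the Steklov average of the flux itself. One then tests with $(v_{h}-k)_{-}\eta^{p}$; the spatial term is $\iint [A]_{h}\cdot D\big((v_{h}-k)_{-}\eta^{p}\big)$, and passing to the limit only requires $[A]_{h}\to A$ in $L^{p'}$ (which follows from $A\in L^{p'}$ by the growth bound) and $D\big((v_{h}-k)_{-}\eta^{p}\big)\to D\big((v-k)_{-}\eta^{p}\big)$ in $L^{p}$ (which follows from $v_{h}\to v$ in $L^{p}(0,T;W^{1,p})$ together with pointwise a.e.\ convergence). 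No Minty-type argument is needed, and the structural assumptions \eqref{pgr} enter only through the coercivity/growth estimates you already used, not in the limiting step.

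You did not address Part 1, but neither does the paper; its proof is a direct interpolation via the Gagliardo--Nirenberg inequality applied slice-by-slice in $t$.
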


We start by sketching the proof of the relevant critical mass lemma.

\begin{lemma}[Critical mass]\label{critlemma}
Let $v\ge 0$ be a supersolution of \eqref{pt10} on $Q_{R, T}$ for $p\ne 2$ and let $h\ge 0$. There exists $\nu>0$ s.t.
\begin{equation}
\label{pt3}
P\left(Q_{R, T};\,  v\le h \right)\le\nu(h\, R^{\frac{p}{2-p}}T^{\frac{1}{p-2}})\quad \Rightarrow\quad  
v\ge  h/2\quad \text{on $K_{R/2}\times [T/2, T]$}.
\end{equation}
\end{lemma}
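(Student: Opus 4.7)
The plan is to run a De Giorgi iteration on shrinking cylinders, as in Lemma \ref{critlemmapar}, carefully tracking how the inhomogeneity of \eqref{pt10} propagates through the estimates. Using the scaling \eqref{scalingin}, set
$$\tilde v(y, s) := R^{\frac{p}{2-p}} T^{\frac{1}{p-2}}\, v(Ry, Ts), \qquad \lambda := h\, R^{\frac{p}{2-p}} T^{\frac{1}{p-2}};$$
then $\tilde v$ is a nonnegative supersolution on $Q_{1,1}$ of an equation of the same structural form \eqref{pt10}, the hypothesis becomes $P(Q_{1,1};\, \tilde v \leq \lambda) \leq \nu(\lambda)$, and we are reduced to showing $\tilde v \geq \lambda/2$ on $K_{1/2} \times [1/2, 1]$.

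Mimicking the notation of Lemma \ref{critlemmapar}, introduce
$$k_n := \lambda/2 + \lambda/2^{n+1}, \qquad r_n := 1/2 + 1/2^{n+1}, \qquad \tau_n := 1/2 - 1/2^{n+2},$$
the nested cylinders $Q_n := K_{r_n} \times [\tau_n, 1]$, and smooth cutoffs $\eta_n$ vanishing at $t = \tau_n$ and on $\partial K_{r_n}$, equal to $1$ on $Q_{n+1}$, with $|D\eta_n| \leq \bar C\, 2^n$ and $|\partial_t\eta_n| \leq \bar C\, 2^n$. Writing $Y_n := |Q_n \cap \{\tilde v \leq k_n\}|$, the energy inequality \eqref{ein} at level $k_n$ on $Q_n$ with cutoff $\eta_n$ yields, using $(\tilde v - k_n)_- \leq \lambda$ on the right-hand side,
$$\sup_{t} \int_{K_n} (\tilde v(\cdot, t) - k_n)_-^2\, \eta_n^p\, dx + \iint_{Q_n} \bigl|D(\eta_n(\tilde v - k_n)_-)\bigr|^p\, dx\, dt \leq \bar C\, 2^{np}\, \bigl(\lambda^p + \lambda^2\bigr)\, Y_n.$$
Inserting these bounds into the parabolic Sobolev embedding applied to $\eta_{n+1}(\tilde v - k_{n+1})_-$ (exploiting $\eta_n \equiv 1$ on $Q_{n+1}$ to transfer the gradient control and $(\tilde v - k_{n+1})_- \leq (\tilde v - k_n)_-$ for the $L^\infty_t L^2_x$ term), and bounding from below the resulting $L^{p(1+2/N)}$-integral on $Q_{n+2} \cap \{\tilde v \leq k_{n+2}\}$ by $(\lambda/2^{n+3})^{p(1+2/N)}\, Y_{n+2}$, routine simplification produces the recursive inequality
$$Y_{n+2} \leq \bar C\, b^n\, \bigl(\lambda^{\frac{p(p-2)}{N}} + \lambda^{2-p}\bigr)\, Y_n^{1+p/N},$$
for some $b = b(N, p) > 1$. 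The iteration criterion \eqref{iteration} then forces $Y_{2n} \to 0$ whenever $Y_0 \leq \nu(\lambda)$ for an explicit $\nu(\lambda) > 0$ depending inversely on a power of $\lambda^{p(p-2)/N} + \lambda^{2-p}$. Since $K_{1/2} \times [1/2, 1] \subset Q_n$ and $\{\tilde v \leq \lambda/2\} \subset \{\tilde v \leq k_n\}$ for every $n$, this proves the conclusion.

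The main delicate point is precisely the $\lambda$-dependence of the threshold $\nu$. Unlike the homogeneous setting of Lemma \ref{critlemmapar}, where rescaling $u \mapsto u/k$ reduces the problem to a universal level $k = 1$, here the equation \eqref{pt10} is not invariant under scalar multiplication, and the energy inequality \eqref{ein} genuinely couples the $L^p$ gradient contribution with the $L^2$ time-derivative contribution. After iteration, this coupling generates the mixed constant $\lambda^{p(p-2)/N} + \lambda^{2-p}$ which cannot be eliminated: it is the analytic shadow of the lack of scale invariance in $v$ of \eqref{pt10}, and it foreshadows the intrinsic character of the Harnack inequalities developed in the following subsections.
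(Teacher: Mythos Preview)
Your proof is correct and follows essentially the same route as the paper: reduce to $R=T=1$ via the scaling \eqref{scalingin}, run the De Giorgi iteration on nested cylinders with levels $k_n\downarrow \lambda/2$, combine the energy inequality \eqref{ein} with the parabolic Sobolev embedding to get a recursive inequality for $Y_n$, and invoke \eqref{iteration}. The only cosmetic differences are a shift in the indexing and your explicit identification of the iteration constant as (comparable to) $\lambda^{p(p-2)/N}+\lambda^{2-p}$, which the paper leaves implicit in $\bar C(h)$; your closing remark on why this $\lambda$-dependence is unavoidable is accurate and helpful.
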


\begin{proof}
Consider the supersolution $v_{R, T}(x, t)=R^{\frac{p}{2-p}}T^{\frac{1}{p-2}}v(R\, x, T\, t)$: as \eqref{pt3} is invariant by this transformation, it suffices to prove it for $R=T=1$.
Define for $n\ge 1$
\[
\begin{split}
&r_{n}=\frac{1}{2}+\frac{1}{2^{n}}, \qquad h_{n}=\frac{h}{2}+\frac{h}{2^{n}},\qquad t_{n}=\frac{1}{2}-\frac{1}{2^{n+1}}\\
&K_{n}=K_{r_{n}},\qquad  Q_{n}=K_{n}\times [t_{n}, 1],\qquad  A_{n}=Q_{n}\cap \{v\le h_{n}\}.
\end{split}
\]
Fix $\eta_{n}$ as per \eqref{etan} with $\theta=4$. Inserting into \eqref{ein} and noting that $h_{n}\le h$, we get
\[
\begin{split}
&\sup_{t\in [t_{n+1}, 1]}\int_{K_{n+1}}(v(x, t)-h_{n})_{-}^{2}\, dx+\iint_{Q_{n}}|D(\eta_{n}(v-h_{n})_{-})|^{p}\, dx\, dt\\
&\quad\le \bar C\, 2^{np}\iint_{Q_{n}}(v-h_{n})_{-}^{p}\, dx\, dt+\bar C\, 2^{n}\iint_{Q_{n}}(v-h_{n})_{-}^{2}\, dx\, dt\le \bar C (h^{p}+h^{2})\, 2^{np}\, |A_{n}|.
\end{split}
\]
 Use $h_{n+1}-h_{n+2}=h/2^{n+3}$,  Tchebicev and the parabolic Sobolev embedding to get
 \[
 \begin{split}
 \frac{h^{p^{*}}}{2^{p^{*}(n+3)}}| A_{n+2}|&\le \iint_{A_{n+2}}(v-h_{n+1})_{-}^{p^{*}}\, dx\, dt\le \iint_{Q_{n+1}}\left((v-h_{n+1})_{-}\eta_{n+1}\right)^{p^{*}}\, dx\, dt\\
 &  \le \bar C\iint_{Q_{n+1}}|D(\eta_{n+1}(v-h_{n+1})_{-}^{2})|^{p}\, dx\, dt\left(\sup_{t\in [t_{n+1}, 1]}\int_{K_{n+1}}(v(x, t)-h_{n+1})_{-}^{2}\, dx\right)^{\frac{p}{N}}\\
 & \le \bar C\, \bar b^{n}\, (h^{p}+h^{2})^{1+\frac{p}{N}}|A_{n+1}||A_{n}|^{\frac{p}{N}}\le \bar C\, \bar b^{n}\, (h^{p}+h^{2})^{1+\frac{p}{N}}|A_{n}|^{1+\frac{p}{N}}. 
 \end{split}
 \]
 This amounts to $ |A_{n+2}|\le \bar b^{n}\, \bar C(h)|A_{n}|^{1+\frac{p}{N}}$ and  \eqref{iteration} for $X_{n}=|A_{2n}|$ gives the conclusion.
 \end{proof}

\begin{lemma}\label{sdg}
Let $v\ge 0$ be a supersolution in $Q_{R,T}$ of \eqref{pt10}.  There exists $\bar \sigma$ s.\,t. 
\[
\inf_{K_{R}}v(x, 0)\ge h\quad \Rightarrow \quad  v\ge h/2\ \text{ on $K_{R/2}\times \big[0, \min\{\bar \sigma\, R^{p}\, h^{2-p}, T\}\big]$}.
\]
\end{lemma}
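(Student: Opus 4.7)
The plan is to run a De Giorgi iteration on a cylinder of fixed time-height, exploiting the fact that the pointwise lower bound $v(\cdot,0)\ge h$ makes the initial-data term in the energy inequality \eqref{ein} vanish identically. First I reduce to a normalized case via the scaling \eqref{scalingin}: setting $\tilde v(x,t) = h^{-1}\,v(Rx, R^{p}h^{2-p}t)$, the function $\tilde v$ is a nonnegative supersolution of an equation of the same structural type on $K_{1}\times[0,S]$ with $S = T\,R^{-p}h^{p-2}$, and $\tilde v(\cdot,0)\ge 1$ on $K_{1}$. Undoing the scaling, the claim reduces to finding a universal $\bar\sigma\in\,]0,1[$ such that $\tilde v\ge 1/2$ on $K_{1/2}\times[0,\min\{\bar\sigma,S\}]$, and by restricting attention to times $t\le\min\{\bar\sigma,S\}$ I may assume without loss of generality that $S\ge\bar\sigma$.

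Next I set up the iteration. Let $r_{n} = \tfrac12 + 2^{-n-1}$, $k_{n} = \tfrac12 + 2^{-n-1}$, $Q_{n} = K_{r_{n}}\times[0,\bar\sigma]$, $A_{n} = Q_{n}\cap\{\tilde v\le k_{n}\}$, and choose time-independent cutoffs $\eta_{n}\in C^{\infty}_{c}(K_{r_{n}})$ with $\eta_{n}\equiv 1$ on $K_{r_{n+1}}$ and $|D\eta_{n}|\le \bar C\,2^{n}$. Apply \eqref{ein} with $k = k_{n}$ and $\eta = \eta_{n}$: the initial term vanishes because $\tilde v(x,0)\ge 1\ge k_{n}$ on $K_{1}$, and the time-derivative term vanishes by time-independence of $\eta_{n}$. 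Using $(\tilde v - k_{n})_{-}\le k_{n}\le 1$ gives
\[
\sup_{t\in[0,\bar\sigma]}\int_{K_{r_{n}}}(\tilde v - k_{n})_{-}^{2}\,\eta_{n}^{p}\,dx + \iint_{Q_{n}}|D(\eta_{n}(\tilde v - k_{n})_{-})|^{p}\,dx\,dt \le \bar C\,2^{np}\,|A_{n}|.
\]
Combining with the parabolic Sobolev embedding applied to $\eta_{n}(\tilde v - k_{n})_{-}$ exactly as in the proof of Lemma \ref{critlemma}, and using Tchebicev at the level $k_{n+1} < k_{n}$ to pass from $L^{p^{*}}$ to measure, one derives the iterative inequality $|A_{n+1}|\le \bar C\,\bar b^{\,n}\,|A_{n}|^{1+p/N}$.

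Finally, the iteration lemma \eqref{iteration} guarantees $|A_{n}|\to 0$ as soon as $|A_{0}|$ lies below a universal threshold. Here $|A_{0}|\le |K_{1}\times[0,\bar\sigma]| = \bar\sigma$, so fixing $\bar\sigma$ universally small forces $|A_{\infty}| = 0$, i.e. $\tilde v\ge 1/2$ a.e. on $K_{1/2}\times[0,\bar\sigma]$, which is the claim after rescaling back. The only routine wrinkle I anticipate is the usual mismatch between the exponent $\eta_{n}^{p}$ appearing in the sup-term of \eqref{ein} and the exponent $\eta_{n}^{2}$ required by the parabolic Sobolev embedding; this is handled identically to the treatment in Lemma \ref{critlemma}, and I do not expect it to introduce any new difficulty. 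The conceptual content of the argument is that the pointwise initial bound furnishes $|A_{0}|$ proportional to the time window $\bar\sigma$ itself, rather than being merely a given small constant, and this is precisely what permits a universal choice of $\bar\sigma$ and generates the intrinsic waiting time $\bar\sigma\,R^{p}h^{2-p}$ through the scaling step.
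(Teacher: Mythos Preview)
Your proposal is correct and follows essentially the same approach as the paper: a De Giorgi iteration with time-independent cutoffs so that both the initial-data term and the $\eta_t$ term in \eqref{ein} vanish, then the smallness needed to trigger \eqref{iteration} comes from bounding $|A_0|$ by the length of the time window. The only cosmetic difference is that you normalize both $R$ and $h$ to $1$ via the full intrinsic scaling \eqref{scalingin}, whereas the paper rescales only in space (setting $R=1$ but keeping $\tilde h = h\,R^{p/(2-p)}$), so the factor $\tilde h^{(p-2)p/N}$ survives in the iterative inequality and the threshold reads $T\le\bar\sigma\,\tilde h^{2-p}$ rather than $\bar\sigma\le$ threshold; the two are equivalent after undoing the respective scalings.
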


\begin{proof}
Consider the supersolution $\tilde v(x, t)=R^{\frac{p}{2-p}}v(Rx, t)$ to reduce to the case $R=1$, $\tilde{v}(\cdot, 0)\ge \tilde{h}=h\, R^{\frac{p}{2-p}}$ on $K_{1}$. Proceed as in the previous proof with $t_{n}\equiv 0$, $\eta_{n}$ independent of $t$ and $Q_{n}=K_{r_{n}}\times [0, T]$. Since $\tilde{v}(\cdot, 0)\ge \tilde{h}_{n}$ and $(\eta_{n})_{t}\equiv 0$, the first and third term on the right of \eqref{ein} vanish, giving
\[
\sup_{t\in [0, T]}\int_{K_{n+1}}(\tilde v(x, t)-\tilde{h}_{n})_{-}^{2}\, dx+\iint_{Q_{n}}|D(\eta_{n}(\tilde v-\tilde h_{n})_{-})|^{p}\, dx\, dt\le \bar C\, 2^{np} h^{p}\, |A_{n}|.
\]
where $A_{n}=Q_{n}\cap \{\tilde v\le \tilde h_{n}\}$. Continuing exactly as before, we get the iterative inequality
\[
\tilde{h}^{p^{*}}/2^{p^{*}(n+3)}|A_{n+2}|\le \bar C\, \bar b^{n}\,  h^{p\frac{N+p}{N}}|A_{n}|^{1+\frac{p}{N}}
\]
which, recalling that $p^{*}=p(N+2)/N$ and enlarging $\bar b$, reads
\[
|A_{n+2}|\le \bar C\, \bar b^{n}\, \tilde{h}^{\frac{p}{N}(p-2)}|A_{n}|^{1+\frac{p}{N}}.
\]
Since $|A_{0}|\le T$,  \eqref{iteration} ensures the existence of $\bar \sigma$ such that 
\[
T\le \bar\sigma \tilde{h}^{2-p}\quad \Rightarrow \quad  \lim_{n}|A_{2n}|=0\quad \Rightarrow
\quad \inf_{Q_{1/2, T }} \tilde{v}\ge \tilde{h}/2\quad \Leftrightarrow\quad \inf_{Q_{R/2, T}}v\ge h/2.
\] 

\end{proof}

We conclude this section with a useful tool to prove H\"older continuity of solutions. 
\begin{lemma}\label{lemmaosc}
Suppose there exist $\bar T>0$ and $\bar m, \bar \theta\in \ ]0, 1[$, depending only on the data, s.\,t. any solution $u$ of \eqref{pt10} with $p\ne 2$ in $Q_{2, \bar T}$  fulfills
\begin{equation}
\label{pt11}
P_{0}(K_{1};\, u\ge 1/2)\ge 1/2\quad \Rightarrow \quad u\ge \bar m\ \text{on $K_{1/4}\times [(1-\bar \theta)\bar T, \bar T]$}.
\end{equation}
There exists $\bar C, \bar\alpha$ depending on $\bar m$ and $\bar\theta$ such that any solution with $\|u\|_{L^{\infty}(Q_{2, \bar T})}\le 1$ obeys
\begin{equation}
\label{oscest1}
{\rm osc}(u, K_{r}\times [\bar T(1-r^{p}), \bar T])\le \bar C\, r^{\bar\alpha}, \qquad 0\le r\le 1.
\end{equation}
\end{lemma}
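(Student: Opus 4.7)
The plan is a De Giorgi iteration on nested intrinsic cylinders, applying the measure-to-positivity hypothesis \eqref{pt11} at each step via a rescaling that preserves the equation class \eqref{pt10}. Although \eqref{pt10} is not invariant under the naive scaling $u\mapsto\alpha u$ that would normalize the oscillation, it is invariant under the combined transformation
\[
v(y,\tau):=\tfrac{2}{\omega}\Big(u\big(r y,\,t_0+r^p(2/\omega)^{p-2}\tau\big)-m\Big),
\]
merging a spatial scaling by $r$, a shift by $m$, a multiplicative rescaling by $2/\omega$, and the compensating intrinsic time-factor $r^p(2/\omega)^{p-2}$. A direct verification of the growth conditions, parallel to \eqref{scalingin}, shows that $v$ is a solution of \eqref{pt10} with the same constants $C_0,C_1$, and $v\in[0,2]$ whenever $u\in[m,m+\omega]$ on the source cylinder.

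Next I build the iteration. Put $\sigma:=1-\bar m/2$ and choose $\rho\in(0,1)$ small enough that $\rho\le 1/4$ and $\rho^p\sigma^{2-p}\le\bar\theta$; both constraints can be met by a $\rho$ depending only on $\bar m$, $\bar\theta$ and the data. Define $r_k:=\rho^k$, $\omega_k:=2\sigma^k$, $s_k:=r_k^p(2/\omega_k)^{p-2}\bar T$ and $\widehat Q_k:=K_{r_k}\times[\bar T-s_k,\bar T]$. The inductive claim is ${\rm osc}(u,\widehat Q_k)\le\omega_k$, which holds for $k=0$ by $\|u\|_\infty\le 1$. For the inductive step, let $m_k=\inf_{\widehat Q_k}u$ and apply the rescaling with $r=r_k$, $\omega=\omega_k$, $t_0=\bar T-s_k$, $m=m_k$; the resulting $v$ is defined on $Q_{2,\bar T}$ (provided $\widehat Q_k\subseteq Q_{2,\bar T}$, which follows from $\rho\le 1$) and $v\in[0,2]$ on $K_1\times[0,\bar T]$. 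At $\tau=0$ either $P_0(K_1;v\ge 1)\ge 1/2$ or $P_0(K_1;2-v\ge 1)\ge 1/2$; since $2-v$ is likewise a solution of \eqref{pt10} (after an adjustment of the flux that still meets the growth conditions), the hypothesis \eqref{pt11} applied to whichever branch holds produces $u\ge m_k+\bar m\omega_k/2$ or $u\le m_k+\omega_k-\bar m\omega_k/2$ on $K_{r_k/4}\times[\bar T-\bar\theta s_k,\bar T]$, so the oscillation on that sub-cylinder is at most $\omega_{k+1}=(1-\bar m/2)\omega_k$. The choice of $\rho$ guarantees $\widehat Q_{k+1}$ sits inside this sub-cylinder, closing the induction.

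Finally I translate the intrinsic bound into \eqref{oscest1} by comparing cylinders. If $p>2$, then $(2/\omega_k)^{p-2}\ge 1$ forces $s_k\ge r_k^p\bar T$, so for every $r\le r_k$ one has $K_r\times[\bar T(1-r^p),\bar T]\subseteq\widehat Q_k$ and hence ${\rm osc}(u,\cdot)\le\omega_k$; choosing $k\simeq\log r/\log\rho$ yields $\omega_k\le\bar C r^{\bar\alpha}$ with $\bar\alpha=\log(1/\sigma)/\log(1/\rho)$. If $p<2$ the natural cylinder is instead longer in time than the intrinsic one, so the inclusion only holds for $r\le r_k(\omega_k/2)^{(2-p)/p}=(\rho\sigma^{(2-p)/p})^k$; matching $k$ to $r$ now yields $\bar\alpha=\log(1/\sigma)/\log(1/(\rho\sigma^{(2-p)/p}))$. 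The principal obstacle is precisely this mismatch between the intrinsic family of cylinders (forced by the equation-class preserving rescaling) and the natural family appearing in the conclusion: the sign of $p-2$ determines which family is larger, producing a mild case split in the final step, but in both regimes the iteration closes and produces a H\"older exponent $\bar\alpha>0$ depending only on $\bar m$, $\bar\theta$ and the data.
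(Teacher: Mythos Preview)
Your proof is correct and follows essentially the same intrinsic-rescaling De Giorgi iteration as the paper. The only notable difference is organizational: the paper argues by contradiction (assuming the oscillation fails to decay at step $n+1$, rescaling so that ${\rm osc}(v,Q_0)>1$, and then deriving a contradiction from \eqref{pt11}), whereas you iterate directly after normalizing $v\in[0,2]$. Your decay factor $\sigma=1-\bar m/2$ is slightly worse than the paper's $\gamma=(1+\bar m)^{-1}$, and your final case split on the sign of $p-2$ is exactly what the paper packages into the single formula $\eta=\delta\min\{1,\gamma^{(p-2)/p}\}$; neither difference is substantive.
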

 
 \begin{proof}
Fix  $\delta\in \ ]0, 1/4]$, $\theta\in \ ]0, \bar \theta]$  so that $\theta^{\frac{1}{2-p}}\delta^{\frac{p}{p-2}}=\gamma:=(1+\bar m)^{-1}<1$. We claim by induction  
\begin{equation}
\label{pt5}
 {\rm osc}(u, Q_{n})\le (1+\bar m)\, \gamma^{n},\quad \text{for all $n\ge 0$, where}\quad Q_{n}:=K_{\delta^{n}}\times [\bar T(1- \theta^{n}), \bar T].
\end{equation}
Since $\|u\|_{L^{\infty}(Q_{2, \bar T})}\le 1$, \eqref{pt5} holds true for $n=0$, so  suppose by contradiction that 
\begin{equation}
\label{pt8}
{\rm osc}(u, Q_{n})\le(1+\bar m)\, \gamma^{n}\quad \text{\&}\quad {\rm osc}(u, Q_{n+1})>(1+\bar m)\, \gamma^{n+1}
\end{equation}
for some $n\ge 1$. Being ${\rm osc}(u, Q_{n})\ge {\rm osc}(u, Q_{n+1})$ we infer 
\begin{equation}
\label{pt7}
{\rm osc}(u, Q_{n})>(1+\bar m)\, \gamma^{n+1}.
\end{equation}
By scaling and traslation invariance, the function $v(x, t)=\gamma^{-n}u\big(\delta^{n} x, (t-\bar T)\theta^{n}+\bar T\big)$
solves in $Q_{0}$ an equation of the type \eqref{pt10} and, recalling that $\gamma= 1/(\bar m+1)$, we have
\[
 {\rm osc}(v, Q_{0})=\gamma^{-n}{\rm osc}(u, Q_{n})\underset{\eqref{pt7}}{>}(1+\bar m) \, \gamma=1.
\]
We infer from the latter that the assumption in \eqref{pt11} holds for at least one of the nonnegative supersolutions  $v_{+}:= v-\inf_{Q_{0}} v$ or $v_{-}=\sup_{Q_{0}}v-v$: indeed, for example, $P_{0}(K_{1};\, v_{+}\ge 1/2)<1/2$ is equivalent to $P_{0}(K_{1};\, v_{+}<1/2)\ge 1/2$ and then $ {\rm osc}(v, Q_{0})\ge 1$ ensures
\[
P_{0}(K_{1};\, v_{-}\ge 1/2)\ge P_{0}(K_{1};\, v_{-}>{\rm osc}(v, Q_{0})-1/2)=P_{0}(K_{1};\, v_{+}< 1/2)\ge 1/2.
\]
Suppose, without loss of generality, that  $P_{0}(K_{1};\, v_{+}\ge 1/2)\ge 1/2$: then, since $\theta\le\bar\theta$ and $\delta\le 1/4$, \eqref{pt11} implies $\inf_{Q_{1}}v_{+}\ge \bar m$ and thus 
\[
{\rm osc} (v, Q_{1})={\rm osc} (v_{+}, Q_{1})\le {\rm osc} (v_{+}, Q_{0}) - \bar m={\rm osc} (v, Q_{0}) - \bar m.
\]
Scaling back to $u$ and using the relations in  \eqref{pt8}, we obtained the contradiction
\[
(1+\bar m)\, \gamma< \gamma^{-n}{\rm osc}(u, Q_{n+1})={\rm osc} (v, Q_{1})\le {\rm osc} (v, Q_{0}) - \bar m= \gamma^{-n}{\rm osc}(u, Q_{n})-\bar m\le 1+\bar m -\bar m.
\]
To prove \eqref{oscest1} let $\eta=\delta\, \min\{1, \gamma^{\frac{p-2}{p}}\}$ and suppose $\eta^{n+1}\le r\le \eta^{n}$ for some $n$. Then, using $\theta^{\frac{1}{2-p}}\, \delta^{\frac{p}{p-2}}=\gamma$, we infer $K_{r}\times [\bar T(1-r^{p})]\subseteq Q_{n}$. Letting $\bar\alpha=\log_{\eta}\gamma$ and using \eqref{pt5} we have 
\[
{\rm osc}(u, K_{r}\times [\bar T(1-r^{p}), \bar T])\le \gamma^{n}= \gamma^{-1}\, (\eta^{n+1})^{\bar \alpha}\le \gamma^{-1}\, r^{\bar\alpha}.
\]
\end{proof}
 
\subsection{Degenerate parabolic equations}\ \\

This subsection is devoted to the case $p>2$ of \eqref{pt10}. Compared to the homogeneous case $p=2$, the most delicate part is the proof of the measure-to-point estimate, Lemma \ref{EP0} below.

\begin{lemma}\label{lemmapt}
Assume that $u\ge 0$ is a  supersolution in $Q_{1, T}$ of \eqref{pt10} with $p> 2$. For any $\mu>0$ there exists $k(\mu)\in \ ]0, 1[$  such that
\begin{equation}
\label{pt0}
P_{0}(K_{1};\, u\ge 1)\ge \mu\quad \Rightarrow\quad P_{t}\left(K_{1};\, u\ge k(\mu)/(t+1)^{\frac{1}{p-2}}\right)>\mu/2 \qquad \text{for all $t\in [0, T]$}
\end{equation}
\end{lemma}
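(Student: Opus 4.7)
The plan is to follow the homogeneous argument of Lemma \ref{lemmapar} but to compensate for the inhomogeneity of the energy inequality \eqref{ein} (where the spatial term now carries $(u-k)_-^p$ rather than $(u-k)_-^2$) by allowing the comparison level to depend on time. Concretely, for each fixed $t_0 \in [0, T]$ I would set $k := k_0/(t_0+1)^{1/(p-2)}$ with $k_0 = k_0(\mu) \in (0, 1]$ to be chosen later, and apply \eqref{ein} on the cylinder $K_1 \times [0, t_0]$ with this \emph{constant} level $k$ and a time-independent cutoff $\eta \in C^\infty_c(K_1)$ satisfying $\eta \equiv 1$ on $K_\delta$ and $|D\eta| \le C_N/(1-\delta)$. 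Because $\eta$ does not depend on $t$, the $|\eta_t|$ term vanishes; and since $k\le k_0\le 1$, the hypothesis $P_0(K_1;\, u\ge 1)\ge \mu$ gives $P_0(K_1;\, u\ge k)\ge \mu$ as well.

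The two remaining terms on the right-hand side of \eqref{ein} are handled exactly as in Lemma \ref{lemmapar}: the initial datum contribution is bounded by $k^2(1-\mu)$, and the spatial gradient term by $\bar C\, t_0\, k^p/(1-\delta)^p$. The crucial algebraic identity $t_0\, k^{p-2}\le (t_0+1)\, k^{p-2}=k_0^{p-2}$ rewrites the latter as $\bar C\, k_0^{p-2}\, k^2/(1-\delta)^p$, placing it on the same footing as the initial term and yielding
\[
\int_{K_\delta} (u(x,t_0)-k)_-^{2}\,dx\leq k^2\left[(1-\mu)+\frac{\bar C\, k_0^{p-2}}{(1-\delta)^p}\right].
\]
This is the exact analogue of the estimate \eqref{par2} from the homogeneous case, with $\theta/(1-\delta)^2$ replaced by $k_0^{p-2}/(1-\delta)^p$.

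Next I would convert this $L^2$ control of $(u-k)_-$ into a measure estimate via the trivial inequality $(u-k)_-\ge (1-\alpha)k$ on $\{u<\alpha k\}$, and then use $|K_1\setminus K_\delta|\le 1-\delta^N$ to obtain
\[
1-P_{t_0}(K_1;\, u\ge \alpha k)\leq (1-\delta^N)+\frac{(1-\mu)+\bar C\, k_0^{p-2}/(1-\delta)^p}{(1-\alpha)^2}.
\]
Successively fixing $\delta$ close to $1$, then $\alpha$ small, and finally $k_0$ small (all depending only on $\mu$ and the data) makes the right-hand side strictly smaller than $1-\mu/2$, yielding \eqref{pt0} with $k(\mu):=\alpha\, k_0$. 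Since $t_0 \in [0, T]$ is arbitrary, the lemma follows.

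The only genuinely new point, and the step I expect to be the main obstacle in a first attempt, is identifying the correct time-dependent level: the decay rate $(t+1)^{-1/(p-2)}$ is precisely what is needed to absorb the inhomogeneous quantity $t_0\, k^p$ into $k^2$, mirroring the ODE decay $u^{2-p} \sim t$ one expects from testing against $u$. Any slower (or level-independent) choice leaves the two terms on the right-hand side of \eqref{ein} out of balance and the homogeneous argument breaks down; with this choice, however, the proof follows the same template as Lemma \ref{lemmapar} almost line by line.
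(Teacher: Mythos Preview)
Your proposal is correct and follows essentially the same approach as the paper: both apply the energy inequality \eqref{ein} with a time-independent cutoff so that the $|\eta_t|$ term drops out, bound the initial term by $k^2(1-\mu)$ and the gradient term by $\bar C\, t\, k^p/(1-\delta)^p$, then convert to a measure bound via Chebyshev and choose the level proportional to $(t+1)^{-1/(p-2)}$ so that $k^{p-2}(t+1)$ becomes a $\mu$-dependent constant. The only cosmetic difference is that you fix $t_0$ and the level $k=k_0/(t_0+1)^{1/(p-2)}$ at the outset, whereas the paper carries generic $k$ and $t$ through to the estimate and then substitutes $k_t=k(\mu)/(t+1)^{1/(p-2)}$ at the end.
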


\begin{proof}
For any $k, \delta\in \ ]0, 1[$, we employ  \eqref{ein} with $\eta$ as in \eqref{varphi2}, obtaining for $t\in [0, T]$ 
\[
\begin{split}
\int_{K_{\delta}} (u(x, t)-k)^{2}_{-}\, dx&\le \int_{K_{1}} (u(x, 0)-k)^{2}_{-}\, dx+\frac{\bar C}{(1-\delta)^{p}}\iint_{Q_{1, t}}(u-k)^{p}_{-}\, dx\, dt\\
&\le k^{2}(1-\mu)+\frac{\bar C\, k^{p}\, t}{(1-\delta)^{p}}.
\end{split}
\]
For $\eps\in \ ]0, 1[$ we have
\[
\int_{K_{\delta}} (u(x, t)-k)^{2}_{-}\, dx\ge \int_{K_{\delta}\cap \{u(\cdot, t)<\eps k\}}(k-\eps k)^{2}\, dx\ge  k^{2}(1-\eps)^{2}| K_{\delta}\cap \{u(\cdot, t)<\eps k\}|
\]
which, inserted into the previous estimate and dividing by $k^{2}(1-\eps)^{2}$ gives 
\begin{equation}
\label{kdelta}
|K_{\delta}\cap \{u(\cdot, t)< \eps k\}|\le \frac{1}{(1-\eps)^{2}}\left(1-\mu+\frac{\bar C\, k^{p-2}\, t}{(1-\delta)^{p}}\right).
\end{equation}
Therefore
\[
\begin{split}
1-P_{t}(K_{1};\,  u\ge \eps k)&\le 1-|K_{\delta}\cap \{u(\cdot, t)\ge \eps k\}|=1-\delta^{N}+|K_{\delta}\cap \{u(\cdot, t)< \eps k\}|\\
&\le 1-\delta^{N} +\frac{1}{(1-\eps)^{2}}\left(1-\mu+\frac{\bar C\, k^{p-2}\, (t+1)}{(1-\delta)^{p}}\right).
\end{split}
\]
Choose $\delta, \eps\in\ ]0, 1[$ and, for each $t\in [0, T]$, $k_{t}\in \ ]0, 1[$ such that 
\[
1-\delta^{N}=\frac{\mu}{8},\qquad \frac{1-\mu}{(1-\eps)^{2}}=1-\frac{3}{4}\mu,\qquad \frac{\bar C\, k_{t}^{p-2}(t+1)}{(1-\eps)^{2}(1-\delta)^{p}}=\frac{\mu}{8}. 
\]
Clearly it holds $\delta=\delta(\mu)$, $\eps=\eps(\mu)$ and therefore $k_{t}=k(\mu)/(t+1)^{\frac{1}{p-2}}$. With these choices we have $1-P_{t}(K_{1};\,  u\ge \eps k_{t})\le 1-\mu/2$, proving the claim.
\end{proof}

The previous Lemma suggests to consider the function $(t+1)^{\frac{1}{p-2}}u(x, t)$, which is a supersolution to an equation similar to \eqref{pt10}, but with structural constants depending on $t$ (and degenerating for large times). In order to keep the structural conditions independent of $t$, it turns out that the change of time variable $t+1=e^{\tau}$ suffices, so that we consider instead
\begin{equation}
\label{cv1}
v(x, e^{\tau})=e^{\frac{\tau}{p-2}}u(x, e^{\tau}-1).
\end{equation}
A straightforward calculation shows that $v$ is a solution on $Q_{1,\log (T+1)}$ of 
\[
v_{t}={\rm div} \tilde A(x, v, Dv)+v/(p-2)
\]
with $\tilde A(x, s, z):=e^{\frac{\tau}{p-2}}A\left(x, se^{\frac{-\tau}{p-2}}, ze^{\frac{-\tau}{p-2}}\right)$ obeying the structural conditions in \eqref{pt10}. In particular, if $u\ge 0$, $v$ belongs to the class of nonnegative supersolution of \eqref{pt10}.

\begin{lemma}[Shrinking lemma]\label{slemma}
Suppose $v\ge 0$ is a supersolution in $Q_{2,S}$ of \eqref{pt10} for $p\ge  2$ such that 
\begin{equation}
\label{pt}
P_{t}(K_{1}; \, v\ge k)\ge \mu\qquad \forall t\in [0, S]
\end{equation} 
for some $\mu>0$, $k\ge 0$. There exists $\beta=\beta(\mu)$ such that
\begin{equation}
\label{pt2}
P\left(K_{1}\times [0, (2^{n}/k)^{p-2}]; v\le k/2^{n}\right)\le \beta(\mu)/n^{1-\frac{1}{p}}, \qquad \text{if  $(2^{n}/k)^{p-2}\le S$}
\end{equation}
\end{lemma}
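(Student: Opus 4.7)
The plan is to follow the outline of Lemma \ref{slemmapar} from the homogeneous setting, the critical new ingredient being that the time window is itself chosen at the intrinsic scale of the target level. Set $k_{j}=k/2^{j}$, $Q_{n}:=K_{1}\times[0,T_{n}]$ with $T_{n}=(2^{n}/k)^{p-2}$, and observe the key identity
\[
k_{j}^{p-2}\,T_{n}=2^{(p-2)(n-j)}\ge 1\qquad \text{for }0\le j\le n,
\]
which will be used to absorb the lower-order $L^{2}$-term coming from the $p$-energy inequality \eqref{ein}.

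The first step is to insert a time-independent cutoff $\eta\in C^{\infty}_{c}(K_{2})$ with $\eta\equiv 1$ on $K_{1}$ and $|D\eta|\le \bar C$ into \eqref{ein}. Because $(v-k_{j})_{-}\le k_{j}$ and $\eta_{t}\equiv 0$, the right-hand side reduces to $\bar C\,(k_{j}^{p}T_{n}+k_{j}^{2})$, and the scaling identity above collapses this to $\bar C\,k_{j}^{p}T_{n}$. Thus
\[
\iint_{Q_{n}}|D(v-k_{j})_{-}|^{p}\,dx\,dt\le \bar C\, k_{j}^{p}\,T_{n}.
\]

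Next, for each $t\in[0,T_{n}]$ the De Giorgi--Poincar\'e inequality combined with \eqref{pt} (which applies at every level $k_{j}\le k$) gives
\[
\frac{k_{j}}{2}\,|K_{1}\cap\{v(\cdot,t)\le k_{j+1}\}|\le \frac{\bar C}{\mu}\int_{K_{1}\cap\{k_{j+1}<v(\cdot,t)\le k_{j}\}}|D(v(x,t)-k_{j})_{-}|\,dx.
\]
Integrate in $t$, apply H\"older with conjugate exponents $p$ and $p/(p-1)$, plug in the energy bound, and simplify the $k_{j}$ factor on both sides. Raising the resulting inequality to the power $p/(p-1)$ and dividing by $|Q_{n}|^{p/(p-1)}$ yields the telescoping-type estimate
\[
P(Q_{n};\,v\le k_{j+1})^{\frac{p}{p-1}}\le C(\mu)\,\bigl(P(Q_{n};\,v\le k_{j})-P(Q_{n};\,v\le k_{j+1})\bigr),
\]
where the factors of $T_{n}$ cancel exactly because $|Q_{n}|=T_{n}|K_{1}|$. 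Summing over $j=0,\dots,n-1$ telescopes the right-hand side to at most $C(\mu)$, while monotonicity bounds the left-hand side below by $n\,P(Q_{n};\,v\le k_{n})^{p/(p-1)}$; extracting the $(p-1)/p$-th root delivers the claimed bound $\beta(\mu)/n^{1-1/p}$.

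The only genuine obstacle is the non-homogeneous structure of \eqref{ein} for $p>2$, where the gradient bound carries both an $L^{p}$-in-space and an $L^{2}$ initial-time contribution that scale differently in $k_{j}$. The choice $T_{n}\simeq k_{n}^{2-p}$ is precisely the intrinsic time scale at the terminal level and is what makes the two terms comparable simultaneously for all $j\le n$; once the energy estimate has been made scale-clean, the rest is a mechanical adaptation of the De Giorgi iteration already used in Lemma \ref{slemmapar}.
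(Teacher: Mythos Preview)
Your proof is correct and follows essentially the same approach as the paper's: energy inequality with a time-independent cutoff in $K_{2}$, the De Giorgi--Poincar\'e inequality slice by slice, H\"older, and the telescopic sum. The one cosmetic difference is that you work on $Q_{n}=K_{1}\times[0,T_{n}]$ with $T_{n}=(2^{n}/k)^{p-2}$ (matching the statement exactly), while the paper writes $Q_{1,S}$ throughout; since in the application $S=T_{n}$, the arguments coincide, and your observation that $k_{j}^{p-2}T_{n}\ge 1$ for $j\le n$ is precisely the paper's ``$k_{j}^{2}/S\le k_{j}^{p}$'' step.
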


\begin{proof}
The proof is very similar to the one of Lemma \ref{slemmapar} and we only sketch it. Suppose $n\ge 1$ satisfies $2^{n(p-2)}\le S\, k^{p-2}$ and let $k_{j}=k/2^{j}$ for $j=0, \dots, n$. Using the energy inequality \eqref{ein} on $Q_{1, S}$ with $\eta$ as in \eqref{varphi} with $R=1$, we get
\[
\iint_{Q_{1, S}}|D(v-k_{j})_{-}|^{p}\, dx\, dt\le \bar C\int_{K_{1}} (v(x, 0)-k_{j})_{-}^{2}\, dx+\bar C\iint_{Q_{1, S}}(v-k_{j})_{-}^{p}\, dx\, dt
\le \bar C (k_{j}^{2}+Sk_{j}^{p}).
\]
For any $t\in [0, S]$ apply the De Giorgi - Poincar\'e inequality and \eqref{pt} to obtain
\[
(k_{j}-k_{j+1})P_{t}(K_{1};\, v(\cdot, t)\le k_{j+1})\le \frac{\bar C}{\mu}\int_{K_{1}\cap \{k_{j+1}\le v(\cdot, t)\}}|D(v(x, t)-k_{j})_{-}|\, dx.
\]
Integrate over $[0, S]$, use H\"older's inequality and the energy estimate to get for $j=0, \dots, n-1$
\begin{equation}
\label{pt12}
\frac{k_{j}}{2}P(Q_{1, S}; \, v\le k_{j+1})\le \frac{\bar C}{\mu}\left(\frac{k_{j}^{2}}{S}+k_{j}^{p}\right)^{\frac{1}{p}}\left(P(Q_{1,S}; \, v\le k_{j})-P(Q_{1,S};\, v\le k_{j+1})\right)^{1-\frac{1}{p}}.
\end{equation}
As $j\le n$, it holds $2^{j(p-2)}\le S\, k^{p-2}$ as well, implying $k_{j}^{2}/S\le k_{j}^{p}$. Thus we can simplify all the factors involving $k_{j}$ above, giving for all $j\le n-1$
\[
\left(P(Q_{\delta_{0}, S};\, v\le k_{j+1}\right)^{\frac{p}{p-1}}\le \bar C\, \mu^{\frac{p}{1-p}} \left(P(Q_{1, S};\, v\le k_{j})-P(Q_{\delta_{0},S};\,  v\le k_{j+1})\right).
\]
which, summed over $j\le n-1$ gives \eqref{pt2} by the usual telescopic argument.
\end{proof}

\begin{lemma}[Measure-to-point estimate]\label{EP0}
 For any $\mu\in \ ]0, 1[$ there exists $m(\mu)\in \ ]0, 1[$, $T(\mu)>1$ such that any supersolution $u\ge 0$ in $Q_{2, T(\mu)}$ fulfills
\begin{equation}
\label{exp0}
P_{0}(K_{1}; u\ge 1)\ge \mu\quad \Rightarrow \quad u\ge m(\mu) \quad \text{in $K_{1/2}\times [ T(\mu)/2, T(\mu)]$}.
\end{equation}
\end{lemma}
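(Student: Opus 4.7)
The plan is to combine Lemma \ref{lemmapt} (propagation in measure) with the shrinking Lemma \ref{slemma} and the critical mass Lemma \ref{critlemma}, mediated by the logarithmic change of time variable displayed in \eqref{cv1}. The obstacle specific to the degenerate case is that Lemma \ref{lemmapt} only delivers positivity at the time-decaying level $k(\mu)(t+1)^{-1/(p-2)}$; this prevents a direct iteration of the kind performed in the homogeneous case. The rescaling
\[
v(x,\tau) := (t+1)^{1/(p-2)} u(x, t), \qquad \tau := \log(t+1),
\]
is tailored precisely to absorb this decay while keeping $v$ a nonnegative supersolution of an equation of the same form as \eqref{pt10}.

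First I apply Lemma \ref{lemmapt} to $u$ (restricted to $Q_{1, T(\mu)}$) to obtain
\[
P_t(K_1;\, u \geq k(\mu)(t+1)^{-1/(p-2)}) > \mu/2, \qquad \forall t \in [0, T(\mu)].
\]
Passing to the variable $v$ above, the discussion following \eqref{cv1} shows that $v$ is a nonnegative supersolution of an equation of type \eqref{pt10} on $Q_{2, \log(T(\mu)+1)}$ (the extra source term $v/(p-2) \geq 0$ only strengthens the supersolution inequality). The preceding bound then becomes the time-uniform statement
\[
P_\tau(K_1;\, v \geq k(\mu)) > \mu/2 \qquad \forall \tau \in [0, \log(T(\mu)+1)],
\]
since $\{v(\cdot,\tau) \geq k(\mu)\}$ corresponds exactly to $\{u(\cdot,t) \geq k(\mu)(t+1)^{-1/(p-2)}\}$.

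I then feed this uniform bound into the shrinking Lemma \ref{slemma} for $v$ with threshold $k = k(\mu)$: for every integer $n$ such that $(2^n/k(\mu))^{p-2} \leq \log(T(\mu)+1)$,
\[
P\bigl(K_1 \times [0, (2^n/k(\mu))^{p-2}];\, v \leq k(\mu)/2^n\bigr) \leq \beta(\mu/2)\, n^{-1+1/p}.
\]
Since the right-hand side tends to $0$, I select $n(\mu)$ so large that this measure falls below the universal threshold $\nu(1)$ from Lemma \ref{critlemma}, and set $\bar S := (2^{n(\mu)}/k(\mu))^{p-2}$. With $R=1$, $T = \bar S$ and $h = k(\mu)/2^{n(\mu)}$, the scaling parameter in Lemma \ref{critlemma} evaluates precisely to $h R^{p/(2-p)} T^{1/(p-2)} = 1$, so applying that lemma to $v$ on $Q_{1,\bar S}$ yields
\[
v \geq k(\mu)/2^{n(\mu)+1} \quad \text{on} \quad K_{1/2} \times [\bar S/2, \bar S],
\]
provided $T(\mu)$ is large enough that $\log(T(\mu)+1) \geq \bar S$.

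Finally I declare $T(\mu) := e^{\bar S} - 1$ (enlarging $n(\mu)$ if necessary to guarantee $T(\mu) > 1$) and $m(\mu) := (T(\mu)+1)^{-1/(p-2)} k(\mu)/2^{n(\mu)+1}$. For $t \in [T(\mu)/2, T(\mu)]$ the corresponding $\tau = \log(t+1)$ lies in $[\log((e^{\bar S}+1)/2),\, \bar S]$, which is contained in $[\bar S/2, \bar S]$ by the AM--GM inequality $e^{\bar S}+1 \geq 2 e^{\bar S/2}$. Inverting the change of variable then gives for such $(x,t)$
\[
u(x,t) = (t+1)^{-1/(p-2)} v(x,\tau) \geq (T(\mu)+1)^{-1/(p-2)} k(\mu)/2^{n(\mu)+1} = m(\mu),
\]
which is the claim. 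The only genuine difficulty is the design of the change of variable that neutralizes the time-decaying level from Lemma \ref{lemmapt}; once this is done, the remaining applications of the shrinking and critical mass lemmas are essentially mechanical.
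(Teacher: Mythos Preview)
Your proof is correct and follows essentially the same approach as the paper's: the same change of variable \eqref{cv1}, the same chain Lemma~\ref{lemmapt} $\to$ Lemma~\ref{slemma} $\to$ Lemma~\ref{critlemma}, and the same choice $T(\mu)=e^{\bar S}-1$ with $\bar S=(2^{n(\mu)}/k(\mu))^{p-2}$. Your verification that $h\,R^{p/(2-p)}T^{1/(p-2)}=1$ and the AM--GM argument for the interval inclusion are slightly more explicit than in the paper, but the substance is identical.
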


\begin{proof}
Let $T$ to be determined and suppose $u\ge 0$ is a supersolution in $Q_{1, T}$. By Lemma \ref{lemmapt},  
\[
P_{t}(K_{1};\, (t+1)^{\frac{1}{p-2}}\, u\ge k(\mu))\ge \mu/2,\qquad \forall t\in [0, T].
\]  
If $v$ is defined as per \eqref{cv1}, the previous condition reads
\[
P_{\tau}(K_{1};\, v\ge k(\mu))\ge \mu/2,\qquad \forall \tau\in [0, S], \quad S=\log(T+1)>0
\]
so that,  being $v\ge 0$ a supersolution, Lemma \ref{slemma} implies
\[
P(Q_{1, S_{n}};\, v\le S_{n}^{\frac{1}{2-p}})\le \beta(\mu/2)/n^{1-\frac{1}{p}},\qquad \text{for $S_{n}:=(2^{n}/ k(\mu))^{p-2}$}.
\]
Next choose $n=n(\mu)$, (and thus $S=S(\mu):=S_{n}$ and $T=T(\mu):=e^{S}-1$) so that $\beta(\mu/2)/n^{1-\frac{1}{p}}\le \nu(1)$, with $\nu$ given in \eqref{pt3}. Lemma \ref{critlemma} applied on $Q_{1, S}$ with $h=S^{\frac{1}{2-p}}$ thus gives
\[
v\ge S^{\frac{1}{2-p}}/2\quad \text{on $K_{1/2}\times [S/2, S]$}.
\]
Recalling the definition \eqref{cv1} of $v$, in terms of $u$ the latter implies
\[
u\ge e^{-\frac{T}{p-2}}\log^{\frac{1}{2-p}}(T+1)/2\quad \text{on $K_{1/2}\times \left[\sqrt{T+1}-1, T\right]\supseteq  K_{1/2}\times \left[ T/2, T\right]$}.
\]
 \end{proof}

\begin{theorem}[H\"older regularity]\label{holdp>2}
Any $L^{\infty}_{\rm loc}(\Omega_{T})$ solution of \eqref{pt10} in $\Omega_{T}$ for $p>2$ belongs to $C^{\bar\alpha}_{\rm loc}(\Omega_{T})$, with $\bar\alpha$ depending only on $N$, $p$, $C_{0}$ and $C_{1}$. Moreover, there exist $\bar T\ge 1$ and $\bar C>0$ such that  if $Q_{R}^{-}(\bar T):=K_{2R}\times [-\bar T\, R^{p}, 0]\subseteq \Omega_{T}$, for any $r\in [0, R]$ it holds
\begin{equation}
\label{oscp>2}
{\rm osc}(u(\cdot, 0), K_{r})\le \bar C\, \max\left\{1, \|u\|_{L^{\infty}(Q_{R}^{-}(\bar T))}\right\} \big(\frac{r}{R}\big)^{\bar\alpha}.
\end{equation}
\end{theorem}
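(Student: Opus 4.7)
The plan is to reduce both conclusions to the measure-to-point estimate \eqref{exp0} via the oscillation-reduction tool Lemma \ref{lemmaosc}, and then to derive the explicit estimate \eqref{oscp>2} through an intrinsic rescaling that is available precisely because of the degeneracy $p>2$. First I verify the hypothesis \eqref{pt11} of Lemma \ref{lemmaosc}. Let $u\ge 0$ be a supersolution of \eqref{pt10} in $Q_{2, \bar T}$ (with $\bar T$ to be determined) satisfying $P_0(K_1;\, u\ge 1/2)\ge 1/2$. The function $\tilde u:=2u$ is a supersolution of an equation of the same form \eqref{pt10}, with $C_0, C_1$ replaced by $C_0/2^{p-2}, C_1/2^{p-2}$; in particular the corresponding constants $T(1/2), m(1/2)$ produced by \eqref{exp0} still depend only on the data of the original problem. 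Applying \eqref{exp0} to $\tilde u$ with $\mu=1/2$, and setting $\bar T:=T(1/2)$, $\bar\theta:=1/2$, $\bar m:=m(1/2)/2$, we conclude $u\ge \bar m$ on $K_{1/4}\times[(1-\bar\theta)\bar T, \bar T]$ since $K_{1/4}\subset K_{1/2}$. This is precisely \eqref{pt11}, so Lemma \ref{lemmaosc} provides the universal oscillation decay \eqref{oscest1} for every solution $W$ of \eqref{pt10} on $Q_{2,\bar T}$ with $\|W\|_{L^\infty(Q_{2,\bar T})}\le 1$.

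With $\bar T$ fixed, let $Q_R^-(\bar T)=K_{2R}\times[-\bar T R^p, 0]\subseteq \Omega_T$ and set $M:=\max\{1, \|u\|_{L^\infty(Q_R^-(\bar T))}\}$. Since $p>2$ and $M\ge 1$, the intrinsic backward cylinder $K_{2R}\times[-\bar T R^p/M^{p-2}, 0]$ is contained in $Q_R^-(\bar T)$, so the rescaled function
\[
W(y, s):=\frac{1}{M}\, u\Big(Ry,\ \frac{R^p}{M^{p-2}}(s-\bar T)\Big), \qquad (y, s)\in K_2\times[0, \bar T],
\]
is well-defined and satisfies $\|W\|_{L^\infty(Q_{2,\bar T})}\le 1$. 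The change of variables is the intrinsic rescaling \eqref{scalingin} adjusted to preserve the structural constants: a direct computation shows that the time dilation $R^p/M^{p-2}$ exactly cancels the factor $M^{p-2}$ that would otherwise appear in the growth bounds when dividing $u$ by $M$, so that $W$ solves an equation of type \eqref{pt10} on $Q_{2,\bar T}$ with the same $C_0$ and $C_1$. Applying \eqref{oscest1} to $W$ at the terminal slice $s=\bar T$ and unwinding the scaling yields
\[
{\rm osc}(u(\cdot, 0), K_\rho)= M\cdot {\rm osc}(W(\cdot, \bar T), K_{\rho/R}) \le \bar C\, M\, (\rho/R)^{\bar\alpha},\qquad \rho\in[0, R],
\]
which is exactly \eqref{oscp>2}.

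For the full local H\"older continuity statement I apply the same rescaling at any interior point $(x_0, t_0)\in \Omega_T$ and keep the entire space-time form of \eqref{oscest1} instead of only its terminal slice: this produces
\[
{\rm osc}\Big(u, K_\rho(x_0)\times\big[t_0-\tfrac{\bar T\rho^p}{M^{p-2}}, t_0\big]\Big)\le \bar C\, M\, (\rho/R)^{\bar\alpha},\qquad \rho\in[0, R],
\]
whenever the corresponding translated cylinder $K_{2R}(x_0)\times[t_0-\bar T R^p, t_0]$ lies in $\Omega_T$. Since $u\in L^\infty_{\rm loc}(\Omega_T)$ by hypothesis, $M$ is bounded on each compact subset of $\Omega_T$, and on such a subset the intrinsic cylinders are comparable to standard parabolic cylinders, so a Campanato-type covering argument yields $u\in C^{\bar\alpha}_{\rm loc}(\Omega_T)$ with $\bar\alpha$ depending only on the data. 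The main technical point is the invariance calculation showing that $W$ solves \eqref{pt10} with unchanged structural constants: it is here that $p>2$ is decisive, since $M^{p-2}\ge 1$ is precisely what guarantees that the intrinsic cylinder fits inside $Q_R^-(\bar T)$ and that $W$ inherits a bound by $1$. In the singular range $p<2$ one has $M^{p-2}\le 1$ so the intrinsic cylinder exceeds $Q_R^-(\bar T)$, the construction breaks down, and a genuinely different approach is needed, consistent with the dichotomy between the degenerate and singular regimes emphasized throughout the paper.
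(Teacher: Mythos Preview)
Your proof is correct and follows essentially the same route as the paper's: verify the hypothesis \eqref{pt11} of Lemma~\ref{lemmaosc} via the measure-to-point estimate \eqref{exp0}, then use the intrinsic rescaling (which preserves the structural constants precisely when the time dilation is $R^p/M^{p-2}$) to reduce to a solution bounded by $1$ on $Q_{2,\bar T}$ and read off \eqref{oscp>2} from \eqref{oscest1}. The paper carries this out more tersely---it rescales space first and then time by $M^{2-p}$ separately, and leaves the verification of \eqref{pt11} implicit---but the substance is identical, including the key observation that $p>2$ (hence $M^{p-2}\ge 1$) is what makes the intrinsic cylinder fit inside $Q_R^-(\bar T)$.
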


\begin{proof} Let $\bar T=T(1/2)$ be given in the previous Lemma. By space/time traslation, it suffices to prove an oscillation decay near $(0, 0)$, with $Q^{-}_{r_{0}}(\bar T)\subseteq \Omega_{T}$ for some $r_{0}>0$. By \eqref{scalingin},  $u(x\, r_{0}, t\, r_{0}^{p})$ (still denoted by $u$) solves \eqref{pt10} on $Q^{-}_{1}(\bar T)$. Let $M=\|u\|_{L^{\infty}(Q^{-}_{1}(\bar T))}$: if $M>1$ consider  $v(x, t)=M^{-1}\, u(x, M^{2-p}\, t)$, 
which, being $p>2$, solves \eqref{pt10} on $Q^{-}_{1}(\bar T)$ and $\|v\|_{L^{\infty}(Q^{-}_{1}(\bar T))}\le 1$. Applying Lemma \ref{lemmaosc} to $v(\cdot, \bar T+t)$ (notice that $Q_{1}^{-}(\bar T)$ translates to $Q_{2, \bar T}$) proves the H\"older continuity of $u$, while \eqref{oscp>2} is obtained from \eqref{oscest1} for $v$, scaling back to $u$.
\end{proof}

The next Lemma shows that expansion of positivity geometry in the degenerate setting is very similar to the nondegenerate case. Compared to figure \ref{EPfig}, the only difference is in the shape of the paraboloid which is thinner for larger $p$.

\begin{lemma}[Expansion of positivity]\label{expdeg}
There exists $\bar \lambda>0$ and, for any $\mu>0$,  $c(\mu)\in \ ]0, 1[$, $ \gamma(\mu)\ge 1$,  such that if $u\ge 0$ is a supersolution to \eqref{pt10} in $Q_{4R, T}$, then
\[
P_{0}(K_{r};\, u\ge k)\ge \mu\quad \Rightarrow\quad \inf_{K_{\rho}}u\big(\cdot, \gamma(\mu) \big(\frac{k\, r^{\bar\lambda}}{\rho^{\bar\lambda}}\big)^{2-p}\rho^{p}\big)\ge c(\mu)\, \frac{k\, r^{\bar\lambda}}{\rho^{\bar\lambda}}
\]
whenever $r\le \rho\le R$ and  $\gamma(\mu) \big(k\, r^{\bar\lambda}/\rho^{\bar\lambda}\big)^{2-p}\rho^{p}\le T/c(\mu)$.
\end{lemma}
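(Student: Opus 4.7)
The plan is to mimic the iteration of Lemma~\ref{epospar} with each step performed at the \emph{intrinsic} time scale dictated by the current pointwise lower bound; the only tools I would use are the measure-to-point estimate (Lemma~\ref{EP0}), the persistence result (Lemma~\ref{sdg}), and the scaling \eqref{scalingin}.

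First I would use \eqref{scalingin} to eliminate $r$ and $k$: setting $v(y,s):=u(r\,y,\,k^{2-p}r^{p}s)/k$, $v$ is a nonnegative supersolution of the same structural type with $P_{0}(K_{1};\, v\ge 1)\ge\mu$, and the conclusion, with $\sigma:=\rho/r$, becomes
\[
\inf_{K_{\sigma}} v\bigl(\cdot,\ \gamma(\mu)\,\sigma^{p+\bar\lambda(p-2)}\bigr)\ \ge\ c(\mu)\,\sigma^{-\bar\lambda},\qquad 1\le\sigma\le R/r,
\]
the time hypothesis on $T$ translating into $v$ being defined on a cylinder large enough to accommodate the final scheduled waiting time. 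Next I would set $\bar c:=m(4^{-N})\in(0,1)$ (with $m$ as in Lemma~\ref{EP0}), $\bar\lambda:=-\log_{2}\bar c$, $\sigma_{n}:=2^{n}$, $h_{n}:=c_{0}\bar c^{\,n}=c_{0}\sigma_{n}^{-\bar\lambda}$, and prove by induction
\[
v(\cdot,t_{n})\ge h_{n}\ \text{on}\ K_{\sigma_{n}},\qquad t_{n}\le C(\mu)\,h_{n}^{2-p}\sigma_{n}^{p},
\]
with base case $n=0$ supplied by Lemma~\ref{EP0} (and Lemma~\ref{sdg} to propagate positivity from $K_{1/2}$ up to $K_{1}$, adjusting $c_{0}$).

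For the inductive step, from $v(\cdot,t_{n})\ge h_{n}$ on $K_{\sigma_{n}}$ a trivial inclusion yields $P_{t_{n}}(K_{4\sigma_{n}};\, v\ge h_{n})\ge 4^{-N}$; rescaling by \eqref{scalingin} so that $(K_{4\sigma_{n}},h_{n})\mapsto(K_{1},1)$, which dilates the normalized time $s$ to actual time $t_{n}+h_{n}^{2-p}(4\sigma_{n})^{p}s$, Lemma~\ref{EP0} applied with parameter $4^{-N}$ delivers $v\ge\bar c\,h_{n}=h_{n+1}$ on $K_{\sigma_{n+1}}\times[t_{n}+\tfrac{1}{2}\Delta_{n},\,t_{n}+\Delta_{n}]$, where $\Delta_{n}:=4^{p}T(4^{-N})\,h_{n}^{2-p}\sigma_{n}^{p}$. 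Setting $t_{n+1}:=t_{n}+\Delta_{n}$ closes the step. A direct computation gives $\Delta_{n}=4^{p}T(4^{-N})c_{0}^{2-p}(2^{p}\bar c^{\,2-p})^{n}$, and since $p>2$ and $\bar c\in(0,1)$ force $2^{p}\bar c^{\,2-p}>1$, the $\Delta_{n}$ form a strictly increasing geometric sequence, so $t_{n}$ is controlled by $\Delta_{n-1}\asymp h_{n}^{2-p}\sigma_{n}^{p}=c_{0}^{2-p}\sigma_{n}^{p+\bar\lambda(p-2)}$, as required.

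For intermediate $\sigma\in[\sigma_{n},\sigma_{n+1}]$, the inclusion $K_{\sigma}\subseteq K_{\sigma_{n+1}}$ combined with the persistence Lemma~\ref{sdg} applied at scale $\sigma_{n+1}$ and level $h_{n+1}$ yields $v\ge h_{n+1}/2\gtrsim \sigma^{-\bar\lambda}$ on $K_{\sigma}$ throughout a full intrinsic window of length $\bar\sigma\, h_{n+1}^{2-p}\sigma_{n+1}^{p}$, which is comparable to $\Delta_{n+1}$ and hence covers the scheduled time $\gamma(\mu)\,\sigma^{p+\bar\lambda(p-2)}$ after enlarging $c(\mu)$ and $\gamma(\mu)$. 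The main obstacle I expect is the bookkeeping of this intrinsic waiting-time sequence: because each iterative step is performed on the time scale set by its own lower bound $h_{n}$, the iteration only closes thanks to the strict inequality $2^{p}\bar c^{\,2-p}>1$ (which forces $t_{n}\asymp\Delta_{n-1}$), and one must simultaneously ensure that $t_{n}$ remains within the admissible cylinder~---~a constraint that, after undoing the initial scaling, is precisely the hypothesis $\gamma(\mu)(k\,r^{\bar\lambda}/\rho^{\bar\lambda})^{2-p}\rho^{p}\le T/c(\mu)$.
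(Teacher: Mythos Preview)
Your iteration is essentially the paper's: both rescale to $k=r=1$, set $\bar\lambda=-\log_{2}\bar c$ with $\bar c$ coming from the measure-to-point estimate at parameter $4^{-N}$ (or $1$, after expanding the cube), and inductively propagate a bound $h_{n}=c_{0}\bar c^{\,n}$ on $K_{2^{n}}$ at times $t_{n}$ growing geometrically like $\sigma_{n}^{p+\bar\lambda(p-2)}$.

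The difference is in how the time gaps between consecutive iterates are filled. The paper does \emph{not} use Lemma~\ref{sdg}; instead it first proves an ``$\eta$-stretched'' version of Lemma~\ref{EP0} (equation~\eqref{exp1} in the proof), obtained by combining Lemma~\ref{lemmapt} (the decay $k(\mu)/(t+1)^{1/(p-2)}$) with Lemma~\ref{EP0} applied at each intermediate time $s\in[0,\eta-1]$. This yields $u\ge c(\mu)h/\eta^{1/(p-2)}$ on $K_{2\rho}$ over the full interval $[\theta(\mu)h^{2-p}\rho^{p}/2,\ \eta\,\theta(\mu)h^{2-p}\rho^{p}]$, and choosing $\eta=\bar\eta=(1+\bar c^{\,2-p}2^{p})/2$ makes consecutive intervals abut at $t_{n+2}$ without shrinking the cube.

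Your use of Lemma~\ref{sdg} instead has a small gap: that lemma halves the spatial cube, so from $v(\cdot,t_{n+1})\ge h_{n+1}$ on $K_{\sigma_{n+1}}$ you only get persistence on $K_{\sigma_{n+1}/2}=K_{\sigma_{n}}$, which does not cover $K_{\sigma}$ for $\sigma\in(\sigma_{n},\sigma_{n+1}]$. This is repairable---e.g.\ by shifting the conclusion a fixed number $m$ of steps (the paper does exactly this at the end, choosing $m=m(\mu)$ so that $t_{n}\le\gamma(\mu)s_{n}\le\gamma(\mu)s_{n+1}\le t_{n+m}$) so that the relevant $K_{\sigma}$ is always contained in the cube where the bound is already known---but as written the interpolation step does not quite close.
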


\begin{proof}
We first generalize \eqref{exp0} as follows: there exists $\theta(\mu)>0$ such that for any $\eta\ge 1, h> 0$  
\begin{equation}
\label{exp1}
P_{0}(K_{\rho};\, u\ge h)\ge \mu\quad \Rightarrow\quad u\ge c(\mu)\, h/\eta^{\frac{1}{p-2}}\quad \text{in } K_{2\rho}\times \left[\frac{\theta(\mu)}{2}\, \frac{\rho^{p}}{h^{p-2}}, \eta\,\theta(\mu)\,  \frac{\rho^{p}}{h^{p-2}}\right].
\end{equation}
By considering $v(x, t)=h^{-1}u(\rho\, x, h^{2-p}\, \rho^{p}\, t)$ and recalling \eqref{scalingin}, it suffices to prove the claim for $\rho=h=1$. By Lemma \ref{lemmapt}, \eqref{pt0} holds true, implying $P_{s}(K_{4}; u\ge k(\mu)/(s+1)^{\frac{1}{p-2}})\ge \mu\, 4^{-N-1}$ where $s$ is a parameter in  $[0, \eta-1]$. Rescale \eqref{exp0} considering 
\[
v(x, t)=k_{s}(\mu)^{-1}\, u(4\, x, k_{s}(\mu)^{2-p}\, 4^{p}\, t), \qquad k_{s}(\mu):=k(\mu)/(s+1)^{\frac{1}{p-2}}
\]
which fulfills $P_{s}(K_{1};\, v\ge 1)\ge \mu\, 4^{-N-1}$, to obtain, with the notations of \eqref{exp0}  
\[
v\ge m(\mu\, 4^{-N-1})\quad \text{in } \quad K_{1/2}\times \big[s+T(\mu\, 4^{-N-1})/2, s+T(\mu\, 4^{-N-1})\big],
\]
If $c(\mu):=k(\mu)\, m(\mu\, 4^{-N-1})$, using $s\in [0, \eta-1]$, the latter reads in terms of $u$  
\[
\begin{split}
&\inf_{K_{2}}u(\cdot, t)\ge k_{s}(\mu)\, m(\mu\, 4^{-N-1})\ge c(\mu)\, \eta^{\frac{1}{2-p}} \qquad \text{if $t\in I_{s}$ for some $s\in [0, \eta-1]$}\\
&\qquad I_{s}:= \big[4^{p}\, k_{s}(\mu)^{2-p}\, (s+ T(\mu\, 4^{-N-1})/2), 4^{p}\, k_{s}(\mu)^{2-p}\, (s+T(\mu\, 4^{-N-1}))\big].
\end{split}
\]
Finally, let $\theta(\mu)=4^{p}\, k(\mu)^{2-p}T(\mu\, 4^{-N-1})$ and observe that $\cup_{s\in [0, \eta-1]}I_{s}\supseteq [\theta(\mu)/2, \eta\, \theta(\mu)]$
\footnote{Both $a(s)=\inf I_{s}$ and $b(s)=\sup I_{s}$ are continuous, hence $\cup_{s\in [0, \eta-1]}I_{s}=\big[\inf_{s\in [0, \eta-1]}a(s), \sup_{s\in [0, \eta-1]}b(s)\big]$. Then observe that $a(0)=\theta(\mu)/2$ while  $b(\eta-1)\ge \eta\, \theta(\mu)$.}, proving \eqref{exp1}. Notice that all the argument goes through as long as it holds $\sup_{s\in [0, \eta-1]}I_{s}= 4^{p}k(\mu)^{2-p}\eta(\eta-1+T(\mu\, 4^{-N-1}))\le T$ which, scaling back, is ensured e.g. by $\eta^{2}\, \theta(\mu)\, \rho^{p}\, h^{2-p}\le T$.

To prove the lemma, again we can suppose $k=1$, (otherwise consider $v(x, t)=k^{-1}u(x, k^{2-p}\, t)$). Let, as per \eqref{exp1}, $\bar\theta=\theta(1)$ and $\bar c=c(1)$, $\rho_{n}=2^{n}\, r$ and define  recursively 
\begin{equation}
\label{tn}
t_{0}=\frac{\theta(\mu)}{2}\,  r^{p},\qquad t_{n+1}=t_{n}+\frac{\bar\theta}{2}\, (c(\mu)\,   {\bar c}^{n})^{2-p}\, \rho_{n}^{p}.
\end{equation}
Applying \eqref{exp1} with $\eta=1$, we get
\[
P_{0}(K_{r};\, u\ge 1)\ge \mu \quad \Rightarrow \quad P_{t_{0}}(K_{\rho_{1}};\, u\ge c(\mu))=1\quad \Rightarrow \quad P_{t_{1}}(K_{\rho_{2}};\, u\ge c(\mu)\, \bar c)=1
\]
and, proceeding by induction, we infer $P_{t_{n}}(K_{\rho_{n+1}};\, u\ge c(\mu)\, \bar c^{n})=1$, for all $n\ge 0$. In particular $P_{t_{n}}(K_{\rho_{n}};\, u\ge  c(\mu)\, \bar c^{n})=1$, so we again use \eqref{exp1}  for $\eta$ to be determined to obtain
\begin{equation}
\label{last1}
u\ge c(\mu)\, \bar c^{n+1}\, \eta^{\frac{1}{2-p}}\quad \text{in } K_{\rho_{n+1}}\times [t_{n+1}, t_{n}+\eta \, \bar\theta \, (c(\mu)\, \bar c^{n})^{2-p}\rho_{n}^{p}].
\end{equation}
Choose $\eta$ so that 
\[
t_{n}+\eta \, \bar\theta \, (c(\mu)\, \bar c^{n})^{2-p}\rho_{n+1}^{p}=t_{n+2}\quad \Leftrightarrow\quad \eta=\bar\eta:=(1+\bar c^{2-p}\, 2^{p})/2.
 \]
For this choice \eqref{last1} holds in the time interval $[t_{n+1}, t_{n+2}]$ giving, by monotonicity,
\[
\inf_{K_{\rho_{n}}}u(\cdot, t)\ge c(\mu)\, \bar c^{n+m}\qquad \text{for all  \ $ t_{n}\le t\le t_{n+m}$, \  $n, m\ge 0$}
\]
for a smaller $c(\mu)$.  Let $s_{n}:= \bar c^{n(2-p)}\rho_{n}^{p}$; computing $t_{n}$ we find $t_{n}\simeq_{\mu}  s_{n}$ with constants depending on $\mu$, therefore, for sufficiently large $m=m(\mu)\in \mathbb{N}$,  $t_{n}\le \gamma(\mu)\, s_{n} \le \gamma(\mu)\, s_{n+1}\le t_{n+m}$ and 
\[
\inf_{K_{\rho_{n}}}u(\cdot, t)\ge c(\mu)\, \bar c^{n}\qquad \text{for all  \   $ \gamma(\mu)\, s_{n}\le t\le \gamma(\mu)\, s_{n+1}$, \ $n\ge 0$}.
\]
The same argument as in the end of the proof of Lemma \ref{epospar} gives the thesis.
 \end{proof}

\begin{theorem}[Forward Harnack inequality]
Let $u$ be a nonnegative solution of \eqref{pt10} in $K_{16R}\times [-T, T]$. Then there exists $\bar C>\bar\theta>0$ such that if $\bar C\, u(0, 0)^{2-p}\, R^{p}\le T$
\begin{equation}
\label{degharnack}
u(0, 0)\le \bar C\inf_{K_{R}} u(\cdot, \bar\theta\, u(0, 0)^{2-p}\, R^{p}).
\end{equation}
\end{theorem}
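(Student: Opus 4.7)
The plan is to adapt the proof of Theorem \ref{HIhom} to the degenerate setting, combining a Landis-type selection of a largeness point with the intrinsic H\"older estimate of Theorem \ref{holdp>2} and the expansion of positivity Lemma \ref{expdeg}. First I would normalize via \eqref{scalingin}: set $v(x, t) := u_0^{-1}\, u(R\, x,\, u_0^{2-p}\, R^p\, t)$ with $u_0 := u(0, 0)$. The hypothesis $\bar C\, u_0^{2-p}\, R^p \le T$ then makes $v$ a nonnegative solution of \eqref{pt10} on $K_{16}\times [-\bar C, \bar C]$ with the same structural constants and $v(0, 0) = 1$, reducing \eqref{degharnack} to $\inf_{K_1} v(\cdot, \bar\theta) \ge 1/\bar C$ for universal $\bar\theta, \bar C > 0$.

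For the Landis step, let $\bar\lambda$ be from Lemma \ref{expdeg} and $\bar\sigma$ from Lemma \ref{sdg}, and set $\psi(\rho) := (1-\rho)^{\bar\lambda} \sup_{Q_\rho^-} v$ on $[0, 1]$ with $Q_\rho^- := K_\rho \times [-\bar\sigma \rho^p, 0]$. By the qualitative local boundedness and the continuity granted by Theorem \ref{holdp>2}, $\psi$ is continuous with $\psi(0) \ge 1$ and $\psi(1) = 0$; pick $\rho_0 \in [0, 1)$ maximizing $\psi$ and $(x_0, t_0) \in \overline{Q_{\rho_0}^-}$ with $v_0 := v(x_0, t_0) = \sup_{Q_{\rho_0}^-} v$. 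With $r := \bar\xi(1-\rho_0)$ and $\bar\xi$ chosen so that $(1-\bar\xi)^{-\bar\lambda} \le 2$, exactly as in Theorem \ref{HIhom} one derives $v_0\, r^{\bar\lambda} \ge \bar\xi^{\bar\lambda}$ together with $\sup_{K_r(x_0)\times [t_0 - \bar\sigma r^p, t_0]} v \le 2\, v_0$.

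Next, I rescale intrinsically via $w(y, s) := v(x_0 + r y, t_0 + (2 v_0)^{2-p} r^p s)/(2 v_0)$ on $K_1\times [-1, 0]$; the intrinsic parabolic scaling preserves the structural constants of \eqref{pt10}, while $w \le 1$ and $w(0, 0) = 1/2$. Theorem \ref{holdp>2} applied to $w$ produces a universal $\bar\eta > 0$ such that $w \ge 1/4$ on $K_{\bar\eta}\times \{0\}$, so that $v(\cdot, t_0) \ge v_0/2$ on $K_{\bar\eta r}(x_0)$. Lemma \ref{expdeg} applied with $\mu = 1$, $k = v_0/2$, initial radius $\bar\eta r$ and free parameter $\rho \ge \bar\eta r$ then yields
\begin{equation*}
\inf_{K_\rho(x_0)} v\big(\cdot,\, t_0 + \tau(\rho)\big) \ge \tfrac{c(1)\,\bar\eta^{\bar\lambda}}{2}\,\tfrac{v_0\, r^{\bar\lambda}}{\rho^{\bar\lambda}}, \qquad \tau(\rho) := \gamma(1)\, 2^{p-2}\, \bar\eta^{\bar\lambda(2-p)}\, (v_0\, r^{\bar\lambda})^{2-p}\, \rho^{p+\bar\lambda(p-2)}.
\end{equation*}
Since $\tau$ is strictly increasing in $\rho$ and $v_0 r^{\bar\lambda} \ge \bar\xi^{\bar\lambda}$, choosing $\bar\theta$ large enough forces the $\rho$ solving $t_0 + \tau(\rho) = \bar\theta$ to satisfy $\rho \ge 2$; using $K_\rho(x_0) \supseteq K_1$ and plugging $v_0 r^{\bar\lambda} \ge \bar\xi^{\bar\lambda}$ into the lower bound yields the desired estimate for $\inf_{K_1} v(\cdot, \bar\theta)$.

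The main obstacle is in this last step. Contrary to the homogeneous case, where $\tau$ depended only on $\rho$, here it carries the intrinsic factor $(v_0 r^{\bar\lambda})^{2-p}$, which is only bounded below (by $\bar\xi^{\bar\lambda(2-p)}$), not above. When $v_0 r^{\bar\lambda}$ is very large, the $\rho$ needed to reach time $\bar\theta$ falls outside the admissible spatial range from Lemma \ref{expdeg}; one has to instead cap $\rho$ at a universal value $R_* \le 15$, obtaining a (correspondingly large) lower bound for $v$ on $K_{R_*}(x_0) \supseteq K_1$ at some intermediate time $t_* < \bar\theta$, and then close the gap $[t_*, \bar\theta]$ by a short forward iteration of Lemma \ref{sdg}, exploiting that for $p>2$ the admissible time extension at each such step grows by the factor $2^{p-2}$. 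Matching consistently the ``short time -- large bound'' and ``long time -- small bound'' regimes so that one single $\bar\theta$ handles every solution is the delicate part of the bookkeeping.
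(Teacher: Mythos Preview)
Your overall architecture matches the paper's: normalize by \eqref{scalingin}, run the Landis selection to produce $(x_0,t_0)$ with $u_0\, r^{\bar\lambda}\ge \bar\xi^{\bar\lambda}$ and $\sup_{\widetilde Q_r}u\le 2u_0$, use the intrinsic H\"older estimate to get $u(\cdot,t_0)\ge u_0/2$ on $K_{\bar\eta r}(x_0)$, then expand positivity. You have also correctly spotted the obstruction: if you feed Lemma \ref{expdeg} with $k=u_0/2$, the intrinsic time $\gamma(\mu)\,(k\,r^{\bar\lambda}/\rho^{\bar\lambda})^{2-p}\rho^p$ carries the unbounded factor $(u_0 r^{\bar\lambda})^{2-p}$, and the $\rho$ solving $t_0+\tau(\rho)=\bar\theta$ may leave the admissible range.

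The paper bypasses this obstacle with one line, and this is the idea you are missing. Instead of the proposed forward iteration via Lemma \ref{sdg}, simply \emph{lower the level} before invoking Lemma \ref{expdeg}. From $u_0\, r^{\bar\lambda}\ge \bar\xi^{\bar\lambda}$ you have, a fortiori,
\[
P_{t_0}\big(K_r(x_0);\ u\ge \tfrac{1}{2}\,\bar\xi^{\bar\lambda}\, r^{-\bar\lambda}\big)\ge \bar\eta^{N},
\]
so apply Lemma \ref{expdeg} with $k=\tfrac{1}{2}\,\bar\xi^{\bar\lambda}\, r^{-\bar\lambda}$. Then $k\,r^{\bar\lambda}=\bar\xi^{\bar\lambda}/2$ is a \emph{universal constant}, the intrinsic time becomes $\bar\gamma\,\bar\xi^{\bar\lambda(2-p)}\,\rho^{\,p+\bar\lambda(p-2)}$, and the equation $t_0+\tau(\rho)=\bar\theta$ is solved by some $\rho\in[2,3]$ independently of $u_0$. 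This replaces all of your ``short time/large bound vs.\ long time/small bound'' bookkeeping by a single monotonicity observation. Your proposed patch through repeated applications of Lemma \ref{sdg} may be salvageable, but it is unnecessary once you see that the expansion of positivity only needs a \emph{lower} bound on the level, not the sharpest one available.
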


\begin{proof}
Thanks to \eqref{scalingin}, the function $v(x, t)=u(0, 0)^{-1}u(R^{p}\, x,  u(0, 0)^{2-p}\, R^{p}\, t)$ solves \eqref{pt10} in $K_{16}\times [-T\, u(0, 0)^{p-2}\, R^{-p},  T\, u(0, 0)^{p-2}\, R^{-p}]$ and $v(0, 0)=1$. It then suffices to prove the existence of  $\bar\theta\ge 1, \bar c\in \ ]0, 1[$ such that any solution $u\ge 0$ of \eqref{pt10} in $K_{16}\times [-2, \bar\theta/\bar c]$ obeys
\begin{equation}
\label{claim89}
u(0, 0)=1\quad \Rightarrow\quad\inf_{K_{1}} u(\cdot, \bar\theta)\ge \bar c.
\end{equation}
As in Theorem \ref{HIhom}, let $Q^{-}_{\rho}:=K_{\rho}\times [-\rho^{p}, 0]$ and consider $\psi(\rho):=(1-\rho)^{\bar\lambda}\sup_{Q_{\rho}^{-}}u$ for $\rho\in [0, 1]$,
where  $\bar\lambda$ is given in Lemma \ref{expdeg}. Let by continuity $\rho_{0}\in [0,1]$, $(x_{0}, t_{0})\in Q^{-}_{\rho_{0}}$ such that
\[
\max_{[0,1]}\psi(\rho) =(1-\rho_{0})^{\bar\lambda}u_{0}\qquad u_{0}:=u(x_{0}, t_{0}),
\]
choose $\bar \xi\in \ ]0, 1[$ such that $(1-\bar\xi)^{-\bar \lambda}=2$ and let $r=\bar \xi\, (1-\rho_{0})$. As in \eqref{etazero}, it holds
$u_{0}\, r^{\bar\lambda}\ge \xi^{\bar\lambda}$. Let  $\bar T$ be given in Theorem \ref{holdp>2} and let  $\widetilde Q_{r}:=K_{\bar T^{-1/p}\,r}(x_{0})\times [t_{0}- r^{p}, t_{0}]$.  Since $\bar T\ge 1$, it holds $\widetilde Q_{r}\subseteq Q^{-}_{\rho_{0}+r}$ and we can deduce as in \eqref{jk45} that $\sup_{\widetilde Q_{r}} u\le (1-\bar\xi)^{-\bar\lambda}\, u_{0}$. Then  \eqref{oscp>2} ensures 
\[
{\rm osc}(u(\cdot, t_{0}), K_{\rho}(x_{0}))\le 2\, \bar C\, u_{0} \, (\rho/r)^{\bar \alpha} \quad\text{for}\quad  \rho\le \bar T^{-1/p}r.
\]
Since $u(x_{0}, t_{0})=u_{0}$, we infer  $u(\cdot, t_{0})\ge u_{0}/2$ in $K_{\bar\eta r}(x_{0})$ for some $\bar\eta>0$. Therefore $P_{t_{0}}(K_{r}(x_{0});\ u\ge u_{0}/2)\ge \bar\eta^{N}$ and being $u_{0}\, r^{\bar\lambda}\ge \bar \xi^{\bar\lambda}$, {\em a fortiori} it holds 
$P_{t_{0}}(K_{r}(x_{0});\ u\ge \bar \xi^{\bar\lambda}\, r^{-\bar\lambda}/2)\ge \bar\eta^{N}$. Since $K_{12}(x_{0})\subseteq K_{16}$, Lemma \ref{expdeg} with $k= \bar \xi^{\bar\lambda}\, r^{-\bar\lambda}/2$ gives for suitable $\bar\gamma\ge 1>\bar c>0$
\[
\inf_{K_{\rho}(x_{0})} u\big(\cdot, t_{0}+\bar\gamma\, \bar \xi^{\bar\lambda(2-p)}\, \rho^{p+\bar\lambda(p-2)}\big)\ge \bar c\, \xi^{\bar\lambda}/\rho^{\bar\lambda},\qquad r\le \rho\le 3,\quad \bar\gamma\, \xi^{\bar\lambda(2-p)}\, \rho^{p+\bar\lambda(p-2)}\le T/\bar c.
\]
In \eqref{claim89} we let $\bar\theta:=\bar\gamma\,  2^{p+\bar\lambda(p-2)}$  and choose $\rho$ such that $ t_{0}+\bar\gamma\, \xi^{\bar\lambda(2-p)}\rho^{p+\bar\lambda(p-2)}=\bar\theta$. From $t_{0}\le 0$ we get $\rho\ge 2$ (and thus $K_{\rho}(x_{0})\supseteq K_{1}$) and from $t_{0}\ge -1$ we infer 
\[
\bar\gamma\, \xi^{\bar\lambda}\, \rho^{p+\bar\lambda(p-2)}\le 1+  \bar\gamma\, \xi^{\bar\lambda(2-p)}\,  2^{p+\bar\lambda(p-2)}\le\bar\gamma\, \xi^{\bar\lambda(2-p)}(1+   2^{p+\bar\lambda(p-2)})\quad \Rightarrow \quad \rho\le 3.
\]
Hence (by eventually lowering $\bar c$), such $\rho$ is admissible and its upper bound proves \eqref{claim89}. 
\end{proof}

\begin{theorem}[Backward Harnack inequality]
Let $u$ be a nonnegative solution of \eqref{pt10} in $K_{16R}\times [-T, T]$. Then there exists $\bar C'>\bar\theta'>0$ such that if $\bar C'\, u(0, 0)^{2-p}\, R^{p}\le T$
\begin{equation}
\label{degharnackback}
\sup_{K_{R}} u(\cdot, -\bar\theta'\, u(0, 0)^{2-p}\, R^{p})\le \bar C'\, u(0, 0).
\end{equation}
\end{theorem}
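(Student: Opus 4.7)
The argument parallels the forward Harnack proof above, replacing the forward intrinsic cylinder $Q_\rho^-=K_\rho\times[-\rho^p,0]$ with a backward slab of fixed height. By the scaling \eqref{scalingin} applied with parameters $R$ and $u(0,0)^{2-p}R^p$, and a division by $u(0,0)$, we reduce to the normalized setting $R=1$, $u(0,0)=1$, with $u\ge 0$ solving \eqref{pt10} on $K_{16}\times[-T,T]$ and $T\ge\bar C'$ to be determined. The claim becomes $\sup_{K_1}u(\cdot,-\bar\theta')\le\bar C'$. Following the template of Theorem~\ref{HIhom} and the forward-Harnack argument just completed, we introduce the continuous function
\[
\psi(\rho)=(2-\rho)^{\bar\lambda}\sup_{K_\rho\times[-\bar\theta',0]}u,\qquad\rho\in[0,2],
\]
with $\bar\lambda$ the expansion-of-positivity rate of Lemma~\ref{expdeg}. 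Since $\psi(2)=0$ and $\psi(0)\ge 2^{\bar\lambda}u(0,0)=2^{\bar\lambda}$, $\psi$ attains its maximum at some $\rho_0\in[0,2)$, at a point $(y_0,t_0)\in\overline{K_{\rho_0}}\times[-\bar\theta',0]$ with $u_0:=u(y_0,t_0)$. Choosing $\bar\xi\in(0,1)$ such that $(1-\bar\xi)^{-\bar\lambda}=2$ and setting $r=\bar\xi(2-\rho_0)$, the maximality of $\psi$ yields $\sup_{K_{\rho_0+r}\times[-\bar\theta',0]}u\le 2u_0$.

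The H\"older regularity of Theorem~\ref{holdp>2}, applied to $u$ on a suitable intrinsic cylinder around $(y_0,t_0)$ where $u\le 2u_0$, furnishes a universal $\bar\eta\in(0,1)$ such that $u(\cdot,t_0)\ge u_0/2$ on $K_{\bar\eta r}(y_0)$, so $P_{t_0}(K_{\bar\eta r}(y_0);\,u\ge u_0/2)=1$. Next, apply the expansion of positivity Lemma~\ref{expdeg} forward in time from $(y_0,t_0)$ with $k=u_0/2$, initial radius $\bar\eta r$, and $\mu=1$: since $|y_0|_\infty\le\rho_0/2\le 1$, choose the expansion radius $\rho\ge 2|y_0|_\infty$ so that the lemma's propagation time equals $|t_0|$, namely
\[
\gamma(\mu)\Big(\tfrac{(u_0/2)(\bar\eta r)^{\bar\lambda}}{\rho^{\bar\lambda}}\Big)^{2-p}\rho^p=|t_0|.
\]
As $0\in K_\rho(y_0)$, the lemma's lower bound evaluated at $(0,0)$ gives
\[
1=u(0,0)\ge c(\mu)\,\tfrac{u_0}{2}\,\tfrac{(\bar\eta r)^{\bar\lambda}}{\rho^{\bar\lambda}}.
\]
Combining these two relations with $u_0 r^{\bar\lambda}=\bar\xi^{\bar\lambda}\max\psi$ and $|t_0|\le\bar\theta'$, elementary algebra produces
\[
\max\psi\le C\,(\bar\theta')^{\bar\lambda/p}=:\bar C'
\]
for a universal $C$. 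Since $\sup_{K_1}u(\cdot,-\bar\theta')\le\psi(1)\le\max\psi\le\bar C'$, the claim follows.

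The main obstacle is the simultaneous admissibility of the radius $\rho$: it must satisfy $\rho\ge 2|y_0|_\infty$ (so $0\in K_\rho(y_0)$) and $\rho\le\rho_*$ for the upper bound $\rho_*\simeq(16-|y_0|_\infty)/4$ allowed by Lemma~\ref{expdeg}. For $|t_0|$ comparable to $\bar\theta'$ and $u_0 r^{\bar\lambda}$ of moderate size, both inequalities hold simultaneously, and the algebra closes. In the extreme regimes---$|t_0|$ very close to $0$, forcing $\rho<2|y_0|_\infty$, or $|t_0|$ close to $\bar\theta'$ and $u_0r^{\bar\lambda}$ very large, forcing $\rho>\rho_*$---the self-consistency of the algebraic relations itself tightens the bound on $\max\psi$; alternatively, one handles these cases by a single auxiliary application of the forward Harnack inequality \eqref{degharnack} at $(y_0,t_0)$, which suffices to reach $(0,0)$ because the intrinsic time $u_0^{2-p}\rho^p$ stays within the domain constraints in precisely those regimes.
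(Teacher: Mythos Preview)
Your approach has genuine gaps that you partly acknowledge but do not resolve. Two concrete problems:

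\textbf{(a) The H\"older step.} You claim to apply Theorem~\ref{holdp>2} on ``a suitable intrinsic cylinder around $(y_0,t_0)$ where $u\le 2u_0$''. But the bound $u\le 2u_0$ that the maximality of $\psi$ gives you holds only on $K_{\rho_0+r}\times[-\bar\theta',0]$, a slab of \emph{fixed} temporal height. The oscillation estimate \eqref{oscp>2} requires control of $u$ on a backward cylinder $K_{2R}(y_0)\times[t_0-\bar T R^p,t_0]$. If $t_0$ is close to $-\bar\theta'$ there is no room inside your slab for this cylinder, and below $-\bar\theta'$ you have no bound on $u$. In the forward proof this difficulty does not arise because there the slab is $K_\rho\times[-\rho^p,0]$, whose temporal height grows with $\rho$, guaranteeing the needed containment.

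\textbf{(b) The admissibility of $\rho$.} If $t_0=0$ (or $t_0$ is very small), the equation $\gamma(\mu)\big((u_0/2)(\bar\eta r)^{\bar\lambda}/\rho^{\bar\lambda}\big)^{2-p}\rho^p=|t_0|$ forces $\rho$ close to $\bar\eta r$, which can be far smaller than $2|y_0|_\infty$ (recall $y_0$ may lie anywhere in $K_2$), so $0\notin K_\rho(y_0)$. Your suggested fallback---apply forward Harnack at $(y_0,t_0)$---does not help here either: forward Harnack from time $t_0\approx 0$ propagates to \emph{positive} times and never reaches $(0,0)$.

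The paper's proof avoids all of this by using the already-established forward inequality \eqref{degharnack} as a black box, in two short steps. First, a continuity argument along the time axis: setting $w(t)=u(0,-t)$ and applying \eqref{degharnack} at $(0,-t)$ with a radius chosen so that the waiting time lands exactly at $0$ shows $w(t)\le 2\bar C\Rightarrow w(t)\le\bar C$, hence $w(t)\le 2\bar C$ for all $t\in[0,\bar\theta']$ with $\bar\theta'=\bar\theta/(2\bar C)^{p-2}$. Second, a contradiction: if some $\bar x\in K_1$ had $u(\bar x,-\bar\theta')=2\bar C$, a single application of \eqref{degharnack} at $(\bar x,-\bar\theta')$ with $R=1$ reaches time $0$ and gives $1=u(0,0)\ge 2$. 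No $\psi$-function, no H\"older step, no expansion of positivity is needed once forward Harnack is in hand. Your closing remark that the edge cases can be handled ``by a single auxiliary application of the forward Harnack inequality'' is in fact the entire proof.
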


\begin{proof}
By the same scaling argument as before, we can reduce to the case $R=1$, $u(0, 0)=1$. Let, for $t\ge 0$, $w(t):=u(0, - t)$ and apply \eqref{degharnack} to $u$ with $(0, -t)$ instead of $(0, 0)$ to get  
\[
u(0, - t+\bar\theta\, w^{2-p}(t)\, \rho^{p})\ge w(t)/\bar C, \qquad 0<\rho\le 1.
\]
 If $w(t)\le 2\, \bar C$ for some $t\le \bar\theta/(2\, \bar C)^{p-2}$, we can choose $\rho(t)>0$ such that $\rho(t)^{p}=t\, w^{p-2}(t)/\bar\theta\le 1$, obtaining $u(0, 0)=u(0, - t+\bar\theta \, w^{2-p}(t)\, \rho^{p}(t))$. Therefore we proved
\[
0\le t\le \bar\theta/\bar (2C)^{p-2}\quad \&\quad  
w(t)\le 2\, \bar C\qquad \Rightarrow \qquad w(t)\le \bar C
\]
which implies $w(t)\le 2\, \bar C$ for all $0\le t\le \bar\theta/\bar (2\, \bar C)^{p-2}$ by a continuity argument. Letting $\bar \theta'= \bar\theta/\bar (2\, \bar C)^{p-2}$, $\bar C'=2\, \bar C$ we prove \eqref{degharnackback} by contradiction: from $u(0, -\bar\theta')\le \bar C$ and $\sup_{K_{1}}u(\cdot, -\bar\theta')>2\, \bar C$, by continuity there exists $\bar x\in K_{1}$ such that $u(\bar x, -\bar\theta')=2\, \bar C$. Since $0\in K_{1}(\bar x)$ and $\bar \theta \, (2\, \bar C)^{2-p}=\bar \theta'$, the Harnack inequality \eqref{degharnack} for $u$ at the point $(\bar x, -\theta')$ implies 
\[
1=u(0, 0)\ge \inf_{K_{1}(\bar x)} u(\cdot, -\bar \theta'+\bar \theta (2\, \bar C)^{2-p})\ge u(\bar x, -\bar\theta')/\bar C=2.
\]
\end{proof}

\subsection{Singular parabolic equations}\ \\

We conclude with the Harnack inequality for solutions of parabolic singular supercritical equations. The measure-to-point estimate will be treated through a change of variable analogous to the degenerate case, but requires a little bit more care. From this we'll derive a H\"older continuity result for all {\em bounded} solutions in the full range $p\in \ ]1, 2[$. As mentioned at the introduction of the section, the proof of the Harnack inequality will rely on theorem \ref{sl1}, which we state without proof.

\begin{lemma}\label{lemmaspt}
Let $u\ge 0$ be a supersolution  in $Q_{1, T}$ of \eqref{pt10} with $p\le 2$. For any $\mu>0$ there exists  $k(\mu)\in \ ]0, 1[$, $T(\mu)\in \ ]0, \min\{1, T\}]$  such that  
\[
P_{0}(K_{1};\, u\ge 1)\ge \mu \quad \Rightarrow\quad 
P_{t}\left(K_{1};\, u\ge k(\mu)\right)>\mu/2 \qquad  \forall t\in [0, T(\mu)].
\]
\end{lemma}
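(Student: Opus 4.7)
The plan is to adapt the proof of Lemma \ref{lemmapt} (the $p>2$ analog) to the singular regime. In the degenerate case, the time-dependent error term coming from the energy estimate was absorbed by choosing a shrinking threshold $k_t=k(\mu)/(t+1)^{1/(p-2)}$. Here $p-2<0$, so $k^{p-2}$ diverges as $k\downarrow 0$ and that rescaling fails. The right compensation is to use a constant threshold $k(\mu)\in\,]0,1[$ and restrict to a short time interval $T(\mu)$; this mandatory waiting time is precisely what is lost compared to the degenerate case, and explains the shape of the statement.

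The energy computation is then formally identical to that of Lemma \ref{lemmapt}. Test the energy inequality \eqref{ein} on $K_1\times[0,t]$ with a time-independent cutoff $\eta\in C^\infty_c(K_1)$ satisfying $\eta\equiv 1$ on $K_\delta$ and $|D\eta|\le C_N/(1-\delta)$. Since $\eta_t\equiv 0$, $(u-k)_-\le k$, and the hypothesis $P_0(K_1;\,u\ge 1)\ge\mu$ gives $\int_{K_1}(u(x,0)-k)_-^2\eta^p\,dx\le k^2(1-\mu)$, one obtains
\[
\int_{K_\delta}(u(x,t)-k)_-^2\,dx\le k^2(1-\mu)+\frac{\bar C\, k^p\, t}{(1-\delta)^p}.
\]
Bounding the left side from below on the set $\{u(\cdot,t)<\eps k\}$ with $\eps\in\,]0,1[$ by Tchebicev, and then splitting off $K_1\setminus K_\delta$, yields
\[
1-P_t\bigl(K_1;\,u\ge\eps k\bigr)\le 1-\delta^N+\frac{1}{(1-\eps)^2}\Big(1-\mu+\frac{\bar C\, k^{p-2}\, t}{(1-\delta)^p}\Big).
\]

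To close the argument, choose parameters successively, exactly as in Lemma \ref{lemmapt}: pick $\delta$ with $1-\delta^N=\mu/8$, then $\eps$ with $(1-\mu)/(1-\eps)^2=1-3\mu/4$, set $k(\mu):=\eps$, and define
\[
T(\mu):=\min\Big\{1,\,T,\,c(\mu)\,\eps^{2-p}\Big\}
\]
with $c(\mu)>0$ chosen so that the last summand on the right is at most $\mu/8$ whenever $t\le T(\mu)$. The three summands then total strictly less than $1-\mu/2$, which is the desired conclusion $P_t(K_1;\,u\ge k(\mu))>\mu/2$. The single step that differs substantively from the degenerate proof is this last one: since $p<2$, the factor $\eps^{p-2}$ in the error cannot be killed by any $t$-dependent choice of threshold, so the range of validity in $t$ must be cut off at $T(\mu)$. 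This is essentially the only obstacle, and it is not a technical one but a reflection of the fast-diffusion nature of the equation: positivity in measure at time $0$ cannot propagate uniformly in time without further information.
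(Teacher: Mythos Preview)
Your argument is correct and is essentially the paper's proof: the paper simply fixes the truncation level $k=1$ from the outset (so the error term is $\bar C\,t/(1-\delta)^{p}$ with no $k^{p-2}$ factor), chooses $\delta,\eps$ exactly as you do, and then picks $T(\mu)$ small enough that $C(\mu)\,T(\mu)\le \mu/8$. The only slip is notational: once you set $k(\mu):=\eps$, consistency with the threshold $\eps k$ in your measure estimate forces $k=1$, so the factor $\eps^{2-p}$ in your definition of $T(\mu)$ (and the ``$\eps^{p-2}$ in the error'' you mention) is spurious and should simply be dropped.
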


\begin{proof}
Proceed as in Lemma \ref{lemmapt} to get \eqref{kdelta} for $k=1$, $\delta, \eps, \in \ ]0, 1[$ and $t\in [0, T]$. Thus 
\[
1-P_{t}(K_{1};\,  u\ge \eps)\le 1-\delta^{N} +\frac{1}{(1-\eps)^{2}}\left(1-\mu+\frac{\bar C\, t}{(1-\delta)^{p}}\right).
\]
Choose $\delta=\delta(\mu)$ and $\eps=\eps(\mu)$ as per $1-\delta^{N}=\mu/8$ and $(1-\mu)/(1-\eps)^{2}=1-3\mu/4$, so that
\[
P_{t}(K_{1};\,  u\ge \eps(\mu))\ge \frac{5}{8}\mu-C(\mu)\, t, \qquad \text{ for any $t\in [0, T]$}.
\]
Choosing $T(\mu)\le T$ such that $C(\mu)\, T(\mu)\le \mu/8$ gives the claim.
\end{proof}

\begin{lemma}[Shrinking lemma]\label{slemmas}
Let $v\ge 0$ be a supersolution in $Q_{2, S}$ of \eqref{pt10} with $p\in \ ]1, 2[$ such that for some $\mu, k\in \ ]0, 1[$
\[
P_{t}\left(K_{1};\, v\ge k\right)>\mu \qquad  \forall t\in [0, S].
\]
Then there exists $\beta=\beta(\mu)>0$ such that for any $n\ge 1$
\[
P\left(Q_{1, S};\, v\le k/2^{n}\right)\le \beta(\mu) \left(1+k^{2-p}/S\right)^{\frac{1}{p-1}}/n^{1-\frac{1}{p}}.
\]
\end{lemma}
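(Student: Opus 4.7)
The plan is to mirror the scheme of Lemma \ref{slemma}, adjusting for the fact that now $p<2$. Set $k_j=k/2^j$ for $j\ge 0$ and $A_j := |Q_{1,S}\cap\{v\le k_j\}|$. The proof proceeds via three steps: an energy bound, slicewise De Giorgi--Poincar\'e, and telescoping.

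First, apply the energy inequality \eqref{ein} with a time-independent cutoff $\eta\in C^\infty_c(K_2)$, $\eta\equiv 1$ on $K_1$, $|D\eta|\le\bar C$. Since $\eta_t \equiv 0$ and $(v-k_j)_-\le k_j$, this yields
\[
\iint_{Q_{1,S}}|D(v-k_j)_-|^p\,dx\,dt \le \bar C(k_j^2 + Sk_j^p) \le \bar C\, S\, k_j^p\,\big(1+k^{2-p}/S\big),
\]
where the last step uses $k_j\le k$ and $2-p>0$. The hypothesis $P_t(K_1;v\ge k)>\mu$ together with $k_{j+1}\le k$ gives $P_t(K_1;v(\cdot,t)\ge k_{j+1})>\mu$ for every $t\in[0,S]$, and applying the De Giorgi--Poincar\'e inequality slicewise and then integrating in $t$ produces
\[
\frac{k_j}{2}\, A_{j+1} \le \frac{\bar C}{\mu} \iint_{Q_{1,S}\cap\{k_{j+1}\le v\}}|D(v-k_j)_-|\,dx\,dt.
\]
H\"older's inequality in space-time combined with the energy bound, and simplifying the common factor $k_j$, then gives
\[
A_{j+1} \le \frac{\bar C}{\mu}\, S^{1/p}\,\big(1+k^{2-p}/S\big)^{1/p}\,(A_j-A_{j+1})^{1-1/p}.
\]

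Second, raise this to the power $p/(p-1)$, obtaining
\[
A_{j+1}^{p/(p-1)} \le C(\mu)\, S^{1/(p-1)}\,\big(1+k^{2-p}/S\big)^{1/(p-1)}\,(A_j-A_{j+1}).
\]
Summing for $j=0,\ldots,n-1$ (the right hand side telescopes while the left hand side is bounded from below by $n\,A_n^{p/(p-1)}$ since $A_j$ is decreasing), and using $A_0\le |Q_{1,S}|=S$, one arrives at
\[
n\,A_n^{p/(p-1)} \le C(\mu)\, S^{p/(p-1)}\,\big(1+k^{2-p}/S\big)^{1/(p-1)}.
\]
Dividing by $S^{p/(p-1)}$ and taking the $(p-1)/p$ power gives the sharper bound $P(Q_{1,S};v\le k_n)\le C(\mu)\,(1+k^{2-p}/S)^{1/p}/n^{1-1/p}$, from which the stated inequality follows, since the base $1+k^{2-p}/S$ is $\ge 1$ and $1/p<1/(p-1)$ for $p\in(1,2)$.

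The main subtlety compared to the degenerate regime lies in step one: unlike Lemma \ref{slemma}, where a size comparison of the type $k_j^2\le S\,k_j^p$ removes one of the two terms produced by the energy inequality, here both terms genuinely contribute, and the correction $(1+k^{2-p}/S)$ must be carried coherently through the H\"older-telescoping pipeline. Once this bookkeeping is in place, the iteration proceeds exactly as in the degenerate case.
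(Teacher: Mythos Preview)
Your proof is correct and follows essentially the same route as the paper's: proceed as in Lemma~\ref{slemma} up to \eqref{pt12}, then use $k_j^2/S+k_j^p\le k_j^p(1+k^{2-p}/S)$ (valid since $p<2$ and $k_j\le k$) to cancel $k_j$, raise to the power $p/(p-1)$, and telescope. The only cosmetic difference is that you track the raw measures $A_j$ and use $A_0\le S$ at the end, whereas the paper works directly with the percentages $P(Q_{1,S};\cdot)$; as you correctly observe, this even yields the slightly sharper exponent $1/p$ in place of the stated $1/(p-1)$.
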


\begin{proof}
Proceed as in the proof of Lemma \ref{slemma} up to \eqref{pt12}.
As $j\ge 1$ and $p< 2$, it holds $k_{j}^{p}+k_{j}^{2}/S\le k_{j}^{p}(1+k^{2-p}/S)$, so that 
\[
P(Q_{1, S};\, v\le k_{j+1})^{\frac{p}{p-1}}\le \bar C\, \mu^{\frac{p}{p-1}} \left(1+k^{2-p}/S\right)^{\frac{1}{p-1}} \left(P(Q_{1, S};\, v\le k_{j})-P(Q_{1, S};\,  v\le k_{j+1})\right), 
\]
which yields the conclusion summing over $j\le n-1$.
\end{proof}

\begin{lemma}[Measure-to-point estimate]
Let $u\ge 0$ be a supersolution of \eqref{pt10} for $p\in \ ]1, 2]$. For any $\mu\in \ ]0, 1]$ there exists $m(\mu), T(\mu)\in \ ]0, 1[$  such that 
\begin{equation}
\label{claims}
P_{0}(K_{1};\, u\ge 1)\ge \mu\quad \Rightarrow\quad u\ge m(\mu)\quad \text{ in $K_{1/4}\times [T(\mu)/2, T(\mu)]$}.
\end{equation}
Moreover, $T(\mu)$ can be chosen arbitrarily small by decreasing $m(\mu)$.
\end{lemma}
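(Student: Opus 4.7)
The strategy is to adapt the three-step procedure used for the degenerate case (Lemma \ref{EP0}) — propagation of measure positivity, shrinking of the sublevel set, and application of the critical mass lemma — using the singular analogues \ref{lemmaspt}, \ref{slemmas}, and \ref{critlemma} (for $p<2$) in place of their degenerate counterparts. The concluding remark that $T(\mu)$ can be made arbitrarily small will then follow from the intrinsic scaling \eqref{scalingin} of the equation.

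\smallskip

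First I would invoke Lemma \ref{lemmaspt} on the hypothesis $P_{0}(K_{1};\,u\ge 1)\ge\mu$ to obtain $k_{1}=k(\mu)\in\,]0,1[$ and $T_{1}=T(\mu)\in\,]0,1]$ such that
\[
P_{t}\bigl(K_{1};\,u\ge k_{1}\bigr)\ge\mu/2\qquad\forall t\in[0,T_{1}].
\]
Unlike the degenerate case, there is no need for the logarithmic-in-time change of variable \eqref{cv1}, since Lemma \ref{lemmaspt} already delivers a uniform-in-time measure bound with a \emph{fixed} threshold $k_{1}$ on a bounded time interval; this is the main structural simplification in the singular regime.

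\smallskip

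Next I would apply the singular shrinking Lemma \ref{slemmas} on $Q_{1,T_{1}}$ with $v=u$, $k=k_{1}$ and $\mu$ replaced by $\mu/2$, obtaining for every $n\ge 1$ the estimate
\[
P\bigl(Q_{1,T_{1}};\,u\le k_{1}/2^{n}\bigr)\le \beta(\mu/2)\bigl(1+k_{1}^{2-p}/T_{1}\bigr)^{1/(p-1)}\,n^{-(1-1/p)}.
\]
Then the critical mass Lemma \ref{critlemma} is applied on $Q_{1,T_{1}}$ with $h=k_{1}/2^{n}$; the smallness threshold becomes $\nu\bigl(k_{1}T_{1}^{1/(p-2)}/2^{n}\bigr)$. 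Choosing $n=n(\mu)$ and (if necessary) replacing $T_{1}$ by a suitable value comparable to $k_{1}^{2-p}$ so that the shrinking bound stays below this threshold, we conclude $u\ge k_{1}/2^{n+1}$ on $K_{1/2}\times[T_{1}/2,T_{1}]$, and since $K_{1/2}\supseteq K_{1/4}$ the lemma follows with $m(\mu)=k_{1}/2^{n+1}$ and $T(\mu)=T_{1}$.

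\smallskip

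The delicate point, and the main obstacle, lies in this last parameter calibration: inspection of the proof of Lemma \ref{critlemma} shows that for small argument $\nu$ decays like a positive power of its argument, whereas the shrinking estimate only decays polynomially in $n$. Consequently one cannot simply drive $n\to\infty$; rather, $T_{1}$ and $n$ must be chosen together so that the quantity $k_{1}T_{1}^{1/(p-2)}/2^{n}$ stays of order one, thereby freezing the threshold at $\nu(1)$. Finally, to justify the remark, given any $T'<T(\mu)$, the scaling \eqref{scalingin} with $R=1$, $T=T'/T(\mu)$ transforms $u$ into a supersolution on a comparable cylinder to which the result already proved applies; this rescales the conclusion onto $K_{1/4}\times[T'/2,T']$ at the cost of a smaller constant $m(\mu)$.
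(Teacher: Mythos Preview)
Your proposal has a genuine gap: the claim that ``there is no need for the logarithmic-in-time change of variable'' is precisely where the argument breaks down. The exponential change of variable is not a convenience in the singular case---it is essential.

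Here is the obstruction. With $T_{1}$ and $k_{1}$ fixed from Lemma~\ref{lemmaspt}, applying Lemma~\ref{critlemma} on $Q_{1,T_{1}}$ with $h=k_{1}/2^{n}$ requires the sublevel percentage to fall below $\nu\bigl(k_{1}T_{1}^{1/(p-2)}/2^{n}\bigr)$. Inspecting the proof of Lemma~\ref{critlemma}, one finds $\nu(a)\sim a^{2-p}$ for small $a$, so this threshold decays like $2^{-n(2-p)}$, i.e.\ \emph{exponentially} in $n$. The shrinking Lemma~\ref{slemmas}, however, only delivers the polynomial decay $n^{-(1-1/p)}$, so the inequality can never be met for $n$ large. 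Your proposed fix---taking $T_{1}\simeq(k_{1}/2^{n})^{2-p}$ so that the argument of $\nu$ is frozen at order one---backfires: it forces $k_{1}^{2-p}/T_{1}=2^{n(2-p)}$, and the prefactor $\bigl(1+k_{1}^{2-p}/T_{1}\bigr)^{1/(p-1)}$ in Lemma~\ref{slemmas} then blows up like $2^{n(2-p)/(p-1)}$, overwhelming the $n^{-(1-1/p)}$ gain. There is no choice of $(n,T_{1})$ that closes the loop.

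The paper resolves this by introducing, for each fixed $T\in[T(\mu)/2,T(\mu)]$, the transformed supersolution $v(x,\tau)=e^{\tau/(2-p)}u(x,T-e^{-\tau})$. The point is that the measure bound from Lemma~\ref{lemmaspt} becomes $P_{\tau}(K_{1};\,v\ge k(\mu)e^{\tau/(2-p)})\ge\mu/2$, with a threshold that \emph{grows} in $\tau$. Applying Lemma~\ref{slemmas} on $I_{s}=[e^{s},2e^{s}]$ with $k=k(\mu)e^{s/(2-p)}$ makes the troublesome factor $k^{2-p}/|I_{s}|=k(\mu)^{2-p}$ independent of $s$ and $n$. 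One then subdivides $I_{s}$ into $\sim 2^{n(2-p)}$ subintervals and finds by pigeonhole one subinterval $J$ of length $\lambda$ where the sublevel percentage is small; crucially, on $J$ the argument of $\nu$ in Lemma~\ref{critlemma} equals $k(\mu)\lambda^{1/(2-p)}\cdot\lambda^{1/(p-2)}=k(\mu)$, a fixed quantity. This decouples $\nu$ from $n$ entirely. A final application of Lemma~\ref{sdg} propagates the resulting pointwise bound forward to time $T$. Your scaling argument for the closing remark on the smallness of $T(\mu)$ is correct, but the core of the proof needs the change of variable.
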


\begin{proof}
Let $T(\mu)$, $k(\mu)$ be given in  Lemma \ref{lemmaspt}: clearly $T(\mu)$ can be chosen arbitrarily small. Since $p<2$, an explicit computation shows that for any fixed $T\in [T(\mu)/2, T(\mu)]$, the function 
\[
v(x, \tau)=e^{\frac{\tau}{2-p}}u(x, T-e^{-\tau}), \qquad x\in K_{1}, \tau\ge -\log T
\]
is a supersolution to  \eqref{pt10}. The conclusion for $u$ of Lemma \ref{lemmaspt} becomes for $v$
\[
P_{\tau}(K_{1};\, v\ge e^{\frac{\tau}{2-p}}k(\mu))\ge \mu/2, \qquad \forall \tau \ge -\log T,
\]
and for $s\ge -\log  T$ to be chosen, the latter implies (thanks to $p<2$)
\begin{equation}
\label{spt13}
P_{\tau}(K_{1};\, v\ge e^{\frac{s}{2-p}}k(\mu))\ge \mu/2, \qquad \forall \tau\ge s \ge -\log T .
\end{equation}
 For $\nu(\mu)$ and $\beta(\mu)$ given in  Lemma \ref{critlemma} and \ref{slemmas}, let $n=n(\mu)\ge 1$ be such that
\[
\beta(\mu)\, n^{\frac{1}{p}-1}\left( k(\mu)^{2-p}+1\right)^{\frac{1}{p}}\le \nu(k(\mu)),
\]
and for  $s\ge -\log  T $ let $I_{s}=[e^{s}, 2e^{s}]$. Due to \eqref{spt13},  Lemma \ref{slemmas} applies to $v$  on $K_{1}\times I_{s}$ for $k=k(\mu)\, e^{\frac{s}{2-p}}$, giving, by the choice of $n=n(\mu)$,
\begin{equation}
\label{spt7}
P(K_{1}\times I_{s};\, v\le k(\mu)\, e^{\frac{s}{2-p}}/2^{n})\le \nu(k(\mu)).
\end{equation}
Subdivide $I_{s}$ in $[2^{n(2-p)}]+1$ disjoint intervals, each of length $\lambda \in [e^{s}\, (2^{-n(2-p)}-1),e^{s}\, 2^{-n(2-p)}]$. On at least one of them, say $J=[a, a+\lambda]\subseteq I_{s}$, \eqref{spt7} holds for $J$ instead of $I_{s}$, thus {\em a fortiori}
\[
P(K_{1}\times J;\, v\le k(\mu)\, \lambda^{\frac{1}{2-p}})\le P(K_{1}\times J;\, v\le k(\mu)\, e^{\frac{s}{2-p}}/2^{n})\le \nu(k(\mu)).
\]
Apply \eqref{pt3} to $v$ on $K_{1}\times J$ to obtain 
\[
v(x, \tau)\ge k(\mu)\lambda^{\frac{1}{2-p}}/2\qquad \forall\  \tau\in [a+\lambda/2, a+\lambda]\subseteq I_{s}, \ x\in K_{1/2}.
\]
Since $\lambda\ge e^{s}\, (2^{-n(2-p)}-1)$,  in terms of $u$ and $s$, the latter implies that for some $\tau_{s}\in J\subseteq [e^{s}, 2e^{s}]$
\[
\inf_{K_{1/2}}u(\cdot, T-e^{-\tau_{s}})= e^{-\frac{\tau_{s}}{2-p}}\inf_{K_{1/2}}v(\cdot, \tau_{s})\ge  \frac{k(\mu)\, e^{\frac{s-\tau_{s}}{2-p}}}{2^{2n}}=:c(\mu)\, e^{\frac{s-\tau_{s}}{2-p}}.
\]
Apply Lemma \ref{sdg} to  $u$ in $K_{1/2}\times [T- e^{-\tau_{s}}, T]$ with $h= c(\mu)\, e^{\frac{s-\tau_{s}}{2-p}}$ to get 
\begin{equation}
\label{spt20}
\inf_{K_{1/4}}u(\cdot, t)\ge c(\mu)\, e^{\frac{s-\tau_{s}}{2-p}}/2 \qquad \forall \, t\in [T-e^{-\tau_{s}}, T-e^{-\tau_{s}} +\min\{e^{-\tau_{s}}, \bar \sigma\, 2^{-p}\, c(\mu)\, e^{s-\tau_{s}}\}].
\end{equation}
Finally, let $\tilde s=s(\mu)=\max\{-\log (T(\mu)/2), -\log(\bar\sigma\, 2^{-p}\, c(\mu)\}$, so that it holds
\[
\tilde s\ge -\log T\qquad\text{and}\qquad   \bar \sigma\, 2^{-p}\, c(\mu)\, e^{\tilde s-\tau_{\tilde s}}\ge e^{-\tau_{\tilde s}}.
\]
 Therefore \eqref{spt20} holds for $t=T$ and from  $\tau_{\tilde s}\le 2e^{\tilde s}$ we deduce a lower bound on $e^{\tilde s-\tau_{\tilde s}}$ depending only on $\mu$, which proves \eqref{claims} by the arbitrariness of $T\in [T(\mu)/2, T(\mu)]$.
 \end{proof}

 \begin{theorem}[H\"older regularity]
Any $L^{\infty}_{\rm loc}(\Omega_{T})$ solution of \eqref{pt10} in $\Omega_{T}$ for $p\in \ ]1, 2[$ belongs to $C^{\bar\alpha}_{\rm loc}(\Omega_{T})$, with $\bar\alpha$ depending only on the data. Moreover there exists $\bar S$, also depending on the data, with the following property: if $S\ge\bar S$ there exist $\bar C(S)>0$ such that  
\begin{equation}
\label{oscp<2}
\sup_{K_{2R}\times [-k^{2-p}\, R^{p}, 0]}u\le S\, k\quad \Rightarrow\quad {\rm osc}(u, K_{r}\times [-k^{2-p}\, r^{p}, 0])\le \bar C(S)\, k\, \big(\frac{r}{R}\big)^{\bar\alpha},\quad r\le R,
\end{equation}
for any $k, R>0$ for which $K_{2\, R}\times [-k^{2-p}\, R^{p}, 0]\subseteq \Omega_{T}$.
 \end{theorem}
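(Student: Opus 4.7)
The plan is to deduce \eqref{oscp<2} by applying Lemma \ref{lemmaosc} to an intrinsically rescaled solution, and then to obtain the qualitative $C^{\bar\alpha}_{\rm loc}$-regularity via a standard localization.

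First I would verify that the measure-to-point estimate just proved supplies the hypothesis \eqref{pt11} of Lemma \ref{lemmaosc}. Set $\bar T:=2^{p-2}T(1/2)$ and take a solution $u$ of \eqref{pt10} on $Q_{2,\bar T}$ with $P_{0}(K_{1};u\ge 1/2)\ge 1/2$. The rescaling $\tilde u(x,t):=2\,u(x,2^{p-2}t)$ is an instance of \eqref{scalingin} (with $R=1$, $T=2^{p-2}$) and thus still solves \eqref{pt10} with the same structural constants; it is defined on $Q_{2,T(1/2)}$ and satisfies $P_{0}(K_{1};\tilde u\ge 1)\ge 1/2$. The measure-to-point estimate therefore yields $\tilde u\ge m(1/2)$ on $K_{1/4}\times[T(1/2)/2,T(1/2)]$, which in terms of $u$ reads $u\ge m(1/2)/2$ on $K_{1/4}\times[\bar T/2,\bar T]$. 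This is exactly \eqref{pt11} with $\bar\theta=1/2$ and $\bar m=m(1/2)/2$, so Lemma \ref{lemmaosc} supplies
\begin{equation*}
{\rm osc}\bigl(w, K_{r}\times[\bar T(1-r^{p}),\bar T]\bigr)\le \bar C\,r^{\bar\alpha},\qquad r\in[0,1],
\end{equation*}
for every solution $w$ of \eqref{pt10} with $\|w\|_{L^{\infty}(Q_{2,\bar T})}\le 1$.

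Next, to prove \eqref{oscp<2}, define $\bar S:=\bar T^{-1/(2-p)}$ and fix a solution $u$ with $\sup_{K_{2R}\times[-k^{2-p}R^{p},0]}u\le Sk$, $S\ge\bar S$. Introduce
\begin{equation*}
v(x,t):=(Sk)^{-1}\,u\bigl(\rho\, x,\,\rho^{p}(Sk)^{2-p}(t-\bar T)\bigr),\qquad \rho:=R/\bigl(\bar T^{1/p}\,S^{(2-p)/p}\bigr).
\end{equation*}
By the scaling rule \eqref{scalingin}, $v$ solves an equation of type \eqref{pt10}, and the condition $\bar TS^{2-p}\ge 1$ (equivalent to $S\ge\bar S$) guarantees $\rho\le R$ and that $v$ is well defined on $K_{2}\times[0,\bar T]$ with $\|v\|_{L^{\infty}}\le 1$. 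Applying the oscillation decay displayed above to $v$ and unwinding the substitution, the cylinder $K_{r}\times[\bar T(1-r^{p}),\bar T]$ in the $(x,t)$-variables corresponds to $K_{\rho r}\times[-R^{p}k^{2-p}r^{p},0]$ in the original coordinates, leading to
\begin{equation*}
{\rm osc}\bigl(u, K_{r'}\times[-k^{2-p}(r')^{p},0]\bigr)\le \bar C(S)\,k\,(r'/R)^{\bar\alpha},\qquad r'=\rho r\in[0,\rho],
\end{equation*}
with $\bar C(S)=\bar C\,S\,(\bar T^{1/p}S^{(2-p)/p})^{\bar\alpha}$; here I use again $\bar TS^{2-p}\ge 1$ to ensure the intrinsic cylinder $K_{r'}\times[-k^{2-p}(r')^{p},0]$ is contained in the one produced by the substitution. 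The residual range $r'\in(\rho,R]$ is absorbed into $\bar C(S)$ via the trivial bound ${\rm osc}\le 2Sk$ combined with $(r'/R)^{\bar\alpha}\ge(\rho/R)^{\bar\alpha}$. Finally, local $C^{\bar\alpha}$-regularity of any $u\in L^{\infty}_{\rm loc}(\Omega_{T})$ follows by a routine localization: around any interior point $(x_{0},t_{0})$, pick a neighborhood on which $|u|\le M$, set $k:=M/\bar S$ so that the sup-bound hypothesis of \eqref{oscp<2} holds with $S=\bar S$ a fixed constant, and shrink $R$ so that $K_{2R}(x_{0})\times[t_{0}-k^{2-p}R^{p},t_{0}]$ lies inside the neighborhood; since $k$ is then fixed locally, the intrinsic parabolic metric is equivalent to the Euclidean one.

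The main technical delicacy, peculiar to the singular regime $p<2$, is the interaction between the prescribed sup-bound $Sk$ and the intrinsic time scaling $k^{2-p}R^{p}$. In the degenerate case treated in Theorem \ref{holdp>2}, normalizing by $M=\|u\|_{L^{\infty}}$ contracts the time axis since $M^{2-p}<1$ for $M\ge 1$, so the rescaled solution fits effortlessly into $Q_{2,\bar T}$. Here instead $M^{2-p}>1$, and to place the rescaled solution into $Q_{2,\bar T}$ one must shrink the spatial scale by a further $S$-dependent factor; this is exactly what forces both the threshold $S\ge\bar S$ and the $S$-dependence of $\bar C(S)$ in the conclusion.
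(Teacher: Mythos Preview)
Your proof is correct and follows essentially the same route as the paper. Both arguments reduce to Lemma~\ref{lemmaosc} via an intrinsic rescaling of the form $v(x,t)=(Sk)^{-1}u(\rho x,\tau t)$ with $\rho\simeq R\,(\bar T S^{2-p})^{-1/p}$ and $\tau\simeq k^{2-p}/\bar T$, then absorb the residual range $r'\in(\rho,R]$ into the constant; your $\bar S$ and the paper's $\bar S=\bar T^{1/(p-2)}$ coincide. You are in fact slightly more careful than the paper in one place: you explicitly handle the discrepancy between the threshold $u\ge 1/2$ in hypothesis~\eqref{pt11} and the threshold $u\ge 1$ in the measure-to-point estimate~\eqref{claims} via the rescaling $\tilde u(x,t)=2u(x,2^{p-2}t)$, which introduces the factor $2^{p-2}$ in your $\bar T$; the paper simply sets $\bar T=T(1/2)$ and leaves this adjustment implicit.
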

 
 \begin{proof}
Let $\bar T=T(1/2)\in \ ]0, 1]$ given in the previous Lemma. By space-time translations and rescaling we are reduced to prove H\"older continuity near  $(0, 0)$ with $\bar Q:=K_{2}\times[-\bar T, 0]\subseteq \Omega_{T}$. If $M:=\|u\|_{L^{\infty}(\bar Q)}>1$ consider  $M^{-1}\, u(M^{(p-2)/p}x, t)$
which, being $p\in \ ]1, 2[$, solves \eqref{pt10} in $\bar Q$ and fulfills $\|v\|_{L^{\infty}(\bar Q)}\le 1$. Applying Lemma \ref{lemmaosc} gives the first statement. To prove \eqref{oscp<2}, suppose that $S\ge \bar T^{\frac{1}{p-2}}=:\bar S$, rescale to $R=1$, then let $\bar\gamma(S):=S^{p-2}\, \bar T^{-1}$  and consider 
\[
v(x, t)=(S\, k)^{-1}u(\rho\, x, \tau\, t)\qquad \rho=\bar \gamma(S)^{1/p},\quad \tau= k^{2-p}\, \bar T^{-1}.
\]
Thanks to \eqref{scalingin}, it is readily verified that $v$ solves \eqref{pt10} in $\bar Q$ and by the assumption in \eqref{oscp<2} it is bounded by $1$. Applying \ref{oscest1} (notice that $\bar T$ is the same) and rescaling back gives \eqref{oscp<2} for all $r\le \bar\gamma(S)^{1/p} R$ and hence for all $r\le R$ with eventually a bigger constant.
 \end{proof}
 
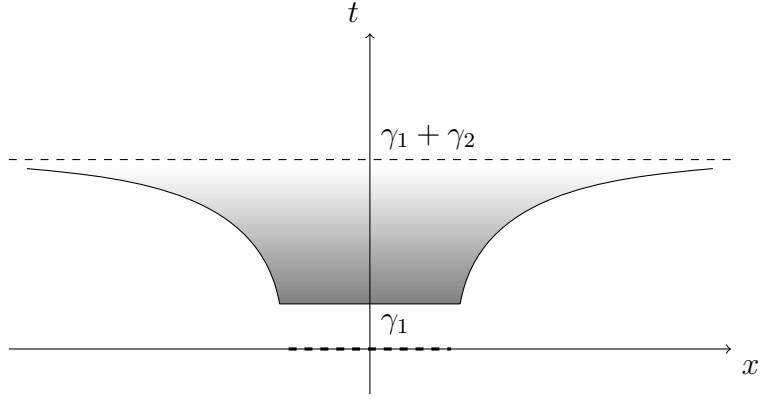
\begin{figure}
\centering
\begin{tikzpicture}[scale=1.2]

\shade[shading=axis, shading angle=180] (-1, 0.5) to [out=100, in=-5] (-3.8, 2) -- (3.8, 2) to [out=185, in=80] (1, 0.5) -- (-1, 0.5);
\draw[->] (0, -0.5) -- (0, 3.5) node[above left]{$t$};
\draw[->] (-4, 0) -- (4, 0) node[below right]{$x$};
\draw (-1, 0.5) to [out=100, in=-5] (-3.8, 2);
\draw (1, 0.5) to [out=80, in=185] (3.8, 2);
\draw (-1, 0.5) -- (1,0.5);
\draw[dashed] (-4, 2.1) -- (4, 2.1);
\draw (0, 0.5) node[below right]{$\gamma_{1}$};
\draw (0, 2.1) node[above right]{$\gamma_{1}+\gamma_{2}$};
\draw[very thick, dashed] (-0.9, 0) -- (0.9, 0);
\end{tikzpicture}
\caption{The expansion of positivity in the singular case. If at time $t=0$, $u\ge 1$ on the dotted part of given measure, after a waiting time $\gamma_{1}$, $u$ is pointwise bounded from below in the shaded region by a large power of $(\gamma_{1}+\gamma_{2}-t)$.}
\label{EPsingfig}
\end{figure}

 \begin{lemma}[Expansion of positivity, see figure \ref{EPsingfig}] \label{epossing}
There exists $\bar \lambda>p/(2-p)$ and, for any $\mu>0$,  $c(\mu), \gamma_{1}(\mu), \gamma_{2}(\mu)\in \ ]0, 1[$ such that if $u\ge 0$ is a supersolution in $Q_{8R, T}$ 
\begin{equation}
\label{exposingu}
 P_{0}(K_{r};\, u\ge k)\ge \mu\quad \Rightarrow\quad \inf_{K_{\rho}}u\big(\cdot,  k^{2-p}\, r^{p}\, \big(\gamma_{1}(\mu)+\gamma_{2}(\mu)\big(1-(r/\rho)^{\bar\lambda(2-p)-p}\big)\big)\ge c(\mu)\, \frac{k\, r^{\bar\lambda}}{\rho^{\bar\lambda}}
\end{equation}
whenever $r\le \rho\le R$ and $ k^{2-p}\, r^{p}\, (\gamma_{1}(\mu)+\gamma_{2}(\mu)(1-(r/\rho)^{\bar\lambda(2-p)-p}))\le T$. Moreover, the $\gamma_{i}(\mu)$ can be chosen arbitrarily small by lowering $c(\mu)$.
 \end{lemma}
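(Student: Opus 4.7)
The plan is to adapt the dyadic iteration of Lemma~\ref{expdeg} to the singular setting. The key structural change is that, since $p<2$, for the accumulated time to stay bounded as $\rho\to\infty$ the per-step time increments must form a \emph{convergent} geometric series, which translates exactly into the constraint $\bar\lambda>p/(2-p)$.

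First I would package the preceding measure-to-point estimate into a basic ``doubling step'' in space: by rescaling $v(x,t)=h^{-1}u(\rho x,h^{2-p}\rho^p t)$ and spatially inflating (so that $P_0(K_{8\rho};u\ge h)\ge 8^{-N}\mu$), one obtains, for any $\mu>0$ and any sufficiently small $\bar T>0$, a constant $\bar m(\mu,\bar T)>0$ such that
\[
P_0(K_\rho;\,u\ge h)\ge \mu\ \Longrightarrow\ u\ge \bar m(\mu,\bar T)\,h\ \text{ on } K_{2\rho}\times\bigl[\tfrac{\bar T}{2}h^{2-p}\rho^p,\,\bar T h^{2-p}\rho^p\bigr].
\]
The arbitrariness of $\bar T$ is inherited from the same flexibility in the measure-to-point lemma and will be used both to choose $\bar c$ below and, later, to tune the time windows.

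Next I would fix $\bar\lambda>p/(2-p)$ large enough that the universal constant $\bar c:=2^{-\bar\lambda}$ does not exceed $\bar m(8^{-N},\bar T_0)$ for a suitable $\bar T_0>0$; this is always possible by taking $\bar\lambda$ large. The choice simultaneously guarantees $a:=\bar c^{2-p}2^p<1$, making the time series summable. Starting from $P_0(K_r;u\ge k)\ge\mu$, the first application of the basic step produces $u\ge v_0:=c(\mu)k$ on $K_{2r}$ at some time $t_0:=\gamma_1(\mu)k^{2-p}r^p$. Iterating along $\rho_n=2^n r$ and $v_n=c(\mu)\bar c^n k$ with the time increment $\Delta t_n$ chosen so that $t_{n+1}$ lies at the right endpoint of the current window, a geometric sum yields
\[
t_n-t_0=\gamma_2(\mu)\,k^{2-p}r^p\bigl(1-a^n\bigr).
\]
The identity $a^n=(r/\rho_n)^{(2-p)\bar\lambda-p}$, a direct consequence of $\bar c=2^{-\bar\lambda}$, then produces exactly the closed-form time of the statement at $\rho=\rho_n$, while $v_n=c(\mu)k(r/\rho_n)^{\bar\lambda}$ matches the claimed lower bound.

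The step I expect to require most care is the passage from dyadic $\rho_n$ to continuous $\rho\in[r,R]$, since in the singular regime positivity is not monotone in time. For $\rho\in[\rho_n,\rho_{n+1}]$, the basic step at stage $n$ provides positivity $\ge v_{n+1}$ on $K_{\rho_{n+1}}\supseteq K_\rho$ throughout a window of length comparable to $\Delta t_n/2$, and by applying the basic step at $t_n$ with a range of different values $\bar T'\in[\bar T_*,\bar T_0]$ (where $\bar T_*$ is the smallest value for which $\bar m(1,\bar T')\ge\bar c$), the union of the resulting windows covers $[t_n+\bar T_*v_n^{2-p}\rho_n^p/2,\,t_{n+1}]$. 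By tuning the constants $\gamma_1(\mu),\gamma_2(\mu)$ one arranges that the target time $t(\rho)$ falls in this extended window for every $\rho$ in the dyadic range, and $v_{n+1}\ge c(\mu)k(r/\rho)^{\bar\lambda}$ (possibly after replacing $c(\mu)$ by $c(\mu)\bar c$) completes the proof. The final claim that the $\gamma_i(\mu)$ can be chosen arbitrarily small is inherited directly from the same freedom in the measure-to-point lemma.
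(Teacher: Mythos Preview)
Your approach is essentially the same as the paper's: dyadically iterate the measure-to-point estimate after an initial spatial inflation, exploit the flexibility in the measure-to-point lemma to force $\bar c:=c(1)$ small enough that $2^{p}\bar c^{2-p}\le 1$ (equivalently $\bar\lambda:=-\log_2\bar c>p/(2-p)$), and read off the closed-form time from the resulting convergent geometric series.

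The only substantive difference is your treatment of the passage from dyadic $\rho_n$ to continuous $\rho$. You propose sweeping over a family of waiting-time parameters $\bar T'\in[\bar T_*,\bar T_0]$ to tile the interval $[t_n,t_{n+1}]$. This works, but the paper uses a simpler observation: since the increments $t_{n+1}-t_n=\tfrac{\bar\theta}{2}(c(\mu)\bar c^{n})^{2-p}\rho_n^{p}$ are \emph{decreasing} in $n$ (precisely because $2^p\bar c^{2-p}\le 1$), the full window produced at step $n-1$, namely $K_{\rho_n}\times[t_n,\,t_n+\tfrac{\bar\theta}{2}(c(\mu)\bar c^{n-1})^{2-p}\rho_{n-1}^p]$, already reaches past $t_{n+1}$. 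Thus the successive windows overlap automatically and one obtains $u\ge c(\mu)\bar c^{n}$ on $K_{\rho_n}\times[t_n,t_{n+1}]$ for every $n\ge 1$ with a single fixed $\bar\theta$, after which a monotonicity argument (adjusting $c(\mu)$ by a factor of $\bar c$, as at the end of Lemma~\ref{epospar}) handles intermediate radii. Your variant is correct but more elaborate than necessary; the decreasing-increment trick is the cleaner route and is worth internalising, as it is specific to the singular regime.
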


 \begin{proof}
 The proof is very similar (and in fact simpler) to the one of Lemma \ref{expdeg} and we only sketch it. First expand in space \eqref{claims} through 
 \[
 P_{0}(K_{1};\, u\ge 1)\ge \mu\quad\Rightarrow\quad P_{0}(K_{8};\, u\ge 1)\ge \mu/8^{N}\quad \Rightarrow\quad u\ge c(\mu) \quad \text{in $K_{2}\times [\theta(\mu)/2, \theta(\mu)]$},
 \]
where we have set, with the notations in \eqref{claims}, $\theta(\mu):=T(\mu/8^{N})$, $c(\mu):=m(\mu/8^{N})$. Notice that, since $p<2$, we can suppose that $2^{p}\, c(1)^{2-p}\le 1$. Through a scaling argument, we infer that for any supersolution $u\ge 0$ in $K_{8\rho}\times [0, \theta(\mu)\, h^{2-p}\, \rho^{p}]$ it holds
 \begin{equation}
 \label{expsing}
 P_{0}(K_{\rho};\, u\ge h)\ge \mu\quad \Rightarrow \qquad u\ge c(\mu) h\qquad \text{in }\quad K_{2\rho}\times \big[\frac{\theta(\mu)}{2}\, h^{2-p}\, \rho^{p}, \theta(\mu)\, h^{2-p}\, \rho^{p}\big],
 \end{equation}
To prove \eqref{exposingu}, we can suppose that $k=1$ by scaling and define 
 \[
 c(\mu):=c(\mu, 1/2),\quad \bar\theta:=\theta(1),\quad \bar c:=c(1)\le 2^{\frac{p}{p-2}},\quad \rho_{n}=2^{n}\, r
 \]
 and $t_{n}$ as per \eqref{tn}. Since by assumption $P_{0}(K_{r}; u\ge 1)\ge \mu$, a first application of \eqref{expsing} implies $P_{t_{0}}(K_{r}; u\ge c(\mu))=1$.  Iterating \eqref{expsing} with $\mu=1$ we thus obtain 
\[
 u\ge c(\mu)\, \bar c^{n}\qquad \text{in }\quad K_{\rho_{n}}\times \big[t_{n}, t_{n}+\frac{\bar\theta}{2}\, (c(\mu)\, \bar c^{n-1})^{2-p}\, \rho_{n-1}^{p}\big]
\]
for all $n\ge 1$. From $2^{p}\, \bar c^{2-p}\le 1$ we infer  $t_{n}+2^{-1}\, \bar\theta\, (c(\mu)\, \bar c^{n-1})^{2-p}\, \rho_{n-1}^{p}\ge t_{n+1}$,
so that 
\[
u\ge c(\mu)\, \bar c^{n}\qquad \text{in }\quad K_{\rho_{n}}\times [t_{n}, t_{n+1}], \qquad n\ge 1.
\]
Finally, an explicit calculation shows that for suitable $\gamma_{1}(\mu), \gamma_{2}(\mu)>0$ it holds
\[
t_{n}=\gamma_{1}(\mu)\, r^{p}+\gamma_{2}(\mu)\big(1- (2^{p}\, \bar c^{2-p})^{n}\big)=r^{p}\, \big(\gamma_{1}(\mu)+\gamma_{2}(\mu)\big(1-(r/\rho_{n})^{\bar\lambda(2-p)-p}\big)\big)
\]
where $\bar\lambda=-\log_{2}\bar c>p/(2-p)$.
A monotonicity argument then gives the claim for any $\rho\ge r$.
 \end{proof}

\begin{theorem}[Appendix A of \cite{HR}]\label{sl1}
Let $u\ge 0$ solve \eqref{pt10} in $K_{2R}\times [t-2h, t]$ for some $p\in \ ]p_{*}, 2[$ . Then 
\begin{equation}
\label{l1har}
\sup_{K_{R}\times [t-h, t]} u\le \frac{\bar c}{h^{\frac{N}{N(p-2)+p}}}\left(\inf_{s\in [t-2h, t]}\int_{K_{2R}}u(x, s)\, dx\right)^{\frac{p}{N(p-2)+p}}+\bar c\big(\frac{h}{R^{p}}\big)^{\frac{1}{2-p}}.
\end{equation}
\end{theorem}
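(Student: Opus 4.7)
The strategy is a Moser-type iteration based on the energy inequality \eqref{ein}, combined with the parabolic Sobolev embedding, followed by an interpolation argument that leverages the supercritical condition $p>p_{*}$, i.e.\ $\lambda:=N(p-2)+p>0$. The $L^{1}$-norm entering the statement is controlled via a time-propagation argument that accounts for the spatial mass leaving the cube $K_{2R}$.

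\emph{Reduction and normalization.} Using the scaling \eqref{scalingin}, we may reduce to the canonical cylinder $Q:=K_{2}\times [-2,0]$ with $R=1$, $h=1$ (writing the generic case as $u_{R,h}(x,\tau)=R^{p/(2-p)}\, h^{1/(p-2)}\, u(Rx,t+h\tau)$) and the error term $\bar c(h/R^{p})^{1/(2-p)}$ will be the rescaled version of an additive constant $\bar c$ appearing on the canonical cylinder.  It then suffices to prove
\[
\sup_{K_{1}\times[-1,0]}u\le \bar c\,\Bigl(\inf_{s\in[-2,0]}\int_{K_{2}}u(x,s)\,dx\Bigr)^{p/\lambda}+\bar c.
\]

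\emph{Step 1: $L^{\infty}$--$L^{r}$ Caccioppoli--Sobolev iteration.} For a nonnegative subsolution (which $u$ itself is, being a solution), apply \eqref{ein} to $(u-k)_{+}$ on nested cylinders $Q_{n}=K_{\rho_{n}}\times [-\tau_{n},0]$ with $\rho_{n}\downarrow\rho$, $\tau_{n}\downarrow\tau$, and increasing levels $k_{n}\uparrow k$, using cut-offs $\eta_{n}$ with $|D\eta_{n}|\lesssim 2^{n}/(r-\rho)$ and $|(\eta_{n})_{t}|\lesssim 2^{n}/(\sigma-\tau)$. Combining with parabolic Sobolev
\[
\iint |w|^{p\frac{N+2}{N}}\,dx\,dt\le C\Bigl(\sup_{t}\int w^{2}\,dx\Bigr)^{p/N}\iint|Dw|^{p}\,dx\,dt
\]
yields the standard recursive inequality $Y_{n+1}\le C\,b^{n}\,Y_{n}^{1+p/N}$ for $Y_{n}=\iint_{Q_{n}}(u-k_{n})_{+}^{r}$, whenever $r\ge\max\{p,2\}$. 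By \eqref{iteration} this gives an estimate of the form
\begin{equation}\label{linfLr}
\sup_{K_{\rho}\times[-\tau,0]}u\le \frac{\bar c}{(r-\rho)^{a}(\sigma-\tau)^{b}}\,\Bigl(\iint_{K_{r}\times[-\sigma,0]}u^{q}\,dx\,dt\Bigr)^{\alpha}
\end{equation}
for suitable exponents $a,b,\alpha$ depending on $N,p,q$, provided $q$ exceeds a threshold.

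\emph{Step 2: Interpolation down to $L^{1}$.} Here is where supercriticality enters. Interpolate $\|u\|_{L^{q}}^{q}\le \|u\|_{L^{\infty}}^{q-1}\|u\|_{L^{1}}$ into \eqref{linfLr}, take $q$-th roots, and apply a Young-type splitting. A direct computation shows that the resulting inequality of the form $M(\rho,\tau)\le C_{\ast}(r-\rho,\sigma-\tau)\cdot M(r,\sigma)^{\theta}\cdot I^{\beta}+\cdots$, where $M(\rho,\tau):=\sup u$ on the corresponding cylinder and $I$ is a spatial $L^{1}$-norm, closes to a universal bound precisely when $\lambda=N(p-2)+p>0$; the resulting exponent on $I$ is $p/\lambda$, matching the statement. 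An iteration on dyadic radii $\rho_{n}\uparrow 1$, $\tau_{n}\uparrow 1$ and the standard interpolation lemma of Giaquinta--Giusti type then absorbs the $\sup$-term and produces
\begin{equation}\label{nnn}
\sup_{K_{1}\times[-1,0]}u\le \bar c\Bigl(\sup_{s\in[-2,0]}\int_{K_{2}}u(x,s)\,dx\Bigr)^{p/\lambda}+\bar c.
\end{equation}

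\emph{Step 3: From $\sup$ to $\inf$ of the mass.} The final step is to replace the supremum in $s$ in \eqref{nnn} by the infimum. For this, test the equation with $\zeta(x)$, where $\zeta\in C_{c}^{\infty}(K_{2R})$ is a spatial cut-off with $\zeta\equiv 1$ on $K_{R}$, to obtain the mass-comparison
\[
\Bigl|\frac{d}{dt}\int u\,\zeta\,dx\Bigr|\le C\int |A(x,u,Du)|\,|D\zeta|\,dx\le \frac{C}{R}\int_{K_{2R}\setminus K_{R}}|Du|^{p-1}\,dx,
\]
and estimate the right-hand side by the energy inequality to bound $|\frac{d}{dt}\int u\,\zeta\,dx|$ by a power of $\|u\|_{L^{\infty}}$ on an annulus, times $R^{-\alpha}$. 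Reinserting into \eqref{nnn} (iterating between the $\sup$ on slightly smaller cylinders and the mass on slightly larger ones, as in a Bombieri--Giusti absorption) converts $\sup_{s}$ into $\inf_{s}$ at the price of the additive term $\bar c(h/R^{p})^{1/(2-p)}$ dictated by scaling.

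\emph{Main obstacle.} The delicate point is Step 2: bookkeeping the exponents so that the iterative inequality closes, and identifying the exact borderline exponent $p/\lambda$. The supercritical threshold $p>p_{*}$ is not used in the energy or Sobolev estimates individually but emerges precisely as the condition that the self-improvement exponent $\theta$ in Step 2 is strictly less than $1$. Equally technical is Step 3, where care is needed to prevent the annular gradient term from growing faster than allowed by the scaling; this requires a simultaneous interpolation between the $\sup$-bound and the mass bound on concentric cylinders, which is the reason why the argument in \cite[Appendix A]{HR} is not easily shortened.
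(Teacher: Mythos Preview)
The paper does not actually prove this theorem: it is stated without proof, with the explicit remark earlier in Section~5 that ``Theorem~\ref{sl1} \dots\ is rather technical and since we could not find any simplification we would simply rewrite \cite[Appendix~A]{HR} word-by-word.'' So there is no in-paper proof to compare against.

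Your outline is faithful to the architecture of \cite[Appendix~A]{HR}: a De~Giorgi/Moser iteration producing an $L^{\infty}$--$L^{r}$ bound, followed by an interpolation that closes only when $\lambda=N(p-2)+p>0$ (this is exactly where the supercritical threshold enters and where the exponent $p/\lambda$ arises), and finally the replacement of $\sup_{s}\int u$ by $\inf_{s}\int u$. One comment on the ordering: in \cite{HR} the $L^{1}$-Harnack inequality (your Step~3) is established \emph{first} as a self-contained result valid for all $p\in\,]1,2[$, and then fed into the $L^{\infty}$ estimate; your sketch instead bootstraps the mass bound against the already-obtained $\sup$ bound \`a la Bombieri--Giusti. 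Both routes work, but the first avoids the circularity you flag in your ``Main obstacle'' paragraph and is the reason the $L^{1}$-Harnack inequality is stated separately in Section~4.3. Also note that the energy inequality \eqref{ein} in the paper is written for $(v-k)_{-}$ and supersolutions; for your Step~1 you need the companion inequality for $(u-k)_{+}$ and subsolutions, which holds by the same computation but is not literally \eqref{ein}.
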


\begin{theorem}[Harnack inequality]
Let $p\in \ ]p_{*}, 2[$. There exists constants $\bar C\ge 1$, $\bar\theta>0$ such that any solution $u\ge 0$ of \eqref{pt10} in $K_{8R}\times [- T, T]$ obeying  $u(0, 0)>0$ and
\begin{equation}
\label{tass}
 4\, R^{p}\, \sup_{K_{2R}}u(\cdot, 0)^{2-p}\le T
\end{equation}
satisfies the following Harnack inequality 
\begin{equation}
\label{singharnack}
\bar C^{-1}\, \sup_{K_{R}} u(\cdot, s)\le u(0, 0)\le \bar C\, \inf_{K_{R}}u(\cdot, t),\qquad - \bar\theta\, u(0, 0)^{2-p}\, R^{p}\le s, t\le\bar\theta\, u(0, 0)^{2-p}\, R^{p}.
\end{equation}
\end{theorem}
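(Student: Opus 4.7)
The plan is to mirror the degenerate-case proof for the forward direction and then invoke Theorem \ref{sl1} as the new tool to obtain the elliptic (two-sided) form characteristic of the singular supercritical regime. By the scaling \eqref{scalingin} and a translation, first reduce to $R=1$ and $u(0,0)=1$, in which case assumption \eqref{tass} reads $4\sup_{K_2} u(\cdot,0)^{2-p} \le T$.

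For the forward inf estimate $u(0,0) \le \bar C \inf_{K_1} u(\cdot,t)$ on an interval of positive times, I would replay the Krylov--Safonov argument of the proof of \eqref{degharnack}: maximize $\psi(\rho)=(1-\rho)^{\bar\lambda}\sup_{Q_\rho^-}u$ with $Q_\rho^-=K_\rho\times[-\rho^p,0]$ and $\bar\lambda$ from \eqref{epossing} to obtain a point $(x_0,t_0)$ with $u(x_0,t_0)=u_0$ satisfying $u_0 r^{\bar\lambda}\ge\bar\xi^{\bar\lambda}$ and $\sup_{\widetilde Q_r}u\le 2u_0$ on an intrinsic box $\widetilde Q_r = K_r(x_0)\times[t_0 - u_0^{2-p}r^p, t_0]$; then apply the singular Hölder estimate \eqref{oscp<2} with $S=2$ and $k=u_0$ to produce $P_{t_0}(K_r(x_0);u\ge u_0/2)\ge\bar\eta^N$, and invoke \eqref{epossing} with $\mu=\bar\eta^N$. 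The crucial singular feature is that the waiting time in \eqref{epossing} stays uniformly bounded as $\rho\to\infty$ (because $\bar\lambda(2-p)-p>0$) and that $\gamma_1(\mu),\gamma_2(\mu)$ can be made arbitrarily small: together these allow landing at any prescribed future time inside an interval $[0,\bar\theta]$ while covering all of $K_1$.

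To upgrade to the full two-sided elliptic statement, I would use a bootstrap centered on Theorem \ref{sl1}. For the sup bound $\sup_{K_1}u(\cdot,s)\le \bar C u(0,0)$, let $M^* := \sup_{K_1 \times [-\bar\theta, \bar\theta]} u$ be attained at some $(x^*, s^*)$ and apply \eqref{l1har} on a cylinder around $(x^*, s^*)$ of intrinsic time-scale $h\sim M^{*\,(2-p)}$: the resulting bound involves $\inf_s\int_{K_2}u(\cdot,s)\,dx$, which I would upper-bound using \eqref{tass} together with the forward Harnack already in hand. For the backward inf $u(0,0)\le \bar C\inf_{K_1}u(\cdot,-\tau)$, once the forward sup bound is available, use the $L^{1}$-Harnack stated just after Theorem \ref{hsing} to transfer the $L^1$ information from positive times back to time $-\tau$, and combine with Theorem \ref{sl1} to force a pointwise lower bound on the infimum there.

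The main obstacle will be this bootstrap. Theorem \ref{sl1} controls $\sup u$ by an $L^1$ norm plus a scale-dependent remainder $(h/R^p)^{1/(2-p)}$, and to avoid the $L^1$ norm blowing up in the attempt to bound $\sup u$ by $u(0,0)$ (rather than by the a priori quantity $\sup_{K_2} u(\cdot,0)$), one must delicately iterate \eqref{l1har} against the $L^1$-Harnack, absorbing the remainder via the supercritical condition $N(p-2)+p>0$. Without supercriticality, the remainder would overwhelm the $L^1$ bound and the elliptic form would fail, exactly as illustrated by the examples \eqref{ex0} for $p<p_*$ and \eqref{ex} at $p=p_*$.
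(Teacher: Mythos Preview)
Your forward-inf argument has a genuine gap at the very first step. You define $\psi(\rho)=(1-\rho)^{\bar\lambda}\sup_{Q_\rho^-}u$ with $Q_\rho^-=K_\rho\times[-\rho^p,0]$, exactly as in the degenerate proof, and then claim $\sup_{\widetilde Q_r}u\le 2u_0$ on the \emph{intrinsic} box $\widetilde Q_r=K_r(x_0)\times[t_0-u_0^{2-p}r^p,t_0]$ via the containment $\widetilde Q_r\subseteq Q_{\rho_0+r}^-$. But in the singular range $p<2$ that containment fails: from $u_0\,r^{\bar\lambda}\ge\bar\xi^{\bar\lambda}$ and $\bar\lambda(2-p)>p$ one gets $u_0^{2-p}r^p\ge \bar\xi^{\bar\lambda(2-p)}r^{p-\bar\lambda(2-p)}\to\infty$ as $r\to 0$, while $Q_{\rho_0+r}^-$ has time-height at most $2^p$. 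So you have no sup bound on the intrinsic cylinder, and without it \eqref{oscp<2} with $k=u_0$, $S=2$ cannot be invoked. Taking instead the non-intrinsic box (height $r^p$) forces $k=1$ and $S=2u_0$ in \eqref{oscp<2}, giving a solution-dependent $\bar C(S)$ that destroys the argument.

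The paper repairs exactly this point, and in doing so reorders the roles of the tools. It defines $\psi(\rho)=(1-\rho)^{\bar\lambda}\sup_{K_\rho}u(\cdot,0)$ purely in space at time $0$, so the maximality of $\psi$ yields only the spatial bound $\sup_{K_r(x_0)}u(\cdot,0)\le 2u_0$. Theorem \ref{sl1} is then used \emph{here}, not later: the $L^1$ term in \eqref{l1har} is estimated at $s=0$ by $2u_0\,r^N$, and with $h\simeq u_0^{2-p}r^p$ the supercriticality $N(p-2)+p>0$ makes both terms on the right of \eqref{l1har} collapse to $\bar c\,u_0$. This produces the missing space-time sup bound on the intrinsic cylinder $K_{r/2}(x_0)\times[-a,a]$, $a=u_0^{2-p}r^p$, after which \eqref{oscp<2} and \eqref{epossing} run as you describe. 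A bonus of this route is that the H\"older estimate applied on the time-symmetric cylinder gives $u\ge u_0/2$ on a two-sided time interval around $0$, so launching \eqref{epossing} from each such time yields the backward inf bound simultaneously with the forward one; no separate $L^1$-Harnack step is needed. The sup bound in \eqref{singharnack} is then obtained by rerunning the same argument centered at a point $x_*\in K_R$ where $u(\cdot,0)$ attains its supremum, not by the bootstrap you sketch.
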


\begin{proof}
Consider the solution $u(0, 0)^{-1}\, u(R\, x, R^{p}\, u(0, 0)^{2-p}\, t)$ in $K_{8}\times [-T', T']$ (still denoted by $u$) with $T'= T\, R^{-p}\, u(0, 0)^{p-2}$. This reduces us to $u(0, 0)=1$, $R=1$, $T'\ge 4$ and \eqref{tass} implies 
\begin{equation}
\label{tass2}
 1\le M^{2-p}:=\sup_{K_{1}}u(\cdot, 0)^{2-p}\le T'/4.
\end{equation}
We first prove the $\inf$  bound in \eqref{singharnack}. Let $\bar\lambda\ge p/(2-p)$ be the expansion of positivity exponent, define $\psi(\rho)=(1-\rho)^{\bar\lambda}\, \sup_{K_{\rho}}u(\cdot, 0)$ for $\rho\in [0, 1]$ and choose $\rho_{0}$, $x_{0}\in K_{\rho_{0}}$ such that 
\[
\max_{[0, 1]}\psi=\psi(\rho_{0})=(1-\rho_{0})^{\bar\lambda}\, u_{0},\qquad u_{0}=u(x_{0}, 0)\ge 1.
\]
As in the proof of Theorem \ref{harnackellittica}, we can let $\bar\xi\in [0, 1]$ obey $(1-\bar\xi)^{-\bar\lambda}=2$ to find for $r=\bar\xi\, (1- \rho_{0})$
\begin{equation}
\label{padf}
u_{0}\, r^{\bar\lambda}\ge \bar\xi^{\bar\lambda},\qquad \sup_{K_{r}(x_{0})}u(\cdot, 0) \le (1-\bar\xi)^{-\bar\lambda}\,  u_{0}=2\, u_{0}.
\end{equation}
Let $a:=u_{0}^{2-p}\, r^{p}$. By construction $u_{0}\le M$ and by  \eqref{tass2}, $u$ solves \eqref{pt10} in $K_{r}(x_{0})\times [-4\, a,  4\, a]$. We can apply \eqref{l1har} for $R=r/2$, $t= a$, $s=0$ and $h=2\, a$ to get
\begin{equation}
\label{final}
\begin{split}
\sup_{K_{\frac{r}{2}}(x_{0})\times [-a, a]}u&\le \frac{\bar c}{a^{\frac{N}{N(p-2)+p}}}\left(\int_{K_{r}(x_{0})}u(x, 0)\, dx\right)^{\frac{p}{N(p-2)+p}}+\bar c\, a^{\frac{1}{2-p}}\, r^{\frac{p}{p-2}}\\
&\le \bar c\,\frac{ (2\, u_{0}\, r^{N})^{\frac{p}{N(p-2)+p}}}{(u_{0}^{2-p}\, r^{p})^{\frac{N}{N(p-2)+p}}}+\bar c\, u_{0}\le \bar c\, u_{0},
\end{split}
\end{equation}
where we used the second inequality in \eqref{padf} to bound the integral. Since $a=u_{0}^{2-p}\, r^{p}$, we can apply \eqref{oscp<2} with $k=u_{0}$ in both $K_{r/2}(x_{0})\times [-a, 0]$ and $K_{r/2}(x_{0})\times [0, a]$ to get
\[
{\rm osc}(u, K_{\rho}(x_{0})\times [- a, a])\le \bar c\, u_{0}\, (\rho/r)^{\bar\alpha},\qquad \rho\le r/2.
\]
As $u(x_{0})=u_{0}$ we infer that $u\ge u_{0}/2$ in $K_{\bar\eta r}(x_{0})\times [-\bar\eta^{p}\, a, \bar\eta^{p}\, a]$ for suitable $\bar\eta\in \ ]0, 1/2[$, so that $P_{t}(K_{r}(x_{0});\, u\ge u_{0}/2)\ge \bar\eta^{N}$ for all $|t|\le \bar\eta\, u_{0}^{2-p}\, r^{p}$. Apply the expansion of positivity Lemma \ref{epossing} at an arbitrary time $t$ such that $|t|\le \bar\eta\, u_{0}^{2-p}\, r^{p}$, choosing the $\gamma_{i}(\bar\eta^{N})$ so small that $\gamma_{1}(\bar\eta^{N})+\gamma_{2}(\bar\eta^{N})<\bar\eta/2$. Its conclusion for $k=u_{0}/2$, $\rho=2$ implies, thanks to $K_{2}(x_{0})\supseteq K_{1}$,
\[
\inf_{K_{1}} u(\cdot, t+ \gamma_{r} \, u_{0}^{2-p}\, r^{p})\ge \bar c\, u_{0}\, r^{\bar\lambda},\qquad \gamma_{r}:=\gamma_{1}(\bar\eta^{N})+\gamma_{2}(\bar\eta^{N})\big(1-(r/2)^{\bar\lambda(2-p)-p}\big)<\bar\eta/2 
\]
for all $|t|\le \bar\eta\, u_{0}^{2-p}\, r^{p}$. The latter readily gives $u(x, t)\ge  \bar c\, u_{0}\, r^{\bar\lambda}$ for $x\in K_{1}$ and $|t|\le\bar\eta\,  u_{0}^{2-p} \, r^{p}/2$. Finally, observe that since $r\le 1$ and $\bar\lambda\ge p/(2-p)$, it holds $u_{0}^{2-p} \, r^{p}\ge (u_{0}\, r^{\bar\lambda})^{2-p}$, so that the first inequality in \eqref{padf} yields $u(x, t)\ge  \bar c\, \bar\xi^{\bar\lambda}=:1/\bar C$ for $x\in K_{1}$ and $|t|\le\bar\eta\,  \bar\xi^{\bar\lambda(2-p)}/2=:\bar\theta$.

To prove the $\sup$ bound we proceed similarly. Indeed, let $x_{*}\in K_{R}$ be such that $u(x_{*}, 0)=\sup_{K_{R}}u$. Notice that $K_{R}(x_{*})\subseteq K_{2R}$, hence \eqref{tass} still implies \eqref{tass2} for the rescaled (and translated)  function. Hence, the same proof as before carry over, giving after rescaling back $\inf_{K_{R}} u(\cdot, 0)\ge c\, u(x_{*}, 0)$. This implies $\sup_{K_{1}}u(\cdot, 0)\le C\, u(0, 0)$ and we can proceed as in \eqref{final} for $r=2R$, $x_{0}=0$ and  $a= R^{p}\sup_{K_{2R}}u$ to get the final $\sup$ estimate.
\end{proof}

\vskip4pt
\noindent
{\small {\bf Acknowledgement.}  S. Mosconi and V. Vespri  are members of GNAMPA (Gruppo Nazionale per l'Analisi Matematica, la Probabilit\`a e le loro Applicazioni) of INdAM (Istituto Nazionale di Alta Matematica). F. G. D\"uzg\"un is partially funded by Hacettepe University BAP through project FBI-2017-16260; S. Mosconi is partially funded by the grant PdR 2016-2018 - linea di intervento 2: ``Metodi Variazionali ed Equazioni Differenziali'' of the University of Catania.}

\end{document}